\documentclass[10pt,a4paper]{article}
\usepackage{hyperref}
\usepackage{a4wide}
\usepackage{graphicx,xcolor,textpos}
\usepackage{helvet}
\usepackage[utf8]{inputenc}
\usepackage[T1]{fontenc}
\usepackage[english]{babel}
\usepackage{amsmath}
\usepackage{amsfonts}
\usepackage{amssymb,amsthm}
\usepackage{graphicx}
\usepackage{mathtools}
\usepackage{bbold}
\usepackage{csquotes}
\usepackage{tikz-cd}
\usepackage{cases}
\usepackage{listings}
\usepackage{algorithm}
\usepackage{algorithmic}
\usepackage{enumerate}   

\newtheorem{thm}{Theorem}[section]
\newtheorem{thmx}{Theorem}

\newtheorem{conj}{Conjecture}
\newtheorem{lemma}[thm]{Lemma}
\newtheorem{prop}[thm]{Proposition}

\newtheorem{cor}[thm]{Corollary}
\theoremstyle{definition}
\newtheorem{df}[thm]{Definition}
\newtheorem{nota}[thm]{Notation}

\newtheorem{ques}{Question}
\theoremstyle{remark}
\newtheorem{rem}{Remark}
\theoremstyle{definition}
\newtheorem{ex}[thm]{Example}
\theoremstyle{definition}
\newcommand{\Z}{\mathbb{Z}}
\newcommand{\N}{\mathbb{N}}
\newcommand{\Q}{\mathbb{Q}}
\newcommand{\R}{\mathbb{R}}
\newcommand{\C}{\mathbb{C}}
\newcommand{\F}{\mathbb{F}}

\newcommand{\norm}[1]{\|#1\|}

\DeclareMathOperator{\Aut}{Aut}

\DeclareMathOperator{\Gal}{Gal}
\DeclareMathOperator{\lcm}{lcm}

\DeclareMathOperator{\GL}{GL}
\DeclareMathOperator{\RF}{RF}
\DeclareMathOperator{\Id}{Id}
\DeclareMathOperator{\BS}{BS}
\DeclareMathOperator{\Fit}{Fit}
\DeclareMathOperator{\degr}{deg}

\title{Residual Finiteness Growth in Minimax Groups}
\author{Jonas Der\'e and Joren Matthys\thanks{KU Leuven Campus Kulak Kortrijk, Department of Mathematics, Research unit `Algebraic Topology and Group Theory', B-8560 Kortrijk, Belgium. The authors were supported by Internal Funds KU Leuven (project number 3E220559).}}
\date{}
\begin{document}
	\maketitle
	\begin{abstract}
	If $g\in G$ is a non-trivial element in a residually finite group, then there exists by definition a finite group $Q$ and a homomorphism $\varphi: G \to Q$ such that $\varphi(g) \neq e$. The residual finiteness growth $\RF_G$ of a finitely generated residually finite group $G$ estimates the size of $Q$ in terms of the word norm $\norm{g}$ of the element $g\in G$. This function has been studied for several classes of groups, including free groups, lamplighter groups and nilpotent groups.
	
	For finitely generated linear groups $G\leq \GL(m, \C)$ this function is known to be bounded by $\RF_G(r) \preceq r^{m^2+1}$, which is quadratic in $m$. This paper establishes an improved bound of the form $\RF_G(r) \preceq r^{4k}$ with $k$ the Pr\"ufer rank of $G$ for certain virtually solvable linear groups, namely minimax groups, a class which includes virtually polycyclic and Baumslag-Solitar groups. Moreover, the upper bound is invariant under taking finite extensions, and also establishes an improved polylogarithmic version for virtually nilpotent groups, generalizing the known exact bound for virtually abelian groups. If the group is not virtually nilpotent, we prove that $\RF_G(r)$ is at least linear, improving a recent result.
	\end{abstract}
	\section{Introduction}
	Let $G$ be a finitely generated residually finite group. By definition, there exists for every non-trivial element $e\neq g \in G$ a homomorphism $\varphi: G \to Q$ to a finite group $Q$ such that $\varphi(g) \neq e$. Since the initial paper \cite{bou2010quantifying} by Bou-Rabee, numerous papers have appeared that bound the size of $Q$ in terms of the word norm $\norm{g}$ of the element $g\in G$ for several classes of groups $G$. This bound on $|Q|$ is encapsulated into the residual finiteness growth $\RF_G: \N \to \N$: it is the minimal function such that if $\norm{g} \leq r$, then $Q$ exists as above with $|Q| \leq \RF_G(r)$. The residual finiteness growth has been studied for several classes of groups, including virtually abelian groups \cite{math_virt_ab},  free groups \cite{bradford2019short}, certain branch groups \cite{branch_groups}, lamplighter groups \cite{bou2019residual}, \ldots \, The survey article \cite{survey2022} states several known results and open questions about this function.
	
	In this paper, we focus on a subclass of linear groups $G\leq \GL(m, \F)$ over fields $\F$ of characteristic zero, namely all finitely generated residually finite virtually solvable minimax groups, as introduced in Section \ref{sec_minimax}. In this paper, we will show that this class can be characterized in the following way:
	\begin{thmx} \label{thm_intro_minimax}
		A finitely generated group $G$ is a residually finite virtually solvable minimax group if and only if $G$ fits in a short exact sequence of the form
		\begin{equation} \label{eq_intro_seq_minimax}
			1 \to K \to G \to H \to 1,
		\end{equation}
		where $K$ is a torsion-free nilpotent group of finite Pr\"ufer rank and $H$ is virtually abelian.
	\end{thmx}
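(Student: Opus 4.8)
The plan is to derive both implications from the fact that the groups in question are linear over a field of characteristic zero, together with the structure theory of solvable minimax groups set up in Section~\ref{sec_minimax}.

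For the forward implication, suppose $G$ is a finitely generated residually finite virtually solvable minimax group. As recalled in Section~\ref{sec_minimax}, $G$ embeds into $\GL(m,\F)$ for some field $\F$ of characteristic zero, and replacing $\F$ by its algebraic closure we may assume $\F$ is algebraically closed. Let $\mathbf{G}$ be the Zariski closure of $G$ in $\GL(m,\F)$. Since $G$ is virtually solvable, the identity component $\mathbf{G}^{0}$ is a connected solvable algebraic group, so its unipotent radical $\mathbf{U}=R_{u}(\mathbf{G}^{0})$ is a closed connected normal subgroup with $\mathbf{G}^{0}/\mathbf{U}$ a torus; moreover $\mathbf{U}$ is characteristic in $\mathbf{G}$. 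Set $K:=G\cap\mathbf{U}$. Then $K\trianglelefteq G$; being a subgroup of a unipotent group over a field of characteristic zero it is torsion-free nilpotent; and being a subgroup of the minimax group $G$ it has finite Pr\"ufer rank. Finally $G/K\cong G\mathbf{U}/\mathbf{U}$ embeds into $\mathbf{G}/\mathbf{U}$, whose identity component $\mathbf{G}^{0}/\mathbf{U}$ is an abelian group of finite index; hence $H:=G/K$ is virtually abelian, and we obtain the exact sequence \eqref{eq_intro_seq_minimax}.

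For the converse, let $G$ be finitely generated and fit into \eqref{eq_intro_seq_minimax} with $K$ torsion-free nilpotent of finite Pr\"ufer rank and $H$ virtually abelian. Then $H$ is finitely generated virtually abelian, hence virtually polycyclic --- in particular residually finite, minimax, and of finite Pr\"ufer rank. Pulling back a finite-index abelian subgroup of $H$ gives a finite-index subgroup $G_{0}\le G$ that is (torsion-free nilpotent)-by-abelian with both terms of finite Pr\"ufer rank; thus $G_{0}$ is finitely generated, solvable, and of finite Pr\"ufer rank, hence minimax by the structure theory of Section~\ref{sec_minimax}, and therefore $G$ is virtually solvable and minimax. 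It remains to show $G$ is residually finite, and for this I would establish that $G$ is linear over a field of characteristic zero and invoke Mal'cev's theorem: $K$ embeds into the $\F$-points of a unipotent algebraic group via its Mal'cev completion --- which is finite dimensional precisely because $K$ has finite rank --- the conjugation action of $G$ on $K$ extends to this completion, and assembling the resulting representation with a faithful representation of the virtually polycyclic group $H$ yields a faithful finite-dimensional representation of $G$.

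Two steps deserve comment. In the forward direction the one subtlety is arranging $K$ to be normal in $G$ itself, not merely in a finite-index subgroup, with $G/K$ still virtually abelian; passing to the unipotent radical of the Zariski closure, which is characteristic, handles this at once. In the converse the main obstacle is the residual finiteness of $G$: extensions of residually finite groups need not be residually finite, and it is exactly the finite-rank hypothesis on $K$ --- which keeps its Mal'cev completion finite dimensional and, together with finite generation of $G$, forces $K$ itself to be minimax --- that makes the linearity argument go through.
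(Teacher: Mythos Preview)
Your argument is correct in outline but follows a genuinely different path from the paper in the forward implication, and there is one citation that needs repair.

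For the forward direction, the paper works purely group-theoretically: it takes the Fitting subgroup $\Fit(\tilde{G})$ of a finite-index solvable normal subgroup $\tilde{G}\lhd G$, uses that $\Fit(\tilde{G})$ is nilpotent with virtually abelian quotient (Theorem~\ref{thm_Fit_in_minimax}), and then --- since $\Fit(\tilde{G})$ may have torsion --- passes to a torsion-free finite-index subgroup $K\leq\Fit(\tilde{G})$ still normal in $G$ by a separate residual-finiteness argument (Proposition~\ref{prop_Fit_to_K}). Your route through the unipotent radical of the Zariski closure is more economical in that respect: $K=G\cap\mathbf{U}$ is automatically torsion-free and normal in all of $G$, so no analogue of Proposition~\ref{prop_Fit_to_K} is needed. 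The trade-off is that your argument presupposes linearity of $G$, and here your reference is off: Section~\ref{sec_minimax} only establishes linearity in the course of the \emph{converse} implication (inside the proof of Proposition~\ref{prop_corr_M_groups}), for the torsion-free subgroup $\bar{G}$ via \cite[Theorem~5.1.8]{robinson}. To avoid circularity you must cite independently that finitely generated residually finite virtually solvable minimax groups are linear in characteristic zero --- true, but not something Section~\ref{sec_minimax} provides.

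For the converse your structure matches the paper's Proposition~\ref{prop_corr_M_groups} closely. One simplification: your $G_0$ is already torsion-free (it is an extension of the torsion-free $K$ by $\Z^n$), so you can invoke \cite[Theorem~5.1.8]{robinson} directly, exactly as the paper does, rather than assembling a representation by hand. As written, your sketch of that assembly is a bit loose --- the map $G\to\Aut(U)$ coming from conjugation kills the center of $K$, so ``the resulting representation'' is not itself faithful on $K$; a genuine construction (\`a la Auslander--Mal'cev) requires one more step --- but since $G_0$ is torsion-free this detour is unnecessary.
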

	Throughout this article, we will call these groups $\mathcal{M}$-groups. This class includes the virtually polycyclic groups and also the Baumslag-Solitar groups $\BS(1,n)$. We prove the following bound:
	\begin{thmx} \label{thm_intro_upper}
		Let $G$ be a $\mathcal{M}$-group with corresponding short exact sequence as in Equation \eqref{eq_intro_seq_minimax}. If $m$ is the Pr\"ufer rank of $K$ and $n$ the maximal rank of a free abelian subgroup of $H$, then,
		$$\RF_G \preceq r^{m + 4n}.$$
	\end{thmx}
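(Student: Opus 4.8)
The plan is to split, for a nontrivial $g \in G$ with $\|g\|_G \le r$, according to whether $g \in K$, and in each case to produce a concrete finite quotient of $G$ separating $g$. If $g \notin K$, the image $\bar g \in H$ is nontrivial and $\|\bar g\|_H \le r$ (take the generators of $H$ to be the images of those of $G$, so the quotient map is $1$-Lipschitz); as $H$ is finitely generated virtually abelian, $\RF_H \preceq r^n$ by the known bound for virtually abelian groups (indeed polylogarithmic, cf.\ \cite{math_virt_ab}), so $g$ survives in a quotient of size $\preceq r^n \le r^{m+4n}$. Thus everything is in the case $g \in K \setminus \{e\}$, which I would handle by induction along a $G$-invariant central series $1 = K_0 \lhd K_1 \lhd \cdots \lhd K_c = K$ with torsion-free abelian factors whose ranks sum to $m$ (such a series exists since $K$ is torsion-free nilpotent of finite Pr\"ufer rank; e.g.\ the isolated lower central series).

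If $g \notin K_1$, its image is a nontrivial element of the $\mathcal{M}$-group $G/K_1$, whose nilpotent kernel has rank $m - \mathrm{rk}\,K_1 < m$, and the inductive hypothesis applies. So assume $g \in K_1 \setminus \{e\}$; then $A := K_1 \le Z(K)$ is a torsion-free abelian normal subgroup of $G$ on which $G$ acts through $H$, and it is finitely generated as a $\Z[H]$-module since $G$ and $G/A$ are finitely generated groups and $K$ acts trivially on $A$. Using a standard height estimate for the linear group $G$ — word length $r$ forces the coordinates of $g$ in a $\Z[1/N]$-basis of $A$ to have bit-size $\preceq r$ — together with the fact that $\Z[H]$ is a Jacobson ring, one picks by the prime number theorem a maximal ideal $\mathfrak m \lhd \Z[H]$ of residue degree $1$ and residue characteristic $p \preceq r \log r$ with $g \notin \mathfrak m A$. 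Then $W := A/\mathfrak m A$ is a finite $\F_p$-vector space of dimension $\le \mathrm{rk}\,A \le m$, on which $G$ acts through the scalar character $\psi : H \to (\Z[H]/\mathfrak m)^\times = \F_p^\times$; projecting onto one coordinate in which the image of $g$ is still nonzero, i.e.\ passing to $E := G/\tilde W'$ for a suitable $G$-normal $\tilde W' \supseteq \mathfrak m A$ with $A/\tilde W' \cong \F_{p,\psi}$, reduces us to separating this nontrivial element inside $E$, where $1 \to \F_{p,\psi} \to E \to G/A \to 1$ and $G/A$ acts via $\psi$, a cyclic group of order $< p$.

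Restricting to $E_0 := \ker\!\big(E \to G/A \xrightarrow{\psi} \F_p^\times\big)$, of index $< p$, makes $\F_{p,\psi}$ central. Now $E_0$ is again an $\mathcal{M}$-group — its residual finiteness is precisely Theorem~\ref{thm_intro_minimax} — with nilpotent kernel of rank $\le m-1$ and virtually abelian quotient of rank $\le n$, so one separates the central $\Z/p$ inside a finite quotient of $E_0$ by a shorter induction, controlling its size: either $\F_{p,\psi}$ survives in a cyclic quotient of the finitely generated abelian group $E_0^{\mathrm{ab}}$ of size $\preceq p^{\,m} \preceq r^{m}$, or it lies in $[E_0,E_0]$, where a quotient of $E_0$ through a bounded-rank piece of the virtually abelian part $G/A$ detects it at cost $\preceq p^{O(n)}$. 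Assembling the factor $|W| \preceq r^{m}$, the index $<p \preceq r$, and the re-separation cost, and carrying this through the outer induction over the $c$ layers while bookkeeping the finitely many bounded indices, the degree of the splitting field of the $H$-action (bounded in terms of $n$, since the semisimple part of the $G$-action on $K$ has at most $n$ independent eigen-directions whereas the nilpotence of $K$ contributes only a lower-order $p$-power), and the per-layer re-separation costs, one obtains a finite quotient of $G$ separating $g$ of size $\preceq r^{m+4n}$.

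The main obstacle is this last step: a priori the action of the "top" on a congruence piece of $K$ could force a quotient of size $p^{\Theta(m^2)}$ (exactly what yields the weak general linear bound $r^{m^2+1}$), and the central extensions that appear need not split, so one cannot simply retract onto $\F_{p,\psi}$. Pushing the exponent down to $m+4n$ — and getting the constant $4$ rather than merely something linear in $n$ — requires genuinely exploiting both the virtually abelian structure of $H$ of rank $n$ and the nilpotence of $K$, and carefully accounting for the cost of re-separating the finite congruence subgroups layer by layer so that the cumulative loss stays within the budget $4n$.
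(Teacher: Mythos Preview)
Your proposal is a sketch, not a proof, and you effectively concede this in your final paragraph: the ``re-separation'' of the central $\F_{p,\psi}$ inside $E_0$ and the global size accounting are exactly where the exponent $m+4n$ must come from, and you do not carry either out. The dichotomy ``either $\F_{p,\psi}$ survives in $E_0^{\mathrm{ab}}$ at cost $\preceq p^m$, or it lies in $[E_0,E_0]$ and is detected at cost $\preceq p^{O(n)}$'' is unjustified (why $O(n)$ and not $O(m)$?), and even granting it, the nested inductions you invoke would accumulate these costs multiplicatively without an argument that the total stays within $r^{m+4n}$. One claim is also false as stated: ``the semisimple part of the $G$-action on $K$ has at most $n$ independent eigen-directions''. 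Already for $K=\Z^m$ and $H=\Z$ acting by a matrix with $m$ distinct eigenvalues there are $m$ eigen-directions, not $n=1$; this is precisely why the exponent has an $m$-term, and it shows your heuristic for bounding the splitting-field cost by something depending only on $n$ cannot work as written. A further technical point: after quotienting by $\tilde W'$, the kernel of $E\to H$ has the torsion $\F_p$ sitting inside it, so $E$ (and $E_0$) is not an $\mathcal{M}$-group in the paper's sense, and invoking Theorem~\ref{thm_intro_minimax} for its residual finiteness is circular.

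The paper's proof takes a rather different, non-inductive route and avoids all of this. It passes once and for all to the Lie ring $L$ attached to $K$ via the Mal'cev correspondence, and proves a geometric estimate (Section~\ref{sec_geom}) showing that $g\in B_G(r)\cap K$ has coordinates in $L$ with numerators bounded by $C^r$. Chebotarev's density theorem then supplies a single prime $p\preceq r$ for which (i) $g$ survives in $L/pL$, and (ii) $L^{\Z_p}$ admits an $\mathcal{H}$-invariant ideal $J$ of codimension $\le m$ avoiding $g$, with all the conjugation maps $\xi_j$ having order dividing $(p-1)p$ on $L^{\Z_p}$. Pulling $J$ back gives a normal subgroup $N_1\lhd G$ inside $K$ of index $\le p^m$, and then one \emph{explicitly} sets
\[
N_2=\big\langle N_1,\; h_j^{(p-1)p^{\epsilon_1+\epsilon_2+\epsilon_3}}\ :\ 1\le j\le n\big\rangle,
\]
and checks by a direct commutator computation (Lemma~\ref{lem_comm_obs}) that $N_2$ is normal in $G$ with $N_2\cap K=N_1$. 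This yields $[G:N_2]\le p^{m+(1+\epsilon_1+\epsilon_2+\epsilon_3)n}\le p^{m+4n}\preceq r^{m+4n}$ in one stroke, with no induction and no re-separation step; the constant $4$ is simply $1+\epsilon_1+\epsilon_2+\epsilon_3$ in the worst case.
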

\noindent Recall that if $G \leq \GL(k, \F)$ is a finitely generated linear group over a field $\F$ of characteristic zero, then $\RF_G$ is bounded above by the polynomial $r^{k^2-1}$ by \cite{franz2017quantifying}. For several subclasses of $\mathcal{M}$-groups, the bound of Theorem \ref{thm_intro_upper} is the first that does not use the linearity of the groups, providing two different improvements over the classical bound. 
	
	Firstly, the bound $r^{k^2-1}$ grows quadratically in the dimension $k$, whereas the new bound grows linearly in $m$ and $n$. As there exist even finitely generated nilpotent groups where the minimal $k$ grows linearly in $m$, the new bound is sharper for many $\mathcal{M}$-groups, see remark \ref{ex:motivation}. Note that this bound also allows to give an upper bound for virtually polycyclic groups $G$ in terms of their Hirsch length $h(G)$, because $h(G) = m+n$ and thus $\RF_G \preceq r^{4h(G)}$.
	
	Secondly, if $G \leq G'$ is a finite extension of $G$, meaning that $[G':G] = l < \infty$, then $G'$ is also linear but typically only embeds in $\GL(kl, \F)$ and not in $\GL(k,\F)$.  This results in the considerably weaker upper bound $r^{k^2l^2-1}$ compared to the new bound where the values $m$ and $n$ do not change under taking finite extensions. 
	
	We also communicate sharper bounds for certain subclasses in Section \ref{sec_upper}, more specifically in Theorem \ref{thm_upper}. In particular, the sharper bound for virtually abelian groups agrees with the exact result obtained in \cite{math_virt_ab}. In fact, we conjecture that the sharper, polylogarithmic bound on virtually nilpotent groups is also exact. Since the upper bound for torsion-free nilpotent groups only depends on its complex Mal'cev completion, this would positively answer \cite[Question 3]{math_virt_ab}.
	
Constructing matching lower bounds for these groups is usually a lot harder. In \cite{Mark_strict_dist}, the author showed that a finitely generated residually finite solvable group $G$ containing a cyclic exponentially distorted subgroup in its Fitting subgroup satisfies $r \preceq \RF_G$. Such a group is never virtually nilpotent, but not all $\mathcal{M}$-groups that are not virtually nilpotent have a cyclic exponentially distorted subgroup, as demonstrated by \cite[Example 7.1]{MR1736519}. In this paper, we prove that the bound $r\preceq \RF_G$ holds for all $\mathcal{M}$-groups that are not virtually nilpotent. 
	\begin{thmx} \label{thm_intro_lower}
		Let $G$ be a $\mathcal{M}$-group. If $G$ is virtually nilpotent, then $\RF_G \preceq \log^k$ for some $k\in\N$, otherwise $r \preceq \RF_G$.
	\end{thmx}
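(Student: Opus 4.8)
The statement has two parts of quite different character, and I would treat them separately.

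The first part is immediate: a virtually nilpotent $\mathcal{M}$-group is in particular a finitely generated virtually nilpotent group, so the bound $\RF_{G}\preceq\log^{k}$ is precisely the polylogarithmic estimate for such groups recorded in Theorem \ref{thm_upper} (it may also be deduced from the classical polylogarithmic bound for finitely generated nilpotent groups together with the behaviour of $\RF$ under finite extensions). Thus this half requires no new work and I would simply invoke that result. For the second part I would begin as follows. By Theorem \ref{thm_intro_minimax} fix a short exact sequence $1\to K\to G\to H\to 1$ with $K$ torsion-free nilpotent of finite Pr\"ufer rank and $H$ virtually abelian. If every element of $G$ acted on the Mal'cev Lie algebra $K\otimes\Q$ with only roots of unity as eigenvalues, then — the image of $G$ in $\GL(K\otimes\Q)$ being virtually solvable and hence virtually triangularisable — some finite-index subgroup of $G$ would act unipotently on $K$ and would therefore be nilpotent, contradicting the hypothesis. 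So there is $t\in G$ whose conjugation action on $K$ has an eigenvalue $\lambda$ that is not a root of unity. Replacing $K$ by a suitable characteristic abelian subgroup (which stays normal in $G$ and still carries a non-root-of-unity eigenvalue of $t$) I may assume $K=A$ is abelian of finite Pr\"ufer rank. Since $G$ is minimax, $\lambda$ is an algebraic number whose numerators and denominators are supported on finitely many primes, and by Kronecker's theorem together with the product formula there is a place $v$ — archimedean or not — at which conjugation by $t$ genuinely expands, $|\lambda|_{v}>1$. The aim is to deduce from this that $t$ \emph{uniformly exponentially distorts} a finitely generated subgroup $A_{0}\cong\Z^{d}$ of $A\le\Fit(G)$, in the sense that $\norm{a}_{G}=O(\log(\norm{a}_{A_{0}}+2))$ for all $a\in A_{0}$, by writing elements of $A_{0}$ efficiently along the expanding direction (a base-$\lambda$ numeration of group elements).

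The result then follows from a generalisation of \cite{Mark_strict_dist} from a cyclic to a finitely generated abelian uniformly exponentially distorted subgroup of the Fitting subgroup; this generalisation is genuinely needed, since by \cite[Example 7.1]{MR1736519} the subgroup $A_{0}$ cannot always be taken cyclic. Concretely, for each $B\in\N$ I would exhibit an element $g_{B}\in A_{0}$ with $\norm{g_{B}}_{G}\le CB$ (the constant $C$ independent of $B$) that is killed by \emph{every} finite quotient of $G$ of order at most $B$; this gives $\RF_{G}(CB)>B$ for all $B$, whence $r\preceq\RF_{G}$. To build $g_{B}$, let $L_{B}\le A_{0}$ be the intersection of all finite-index $G$-invariant subgroups $L$ for which the finite quotient of $\langle A_{0},t\rangle$ obtained by extending $A_{0}/L$ by the finite cyclic group through which $t$ then acts has order at most $B$. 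Any finite quotient of $G$ of order $\le B$ restricts on $A_{0}$ to such a quotient, so an element $g_{B}\in L_{B}$ is automatically killed by all of them. Each admissible $L$ forces the multiplicative order of $\lambda$ in $A_{0}/L$ to be bounded in terms of $[A_{0}:L]$ and $B$, and a careful count of the admissible $L$ — in the same spirit as counting ideals of bounded norm, using that almost all primes make the order of $\lambda$ large — shows that the covolume of $L_{B}$, measured at the place $v$, is at most exponential in $B$. By Minkowski's theorem $L_{B}$ then contains a nonzero vector $g_{B}$ with $\norm{g_{B}}_{A_{0}}$ exponential in $B$, so that $\norm{g_{B}}_{G}=O(B)$ by the uniform exponential distortion, as required.

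I expect the crux to be establishing the uniform exponential distortion and coupling it with the covolume bound. Turning ``small at the place $v$'' into ``small group word length'' is an explicit numeration of group elements along the expanding direction, and it must be carried out uniformly over the finitely many places in the minimax spectrum and with adequate control of digit sizes (a Pisot-type property of $\lambda$ at an archimedean place); moreover this estimate must be fine enough, together with a correspondingly careful count of the admissible subgroups $L$, for the two contributions to combine to a genuinely \emph{linear} lower bound rather than something weaker such as $r/\log r$. A secondary and more routine difficulty is the opening reduction to an abelian normal subgroup of $G$ carrying a non-root-of-unity eigenvalue of $t$, extracted from the nilpotent group $K$ by means of its central series.
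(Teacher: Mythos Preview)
Your treatment of the virtually nilpotent case is exactly what the paper does: invoke the polylogarithmic upper bound of Theorem~\ref{thm_upper}.

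For the non-virtually-nilpotent case, however, your route and the paper's diverge completely. You propose to locate an element $t$ acting on $K$ with an eigenvalue that is not a root of unity, reduce to an abelian normal subgroup, find a place where this eigenvalue expands, manufacture a uniformly exponentially distorted copy of $\Z^d$ inside $\Fit(G)$, and then push through a genuine generalisation of the argument in \cite{Mark_strict_dist} from cyclic to finitely generated abelian distorted subgroups, with Minkowski's theorem and a subgroup-counting estimate at the end. You are right that such a generalisation is needed for this strategy (the paper itself flags \cite[Example~7.1]{MR1736519} as an obstruction to the cyclic version), and you correctly identify the two delicate points: turning ``small at the place $v$'' into short word length uniformly, and making the covolume count precise enough to land on a linear bound rather than $r/\log r$.

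The paper bypasses all of this. Its argument is strikingly short and uses none of the eigenvalue, valuation, or lattice-point machinery. The steps are: (i) since $\bar G$ is finitely generated solvable and not virtually nilpotent, Wolf's theorem gives exponential word growth $C_1^r\le |B_{\bar G}(r)|$; (ii) a pigeonhole over the projection $\bar G\to\Z^n$ (Lemma~\ref{lem_pigeon_Zn}) transfers this to $C_2^r\le |B_{\bar G}(r)\cap K|$; (iii) with $f(r)=\lcm(1,\ldots,r)^c$ and $c$ the nilpotency class of $K$, the finite Pr\"ufer rank $m$ gives $|K/K^{f(r)}|\le f(r)^m$, which is at most exponential in $r$ by the Prime Number Theorem; (iv) a second pigeonhole then produces a nontrivial element of $K^{f(r)}\cap B_{\bar G}(Cr)$; (v) by Segal's result \cite[Chapter~6, Proposition~2]{sega83-1}, every element of $K^{\lcm(1,\ldots,r)^c}$ is a single $\lcm(1,\ldots,r)$-th power $g^{\lcm(1,\ldots,r)}$; (vi) any finite quotient detecting this element must have order exceeding $r$. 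Hence $r\preceq\RF_{\bar G}\preceq\RF_G$.

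What each approach buys: the paper's argument is elementary, uniform, and needs nothing about the spectral structure of the $H$-action on $K$; it works directly from exponential growth plus the finite rank of $K$, and it immediately covers the examples that escape \cite{Mark_strict_dist}. Your approach, if carried through, would presumably give more: it isolates a concrete exponentially distorted abelian subgroup and might feed into sharper bounds of the type the paper later pursues in Lemma~\ref{lem_lower_extra_log}. But for the bare statement $r\preceq\RF_G$ it is substantial overkill, and the two points you yourself flag as the crux are exactly the places where a full proof along your lines would require real work that the paper's argument simply does not need.
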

	\noindent In Section \ref{sec_lower}, we also communicate some sharper lower bounds and related open questions.

	The outline of this article is as follows. Section \ref{sec_prelim} introduces some background material, including the residual finiteness growth, nilpotent groups and Chebotarev's density theorem. In Section \ref{sec_minimax}, we introduce the class of solvable minimax groups, towards the characterization in Theorem \ref{thm_intro_minimax}. Sections \ref{sec_geom} and \ref{sec_upper} contain the proof of Theorem \ref{thm_intro_upper} and its refinements as given in Theorem \ref{thm_upper}, based on the notations introduced in Sectionr \ref{sec_nota}. The proof splits in two parts: first Section \ref{sec_geom} focuses on understanding the word norm $\norm{g}$ in $G$, and secondly Section \ref{sec_upper} constructs homomorphisms to finite groups $G\to Q$. Finally, in Section \ref{sec_lower}, we prove Theorem \ref{thm_intro_lower} and its refinements.
	\section{Preliminaries} \label{sec_prelim}
	This section consists of three parts. In subsection \ref{sec_RF}, we will recall the definition of residual finiteness growth. In subsection \ref{sec_lie_cor}, we recall the correspondence between nilpotent Lie groups and Lie algebras. This correspondence will play a central role in the proof of Theorem \ref{thm_intro_upper}. Subsection \ref{sec_Chebotarev} gives a brief introduction to Chebotarev's density theorem. We will use this theorem only once in the paper, namely in Proposition \ref{prop_delta_Zp}.
	\subsection{Residual Finiteness Growth} \label{sec_RF}
	In this subsection, we introduce the residual finiteness growth for residually finite groups, as it was originally introduced in \cite{bou2010quantifying}. 
	In the remainder, $G$ will be a group with neutral element $e \in G$ and the natural numbers $\N$ are equal to $\{1,2, \ldots\}$.
	
	Recall the following notions:
	\begin{df}
		A group $G$ is called \textbf{residually finite} if for every non-trivial element $g\in G$, there exists a homomorphism $\varphi: G \to Q$ to a finite group $Q$ such that $\varphi(g) \neq e$. 
	\end{df}
	\begin{df}
		Let $G$ be a finitely generated group, with finite generating set $S$. The \textbf{word norm} on $G$ via $S$ is defined as 
		$$\norm{g}_{G,S} = \min\{k \mid g = s_1^{\pm 1}\ldots s_k^{\pm 1}, s_i \in S, k\in \N\cup\{0\}\}.$$
	\end{df}
	\begin{nota}
		The word metric ball centered around $e$ with radius $r\in \R^+$, denoted by $B_{G,S}(r)$ is equal to
		$$B_{G,S}(r) = \{g\in G\mid \norm{g}_{G,S} \leq r\} = \{s_1^{\pm 1}\ldots s_k^{\pm 1}\mid s_i \in S, k\leq r\}.$$
		If $S$ is clear from the context, we will write $\norm{g}_G$ and $B_G(r)$.
	\end{nota}
	
	The residual finiteness growth was originally defined as a way to quantify the property `residual finiteness' using the word norm. It is defined as follows:
	\begin{df}
		The \textbf{divisibility function} $D_G: G\setminus\{e\} \to \N$ is defined as
		$$D_G(g) = \min\{[G:N] \mid g\notin N, N \lhd G\}.$$
	\end{df}
	Note that $D_G(g)$ is indeed well-defined for every $g\in G\setminus\{e\}$ by the definition of residual finiteness. Equivalently, $D_G(g)$ can be defined as the smallest size of $Q$ such that there exists a morphism $\varphi: G \to Q$ with $\varphi(g) \neq e$.
	\begin{df}
		The \textbf{residual finiteness growth} of $G$ with respect to $S$ is given by
		$$\RF_{G,S}: \R_{\geq 1} \to \N: r \mapsto \max\{D_G(g) \mid e\neq g \in B_{G,S}(r)\}.$$
	\end{df}
	This function, which a priori depends on the choice of $S$, becomes a group invariant if we consider this function up to the equivalence relation defined below.
	\begin{df}
		Let $f,g: \mathbb{R}_{\geq 1} \to \mathbb{R}_{\geq 1}$ be non-decreasing functions. We write 
		\begin{align*}f &\preceq g \Leftrightarrow \exists C >0: \forall r \geq \max\{1,1/C\}: f(r) \leq Cg(Cr);\\
			f&\approx g \Leftrightarrow f\preceq g \text{ and } g \preceq f.\end{align*}
	\end{df}
	Indeed, if $T$ is another choice of generating set, then there exists some $C > 0$ such that $B_{G,S}(r) \subset B_{G,T}(Cr)$, and hence $\RF_{G,S}(r) \leq \RF_{G,T}(Cr)$. Exchanging the roles of $S$ and $T$ shows that $\RF_{G,S} \approx \RF_{G,T}$. The same flexibility also allows us to replace the word norm $\norm{g}_{G,S}$ by norms that are not necessarily induced by a finite generating set, e.g.~the Euclidean norm on $\Z^m$.
	
	\subsection{Nilpotent Groups and Lie Algebras} \label{sec_lie_cor}
	In this subsection, we introduce nilpotent groups and their corresponding Lie algebras.
	\begin{df}
		A group $G$ is called \textbf{nilpotent} if there exists a central series, i.e.~a sequence of normal subgroups $G_i$ of $G$ such that
		$$\{e\} = G_{c+1} \lhd G_{c} \lhd \dots \lhd G_{2} \lhd G_{1} = G$$
		and $G_{i}/G_{i+1} \leq Z(G/G_{i+1})$. The minimal $c$ for which such a series exists is called the \textbf{nilpotency class} of the group G.
	\end{df}
	\begin{df}
		Let $G$ be a group. The $l$'th group of the lower central series, $\gamma_l(G)$, of $G$ is defined via the relation $\gamma_1(G) = G$ and $\gamma_{i+1}(G) = [\gamma_i(G),G]$.
	\end{df}
	
	Let $G$ be a finitely generated torsion-free nilpotent group. 
	As outlined in \cite[Section 4.2.2]{theory_nilpotent}, such a group has a central series
	\begin{equation} \label{eq_Malcev_basis_chain}
	\{e\}= G_{m+1} \lhd G_{h(G)} \lhd \ldots \lhd G_1  = G
	\end{equation}
	with $G_{i}/G_{i+1}$ infinite cyclic. This allows us to take elements $g_i\in G_i$ such that $G_i = \langle g_i, G_{i+1}\rangle$. 
	\begin{df}
		The set $\{g_1, \ldots , g_{m}\}$ as defined above is called a \textbf{Mal'cev basis} of $G$. The number $m$ is called the \textbf{Hirsch length} of $G$ and written as $h(G)$.
	\end{df}
	This basis satisfies the following properties (see \cite[Section 4.2.2]{theory_nilpotent}):
	\begin{prop} \label{prop_malcev_poly}
		Let $G$ be a finitely generated torsion-free nilpotent group. 
		Let $\{g_1, \ldots , g_m\}$ be a Mal'cev basis of $G$ corresponding to a central series as in Equation \eqref{eq_Malcev_basis_chain}. Then,
		\begin{enumerate}[(i)]
			\item every element $g\in G$ can be uniquely written as $g = g_1^{k_1}\cdots g_m^{k_m}$ with $k_i \in \Z$;
			\item $[G_i,G_j] \leq G_{\max(i,j)+1}$;
			\item there exist polynomials $f_i \in \Q[x_1, \ldots, x_{2m}]$ for $i=1, \ldots, m$ such that
			$$\left(g_1^{k_1}\cdots g_m^{k_m}\right) \cdot \left(g_1^{k'_1}\cdots g_m^{k'_m}\right) = g_1^{f_1(k_1, \ldots , k_m, k'_1, \ldots , k'_m)}\cdots g_m^{f_m(k_1, \ldots , k_m, k'_1, \ldots , k'_m)};$$
			\item there exist polynomials $f^\prime_i \in \Q[x_1, \ldots, x_m,z]$ for $i=1, \ldots, m$ such that
			$$\left(g_1^{k_1}\cdots g_m^{k_m}\right)^z = g_1^{f'_1(k_1, \ldots , k_m, z)}\cdots g_m^{f'_m(k_1, \ldots , k_m, z)}.$$
		\end{enumerate}
	\end{prop}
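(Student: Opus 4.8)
The plan is as follows. Parts (i) and (ii) are elementary, so I would dispatch them first. For (i) I would prove by descending induction on $i$ the stronger claim that every $g\in G_i$ has a \emph{unique} expression $g=g_i^{k_i}g_{i+1}^{k_{i+1}}\cdots g_m^{k_m}$ with $k_j\in\Z$. The base case is $G_{m+1}=\{e\}$. For the step, since $G_i/G_{i+1}$ is infinite cyclic generated by the image of $g_i$, there is a unique $k_i\in\Z$ with $g_i^{-k_i}g\in G_{i+1}$; applying the induction hypothesis to $g_i^{-k_i}g$ gives existence, and projecting to $G_i/G_{i+1}$ and cancelling gives uniqueness. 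The case $i=1$ is (i). For (ii), the hypothesis that $\{G_i\}$ is a central series means $[G,G_l]\leq G_{l+1}$ for every $l$, hence $[G_i,G_j]\subseteq[G_i,G]\cap[G,G_j]\subseteq G_{i+1}\cap G_{j+1}=G_{\max(i,j)+1}$.

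For (iii), the technical heart, I would use the classical fact that a finitely generated torsion-free nilpotent group embeds into the group $U_n(\Z)$ of upper unitriangular integer matrices for some $n$; identify $G$ with its image and let $u_i\in U_n(\Z)$ correspond to $g_i$. Since $u_i-I$ is nilpotent, $u_i^{k}=\sum_{j\geq 0}\binom{k}{j}(u_i-I)^{j}$ is a finite sum whose entries are polynomials in $k$, so the map $\Phi\colon\Z^{m}\to U_n(\Z)$, $(k_1,\dots,k_m)\mapsto u_1^{k_1}\cdots u_m^{k_m}$, has matrix-entry functions that are polynomials in $k_1,\dots,k_m$ (matrix multiplication being bilinear), and by (i) it is a bijection onto $G$. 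Multiplication in $G$ in Mal'cev coordinates is the composite $\Phi^{-1}\circ\mu\circ(\Phi\times\Phi)$ with $\mu$ ordinary matrix multiplication, so it remains only to show the coordinate functions of $\Phi^{-1}$ are restrictions to $G$ of polynomials in the matrix entries. This I would establish by a recursion tied to the chain $\{G_i\}$: choosing the embedding compatibly with the filtration of $U_n$ by distance from the diagonal, the coordinate $k_1$ of $g\in G$ is a fixed polynomial in the entries of $g$ (read off modulo $G_2$), and inductively $k_i$ is a fixed polynomial in the entries of $gu_1^{-k_1}\cdots u_{i-1}^{-k_{i-1}}$, which are themselves polynomial in the entries of $g$ and in $k_1,\dots,k_{i-1}$. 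Composing everything yields (iii) with $f_i\in\Q[x_1,\dots,x_{2m}]$.

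Part (iv) comes out of the same picture: writing $N=\Phi(k)-I$ (entries polynomial in $k$), we have $\Phi(k)^{z}=\sum_{j\geq 0}\binom{z}{j}N^{j}$, whose entries are polynomial in $z$ and in $k$, so $\Phi^{-1}(\Phi(k)^{z})$ is polynomial in $(k,z)$. Alternatively one can deduce (iv) directly from (iii): the coordinates $c^{(z)}$ of $g^{z}$ satisfy $c^{(z+1)}=f(c^{(z)},k)$ and $c^{(0)}=0$, and using the structural identity $f_i(x,y)=x_i+y_i+p_i(x_1,\dots,x_{i-1},y_1,\dots,y_{i-1})$ — valid because $g_i^{x_i}$ commutes modulo $G_{i+1}$ with $g_1^{y_1}\cdots g_{i-1}^{y_{i-1}}$ by (ii) — an induction on $i$ shows $c^{(z+1)}_i-c^{(z)}_i$ is polynomial in $z$ and $k$, whence $c^{(z)}_i$ is polynomial in $z$ by Faulhaber's summation formula, extending from $z\geq 0$ to all of $\Z$ since a $\Z$-indexed sequence is determined by its value at $0$ and its step recursion.

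I expect the real obstacle to be the polynomiality of $\Phi^{-1}$, i.e.\ that Mal'cev coordinates depend polynomially on matrix entries; this is what forces one to pin down the embedding into $U_n(\Z)$ so that the chain $\{G_i\}$ is adapted to a matrix filtration. A self-contained alternative avoiding matrices is the Hall collection process — repeatedly rewriting a word via the relations $g_jg_i=g_ig_j\,r_{ij}$ with $r_{ij}\in G_{j+1}$ supplied by (ii) — but making that yield \emph{polynomial} exponents requires a delicate double induction (on word length and on the weight of collected subscripts) together with a Hall--Petrescu-type expansion of $[g_j^{a},g_i^{b}]$, which is where the genuine work sits.
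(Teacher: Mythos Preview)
The paper does not give a proof of this proposition at all: it is stated as a known fact with the parenthetical reference ``(see \cite[Section 4.2.2]{theory_nilpotent})'' and no argument is supplied. So there is nothing to compare your proposal against on the paper's side beyond a citation.

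Your sketch is correct and covers the standard routes. Parts (i) and (ii) are handled cleanly. For (iii)--(iv) you outline two classical arguments --- the unitriangular embedding and the Hall collection process --- and you correctly isolate the only nontrivial step, namely that the Mal'cev coordinates $k_i$ depend polynomially on the matrix entries (equivalently, that $\Phi^{-1}$ is polynomial). Your recursive recovery of the $k_i$ works once the embedding is chosen so that the chain $\{G_i\}$ is adapted to the diagonal filtration of $U_n$; one convenient way to guarantee this is to pass to the rational Mal'cev completion, use the Lie algebra $\mathfrak{g}$ with the basis coming from the $g_i$, and note that the structure constants are strictly upper triangular by (ii), so $\mathrm{ad}$ (suitably augmented to be faithful) lands in strictly upper triangular matrices with the desired compatibility. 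The alternative derivation of (iv) from (iii) via the structural identity $f_i(x,y)=x_i+y_i+p_i(x_{<i},y_{<i})$ and discrete integration is also standard and perfectly fine.

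In short: your proposal supplies a genuine proof where the paper only cites one; either of your two approaches would be acceptable, and you have identified the right pressure point.
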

	\noindent This result allows us to identify a finitely generated torsion-free nilpotent group $G$ with the set $\Z^m$, where multiplication and taking exponents (including inversion) are defined by rational polynomials. It is part of \cite[section 4.3]{theory_nilpotent} that the following definition is well-defined. 
	\begin{df}
		Let $G$ be a finitely generated torsion-free nilpotent group and $R$ a ring containing $\Z[1/M]$, where $M$ is a common denominator of the polynomials $f_i$ and $f'_i$ ($1\leq i \leq m$) from Proposition \ref{prop_malcev_poly}. Denote $G^R$ for the $R$-\textbf{completion} of $G$, i.e. the group with $R^m$ as a set and its operations defined via the polynomials $f_i$ and $f'_i$. The $\Q$-completion $G^\Q$ of $G$ is also called the \textbf{rational Mal'cev completion}. 
	\end{df}
	Extending the last point in Proposition \ref{prop_malcev_poly} from $z\in \Z$ to $z\in \Q$, we see that $G^\Q$ is radicable:
	\begin{df}
		We say a torsion-free nilpotent group is \textbf{radicable} if for every $g\in G$ and $k \in \N$, there exists a unique $h\in G$ such that $h^k=g$.
	\end{df}
	\begin{nota}
		Let $k\in \N$ and $g\in G$, then the unique element $h\in G$ such that $h^k = g$ will be denoted by $g^{1/k}$. Similarly, we can define $g^q$ for every $q\in \Q$.
	\end{nota}
	Even if $G$ is not finitely generated one can construct the rational Mal'cev completion:
	\begin{thm} \label{thm_Malcev}
		Let $G$ be a torsion-free nilpotent group. There exists a torsion-free radicable nilpotent group $G^\Q$, called the rational Mal'cev completion, such that
		\begin{enumerate}[(i)]
			\item $G$ is a subgroup of $G^\Q$,
			\item for every $g\in G$, there exists $k \in \N$ such that $g^k \in G^\Q$.
		\end{enumerate}
		Moreover, the group $G^\Q$ is unique up to isomorphism.
	\end{thm}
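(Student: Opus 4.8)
The plan is to bootstrap from the finitely generated case recalled above (Mal'cev bases and the polynomial multiplication of Proposition~\ref{prop_malcev_poly}) to an arbitrary torsion-free nilpotent $G$ by a direct limit. I would establish uniqueness first, since it is exactly what makes the gluing in the construction canonical, and because the uniqueness argument needs no finite generation hypothesis at all.

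\emph{Uniqueness.} Suppose $H_1$ and $H_2$ are torsion-free radicable nilpotent groups, each containing $G$ as a subgroup and such that every element of $H_i$ has a positive power lying in $G$. Then $H_1\times H_2$ is torsion-free, radicable and nilpotent, and contains the diagonal copy $D=\{(g,g)\mid g\in G\}\cong G$. Let $\bar D=\{x\in H_1\times H_2\mid x^n\in D\text{ for some }n\in\N\}$ be the isolator of $D$; by Mal'cev's classical theorem that the isolator of a subgroup of a nilpotent group is again a subgroup, $\bar D$ is a subgroup. I would then show that both coordinate projections $p_i\colon\bar D\to H_i$ are isomorphisms. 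Injectivity of $p_1$: if $(e,h)\in\bar D$ then $(e,h^n)=(e,h)^n\in D$ for some $n$, so $h^n=e$ and hence $h=e$ since $H_2$ is torsion-free. Surjectivity of $p_1$: given $h_1\in H_1$, pick $n$ with $h_1^n=g\in G$ and let $g'\in H_2$ be an $n$-th root of $g$; then $(h_1,g')^n=(g,g)\in D$, so $(h_1,g')\in\bar D$ and $p_1(h_1,g')=h_1$. The same applies to $p_2$, so $\Phi:=p_2\circ p_1^{-1}\colon H_1\to H_2$ is an isomorphism, and it fixes $G$ pointwise because $p_1^{-1}(g)=(g,g)\in D\subseteq\bar D$, whence $\Phi(g)=g$. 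In particular any two groups with the properties claimed for $G^\Q$ are isomorphic.

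\emph{Existence.} I would write $G$ as the directed union $G=\bigcup_\alpha G_\alpha$ of its finitely generated subgroups; each $G_\alpha$ is torsion-free nilpotent and so has a rational Mal'cev completion $G_\alpha^\Q$ by the construction already recalled. Whenever $G_\alpha\le G_\beta$, consider the isolator
$$I_{\alpha\beta}=\{x\in G_\beta^\Q\mid x^n\in G_\alpha\text{ for some }n\in\N\}\le G_\beta^\Q,$$
which is torsion-free nilpotent, is radicable (if $x^n\in G_\alpha$ then $(x^{1/k})^{nk}\in G_\alpha$, so the roots of $x$ computed in the radicable group $G_\beta^\Q$ all remain inside $I_{\alpha\beta}$), and every element of which has a positive power in $G_\alpha$. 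By the uniqueness statement above there is a unique isomorphism $G_\alpha^\Q\xrightarrow{\sim}I_{\alpha\beta}\le G_\beta^\Q$ fixing $G_\alpha$; this uniqueness makes the family $(G_\alpha^\Q)$ into a directed system, and I would set $G^\Q:=\varinjlim_\alpha G_\alpha^\Q$. One then checks that $G^\Q$ is torsion-free (a direct limit along injections of torsion-free groups), radicable (any element lies in some $G_\alpha^\Q$, where its unique roots are computed), nilpotent (indeed of class no larger than that of $G$), that $G=\varinjlim_\alpha G_\alpha$ embeds into $G^\Q$, and that every element of $G^\Q$ has a positive power in some $G_\alpha\subseteq G$; these verifications are routine once the directed system has been built.

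The only genuinely nontrivial input is Mal'cev's theorem on isolators, used both in the uniqueness proof and to glue the finitely generated completions. I expect the main bookkeeping obstacle to be verifying that the canonical embeddings $G_\alpha^\Q\hookrightarrow G_\beta^\Q$ are compatible along chains $G_\alpha\le G_\beta\le G_\gamma$, which again reduces to the uniqueness of the identity-on-$G_\alpha$ isomorphism onto the isolator of $G_\alpha$ in $G_\gamma^\Q$.
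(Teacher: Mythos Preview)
Your argument is correct and follows the standard route to this classical theorem. Note, however, that in the paper this theorem is stated as background without proof: it appears in Section~\ref{sec_lie_cor} as a known fact about torsion-free nilpotent groups, with the surrounding material referring the reader to sources such as \cite{theory_nilpotent} and \cite{Khukhro}. There is therefore no ``paper's own proof'' to compare against; you have supplied what the paper deliberately omits.

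A few remarks on your write-up. First, you have correctly read condition~(ii) as ``for every $g\in G^\Q$ there exists $k\in\N$ with $g^k\in G$''; the quantifier in the displayed statement is a typo, since as written (with $g\in G$) it is trivially implied by~(i). Second, for the directed-system construction you need not just existence but \emph{uniqueness} of the isomorphism $G_\alpha^\Q\xrightarrow{\sim}I_{\alpha\beta}$ fixing $G_\alpha$ pointwise; you invoke this at the end but do not isolate it in the uniqueness paragraph. It follows immediately from unique extraction of roots in a torsion-free radicable nilpotent group (which is part of the paper's Definition of radicable): if $\Phi_1,\Phi_2$ both fix $G_\alpha$ and $x^n\in G_\alpha$, then $\Phi_1(x)^n=\Phi_2(x)^n$ forces $\Phi_1(x)=\Phi_2(x)$. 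With that made explicit, the compatibility along $G_\alpha\le G_\beta\le G_\gamma$ is immediate, and the rest of your outline goes through without difficulty.
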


The following notion of rank works for groups that are not necessarily finitely generated.
	\begin{df}
		Let $G$ be a group, then its \textbf{Prüfer rank} $r(G)\in \N\cup\{\infty\}$ is defined as the least value such that every finitely generated subgroup of $G$ can be generated by at most $r(G)$ elements.
	\end{df}
	
	We have the following well-known relation:
	\begin{lemma}
		Let $G$ be a torsion-free nilpotent group, then $r(G) = r(G^\Q)$. If $r(G)<\infty$, then $G$ contains a finitely generated subgroup $H$ such that $H^\Q = G^\Q$. 
	\end{lemma}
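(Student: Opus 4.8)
The two assertions are intertwined, and the plan is to prove the second one first, by induction on nilpotency class, and then read off the first. For the first assertion the inequality $r(G)\le r(G^\Q)$ is immediate from $G\le G^\Q$. For the reverse one uses that for a \emph{finitely generated} torsion-free nilpotent group the Pr\"ufer rank equals the Hirsch length of its rational Mal'cev completion — this is classical: the bound $r(\cdot)\le h(\cdot)$ comes from inducting down the (isolated) upper central series, since a finitely generated subgroup meets each central factor in a free abelian group of the expected rank, and the reverse bound comes from exhibiting, for all large primes $p$, a finite-index subgroup whose abelianization tensored with $\F_p$ has dimension $h$ (for the integral Heisenberg group this is the index-$p^3$ subgroup generated by $p$-th powers, which needs three generators). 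Granting this together with the second assertion: if $r(G)=r<\infty$, choose a finitely generated $H\le G$ with $H^\Q=G^\Q$; then $r(G^\Q)=r(H^\Q)=r(H)\le r(G)$, while if $r(G)=\infty$ there is nothing left to prove.

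So the real work is the second assertion, which I would prove by induction on the nilpotency class $c$ of $G^\Q$ (equivalently of $G$). If $c=1$, then $G$ is torsion-free abelian of finite rank $r$, it embeds in its divisible hull $\Q^{r}=G^\Q$, and any $r$ elements of $G$ forming a $\Q$-basis generate a subgroup $H\cong\Z^{r}$ with $H^\Q=G^\Q$.

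For the inductive step, put $A=\gamma_c(G^\Q)$. By the Mal'cev--Lie correspondence $A$ is a central radicable torsion-free abelian subgroup, isomorphic to some $\Q^{h_c}$, and $G^\Q/A$ is again torsion-free radicable nilpotent, of class $c-1$. The image $\bar G$ of $G$ in $G^\Q/A$ is isomorphic to $G/(G\cap A)$, has Pr\"ufer rank at most $r(G)$, and every element of $G^\Q/A$ has a power in $\bar G$ (lift to $G^\Q$ and apply Theorem \ref{thm_Malcev}), so $(\bar G)^{\Q}=G^\Q/A$. By induction there is a finitely generated $\bar H_0\le\bar G$ with $(\bar H_0)^{\Q}=G^\Q/A$; lifting generators of $\bar H_0$ to $G$ gives a finitely generated $H_0\le G$ whose image in $G^\Q/A$ generates $\bar H_0$, whence $H_0^\Q$ surjects onto $G^\Q/A$, i.e.\ $H_0^\Q\cdot A=G^\Q$. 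Separately, because $c$ is the class of $G^\Q$, weight-$c$ commutators are multilinear modulo $\gamma_{c+1}(G^\Q)=\{e\}$, so writing $g_i=h_i^{1/k_i}$ with $h_i\in G$ one gets $[g_1,\dots,g_c]=[h_1,\dots,h_c]^{1/(k_1\cdots k_c)}\in A$ with $[h_1,\dots,h_c]\in\gamma_c(G)$; hence $\gamma_c(G)$ spans $A$ over $\Q$, and by the base case it contains a finitely generated $H_1$ with $H_1^\Q=A$. Then $H:=\langle H_0,H_1\rangle\le G$ is finitely generated and $H^\Q\supseteq H_0^\Q\cdot A=G^\Q$, so $H^\Q=G^\Q$, which closes the induction.

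The only step requiring genuine input, rather than bookkeeping with Mal'cev completions, is the finitely generated case of the rank equality — specifically the lower bound $r(H)\ge h$, for which one must manufacture a finitely generated subgroup needing $h$ generators. Everywhere else the main point to check carefully is that the subquotients of $G$ appearing in the induction really do have the corresponding subquotient of $G^\Q$ as rational Mal'cev completion, which is exactly where the defining property in Theorem \ref{thm_Malcev}, together with its uniqueness clause, is used.
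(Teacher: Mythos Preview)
Your argument is correct but takes a different route from the paper's. The paper's proof is essentially a citation: it observes that a torsion-free nilpotent group of finite Pr\"ufer rank is \emph{polyrational} in the sense of \cite[p.~92]{robinson} (a refinement of the upper central series has factors embedding in $\Q$), that $G^\Q$ has the corresponding series with each factor equal to $\Q$, and then invokes \cite[Theorem~5.2.7]{robinson} to conclude that both $r(G)$ and $r(G^\Q)$ equal the length of this series. The subgroup $H$ is then simply $\langle g_1,\dots,g_m\rangle$ with $g_i\in G_i\setminus G_{i+1}$ picked from the successive factors.

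Your approach trades this black box for a self-contained induction on the nilpotency class to build $H$, followed by a reduction of the rank equality to the finitely generated case. This is longer but more transparent, and the inductive construction of $H$ is a nice alternative to the polyrational-series picture. One point worth making explicit: in the chain $r(G^\Q)=r(H^\Q)=r(H)$, the middle equality needs both $r(H)=h(H)$ and $r(H^\Q)=h(H)$. Your argument for $r(\cdot)\le h(\cdot)$ via the isolated central series does apply to $H^\Q$ as well, and $r(H^\Q)\ge r(H)$ comes from the inclusion, so everything is there --- but spelling this out would make the logic cleaner. Also, your sketch of the lower bound $r(H)\ge h(H)$ via a finite-index subgroup with large abelianization is correct but nontrivial in general; the cleanest way to see it is to pass to a finite-index Lie-ring subgroup $L\le H$ (as in \cite{sega83-1}) and take $pL$ for large $p$, whose abelianization modulo $p$ is visibly $(\Z/p)^{h}$.
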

	\begin{proof}
	It is clear that if $r(G) = \infty$ then also $r(G^\Q) = \infty.$ So assume that $G$ is a torsion-free nilpotent group of finite Pr\"ufer rank. A refinement of the upper central series shows that $G$ is polyrational in the terminology of \cite[p.92]{robinson}. If $G$ has factors $G_i/G_{i+1} \leq \Q$ in its polyrational series, then $G^\Q$ has factors $G_i/G_{i+1}\otimes_\Z \Q = \Q$, so $r(G)$ and $r(G^\Q)$ are equal by \cite[Theorem 5.2.7]{robinson}. The finitely generated group $H$ is generated by elements $g_i \in G_i\setminus G_{i+1}$ for every $i$. 
	\end{proof}
	
	The Mal'cev correspondence in Theorem \ref{thm_Malcev_corr} below, see \cite[Theorem 10.11]{Khukhro}, gives a one-to-one correspondence between radicable nilpotent groups and nilpotent Lie $\Q$-algebras:
	\begin{df}
		Let $\mathfrak{g}$ be a \textbf{Lie algebra} with Lie bracket $[\cdot, \cdot]_L$ over a field $\F$. Define the lower central series $(\mathfrak{g}_i)_{i\in \N}$ of $\mathfrak{g}$ via $\mathfrak{g}_1 = \mathfrak{g}$ and $\mathfrak{g}_{i+1} = [\mathfrak{g}_i, \mathfrak{g}]_L$. A Lie algebra is \textbf{nilpotent} if there exists $c\in \N$ such that $\mathfrak{g}_{c+1} = 0$. The smallest such $c\in \N$ is called the \textbf{nilpotency class} of $\mathfrak{g}$.
	\end{df}

		Let $\mathfrak{g}$ be a nilpotent Lie algebra over a field $\F$. Define an operation on $\mathfrak{g}$ via the \textbf{Baker-Campbell-Hausdorff formula}:
		$$\ast : \mathfrak{g}\times\mathfrak{g} \to \mathfrak{g}: (v,w) \mapsto v\ast w := v+w + \frac{1}{2}[v,w]_L + \sum_{e=3}^\infty q_e(v,w), $$
		with $q_e(v,w)$ a specific rational linear combination of nested Lie bracket of length $e$, see for example \cite[Section 9.2]{Khukhro} for a more detailed description. Note that the Baker-Campbell-Hausdorff formula is defined as an infinite sum. However, since $\mathfrak{g}$ is nilpotent of, say, nilpotency class $c\in \N$, we know that $q_e(v,w) = 0$ for all $e>c$.

	\begin{thm}[\textbf{Mal'cev correspondence}] \label{thm_Malcev_corr}
		If $\mathfrak{g}$ is a nilpotent Lie algebra over $\Q$, then $(\mathfrak{g}, \ast)$ is a radicable nilpotent group. Furthermore, if $G$ is a radicable nilpotent group, then there exists a nilpotent Lie algebra $\mathfrak{g}$ over $\Q$ such that $G \cong (\mathfrak{g}, \ast)$ as groups.
	\end{thm}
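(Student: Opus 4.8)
The plan is to treat the two implications separately; the first is essentially formal, while the second carries the technical weight.

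\emph{From a Lie algebra to a group.} Given a nilpotent Lie $\Q$-algebra $\mathfrak g$ of class $c$, I would realize $(\mathfrak g,\ast)$ concretely inside the truncated universal enveloping algebra $A := U(\mathfrak g)/I^{c+1}$, where $I$ is the augmentation ideal. This is a finite-dimensional associative $\Q$-algebra with $\mathfrak g \subseteq I$, so $v^{c+1}=0$ for every $v\in\mathfrak g$, and $\exp(v)=\sum_{j=0}^{c}v^{j}/j!$ and $\log(1+x)=\sum_{j=1}^{c}(-1)^{j+1}x^{j}/j$ are mutually inverse polynomial bijections between $\mathfrak g$ and $1+I$. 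The Baker--Campbell--Hausdorff (Dynkin) identity $\exp(v)\exp(w)=\exp(v\ast w)$ holds in $A$, with $v\ast w$ landing back in $\mathfrak g$ because it is a Lie series. Hence $\exp(\mathfrak g)$ is a subgroup of $A^{\times}$ and $\exp\colon(\mathfrak g,\ast)\to\exp(\mathfrak g)$ is a bijective homomorphism, so associativity, the identity $0$, and inverses ($-v$, which one also checks directly since every nested bracket occurring in $v\ast(-v)$ vanishes) are inherited. Nilpotency follows by comparing lower central series: the group commutator of $v,w$ in $(\mathfrak g,\ast)$ equals $[v,w]_{L}$ modulo $\mathfrak g_{3}$, and inductively $\gamma_{i}((\mathfrak g,\ast))\subseteq\mathfrak g_{i}$, so $\gamma_{c+1}$ is trivial. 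Radicability is immediate: since $\langle v\rangle_{\Q}$ is an abelian subalgebra, $v^{\ast k}=\underbrace{v\ast\cdots\ast v}_{k}=kv$ in the ambient $\Q$-vector space, so $\tfrac1k v$ is the unique $k$-th root of $v$; the same formula shows $(\mathfrak g,\ast)$ is torsion-free.

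\emph{From a group to a Lie algebra.} Conversely, let $G$ be a radicable nilpotent group of class $c$; by definition scalar multiplication $\lambda\cdot v := v^{\lambda}$ for $\lambda\in\Q$ is well-defined on the underlying set of $G$. I would complete this to a Lie $\Q$-algebra structure via addition $v+w := \lim_{k\to\infty}(v^{1/k}w^{1/k})^{k}$ and bracket $[v,w]_{L} := \lim_{k\to\infty}[v^{1/k},w^{1/k}]^{k^{2}}$. These limits make sense because the commutator calculus in a nilpotent group expresses $(v^{1/k}w^{1/k})^{k}$ and $[v^{1/k},w^{1/k}]^{k^{2}}$ as values at $1/k$ of $G$-valued polynomials, and one reads off the constant term. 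Then I would verify the Lie algebra axioms — commutativity, associativity and $\Q$-bilinearity of $+$, antisymmetry and bilinearity of $[\cdot,\cdot]_{L}$, and the Jacobi identity — by induction on the nilpotency class, passing to $G/Z(G)$ (the center being a $\Q$-vector space on which the operations restrict to the ordinary ones) and using that every map in sight is polynomial. Finally I would check that the BCH product $\ast$ built from this $\mathfrak g$ recovers the multiplication of $G$; this is a formal identity, valid because the set-theoretic identity $G\to\mathfrak g$ (composed with the dictionary above) is an isomorphism $G\to(\mathfrak g,\ast)$ once it is known to be a homomorphism.

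\emph{Main obstacle.} The forward direction is routine once the enveloping-algebra picture is set up. The real work is the converse: showing the limit-defined $+$ and $[\cdot,\cdot]_{L}$ are well-defined independently of the chosen polynomial presentation and satisfy bilinearity and Jacobi. I expect the most robust route is not to manipulate the limit formulas directly but to build an honest logarithm $\log\colon G\to\widehat{\Q[G]}$ into the completion of the rational group algebra (or into $\mathrm{gr}\,G$ with a chosen splitting), identify its image as a nilpotent Lie subalgebra, and transport the linear structure back — after which every axiom becomes a formal consequence of the corresponding associative-algebra identity, mirroring the forward direction. Staying fully elementary via the induction on class is possible, but the delicate point there is exactly the lifting of the Lie structure from $G/Z(G)$ back to $G$.
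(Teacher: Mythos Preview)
The paper does not supply its own proof of this theorem; it is quoted with a reference to \cite[Theorem~10.11]{Khukhro}, so there is no in-house argument to compare against.

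Your forward direction is essentially correct, with one harmless slip: $A=U(\mathfrak g)/I^{c+1}$ need not be finite-dimensional, since the statement does not assume $\dim_{\Q}\mathfrak g<\infty$. You never actually use finiteness---only $\mathfrak g\subseteq I$, $I^{c+1}=0$ in $A$, and the PBW embedding $\mathfrak g\hookrightarrow A$, all of which hold regardless.

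In the converse the word ``limit'' is doing work it cannot do unaided: an abstract radicable nilpotent group carries no topology. What makes your formulas meaningful is a purely algebraic collection process. The Hall--Petresco identity $(xy)^{n}=x^{n}y^{n}\prod_{i\ge 2}\tau_{i}(x,y)^{\binom{n}{i}}$ with $\tau_{i}\in\gamma_{i}$, specialised to $x=v^{1/k}$, $y=w^{1/k}$, $n=k$ and then iterated inside the deeper commutator factors, terminates after finitely many steps (since $\gamma_{c+1}=1$) and produces a closed expression for $(v^{1/k}w^{1/k})^{k}$ as a product of fixed commutator words in $v,w$ raised to exponents that are genuine rational polynomials in $1/k$; your ``limit'' is then literally evaluation at $1/k=0$, an honest element of $G$. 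This is Lazard's inversion of the BCH formula, and it (or the $\log$-into-the-completed-group-algebra variant you mention, which is closer to how Khukhro proceeds) is exactly the substantive input your sketch defers. Your self-diagnosis of where the difficulty lies is accurate; what is missing is naming and invoking this collection identity rather than speaking of limits.
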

	In fact, the Pr\"ufer rank of a radicable nilpotent group and the dimension of its corresponding Lie algebra are the same. Furthermore, under the isomorphism $G \cong (\mathfrak{g}, \ast)$, one can switch between multiplicative notation in $G$ and linear notation in $\mathfrak{g}$:
	\begin{prop} \label{prop_coor}
		Let $G$ be a finitely generated torsion-free nilpotent group with Mal'cev basis $\{g_1, \ldots , g_m\}$. If $G^\Q \cong (\mathfrak{g}, \ast)$, then the Mal'cev basis corresponds to a vector space basis of $\mathfrak{g}$.
		
		Furthermore, under this identification
		\begin{enumerate}[(i)]
			\item there exist rational polynomials $f_i \in \Q[x_1,\ldots,x_{2m}]$ for $i \in \{1, \ldots, m\}$ such that
			$$g_1^{z_1}\ast \ldots \ast g_m^{z_m} := \prod_{i=1}^m g_i^{z_i} = \sum_{i=1}^m f_i(z_1, \ldots , z_m)g_i;$$
			\item there exist rational polynomials $f^\prime_i \in \Q[x_1,\ldots,x_m]$ for $i \in \{1, \ldots, m\}$ such that
			$$\sum_{i=1}^m z_ig_i = \prod_{i=1}^m g_i^{f'_i(z_1, \ldots , z_m)} = g_1^{f'_1(z_1, \ldots , z_m)} \ast \ldots \ast g_m^{f'_m(z_1, \ldots , z_m)},$$
		\end{enumerate}
		holds for all $z_i \in \Q$.
	\end{prop}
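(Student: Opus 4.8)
The plan rests on two elementary facts about the Mal'cev group $(\mathfrak g, \ast)$ of a nilpotent Lie $\Q$-algebra $\mathfrak g$. First, the power operation is scalar multiplication: $v^q = qv$ for $v \in \mathfrak g$ and $q \in \Q$, since $v \ast v = 2v$ (all brackets in $v$ alone vanish), hence $v^n = nv$ for $n \in \Z$, and radicability forces $v^{1/n} = \tfrac{1}{n}v$. Second, because $\mathfrak g$ is nilpotent of some class $c$, the Baker--Campbell--Hausdorff product $v \ast w$ is a \emph{finite} combination of $v$, $w$ and iterated brackets of length $\le c$ with rational coefficients, so after fixing a basis of $\mathfrak g$ the coordinates of $v\ast w$ are rational polynomials in those of $v$ and $w$ (the coefficients coming from BCH and from the rational structure constants). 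Assuming for the moment that the Mal'cev basis $\{g_1,\dots,g_m\}$, transported into $\mathfrak g$ through $G^\Q \cong (\mathfrak g, \ast)$, is a vector space basis, part (i) follows at once: $g_i^{z_i} = z_i g_i$, and iterating BCH expresses $(z_1 g_1)\ast\cdots\ast(z_m g_m)$ in the basis $\{g_i\}$ with coordinates $f_i(z_1,\dots,z_m)$ that are rational polynomials, valid for all $z_i\in\Q$ because $\ast$ and scalar multiplication are defined on all of $\mathfrak g$.

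To see that $\{g_1,\dots,g_m\}$ is a basis, I would transport the Mal'cev central series $\{e\}=G_{m+1}\lhd\cdots\lhd G_1 = G$ to the completion and then through the correspondence of Theorem \ref{thm_Malcev_corr}. Each $G_i^\Q$ is a normal radicable subgroup of $G^\Q$, hence corresponds to an ideal $\mathfrak h_i\subseteq\mathfrak g$, and since $G_i^\Q/G_{i+1}^\Q\cong\Q$ is radicable of Pr\"ufer rank $1$, the quotient algebra $\mathfrak h_i/\mathfrak h_{i+1}$ is one-dimensional; thus $\mathfrak g = \mathfrak h_1\supsetneq\mathfrak h_2\supsetneq\cdots\supsetneq\mathfrak h_{m+1}=0$ is a complete flag and $\dim\mathfrak g = m$. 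Because $G/G_{i+1}$ is torsion-free (an iterated extension of infinite cyclic groups), $G_{i+1}$ is isolated in $G$, so $g_i\notin G_{i+1}^\Q$ and hence $g_i\in\mathfrak h_i\setminus\mathfrak h_{i+1}$; a set containing one vector from each successive difference of a complete flag is automatically a basis. Moreover the relation $[G_1,G_i]\le G_{i+1}$ of Proposition \ref{prop_malcev_poly}(ii) says that the $G_i^\Q$, hence the $\mathfrak h_i$, form a \emph{central} series of $\mathfrak g$: $[\mathfrak g,\mathfrak h_i]\subseteq\mathfrak h_{i+1}$.

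For part (ii) I would first upgrade (i) to a \emph{triangular} form. Reducing $\prod_{k=1}^m g_k^{z_k}$ modulo $\mathfrak h_{i+1}$ kills every factor with $k>i$; writing the remaining product as $u\ast(z_ig_i)$ with $u = \prod_{k=1}^{i-1}g_k^{z_k}$, every BCH correction term beyond $u + z_ig_i$ is an iterated bracket in which the factor $z_ig_i\in\mathfrak h_i$ occurs, hence lies in $\mathfrak h_{i+1}$ by the central-series property. Therefore the $g_i$-coordinate of $\prod_k g_k^{z_k}$ has the shape $f_i(z_1,\dots,z_m) = z_i + R_i(z_1,\dots,z_{i-1})$ with $R_i$ a rational polynomial in the earlier variables only (and $R_1=0$). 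Such a triangular system inverts recursively: $f_1'(z)=z_1$ and $f_i'(z_1,\dots,z_m)=z_i - R_i\bigl(f_1'(z),\dots,f_{i-1}'(z)\bigr)$ are rational polynomials satisfying $\prod_i g_i^{f_i'(z)} = \sum_i z_i g_i$, which is (ii). The one genuinely delicate point, as opposed to routine bookkeeping with the finite BCH series, is the passage of the subgroup, normality and commutator data of the $G_i^\Q$ to the ideals $\mathfrak h_i$ via the Mal'cev correspondence; for this I would invoke the standard compatibility of Theorem \ref{thm_Malcev_corr} (see \cite[Chapter 10]{Khukhro}) with quotients, ideals and central series, which supplies the flag and the estimate $[\mathfrak g,\mathfrak h_i]\subseteq\mathfrak h_{i+1}$ and leaves nothing further to check.
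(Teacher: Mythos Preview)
Your argument is correct. The paper's own proof is essentially by citation: it refers to \cite[Theorem 6.7]{theory_nilpotent} for the basis claim and to the Baker--Campbell--Hausdorff formula via \cite[Lemma 4.4]{Assmann} for parts (i) and (ii), so your self-contained write-up is not so much a different route as a fleshing-out of the same underlying mechanism. The one place where you add something beyond what the paper sketches is the explicit triangular shape $f_i(z_1,\dots,z_m)=z_i+R_i(z_1,\dots,z_{i-1})$ and its recursive inversion to obtain the $f_i'$; this is a clean way to extract (ii) from (i) and is presumably the content of the Assmann reference. Your passage of the central series through the Mal'cev correspondence to get the flag $\mathfrak h_1\supsetneq\cdots\supsetneq\mathfrak h_{m+1}=0$ with $[\mathfrak g,\mathfrak h_i]\subseteq\mathfrak h_{i+1}$ is the standard argument and is exactly what the cited sources provide.
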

	\begin{proof}
		The first part is proven in for example \cite[Theorem 6.7]{theory_nilpotent}. The second part follows from the Baker-Campbell-Hausdorff formula, see for example \cite[Lemma 4.4]{Assmann}.
	\end{proof}

	The Mal'cev correspondence also gives a relation between automorphisms of radicable groups and of their corresponding Lie algebras, as stated below. We have formulated the result for nilpotent group $\tilde{G}$ such that $G\leq \tilde{G} \leq G^\Q$, where $G$ is a torsion-free, finitely generated nilpotent group and $G^\Q$ is its $\Q$-completion. Note that the group $\tilde{G}$ can lie strictly between $G$ and $G^\Q$ in the sense that it does not need to be finitely generated nor radicable. In fact, most nilpotent groups under consideration will be of this type.
	
	\begin{prop} \label{prop_KQ_maps}
		Let $G$ be a finitely generated torsion-free nilpotent group with Mal'cev basis $\{g_1, \ldots, g_m\}$. Let $G \leq \tilde{G} \leq G^\Q$. Then, an automorphism $\varphi:\tilde{G} \to\tilde{G}$ extends uniquely to an automorphism $\varphi^\Q: G^\Q \to G^\Q$. Furthermore,
		\begin{enumerate}[(i)]
			\item under the identification $G^\Q \cong (\mathfrak{g}, \ast)$, group automorphisms of $G^\Q$ are Lie algebra automorphisms of $\mathfrak{g}$ and vice versa,
			\item a group automorphism of $G^\Q$ is given by a polynomial map with respect to the coordinates yielded by the Mal'cev basis.  
		\end{enumerate}
	\end{prop}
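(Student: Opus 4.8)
The plan is to build everything on the Mal'cev correspondence (Theorem \ref{thm_Malcev_corr}) together with the polynomiality properties recorded in Propositions \ref{prop_malcev_poly} and \ref{prop_coor}. First I would establish the extension statement: given an automorphism $\varphi:\tilde G\to\tilde G$, I want a unique automorphism $\varphi^\Q:G^\Q\to G^\Q$ restricting to it. For existence, note that every element of $G^\Q$ has a power lying in $\tilde G$ (since $G\leq\tilde G$ and, by Theorem \ref{thm_Malcev}(ii), each $x\in G^\Q$ satisfies $x^k\in G\leq\tilde G$ for some $k\in\N$). So I define $\varphi^\Q(x)$ to be the unique $k$-th root of $\varphi(x^k)$ in the radicable group $G^\Q$; one checks this is independent of the chosen $k$ (pass to a common multiple) and is a homomorphism (using that roots are unique and that $\varphi$ already respects the operations on $\tilde G$, together with the fact that $\tilde G$ is Zariski-dense enough — more concretely, that the relevant identities hold for an infinite set of integer exponents and hence as polynomial identities). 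For uniqueness, any extension must send $x$ to the unique root of $\varphi(x^k)$, so it is forced. This is the step I expect to be the main obstacle: one has to be careful that $\tilde G$ need be neither finitely generated nor radicable, so the argument cannot rely on generators of $\tilde G$ directly, only on the embedding into $G^\Q$ and radicability of the target.

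For part (i), I would invoke the functoriality of the Mal'cev correspondence directly. Under $G^\Q\cong(\mathfrak g,\ast)$, a group automorphism of $(\mathfrak g,\ast)$ is in particular a bijection of the underlying set $\mathfrak g$ fixing $0$, and the Baker--Campbell--Hausdorff formula expresses the Lie bracket $[v,w]_L$ purely in terms of the group operation $\ast$ (for instance $[v,w]_L$ can be recovered from $\ast$-commutators and $\ast$-powers, as in \cite[Section 9--10]{Khukhro}), so any $\ast$-automorphism automatically preserves $[\cdot,\cdot]_L$ and is $\F$-linear; conversely a Lie algebra automorphism preserves every $q_e(v,w)$ and hence commutes with $\ast$. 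I would cite \cite[Theorem 10.11]{Khukhro} for this equivalence rather than reprove it. Then $\varphi^\Q$, being a group automorphism of $G^\Q$, is identified with a Lie algebra automorphism of $\mathfrak g$.

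For part (ii), I would combine (i) with Proposition \ref{prop_coor}. In the vector-space coordinates on $\mathfrak g$ coming from the Mal'cev basis $\{g_1,\dots,g_m\}$, a Lie algebra automorphism is linear, hence certainly polynomial, in those coordinates. To transfer this to the multiplicative (exponent) coordinates on $G^\Q$ given by the same Mal'cev basis, I compose with the coordinate-change maps of Proposition \ref{prop_coor}: the map sending exponent coordinates $(z_1,\dots,z_m)$ to linear coordinates is given by the rational polynomials $f_i$, and its inverse by the rational polynomials $f_i'$. Thus in exponent coordinates the automorphism $\varphi^\Q$ is the composite (polynomial) $\circ$ (linear) $\circ$ (polynomial), which is again a polynomial map with rational coefficients. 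This finishes the proof; no serious obstacle remains here beyond bookkeeping of which coordinate system is in play.
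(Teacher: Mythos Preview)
Your proposal is correct and follows essentially the same approach as the paper: the paper simply cites \cite[Theorem 9.20]{Khukhro} and \cite[Theorem 10.13(f)]{Khukhro} for the extension statement and part (i), while you spell out the standard root-extraction argument and the BCH reasoning behind the citations; for part (ii) your argument (polynomial change to linear coordinates, apply the linear Lie-algebra automorphism, polynomial change back via Proposition~\ref{prop_coor}) is exactly the paper's proof.
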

	\begin{proof}
		The first statement and (i) are given in \cite[Theorem 9.20]{Khukhro} and \cite[Theorem 10.13(f)]{Khukhro} respectively. We proceed to prove (ii). Take a general element $g = \prod_{i=1}^m g_i^{z_i}$ with $z_i \in \Q$ of $G^\Q$. By the first part of Proposition \ref{prop_coor}, this equals
		\begin{equation} \label{eq_g_as_sum}
			g = \sum_{i=1}^m f_i(z_1, \ldots , z_m)g_i.
		\end{equation}
		Now, applying the automorphism $\varphi$ to $g$ means applying a linear (and therefore also polynomial) map on this expression by statement (i). Now, use the second part of Proposition \ref{prop_coor} to rewrite the expression into a product form. Since the composition of polynomials is still a polynomial, we conclude that $\varphi(g) = \prod_{i=1}^m g_i^{z_i'}$, where every $z_i'$ is a rational polynomial in $\{z_1, \ldots , z_m\}$.
	\end{proof}
	
	In this paper, we will also work with a notion that is slightly weaker than being a Lie algebra: 
	\begin{df}
		Let $R$ be a (commutative) ring. We call $(L, [\cdot, \cdot]_L)$ a \textbf{Lie ring} if it is an algebra over the ring $R$ satisfying $[v,v]_L = 0$ for all $v\in L$ and satisfying the Jacobi identity.
	\end{df}
	In particular, we will work with a finitely generated torsion-free group $G$ such that the following holds: under the identification of $G^\Q \cong (\mathfrak{g}, \ast)$ the group $G$ is not only a subgroup of $G^\Q$ but also a Lie ring over $\Z$ inside $\mathfrak{g}$. In \cite{sega83-1}, they call such a group an LR-group (short for Lie ring group).

	\subsection{Chebotarev's density theorem} \label{sec_Chebotarev}
	In this section, we will briefly introduce the reader to Chebotarev's density theorem, which is a classical result from Number Theory. We will only use this result once in this paper: we will apply Proposition \ref{prop_apply_cheby} in the proof of Proposition \ref{prop_delta_Zp}. More details about the results in this subsection can be found in several standard works, e.g. \cite[Chapters 2-4]{marcus1977number}.
	
	\begin{nota}
		Let $\F$ be a \textbf{number field}, i.e. a finite field extension of $\Q$. We will suppose that $\F$ is Galois over $\Q$ with Galois group $\Gal(\F/\Q) = \{\sigma_1, \ldots , \sigma_n\}$.
	\end{nota}
		Recall that an \textbf{algebraic integer} is a zero of a univariate polynomial over $\Z$. The ring of algebraic integers in a number field $\F$ will be denoted by $\mathcal{O}_\F$. 

	\begin{ex}
		The algebraic integers in $\Q$ are precisely the integers: $\mathcal{O}_\Q = \Z$. 
	\end{ex}
	
	Chebotarev's density theorem treats the relationship between prime ideals in $\Z$, i.e. $p\Z$ for prime numbers $p$, and prime ideals in $\mathcal{O}_\F$. In general, if $x\in \mathcal{O}_\F$, then $x\mathcal{O}_\F$ is an ideal. Since $\mathcal{O}_\F$ is a Dedekind domain by \cite[Theorem 14]{marcus1977number}, there exists a decomposition in (not necessarily distinct) prime ideals of the form
	$$x\mathcal{O}_\F = \mathfrak{P}_1\mathfrak{P}_2\dots\mathfrak{P}_l$$
	for some $l\in \N$. This decomposition is unique up to ordering. Specifically for $p\mathcal{O}_\F$ with $p\in\N$ prime, we observe the following (see \cite[Chapter 2 \& 3]{marcus1977number}): 
	\begin{prop}
		Let $\F$ be a number field, Galois over $\Q$, with Galois group $G = \Gal(\F/\Q)$ and $p$ be a prime number. The following holds:
		\begin{enumerate}[(i)]
			\item There is a unique decomposition of the ideal $p\mathcal{O}_\F \subset \mathcal{O}_\F$ into prime ideals (up to ordering), i.e.
			$$p\mathcal{O}_\F = \mathfrak{P}_1^{e}\mathfrak{P}_2^{e}\dots\mathfrak{P}_l^{e},$$
			where $l, e\in\mathbb{N}$.
			\item All quotients $\F_{\mathfrak{P}_i} := \mathcal{O}_\F/\mathfrak{P}_i$ are isomorphic finite fields of characteristic $p$.
			\item The group $G$ induces a transitive action on $P = \{\mathfrak{P}_1, \dots \mathfrak{P}_l\}$ via $\sigma \cdot \mathfrak{P} = \sigma(\mathfrak{P})$.
		\end{enumerate}
	\end{prop}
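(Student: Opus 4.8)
The plan is to deduce all three assertions from the structure theory of Dedekind domains, the single genuinely number-theoretic input being the transitivity of the Galois action; the details are classical and can be found in \cite[Chapters 2--3]{marcus1977number}.

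First I would record the factorization: since $\mathcal{O}_\F$ is a Dedekind domain, the ideal $p\mathcal{O}_\F$ factors as $p\mathcal{O}_\F = \mathfrak{P}_1^{e_1}\cdots\mathfrak{P}_l^{e_l}$ into distinct prime ideals, uniquely up to ordering, which already gives the uniqueness statement in (i). Next I would check that the $\F_{\mathfrak{P}_i}$ are finite fields of characteristic $p$: each $\mathfrak{P}_i$ contains $p$, so $\mathfrak{P}_i\cap\Z = p\Z$ and $\mathcal{O}_\F/\mathfrak{P}_i$ is an $\F_p$-algebra; since $\mathcal{O}_\F$ is a free $\Z$-module of rank $n=[\F:\Q]$, this quotient is a finite-dimensional $\F_p$-vector space, and being an integral domain it is a finite field of characteristic $p$.

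The core of the proof is part (iii), and this is where I expect the only real difficulty. That the action is well defined is immediate, since for $\sigma\in G$ the ring automorphism $\sigma$ fixes $\Z$ and hence carries a prime over $p$ to a prime over $p$. For transitivity I would argue by contradiction: if $\mathfrak{P}$ and $\mathfrak{Q}$ both lie over $p$ and $\sigma(\mathfrak{P})\neq\mathfrak{Q}$ for all $\sigma\in G$, then prime avoidance applied to the finitely many distinct primes $\sigma(\mathfrak{P})$ produces an element $x\in\mathfrak{Q}$ with $x\notin\sigma(\mathfrak{P})$ for every $\sigma$; computing the norm $N_{\F/\Q}(x) = \prod_{\sigma\in G}\sigma(x)\in\Z$ (an algebraic integer fixed by $G$), which lies in $\mathfrak{Q}\cap\Z = p\Z\subseteq\mathfrak{P}$, forces some factor $\sigma(x)$ into the prime $\mathfrak{P}$, i.e. $x\in\sigma^{-1}(\mathfrak{P})$, a contradiction. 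This norm-and-prime-avoidance step is the crux of the whole statement; everything else is bookkeeping.

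Finally I would harvest the remaining claims from transitivity. Picking $\sigma$ with $\sigma(\mathfrak{P}_i)=\mathfrak{P}_j$ and applying it to the factorization of $p\mathcal{O}_\F$ (which $\sigma$ fixes because $p\in\Z$), uniqueness of the factorization shows $\sigma$ permutes the $\mathfrak{P}_k$ and preserves their exponents, so all $e_k$ are equal to a common value $e$, finishing (i); and the same $\sigma$ induces a ring isomorphism $\mathcal{O}_\F/\mathfrak{P}_i\cong\mathcal{O}_\F/\mathfrak{P}_j$, so all the residue fields $\F_{\mathfrak{P}_i}$ are isomorphic, finishing (ii).
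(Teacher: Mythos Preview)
Your argument is correct and follows the standard textbook route (prime avoidance plus the norm trick for transitivity, then reading off the equality of the $e_i$ and the isomorphism of residue fields from the action). Note, however, that the paper does not supply its own proof of this proposition: it is stated as background with a reference to \cite[Chapters~2--3]{marcus1977number}, and your write-up is precisely the argument one finds there.
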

	The stabilizer of $\mathfrak{P}_j \in \{\mathfrak{P}_1, \dots \mathfrak{P}_l\}$ with respect to the transitive action above is called the decomposition group of $\mathfrak{P}_j$ over $p\Z$:
	\begin{df}
		Let $p\mathcal{O}_\F = \mathfrak{P}_1^{e}\mathfrak{P}_2^{e}\dots\mathfrak{P}_l^{e}$. For $1\leq j \leq l$, define the \textbf{decomposition group} of $\mathfrak{P}_j$ over $p\Z$ by
		$$D(\mathfrak{P}_j\mid p\Z) = \{\sigma \in G\mid \sigma(\mathfrak{P}_j) = \mathfrak{P}_j\} \subset G.$$
	\end{df}
	Fix $\mathfrak{P}_j \in \{\mathfrak{P}_1, \dots \mathfrak{P}_l\}$, and let $f = [\F_{\mathfrak{P}_j}: \Z_p]$. Since $|G| = ref$ by \cite[Theorem 21]{marcus1977number} and $G$ acts transitively on $\{\mathfrak{P}_1, \dots \mathfrak{P}_l\}$, one sees that the stabilizer of $\mathfrak{P}_j$, which is $D(\mathfrak{P}_j\mid p\Z)$ by definition, has $ef$ elements. In the particular case that $e = 1$, we say that $p\Z$ is \textbf{unramified} in $\mathcal{O}_\F$. Then, we have
	$$|D(\mathfrak{P}_j\mid p\Z)| = f .$$
	Now, we will define the Frobenius and Artin symbol for the unramified primes $p\Z$.
	
	\begin{lemma}
		If $p\Z$ is unramified in $\mathcal{O}_\F$ with $p\mathcal{O}_\F = \mathfrak{P}_1\mathfrak{P}_2\dots\mathfrak{P}_l$, then there is an isomorphism
		$$\Psi_\mathfrak{P} : D(\mathfrak{P}\mid p\Z) \to \Gal(\mathbb{F}_\mathfrak{P}/\Z_p)$$
		for every $\mathfrak{P} \in \{\mathfrak{P}_1, \dots \mathfrak{P}_l\}$.
	\end{lemma}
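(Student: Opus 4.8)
The plan is to realize $\Psi_\mathfrak{P}$ concretely as the ``reduction modulo $\mathfrak{P}$'' map and then to prove it is bijective, with surjectivity being the only delicate point. First I would note that every $\sigma \in D(\mathfrak{P}\mid p\Z)$ restricts to a ring automorphism of $\mathcal{O}_\F$ that maps $\mathfrak{P}$ onto itself, hence descends to a field automorphism $\bar\sigma$ of $\F_\mathfrak{P} = \mathcal{O}_\F/\mathfrak{P}$. Since $\sigma$ fixes $\Z$ pointwise, $\bar\sigma$ fixes the image of $\Z$ in $\F_\mathfrak{P}$, which is exactly the prime field $\Z_p$; thus $\bar\sigma \in \Gal(\F_\mathfrak{P}/\Z_p)$, and one checks immediately that $\Psi_\mathfrak{P}(\sigma) := \bar\sigma$ defines a group homomorphism $D(\mathfrak{P}\mid p\Z) \to \Gal(\F_\mathfrak{P}/\Z_p)$.

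For surjectivity I would use that $\F_\mathfrak{P}/\Z_p$ is a finite extension of finite fields, hence simple and cyclic with Galois group generated by the Frobenius $x \mapsto x^p$; so it suffices to hit the Frobenius. Pick a generator $\bar\alpha$ of $\F_\mathfrak{P}$ over $\Z_p$; if $f := [\F_\mathfrak{P}:\Z_p] = 1$ there is nothing to prove, so assume $f>1$ and $\bar\alpha \neq 0$. Using the Chinese Remainder Theorem in the Dedekind domain $\mathcal{O}_\F$, lift $\bar\alpha$ to $\alpha \in \mathcal{O}_\F$ with $\alpha \equiv \bar\alpha \pmod{\mathfrak{P}}$ and $\alpha \equiv 0 \pmod{\mathfrak{P}'}$ for every prime $\mathfrak{P}' \mid p$ with $\mathfrak{P}' \neq \mathfrak{P}$. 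Form $g(X) = \prod_{\sigma \in G}\bigl(X - \sigma(\alpha)\bigr)$; its coefficients are $G$-invariant algebraic integers, hence lie in $\mathcal{O}_\Q = \Z$, so $g \in \Z[X]$. Reducing modulo $\mathfrak{P}$ gives $\bar g(X) = \prod_{\sigma \in G}\bigl(X - \overline{\sigma(\alpha)}\bigr)$ in $\F_\mathfrak{P}[X]$. Since $\bar\alpha^p$ is a root of $\bar g$ (because $\bar g(\bar\alpha) = 0$ and $\bar g$ has coefficients in $\Z_p$, so applying Frobenius shows $\bar g(\bar\alpha^p)=0$), there is $\sigma \in G$ with $\overline{\sigma(\alpha)} = \bar\alpha^p \neq 0$. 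This forces $\sigma \in D(\mathfrak{P}\mid p\Z)$: otherwise $\sigma^{-1}(\mathfrak{P})$ would be a prime above $p$ distinct from $\mathfrak{P}$, whence $\alpha \equiv 0 \pmod{\sigma^{-1}(\mathfrak{P})}$, i.e.\ $\overline{\sigma(\alpha)} = 0$, a contradiction. For this $\sigma$ we have $\Psi_\mathfrak{P}(\sigma)(\bar\alpha) = \bar\alpha^p$, and since $\bar\alpha$ generates $\F_\mathfrak{P}$, the automorphism $\Psi_\mathfrak{P}(\sigma)$ is precisely the Frobenius; hence $\Psi_\mathfrak{P}$ is onto.

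To finish, I would invoke that $|D(\mathfrak{P}\mid p\Z)| = ef$, as established in the discussion preceding the statement from \cite[Theorem 21]{marcus1977number} together with transitivity of the $G$-action, which equals $f$ since $p\Z$ is unramified, i.e.\ $e = 1$. As $|\Gal(\F_\mathfrak{P}/\Z_p)| = f$ as well, the surjective homomorphism $\Psi_\mathfrak{P}$ between finite groups of equal cardinality is an isomorphism. (Equivalently, the kernel of $\Psi_\mathfrak{P}$ is the inertia group, of order $e = 1$.)

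The main obstacle is the surjectivity step: choosing the lift $\alpha$ via the Chinese Remainder Theorem so that all conjugates $\sigma(\alpha)$ with $\sigma \notin D(\mathfrak{P}\mid p\Z)$ vanish modulo $\mathfrak{P}$, verifying that $g \in \Z[X]$, and separately dispatching the degenerate case $f = 1$. The well-definedness of the reduction map and the concluding cardinality count are routine.
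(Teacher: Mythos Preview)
Your proof is correct and follows the same route as the paper, which defines $\Psi_\mathfrak{P}(\sigma)=\bar\sigma$ and then simply cites \cite[p.~71]{marcus1977number} for the isomorphism claim. Your proposal unfolds that citation in full (the CRT lift of a residue-field generator, the characteristic-polynomial trick to hit the Frobenius, and the concluding cardinality count $|D(\mathfrak{P}\mid p\Z)|=ef=f=|\Gal(\F_\mathfrak{P}/\Z_p)|$), so there is no substantive difference in approach.
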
 
	\begin{proof}
		Take any $\sigma \in D(\mathfrak{P}\mid p\Z)$. Since $\sigma(\mathfrak{P}) = \mathfrak{P}$, the isomorphism $\sigma: \mathcal{O}_\F \to \mathcal{O}_\F$ induces an isomorphism $\bar{\sigma}: \mathbb{F}_\mathfrak{P} \to \mathbb{F}_\mathfrak{P}$. The map $\Psi_\mathfrak{P}$ is then defined as $\Psi_\mathfrak{P}(\sigma) = \bar{\sigma}$, with the remainder of the lemma given in \cite[p. 71]{marcus1977number}. 
	\end{proof}
	\begin{df}
		The \textbf{Frobenius symbol} $\left[\dfrac{\mathbb{F}/\mathbb{Q}}{\mathfrak{P}}\right]$ is the group element of $D(\mathfrak{P}\mid p\Z)$ given by $\Psi_\mathfrak{P}^{-1}(\tilde{\sigma})$, where $\tilde{\sigma}:\mathbb{F}_\mathfrak{P}\to \mathbb{F}_\mathfrak{P}: x \mapsto x^{p^f}$ is the Frobenius automorphism (which is a generator of $\Gal(\mathbb{F}_\mathfrak{P}/\Z_p)$). The \textbf{Artin symbol} $\left(\dfrac{\mathbb{F}/\mathbb{Q}}{p\Z}\right)$ is the conjugacy class of $\left[\dfrac{\mathbb{F}/\mathbb{Q}}{p\Z}\right]$ in $G$.
	\end{df}
	\begin{rem}
		Note that the Frobenius symbol hence represents a generator of $D(\mathfrak{P}\mid p\Z)$ and the Artin symbol its conjugacy class. The Artin symbol is independent of the choice of $\mathfrak{P} \in \{\mathfrak{P}_1, \dots \mathfrak{P}_l\}$, as
		$$\left[\dfrac{\mathbb{F}/\mathbb{Q}}{\sigma(\mathfrak{P})}\right] = \sigma\left[\dfrac{\mathbb{F}/\mathbb{Q}}{\mathfrak{P}}\right]\sigma^{-1}.$$
	\end{rem}
	
	We are now ready to state the theorem. (Our formulation is a weakened version of the one given in \cite[Theorem 3.2]{Serre}.)
	\begin{thm}[\textbf{Chebotarev's density theorem}]
		Using the notation introduced in this section, let $\mathcal{C}$ denote a conjugacy class in $G$, and let $\pi_\mathcal{C}(r)$ denote the number of prime numbers $p\leq r$ such that
		$$p\Z \text{ is unramified in } \mathcal{O}_\mathbb{F}\text{ and } \left(\dfrac{\mathbb{F}/\mathbb{Q}}{p\Z}\right) = \mathcal{C}.$$
		Then, $\pi_\mathcal{C}(r) \asymp r/\log(r)$, i.e. there exist constants $C_1,C_2 > 0$ such that for all $r$ sufficiently large
		$$C_1r/\log(r) \leq \pi_\mathcal{C}(r) \leq C_2r/\log(r).$$
	\end{thm}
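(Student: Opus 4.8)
The statement in question is \textbf{Chebotarev's density theorem}, which is a classical result of analytic number theory; we only sketch the standard route and refer to \cite[Theorem 3.2]{Serre} and \cite[Chapters 2--4]{marcus1977number} for complete details. First observe that the upper bound is essentially free: since $\pi_{\mathcal{C}}(r) \leq \pi(r)$, the number of all primes up to $r$, the estimate $\pi_{\mathcal{C}}(r) \leq C_2 r/\log(r)$ follows from the prime number theorem $\pi(r) \sim r/\log(r)$. Thus the content of the theorem lies in the lower bound, and for this it suffices to prove that the set of primes with Artin symbol $\mathcal{C}$ has positive Dirichlet density (from which the natural-density version, and hence the stated two-sided bound, follows with some additional care); in fact this density equals $|\mathcal{C}|/|G|$.

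The plan is first to reduce the general (possibly non-abelian) case to an abelian one. Fix a representative $\sigma \in \mathcal{C}$, let $H = \langle \sigma \rangle$ be the cyclic subgroup it generates, and let $M = \F^{H}$ be the corresponding fixed field, so that $\F/M$ is a cyclic, hence abelian, extension. The key combinatorial observation is that for a prime $p$ unramified in $\mathcal{O}_{\F}$, the condition $\left(\frac{\F/\Q}{p\Z}\right) = \mathcal{C}$ is equivalent to the existence of a prime ideal $\mathfrak{q}$ of $\mathcal{O}_M$ lying over $p$ with residue degree one over $\Z_p$ and with Frobenius symbol in $\Gal(\F/M)$ equal to $\sigma$. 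Summing over such $\mathfrak{q}$, together with the standard count of how the primes of $M$ sit over those of $\Q$, converts a density statement for good primes $\mathfrak{q}$ of $M$ into the desired density $|\mathcal{C}|/|G|$ for good primes $p$ of $\Q$; here one uses that the primes of $M$ of residue degree at least two over $\Q$ are $O(\sqrt{r})$ in number and so contribute nothing to the density.

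It then remains to establish Chebotarev's theorem for the abelian extension $\F/M$, namely that the degree-one primes $\mathfrak{q}$ of $M$ with a prescribed Frobenius in $\Gal(\F/M)$ have density $1/[\F:M]$. By Artin reciprocity from class field theory, the Frobenius condition on $\mathfrak{q}$ is a congruence condition modulo a suitable modulus of $M$, i.e.\ $\mathfrak{q}$ must lie in a fixed coset of a ray class group of $M$. The density of primes in a fixed coset is computed exactly as in Dirichlet's theorem on primes in arithmetic progressions: apply orthogonality of the characters $\chi$ of the ray class group to the logarithmic derivatives of the Hecke $L$-functions $L(s,\chi)$, using that $\zeta_M(s) = L(s,\chi_0)$ has a simple pole at $s=1$ while $L(s,\chi)$ is holomorphic and nonzero at $s=1$ for every nontrivial $\chi$, and conclude by a Tauberian argument.

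The main obstacle is precisely the analytic nonvanishing $L(1,\chi) \neq 0$ for nontrivial characters $\chi$: this is already the delicate point in Dirichlet's theorem, and in the number-field setting it rests on Hecke's analytic continuation and functional equation for $L(s,\chi)$, or equivalently on the factorization $\zeta_{\F}(s) = \prod_{\chi} L(s,\chi)$ together with the nonvanishing and finiteness of $\zeta_{\F}(s)$ near $s=1$ off its pole. The remaining work --- the bookkeeping with the finitely many ramified primes, the multiplicities in the reduction step, and the passage from Dirichlet density to natural density --- is routine but requires care.
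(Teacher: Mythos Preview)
The paper does not prove this theorem at all: it is stated as a classical result with the parenthetical remark that the formulation is a weakened version of \cite[Theorem 3.2]{Serre}, and no proof or sketch is given. Your proposal therefore goes beyond what the paper does, supplying the standard reduction-to-abelian-plus-$L$-function outline; this sketch is correct as far as it goes and is consistent with the references the paper itself cites, so there is nothing to compare on the level of arguments.
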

	We will be interested in the case where $\mathcal{C} = \{\Id\}$. This gives the following formulation:
	\begin{prop} \label{prop_apply_cheby}
		Using the notation introduced in this section, let $\pi_{\mathcal{O}_\F \to \Z_p}(r)$ denote the number of prime numbers $p\leq r$ such that there exists a homomorphism $\rho: \mathcal{O}_\F \to \Z_p$. Then, $\pi_{\mathcal{O}_\F \to \Z_p}(r) \asymp r/\log(r)$, i.e. there exist constants $C_1,C_2 > 0$ such that for all $r$ sufficiently large
		$$C_1r/\log(r) \leq \pi_{\mathcal{O}_\F \to \Z_p}(r) \leq C_2r/\log(r).$$
	\end{prop}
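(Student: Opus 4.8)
The plan is to translate the existence of a ring homomorphism $\mathcal{O}_\F \to \Z_p$ into a statement about the splitting behaviour of $p$ in $\mathcal{O}_\F$, and then invoke Chebotarev's density theorem with the conjugacy class $\mathcal{C} = \{\Id\}$. First I would observe that any (unital) ring homomorphism $\rho : \mathcal{O}_\F \to \Z_p$ is automatically surjective, since its image is a subring of the field $\Z_p$ containing $1$, hence all of $\Z_p$. Therefore $\mathfrak{P} := \ker\rho$ is a maximal ideal with $\mathcal{O}_\F/\mathfrak{P} \cong \Z_p$. Because $p\cdot 1 \in \ker\rho$, the prime $\mathfrak{P}$ occurs in the factorisation of $p\mathcal{O}_\F$, and the isomorphism $\mathcal{O}_\F/\mathfrak{P} \cong \Z_p$ says precisely that its residue degree $f = [\mathbb{F}_\mathfrak{P} : \Z_p]$ equals $1$. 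Conversely, whenever $\mathfrak{P}$ is a prime of $\mathcal{O}_\F$ above $p$ of residue degree $1$, the quotient map $\mathcal{O}_\F \to \mathcal{O}_\F/\mathfrak{P} \cong \Z_p$ is such a homomorphism. Thus a homomorphism $\mathcal{O}_\F \to \Z_p$ exists if and only if some prime of $\mathcal{O}_\F$ above $p$ has residue degree $1$; since $\F/\Q$ is Galois, all primes above $p$ share the same residue degree $f$, so this is equivalent to $f = 1$.

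Next I would restrict attention to the primes $p$ that are unramified in $\mathcal{O}_\F$; by standard ramification theory only finitely many primes are excluded, which will not affect the claimed asymptotics. For an unramified $p$, the decomposition group $D(\mathfrak{P}\mid p\Z)$ is isomorphic to $\Gal(\mathbb{F}_\mathfrak{P}/\Z_p)$ via the map $\Psi_\mathfrak{P}$, and this group is cyclic of order $f$ generated by the Frobenius automorphism. Hence $f = 1$ if and only if the Frobenius symbol is trivial, i.e.\ the Artin symbol $\left(\dfrac{\F/\Q}{p\Z}\right)$ equals the conjugacy class $\{\Id\}$.

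Combining the two steps, the set of primes $p \le r$ admitting a homomorphism $\mathcal{O}_\F \to \Z_p$ coincides, up to the finitely many ramified primes, with the set of primes $p \le r$ that are unramified in $\mathcal{O}_\F$ and have Artin symbol $\{\Id\}$, which is exactly the set counted by $\pi_{\mathcal{C}}(r)$ for $\mathcal{C} = \{\Id\}$. Therefore $\pi_{\mathcal{O}_\F \to \Z_p}(r) = \pi_{\{\Id\}}(r) + O(1)$, and the conclusion $\pi_{\mathcal{O}_\F \to \Z_p}(r) \asymp r/\log(r)$ follows directly from Chebotarev's density theorem. There is no genuine obstacle here beyond setting up this dictionary correctly; the only points requiring a little care are the bookkeeping of the finitely many ramified primes and the reduction, valid because $\F/\Q$ is Galois, from ``some prime above $p$ has residue degree $1$'' to ``the residue degree above $p$ is $1$''.
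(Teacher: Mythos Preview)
Your proposal is correct and follows the same approach as the paper: apply Chebotarev's density theorem with the conjugacy class $\mathcal{C}=\{\Id\}$ and translate the condition on the Artin symbol into the existence of a prime above $p$ with residue degree $1$, hence a quotient map $\mathcal{O}_\F \to \Z_p$. Your write-up is in fact more careful than the paper's, which only spells out the implication from trivial Artin symbol to the existence of $\rho$ (the lower bound) and leaves the upper bound implicit via the prime number theorem; you additionally establish the converse and account for the finitely many ramified primes.
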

	\begin{proof}
		Apply Chebotarev's density theorem to the conjugacy class $\mathcal{C} = \{\Id\}$, or thus with $D(\mathfrak{P}\mid p\Z)$ trivial. In particular, $1 = |D(\mathfrak{P}\mid p\Z)| = f$, since $p\Z$ is unramified in $\mathcal{O}_\F$, so with $\F_\mathfrak{P} = \Z_p$. Hence we obtain a homomorphism $\rho: \mathcal{O}_\F \to \F_\mathfrak{P} = \Z_p$. 
	\end{proof}
	It is this result that we will apply in Proposition \ref{prop_delta_Zp}. Note that this result remains valid if we exclude a finite amount of primes $p$ from the statement. In particular, given $x\in \mathcal{O}_\F$, we may suppose that $\rho(x) \neq 0$. Indeed, $x\mathcal{O}_\F$ decomposes as a product of finitely many prime ideals in $\mathcal{O}_\F$,
	$$x\mathcal{O}_\F = \mathfrak{P}_1\mathfrak{P}_2\dots\mathfrak{P}_l$$
	for some $l\in \N$. Now, $\mathfrak{P}_i\cap \Z = p_i\Z$ for some prime $p_i$. Excluding the primes $\{p_i\mid 1\leq i \leq l\}$ from the proposition above then guarantees that $\rho(x) \neq 0$.

	\section{Minimax Groups} \label{sec_minimax}
	In this section, we will prove Theorem \ref{thm_intro_minimax} (see Theorem \ref{thm_charac_minimax} and Proposition \ref{prop_corr_M_groups}), which provides a characterization of finitely generated residually finite virtually solvable minimax groups via a short exact sequence of the form
	$$1\to K \to G \to H \to 1,$$
	with $K$ nilpotent and $H$ virtually abelian. Apart from this characterization of $\mathcal{M}$-groups as we define them, no other results or definitions of this section will be used in the rest of the article. 

	\begin{df}
		We say a solvable group $G$ is \textbf{minimax} if there exists a series $1= G_0 \lhd G_1 \lhd \ldots \lhd G_n = G$, such that each factor $G_{i+1}/G_i$ satisfies max or min.
	\end{df}
	Recall that a group satisfies max if every increasing series of subgroups stabilizes after a finite number of steps, and analogously for min for every decreasing series of subgroups.
	\begin{df} \label{df_Mgroup}
		We say a finitely generated group $G$ is an \textbf{$\mathcal{M}$-group} if there exists a short exact sequence of the form
		$$1 \to K \to G \to H \to 1,$$
		where $K$ is a torsion-free nilpotent group of finite Pr\"ufer rank and $H$ is virtually abelian.
	\end{df}
	The goal of this section is to give proof of the following statement:
	\begin{thm} \label{thm_charac_minimax}
		Let $G$ be a finitely generated group, then $G$ is a residually finite virtually solvable minimax group if and only if $G$ is an $\mathcal{M}$-group.
	\end{thm}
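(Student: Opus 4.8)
The plan is to prove the two implications separately. For the easier direction ($\mathcal{M}$-group $\Rightarrow$ residually finite virtually solvable minimax), suppose $G$ fits in $1\to K\to G\to H\to 1$ with $K$ torsion-free nilpotent of finite Pr\"ufer rank and $H$ virtually abelian. First I would observe that $K$ is minimax: a torsion-free nilpotent group of finite Pr\"ufer rank has a central series whose factors are subgroups of $\Q$ (as in the proof of the rank lemma, using that $G$ is polyrational in Robinson's terminology), and a finitely generated subgroup of $\Q$ satisfies max while $\Q$ itself and its subgroups are minimax with min-by-(finitely generated) factors; more directly, each rational factor $\Q$-type group embeds as a subgroup of $\Q$, which is minimax. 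Since $H$ is virtually abelian and finitely generated (being a quotient of the finitely generated $G$), it is polycyclic, hence minimax. The class of minimax solvable groups is closed under extensions, so $G$ is solvable minimax. Residual finiteness of $G$ I would get from the fact that $G$ is finitely generated and virtually solvable minimax of finite Pr\"ufer rank, hence (by a theorem of Mal'cev / the structure theory of minimax groups, or by constructing an explicit faithful linear representation over a number field as the paper does elsewhere) linear in characteristic zero, and finitely generated linear groups are residually finite.

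For the harder direction, suppose $G$ is finitely generated, residually finite, virtually solvable and minimax. I would pass to a finite-index solvable minimax subgroup and then work with the Fitting subgroup: a key structural fact (due to Robinson) is that in a finitely generated solvable minimax group $G$, the Fitting subgroup $\Fit(G)$ is nilpotent of finite Pr\"ufer rank and $G/\Fit(G)$ is (virtually) abelian-by-finite; more precisely the quotient is finitely generated abelian-by-finite because the minimax condition forces the torsion-free rank to be finite and the residual finiteness (together with finite generation) kills any divisible or infinitely-generated torsion obstruction in the quotient. So the candidate for $K$ is (a finite-index-corrected version of) the Fitting subgroup, and for $H$ the quotient. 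The remaining work is to arrange that $K$ is actually \emph{torsion-free}: one replaces $\Fit(G)$ by $\Fit(G)$ modulo its torsion subgroup, or intersects with a suitable normal subgroup, using that the torsion subgroup of a nilpotent minimax group is finite (here residual finiteness and finite generation are what guarantee finiteness rather than mere Chernikov-ness of the torsion part), then absorbs this finite torsion group into the quotient $H$, keeping $H$ virtually abelian and finitely generated.

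The main obstacle, I expect, is pinning down exactly why residual finiteness is needed and where it enters — a general finitely generated solvable minimax group need not be residually finite (e.g. it may have a nontrivial divisible normal subgroup or a non-residually-finite quotient), and the hypothesis must be used precisely to rule out divisible sections and to force the torsion to be finite. Concretely, I would invoke the characterization that a finitely generated solvable minimax group is residually finite if and only if it has no nontrivial divisible normal abelian subgroup (equivalently, if and only if it is a subgroup of $\GL(n,\Q)$ for some $n$, equivalently it is "finitely generated of type $\mathrm{FP}$" in the relevant sense); granting that, the absence of divisible sections makes the Fitting subgroup torsion-free-by-finite and lets the argument go through. The bookkeeping of which finite subgroups get moved between $K$ and $H$, and checking that virtual abelianness of $H$ and the torsion-freeness of $K$ can be achieved simultaneously, is routine but fiddly; I would do it by first extracting a torsion-free nilpotent normal subgroup $K$ of finite index in $\Fit(G)$ that is normal in $G$, and then noting $G/K$ is an extension of the finite group $\Fit(G)/K$ by the virtually abelian $G/\Fit(G)$, hence virtually abelian.
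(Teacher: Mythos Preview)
Your approach mirrors the paper's closely: Fitting subgroup theory for one direction, linearity for the other. But there is a genuine error in your ``easier'' direction.

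You argue that $K$ is minimax because its polyrational factors are subgroups of $\Q$, and ``$\Q$ itself and its subgroups are minimax''. This is false: $\Q$ is \emph{not} minimax (the quotient $\Q/\Z$ is an infinite direct sum of Pr\"ufer groups, which does not satisfy min), and neither is, say, $\Z[1/p_1,1/p_2,\ldots]$ for infinitely many primes. So a torsion-free nilpotent group of finite Pr\"ufer rank need not be minimax in isolation. The paper avoids this trap by not trying to show $K$ is minimax first. Instead it passes to the finite-index subgroup $\bar{G}$ sitting in $1\to K\to\bar{G}\to\Z^n\to 1$, observes that $\bar{G}$ is finitely generated, solvable, and of finite Pr\"ufer rank, and then invokes Robinson's result that a \emph{finitely generated} solvable group of finite Pr\"ufer rank is minimax. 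The finite generation of $G$ is doing essential work here that your argument drops. Once $\bar{G}$ is minimax, torsion-freeness gives linearity and hence residual finiteness, exactly as you say.

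For the harder direction your outline is essentially the paper's. One point worth making explicit: to extract a torsion-free $K\le\Fit(\tilde G)$ that is normal in all of $G$ (not just in $\tilde G$), the paper uses residual finiteness of $G$ directly---for each torsion element $t$ in the finite torsion subgroup $T$, choose a finite quotient of $G$ separating $t$ from $e$, and intersect the kernels with $\Fit(\tilde G)$. Your phrase ``intersects with a suitable normal subgroup'' is the right idea, but note that simply taking $\Fit(\tilde G)/T$ gives a quotient, not a subgroup, so some device like this is needed.
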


For the first implication, let $G$ be a finite extension of a residually finite solvable minimax group $\tilde{G}$. By \cite[Theorem 5.2.2]{robinson}, we have:
	\begin{thm} \label{thm_Fit_in_minimax}
		Let $G$ be a solvable minimax group. Then, the Fitting subgroup $\Fit(G)$ is nilpotent and $G/\Fit(G)$ is virtually abelian.
	\end{thm}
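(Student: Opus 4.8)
This is a standard structural fact about solvable minimax groups, essentially due to Robinson, so the plan is to reduce the statement to the cited reference (\cite[Theorem 5.2.2]{robinson}) and fill in only what is needed for the precise formulation we use. Concretely, I would proceed in two parts: first showing that $\Fit(G)$ is nilpotent, and second that $G/\Fit(G)$ is virtually abelian.

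For the first part, recall that $\Fit(G)$ is generated by all nilpotent normal subgroups of $G$. In a solvable minimax group, every subgroup (hence every normal subgroup) is again solvable minimax, and one knows that a solvable minimax group has finite Prüfer rank; thus the nilpotent normal subgroups all have bounded Prüfer rank, and hence bounded nilpotency class by the usual bound relating nilpotency class and rank for nilpotent groups of finite rank. The key point to quote from \cite{robinson} is then that in a solvable minimax group the product of all nilpotent normal subgroups is itself nilpotent — equivalently, Fitting's theorem applies even though there may be infinitely many such subgroups — so $\Fit(G)$ is nilpotent. I would cite \cite[Theorem 5.2.2]{robinson} directly here rather than reprove it.

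For the second part, one must see that $\bar{G} := G/\Fit(G)$ is virtually abelian. Here the relevant input is again from \cite{robinson}: a solvable minimax group with trivial Fitting subgroup, or more precisely with $\Fit(\bar G)$ reduced to a small (finite, or at least abelian-by-finite) piece, is itself virtually abelian; equivalently, the Fitting quotient of a solvable minimax group is \emph{minimax with finite-rank abelian structure}, and by the minimax condition its torsion part is Chernikov and its torsion-free part is finitely generated abelian after passing to a finite-index subgroup. The cleanest route is: by \cite[Theorem 5.2.2]{robinson} $\bar G$ is an extension of a finite-rank torsion-free abelian group by a finite group; since a finitely generated subgroup of such a $\bar G$ is polycyclic-by-finite with trivial Fitting quotient only when virtually abelian, one concludes $\bar G$ itself is virtually abelian. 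Again I would lean on the reference for the hard content.

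\textbf{Main obstacle.} The genuine difficulty — that Fitting's theorem survives into the infinitely-generated setting of solvable minimax groups, and that the Fitting quotient collapses to virtually abelian — is exactly what \cite[Theorem 5.2.2]{robinson} provides, so for the purposes of this paper the statement is obtained by citation with at most a sentence reconciling Robinson's formulation (in terms of the minimax structure and Prüfer rank) with the phrasing ``$G/\Fit(G)$ is virtually abelian'' used here. The only point requiring a little care is confirming that the finite-rank torsion-free-abelian-by-finite group appearing as $G/\Fit(G)$ in Robinson's theorem is indeed virtually (finitely generated) abelian in our sense, which follows because finite Prüfer rank plus the minimax (in particular, max-by-min) condition forces the torsion-free part to be finitely generated.
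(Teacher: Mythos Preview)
The paper does not prove this theorem at all: it is stated immediately after the sentence ``By \cite[Theorem 5.2.2]{robinson}, we have:'' and is used as a black-box citation. Your core plan --- lean on exactly that reference for both assertions --- is therefore precisely what the paper does.

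That said, the heuristic arguments you wrap around the citation contain several inaccuracies and should simply be dropped. First, your route to nilpotency of $\Fit(G)$ via ``bounded nilpotency class by the usual bound relating nilpotency class and rank'' is not a valid general principle and is in any case not how Robinson's argument proceeds. Second, your sketch for the quotient is circular: you invoke \cite[Theorem 5.2.2]{robinson} to assert that $\bar G$ is (finite-rank torsion-free abelian)-by-finite, which is already the conclusion you are trying to extract. Third, no ``reconciliation'' is needed at the end: the theorem as stated only claims $G/\Fit(G)$ is virtually abelian, not virtually \emph{finitely generated} abelian, and a general solvable minimax group need not be finitely generated, so your attempt to force finite generation of the torsion-free part is both unnecessary and not generally available. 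The finite generation that the paper actually uses later comes from the standing hypothesis that the ambient $\mathcal{M}$-group $G$ is finitely generated, not from this theorem.

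In short: keep the citation, delete the surrounding sketch.
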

	Hence, applying this to $\tilde{G}$, we obtain a short exact sequence of the form
	$$1 \to \Fit(\tilde{G}) \to \tilde{G} \to \tilde{G}/\Fit(\tilde{G}) \to 1.$$
	Since $\Fit(\tilde{G})$ is characteristic in $\tilde{G}$, it is normal in $G$. This yields a short exact sequence
	$$1\to \Fit(\tilde{G}) \to G \to H \to 1.$$
	By construction, we have the following information:
	\begin{itemize}
		\item The group $H$ is an extension of $\tilde{G}/\Fit(\tilde{G})$, which is a finitely generated virtually abelian group, by the finite group $G/\tilde{G}$. Hence, $H$ is virtually abelian itself.
		\item The group $\Fit(\tilde{G})$ is nilpotent. As a subgroup of $\tilde{G}$, it is also residually finite and solvable minimax.
	\end{itemize}
	Now, we will show that we can replace $\Fit(\tilde{G})$ by a torsion-free nilpotent group.
	\begin{prop} \label{prop_Fit_to_K}
		Let $G$ be a residually finite, finitely generated group with a normal subgroup $N$ that is nilpotent and minimax. Then, there exists a torsion-free subgroup $K\leq N$ such that $K\lhd G$ and $[N:K] < \infty$.
	\end{prop}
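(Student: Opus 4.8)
The plan is to find the torsion elements of $N$ and show they form a finite characteristic subgroup, then quotient out. First I would recall the standard fact that in a nilpotent group $N$, the set of torsion elements forms a subgroup $T = \tau(N)$ (this uses nilpotency in an essential way; it fails for general groups). Since $T$ is defined purely in terms of torsion, it is characteristic in $N$, and since $N \lhd G$, we get $T \lhd G$. The key point is that $T$ is finite: $T$ is a torsion nilpotent minimax group, and a torsion minimax group is finite. Indeed, a minimax group has a series with factors satisfying max or min; a torsion abelian group satisfying max is finite, and a torsion abelian group satisfying min is a finite direct sum of Pr\"ufer groups $\Z(p^\infty)$, which is torsion but—being divisible—cannot occur as a section of a finitely generated... wait, that is not quite the argument since $N$ need not be finitely generated. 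Instead I would argue: a torsion nilpotent group of finite Pr\"ufer rank which is minimax is a \v{C}ernikov group, and a torsion \v{C}ernikov group that is also residually finite must be finite, because the divisible part (a finite product of Pr\"ufer groups) has no nontrivial finite quotients yet is residually finite only if trivial. Here residual finiteness of $N$ is inherited from $G$.

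So the steps, in order, are: (1) let $T = \tau(N)$ be the torsion subgroup of the nilpotent group $N$, noting it is a subgroup because $N$ is nilpotent, hence characteristic in $N$ and thus normal in $G$; (2) show $T$ is finite, using that $T$ is torsion, minimax (as a subgroup of the minimax group $N$—subgroups of solvable minimax groups are minimax), nilpotent, and residually finite (as a subgroup of the residually finite group $G$); (3) set $K = N/T$... no—rather, I want $K \leq N$, so instead take $K$ to be the preimage-free construction: since $T$ is finite and normal in $G$, and we want a torsion-free $K \lhd G$ with $[N:K]<\infty$, I would take $K$ to be a suitable characteristic subgroup of $N$ of finite index that meets $T$ trivially. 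Concretely, because $N/T$ is torsion-free nilpotent of finite rank, it is not necessarily the case that $N$ splits over $T$; but we only need \emph{finite index}, not a complement. So here is the cleaner route: once $T$ is finite, consider $N$ acting on itself; the subgroup $K := \{n \in N : n \text{ has infinite order or } n = e\}$ need not be a subgroup. Instead, use that $N$ is residually finite and $T$ is finite to find a normal subgroup $K_0 \lhd G$ of finite index in $G$ with $K_0 \cap T = \{e\}$; then $K := K_0 \cap N$ is normal in $G$, of finite index in $N$, and torsion-free (any torsion element of $K$ lies in $\tau(N) = T$, but $K \cap T = \{e\}$).

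The main obstacle, and the step needing the most care, is step (2): proving that a residually finite torsion nilpotent minimax group is finite. The minimax hypothesis gives a finite series with factors satisfying max or min; torsion factors satisfying max are finite, and torsion factors satisfying min are \v{C}ernikov, i.e. finite-by-(divisible abelian of finite rank). Assembling these, $T$ is \v{C}ernikov. A \v{C}ernikov group has a divisible abelian normal subgroup $D$ of finite index; $D$ is a finite product of quasicyclic groups $\Z(p^\infty)$. Since $D$ is divisible, it has no proper subgroups of finite index, so any homomorphism from $T$ to a finite group kills $D$; thus if $D \neq \{e\}$, the group $T$ is not residually finite, contradiction. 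Hence $D = \{e\}$ and $T$ is finite. For step (3) I should double-check that the separation "$K_0 \cap T = \{e\}$" is available: since $G$ is residually finite and $T \setminus \{e\}$ is finite, for each $t \in T\setminus\{e\}$ pick $N_t \lhd G$ of finite index avoiding $t$, and let $K_0 = \bigcap_t N_t$, a finite-index normal subgroup of $G$ with $K_0 \cap T = \{e\}$; intersecting with $N$ and invoking $\tau(N) = T$ finishes the argument. I expect the finiteness of $\tau(N)$ to be the crux, with everything else being bookkeeping with standard properties of minimax and \v{C}ernikov groups.
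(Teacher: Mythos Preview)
Your proposal is correct and follows essentially the same route as the paper: identify the torsion subgroup $T=\tau(N)$, show it is finite via the \v{C}ernikov structure together with residual finiteness, then intersect $N$ with the finite-index normal subgroup $\bigcap_{t\in T\setminus\{e\}}\ker\varphi_t$ of $G$ obtained by separating each element of $T$. The only cosmetic difference is that the paper cites Robinson directly for the \v{C}ernikov and no-quasicyclic-subgroup facts, whereas you unpack the argument that a nontrivial divisible part would obstruct residual finiteness; the constructions of $K$ are identical.
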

	\begin{proof}
		Recall that the torsion elements of $N$ form a fully invariant subgroup $T$ of $N$, since $N$ is nilpotent, see \cite[Lemma 1.2.13]{robinson}. We will first argue that $T$ is finite. For this, note that $T$ can also be described as the unique largest normal torsion subgroup $\tau(N)$ of $N$. By the remark below \cite[Proposition 5.2.1]{robinson}, the group $\tau(N)$ is Cernikov, i.e.~it is virtually a direct product of finitely many quasicyclic groups. However, $N$ is residually finite and thus can have no quasicyclic subgroups by \cite[Corollary 5.3.2]{robinson}. Hence, $T = \tau(N)$ has to be virtually trivial, thus finite.
		
		Take $e\neq t\in T$ arbitrary. Since $G$ is residually finite, we can find a homomorphism to a finite group $\varphi_t: G \to Q_t$ such that $\varphi_t(t) \neq e$. Now, 
		$$K = N\cap \left(\bigcap_{t\in T}\ker\varphi_t\right) $$
		is a normal subgroup of $G$ such that $K \subset N\setminus T$. Hence, it is torsion-free nilpotent. Its index in $N$ is finite, since $\cap_{t\in T}\ker\varphi_t$ has finite index in $G$.
	\end{proof}
	Let $N = \Fit(\tilde{G})$ in the result above. The normal subgroup $K$ yields a short exact sequence of the form
	$$1\to K \to G \to \tilde{H} \to 1.$$
	Now, the group $\tilde{H}$ is an extension of the finite $\Fit(\tilde{G})/K$ by the finitely generated, virtually abelian group $H$. This implies that $\tilde{H}$ is virtually abelian. 
	Since $G$ is solvable minimax, it has finite Pr\"ufer rank (see e.g. \cite[Lemma 5.1.6]{robinson}). Hence, its subgroup $K$ has finite Pr\"ufer rank too. This finishes the proof of Theorem \ref{thm_charac_minimax}.

Next, we show the other implication of Theorem \ref{thm_charac_minimax}. 
	\begin{prop} \label{prop_corr_M_groups}
		An $\mathcal{M}$-group is a finite extension of a residually finite, finitely generated (torsion-free) solvable minimax group.
	\end{prop}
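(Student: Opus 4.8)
The strategy is to realise the required subgroup as the preimage in $G$ of a free abelian, finite-index, normal subgroup of $H$, and then to verify the needed properties one by one; only residual finiteness will require real work.

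Since $G$ is finitely generated, so is $H$, and a finitely generated virtually abelian group contains a finite-index \emph{normal} subgroup isomorphic to some $\Z^n$ (for instance the subgroup of $d$-th powers in the core of a finite-index abelian subgroup, with $d$ the order of its torsion subgroup). Fix such an $A\lhd H$ and let $\tilde G\lhd G$ be its preimage under $G\to H$. Then $[G:\tilde G]=[H:A]<\infty$, so $G$ is a finite extension of $\tilde G$, and $\tilde G$ sits in a short exact sequence $1\to K\to\tilde G\to\Z^n\to 1$. As a finite-index subgroup of a finitely generated group, $\tilde G$ is finitely generated. It is solvable, being an extension of the nilpotent group $K$ by the abelian group $\Z^n$. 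It is torsion-free: a torsion element of $\tilde G$ projects to a torsion element of $\Z^n$, hence lies in the torsion-free group $K$, and is therefore trivial. Finally it is minimax: by Theorem \ref{thm_Malcev}, $K$ embeds into its rational Mal'cev completion $K^\Q$, which — $K$ having finite Pr\"ufer rank — admits a finite series whose factors are all isomorphic to the additive group $\Q$, so is solvable minimax; since the class of solvable minimax groups is closed under subgroups and extensions (\cite{robinson}), first $K$ and then $\tilde G$, an extension of $K$ by the polycyclic group $\Z^n$, is solvable minimax.

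It remains to prove that $\tilde G$ is residually finite, which I expect to be the main obstacle. The plan is to reduce to P.~Hall's theorem that finitely generated abelian-by-polycyclic groups are residually finite, using the lower central series $K=\gamma_1(K)\supseteq\gamma_2(K)\supseteq\cdots\supseteq\gamma_{c+1}(K)=\{e\}$, each term of which is characteristic in $K$ and so normal in $\tilde G$, and to induct on the nilpotency class $c$ of $K$. The quotient $\tilde G/\gamma_2(K)$ is finitely generated abelian-by-polycyclic, hence residually finite by Hall, and this separates every $g\in\tilde G\setminus\gamma_2(K)$. If $e\neq g\in\gamma_2(K)$ but $g\notin\gamma_c(K)$, one works in $\tilde G/\gamma_c(K)$, which has strictly smaller class and is again of the required form, so is residually finite by induction. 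The remaining case $e\neq g\in\gamma_c(K)$ is the crux: here $\gamma_c(K)$ is central in $K$, normal in $\tilde G$, and — $\tilde G$ being finitely generated with finitely presented quotient $\Z^n$ — finitely generated as a $\Z[\Z^n]$-module (the action factoring through $\Z^n$ since $K$ centralises $\gamma_c(K)$); one separates $g$ inside this module by a finite-index submodule $V$, which is automatically normal in $\tilde G$ (finitely generated $\Z[\Z^n]$-modules being residually finite, again by Hall), and then one must separate the finite normal subgroup $\gamma_c(K)/V$ of $\tilde G/V$, knowing that $(\tilde G/V)/(\gamma_c(K)/V)\cong\tilde G/\gamma_c(K)$ is residually finite by the induction. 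This last amalgamation — producing a genuine finite quotient of $\tilde G/V$ detecting $g$ — is the delicate step; it must exploit that all groups in play are solvable of finite Pr\"ufer rank, so that (unlike in Deligne-type central extensions of perfect groups) no finite section is trapped in the finite residual, and it again leans on the residual finiteness theorems of Hall and Robinson. An alternative that sidesteps this induction is to show that $\tilde G$ is \emph{reduced}, i.e.\ has no nontrivial radicable subgroup: its radicable radical would be a normal radicable subgroup contained in $K$, and passing to a suitable quotient would realise a nonzero $\Q$-vector space as a section of a finitely generated $\Z[\Z^n]$-module — impossible, since such modules possess proper finite-index submodules — after which residual finiteness follows from the known characterisation of residual finiteness for finitely generated solvable minimax groups (\cite{robinson}).
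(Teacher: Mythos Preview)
Your construction of $\tilde G$ and the verification that it is finitely generated, torsion-free, solvable and minimax are correct and match the paper exactly. The divergence is entirely in the residual finiteness step.

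Both of your approaches there are left incomplete. Plan A you yourself flag as unfinished: the ``delicate step'' of lifting a finite detection of $g$ from $\gamma_c(K)/V$ to a genuine finite quotient of $\tilde G/V$ is essentially the residual finiteness of $\tilde G/V$ one layer down, and you do not close this loop. Plan B is morally correct and can be completed --- the image of the radicable radical in some $\gamma_i(K)/\gamma_{i+1}(K)$ is a nonzero divisible subgroup, these factors are finitely generated $\Z[\Z^n]$-modules (via the iterated commutator map from $(K^{\mathrm{ab}})^{\otimes i}$, $K$ being finitely normally generated since $\Z^n$ is finitely presented), and finitely generated modules over finitely generated commutative $\Z$-algebras are residually finite as abelian groups, hence contain no nonzero divisible subgroup --- but you do not supply this chain. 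The phrase ``such modules possess proper finite-index submodules'' is not by itself enough: what is needed is that the intersection of \emph{all} finite-index submodules is zero.

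The paper sidesteps all of this with a two-line argument you have overlooked: by \cite[Theorem~5.1.8]{robinson}, a torsion-free solvable minimax group is \emph{linear}, and by Mal'cev's theorem a finitely generated linear group is residually finite. Since you have already shown $\tilde G$ to be finitely generated, torsion-free, solvable and minimax, this finishes immediately. Your Plan B, once completed, would essentially be reproving by hand part of the machinery underlying that linearity theorem.
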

	\begin{proof}
		Suppose that $G$ is an $\mathcal{M}$-group with corresponding short exact sequence
		$$1 \to K \to G \to H \to 1,$$
		as in Definition \ref{df_Mgroup}. Since $H$ is finitely generated and virtually abelian, it contains a free abelian subgroup $\Z^n$ which has finite index in $H$. The preimage of this subgroup in $G$, denoted by $\bar{G}$, is a finite index normal subgroup of $G$ with short exact sequence
		$$1 \to K \to \bar{G} \to \Z^n \to 1.$$
		We will argue that $\bar{G}$ is a residually finite, finitely generated torsion-free, solvable minimax group.
		
		By definition, $G$ is finitely generated. Since $\bar{G} \lhd_f G$, $\bar{G}$ is finitely generated too. It is clear that $\bar{G}$ is torsion-free, since $K$ and $\Z^n$ are. For solvability, note that $[\bar{G}, \bar{G}] \leq K$ and $K$ is nilpotent (and therefore solvable), hence $\bar{G}$ is solvable. The group $\bar{G}$ has finite Pr\"ufer rank, as both $K$ and $\Z^n$ have finite Pr\"ufer rank. However, by \cite[Corollary 10.5.3]{robinson}, a finitely generated solvable group with finite Prüfer rank is minimax, so $\bar{G}$ is minimax.	Finally, by \cite[Theorem 5.1.8]{robinson}, a torsion-free, solvable minimax group is linear, and thus $\bar{G}$ is linear. We conclude by noting that finitely generated, linear groups are residually finite.
	\end{proof}

	We end this section with some examples of $\mathcal{M}$-groups.
	\begin{ex} 
		A polycyclic group is a residually finite, finitely generated solvable minimax group, as these groups are max. Hence, a virtually polycyclic group is an $\mathcal{M}$-group. 
	\end{ex}
	\begin{ex} \label{ex_BS}
		It is a known fact that the \textbf{Baumslag-Solitar groups} $\BS(1,n)$ with $0\neq n \in \Z$, defined via the presentation $\langle x,y \mid y^{-1}xy = x^n \rangle$, are isomorphic to $\Z[1/n] \rtimes_\varphi \Z$ where \begin{align*}
	\varphi(1): \Z[1/n] &\to \Z[1/n] \\
	 x &\mapsto n x.
		\end{align*} 
		These $\mathcal{M}$-groups show that the torsion-free nilpotent subgroup $K = \Z[1/n]$ is not necessarily finitely generated.
	\end{ex}
	\section{Setup and Notations} \label{sec_nota}
The notations introduced in this section will be used throughout sections \ref{sec_geom} and \ref{sec_upper}. The finitely generated groups $G$ in this section fit in a short exact sequence of the form
	$$1\to K \to G \to H \to 1,$$
	with $K$ a torsion-free nilpotent group of finite Pr\"ufer rank and $H$ finitely generated virtually abelian. 
	
	For the ease of referencing, we will first state our notations and conventions and then only afterwards prove that the notations make sense. For example, we will introduce another related group $\bar{K}$, and we will show below that this group exists with the given properties.
	\begin{nota} \label{nota_group}
		We fix the groups $G$, $\bar{G}$, $K$, $\bar{K}$, $H$ and $\Z^n$ as follows:
		\begin{itemize}
			\item The group $G$ will be a fixed $\mathcal{M}$-group that fits in the short exact sequence 
			$$ 1 \to K \to G \to H \to 1.$$
			\item The group $K$ is torsion-free nilpotent, and $r(K^\Q) = m$. This implies that there exists a finitely generated subgroup $\bar{K} \leq K$ such that $\bar{K}^\Q = K^\Q$. 
			\item The group $H$ is finitely generated virtually abelian with a free abelian subgroup $\Z^n$ of finite index, i.e. $\Z^n \lhd_f H$.
			\item The group $\bar{G} \lhd_f G$ is the preimage of $\Z^n$ in $G$. In particular, it fits in the short exact sequence
			$$1 \to K \to \bar{G} \to \Z^n \to 1.$$
		\end{itemize}
	\end{nota}
	\begin{nota} \label{nota_basis}
	There exists a finitely generated $\bar{K}$ as above and a generating set $\{k_i, h_j, f_s \mid 1\leq i \leq m, 1\leq j \leq n, 1 \leq s \leq [H: \Z^n]-1\}$ of the group $G$ as follows:
		\begin{itemize}
			\item The subset $\{k_i\mid 1 \leq i \leq m\}$ is a Mal'cev basis of $\bar{K}$.
			\item The set $\{h_jK \in H \cong G/K \mid 1 \leq j \leq n\}$ give the standard generators of $\Z^n$ in $H$. Furthermore, the set $\{k_i, h_j \mid 1\leq i \leq m, 1\leq j \leq n\}$ generates $\bar{G}$. In the special case when $\bar{G} = K \rtimes_\varphi \Z^n$ for $\varphi: \Z^n \to \Aut(K)$, the elements are equal to $h_j = (e,e_j)$, where $e_j$ is the $j$-th standard vector. 
			\item The set $\{f_s\bar{G} \in (G/\bar{G}) \cong (H/\Z^n)\mid 1\leq s \leq [H: \Z^n]-1\}$ are precisely all non-trivial elements of $H/\Z^n$.
		\end{itemize}
	\end{nota}
	We now show that $\bar{K}$ and the generating set indeed exist.
	\begin{lemma} \label{lem_basis_exists}
		There exists a finitely generated subgroup $\bar{K}$ of $K$ with $\bar{K}^\Q = K^\Q$ such that there exists a generating set of $G$ as defined in Notation \ref{nota_basis}.
	\end{lemma}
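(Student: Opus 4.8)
The plan is to build the subgroup $\bar{K}$ and the generating set from the ground up, starting from an arbitrary finite generating set of $G$ and successively refining it. First, since $K$ has finite Pr\"ufer rank, the lemma in Section \ref{sec_lie_cor} guarantees the existence of \emph{some} finitely generated subgroup $K_0 \leq K$ with $K_0^\Q = K^\Q$; concretely $K_0$ is generated by elements lying in consecutive factors of a polyrational series of $K$. The group $K_0$ is finitely generated torsion-free nilpotent, so by the discussion around Equation \eqref{eq_Malcev_basis_chain} it has a Mal'cev basis. However, I will need to enlarge $K_0$ so that the generating set of $G$ restricts nicely. Pick a finite generating set $T = \{t_1,\dots,t_N\}$ of $G$. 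The images of the $t_i$ in $H$ generate $H$; since $\Z^n \lhd_f H$, choose coset representatives $f_1,\dots,f_{[H:\Z^n]-1}$ of the non-trivial cosets of $\Z^n$ in $H$, each lifted to an element of $G$ (also called $f_s$), and choose $h_1,\dots,h_n \in G$ whose images in $H$ are the standard generators of $\Z^n$.

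Next, I would control the ``error terms'' that arise from working modulo $K$. The elements $h_j, f_s$ together with the $t_i$ do not literally generate $G$ on the nose, but every relation among their $H$-images lifts to an element of $K$, and there are finitely many such elements needed (one generator of $H$ expressed in the chosen generators, finitely many conjugates). Collect all these finitely many elements of $K$, together with the finitely many elements $[t_i, t_j]$, $t_i h_j t_i^{-1} h_{j'}^{\pm1}$, etc., that land in $K$, and also the conjugates $h_j^{\pm 1} k h_j^{\mp 1}$ and $f_s^{\pm1} k f_s^{\mp1}$ for $k$ ranging over the current generators of $K_0$ — but here one must be careful, as these conjugates need not lie in $K_0$. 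This is the crux. The point is that the conjugation action of $\bar G$ on $K$ extends to an action on $K^\Q$ by Proposition \ref{prop_KQ_maps}, and since $\bar{K}^\Q$ is to equal $K^\Q$, the conjugates $h_j^{\pm1} k_i h_j^{\mp1}$ lie in $K^\Q = \bar{K}^\Q$, hence some power of each lies in any fixed finitely generated subgroup with the right $\Q$-completion. So I would take $\bar K$ to be the subgroup of $K$ generated by $K_0$ together with these finitely many conjugates (and the finitely many ``lifted relation'' elements of $K$ above); this is still finitely generated, still torsion-free nilpotent, and still satisfies $\bar K^\Q = K^\Q$ because enlarging a subgroup that already has full $\Q$-completion inside $K$ cannot shrink the completion and cannot exceed $K^\Q$. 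Finally, replace the generating set of $\bar K$ by a Mal'cev basis $\{k_1,\dots,k_m\}$ (with $m = r(K^\Q)$, using that the Hirsch length of $\bar K$ equals $r(\bar K^\Q) = r(K^\Q) = m$).

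It then remains to check that $\{k_i, h_j, f_s\}$ generates $G$ and that the subset conditions in Notation \ref{nota_basis} hold. For generation: given $g \in G$, its image in $H$ is a word in the $H$-images of the $h_j$ and $f_s$; multiplying $g$ by the corresponding word in the $h_j, f_s$ (in $G$) yields an element of $K$, which by construction — since we threw in all the needed conjugates and lifted relations — lies in $\bar K = \langle k_1,\dots,k_m\rangle$. That $\{k_i,h_j\}$ generates $\bar G$ follows the same way, restricting to the cosets of $K$ inside $\bar G \cong$ (extension of $\Z^n$ by $K$), where no $f_s$ is needed. The statements that the $h_jK$ are the standard generators of $\Z^n \leq H$ and that the $f_s\bar G$ exhaust the non-trivial elements of $H/\Z^n \cong G/\bar G$ hold by the choices made in the previous paragraph. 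The main obstacle, as indicated, is arranging in one stroke a \emph{finitely generated} $\bar K$ that is simultaneously (a) closed enough under the finitely many relevant conjugations and relation-lifts that the three generating sets behave as claimed, and (b) still has $\bar K^\Q = K^\Q$; both are handled by the observation that only finitely many conjugates are involved and that each lies in $K^\Q$, so a single finitely generated enlargement suffices.
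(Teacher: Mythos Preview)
Your overall strategy---start with a finitely generated $K_0 \leq K$ having $K_0^\Q = K^\Q$ and then enlarge it---is reasonable, but the verification of generation at the end contains a genuine gap. You write that for arbitrary $g \in G$, multiplying by a suitable word in the $h_j, f_s$ yields an element of $K$ which ``by construction \ldots lies in $\bar K$.'' This is false: $\bar K$ is in general a \emph{proper} subgroup of $K$ (in $\BS(1,2)$, for instance, $K = \Z[1/2]$ while any finitely generated $\bar K \leq K$ is cyclic), and the finitely many commutators and conjugates you adjoined do not force every such $K$-residue into $\bar K$. Concretely, take $g = 1/4 \in K \leq \BS(1,2)$: its $H$-image is trivial, so your recipe asserts $1/4 \in \bar K$, which need not hold. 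What you actually need is that each \emph{generator} $t_i$ lies in $\langle k_i, h_j, f_s\rangle$; your list of adjoined elements (commutators $[t_i,t_j]$, expressions $t_i h_j t_i^{-1} h_{j'}^{\pm1}$, conjugates of $K_0$-generators) does not obviously contain the elements $t_i \cdot w_i^{-1} \in K$ for a word $w_i$ in $h_j, f_s$ with the same $H$-image as $t_i$, which is what generation requires.

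The paper's proof avoids this entirely by working inside $\bar G$ from the outset. Since $[G:\bar G] < \infty$ and $G$ is finitely generated, $\bar G$ is finitely generated; one chooses the $h_j$ first, and because their images already generate $\Z^n = \pi(\bar G)$, any finite generating set of $\bar G$ can be modified to one of the form $S \cup \{h_1,\ldots,h_n\}$ with $S \subset K$ finite. Setting $\bar K = \langle S \cup \tilde S\rangle$ (with $\tilde S$ any finite set ensuring $\bar K^\Q = K^\Q$) then makes generation of $\bar G$ by $\{k_i, h_j\}$ automatic, simply because $S \subset \bar K$. The $f_s$ are then adjoined as coset representatives. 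Note also that your detour through conjugates $h_j^{\pm1} k h_j^{\mp1}$ and $f_s^{\pm1} k f_s^{\mp1}$ is unnecessary: Notation~\ref{nota_basis} does not ask $\bar K$ to be stable under conjugation, and adding one round of conjugates neither achieves stability nor repairs the generation argument.
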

	\begin{proof}
		The group $\bar{G}$ is finitely generated as a finite index subgroup of the group $G$. Choose $h_1$ to $h_n$ to be preimages of the standard basis of $\Z^n$. If $\bar{G} = K\rtimes_\varphi H$, we take the obvious preimages $(e,e_j)$, where $e_j$ is the $j$'th standard vector of $\Z^n$. Let $\pi: \bar{G} \to \Z^n$ denote the projection of $\bar{G}$ onto $\bar{G}/K \cong \Z^n$. As the elements $\pi(h_1), \ldots, \pi(h_n)$ generate $\pi(\bar{G})$ and $\bar{G}$ is finitely generated, there exists a finite set $S \subset K$ such that $S \cup \{h_1, \ldots, h_n\}$ still generates $\bar{G}$. 
		
		Since $r(K) < \infty$, we can take a finite set $\tilde{S} \subset K$ such that $\langle \tilde{S} \rangle$ has $K^\Q$ as its Mal'cev completion.
		Define $\bar{K}$ to be the group generated by $S \cup \tilde{S}$, and take a Mal'cev basis $\{k_1, \ldots, k_m\}$ of $\bar{K}$. Note that $\{k_i, h_j \mid 1\leq i \leq m, 1 \leq j \leq n\}$ now generates $\bar{G}$, because $S \subset \langle k_1, \ldots k_m \rangle$. 
		To finish the proof, it now suffices to take the $f_s$ to be preimages in $G$ of the non-trivial elements of $G/\bar{G}$.
	\end{proof}
	\begin{nota} \label{nota_action}
		Note that conjugation by the generators $\{h_j, f_s\mid 1\leq j \leq n, 1 \leq s \leq [H: \Z^n]-1\}$ induces automorphisms on $K$, since $K$ is a normal subgroup of $G$. We will also fix this notation:
		\begin{itemize}
			\item Let $\xi_j: K \to K$ denote the isomorphism such that $h_jk = \xi_j(k)h_j$ for all $k\in K$.
			\item Let $\eta_s: K \to K$ denote the isomorphism such that $f_sk = \eta_s(k)f_s$ for all $k\in K$.
		\end{itemize}
	\end{nota}
	\begin{nota} \label{nota_poly}
		Finally, we also fix some constants. Here, all mentioned polynomials are considered with respect to the Mal'cev basis $\{k_i\mid 1\leq i \leq m\}$ of $\bar{K} \leq K^\Q$. Recall that this is also a vector space basis under the identification of $K^\Q$ with its Lie algebra by Proposition \ref{prop_coor}.
		\begin{itemize}
			\item Consider the rational polynomials defining multiplication and exponentiation on $K^\Q$  as in Proposition \ref{prop_malcev_poly}. Let $\Delta_K \in \N$ denote a common denominator of these polynomials.
			\item Consider the rational polynomials defining the automorphisms $$\{\xi_j, \xi_j^{-1}, \eta_s, \eta_s^{-1}\mid 1\leq j \leq n, 1\leq s \leq [H: \Z^n]-1\}$$ as in Proposition \ref{prop_KQ_maps}. Let $\Delta_{\text{Hom}} \in \N$ denote a common denominator.
			\item Let $\epsilon_i, \epsilon_j \in \{1,-1\}$. The element $[h_i^{\epsilon_i}, h_j^{\epsilon_j}]$ must lie in $K\leq K^\Q$, since $[h_i^{\epsilon_i}, h_j^{\epsilon_j}]K = K \in \bar{G}/K \cong \Z^n$. Hence, it can be written as $k_1^{z_1}\dots k_m^{z_m}$ with $z_i \in \Q$. Let $\Delta_H$ denote a common denominator of all these rational coordinates, for all possible $[h_i^{\epsilon_i}, h_j^{\epsilon_j}]$ with $1 \leq i,j \leq n$.
			\item Under the identification $K^\Q \cong (\mathfrak{k}, \ast)$, let $\Delta_{\text{BCH}}$ denote a common denominator of the  polynomials in Proposition \ref{prop_coor} and of the rational coefficients of the Baker-Campbell-Hausdorff formula for $K$.
			\item Let $\Delta$ denote the product of all those constants. In particular, $\Delta$ is a common denominator for all the numbers mentioned above.
		\end{itemize}
	\end{nota}
	The choice of $\Delta$ in Notation \ref{nota_poly} is chosen such that the following result holds:
	\begin{lemma} \label{lem_multiplications}
		Take notations as in \ref{nota_poly}, then the following statements hold:
		\begin{itemize}
			\item If $k_1^{z_1}\dots k_m^{z_m}$ and $k_1^{z'_1}\dots k_m^{z'_m}$ satisfy $z_i, z_i' \in \Z[1/\Delta]$ for all $1\leq i \leq m$, then their product 
			$$k_1^{z_1}\dots k_m^{z_m} \cdot k_1^{z'_1}\dots k_m^{z'_m} = k_1^{z''_1}\dots k_m^{z''_m} $$
			also satisfies $z_i'' \in \Z[1/\Delta]$. The same conclusion holds for exponentiation and inversion.
			\item A product of the form $h_j^{\pm 1}\cdot k_1^{z_1}\dots k_m^{z_m}$ with all $z_i \in \Z[1/\Delta]$ equals $k_1^{z'_1}\dots k_m^{z'_m}h_j^{\pm 1}$ with all $z'_i \in \Z[1/\Delta]$.
			\item A product of the form $h_i^{\epsilon_i}h_j^{\epsilon_j}$ with $\epsilon_i, \epsilon_j \in \{1,-1\}$ equals $k_1^{z_1}\dots k_m^{z_m}h_j^{\epsilon_j}h_i^{\epsilon_i}$ with all $z_k \in \Z[1/\Delta]$.
		\end{itemize}
	\end{lemma}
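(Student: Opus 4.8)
The plan is to unwind each bullet of Lemma~\ref{lem_multiplications} by tracing how the constant $\Delta$ was assembled in Notation~\ref{nota_poly}, using the polynomial descriptions of the group operations supplied by Propositions~\ref{prop_malcev_poly}, \ref{prop_coor} and \ref{prop_KQ_maps}. The guiding principle throughout is elementary but needs to be stated once: a rational polynomial $f\in\Q[x_1,\dots,x_N]$ all of whose coefficients lie in $\Z[1/\Delta]$ maps $(\Z[1/\Delta])^N$ into $\Z[1/\Delta]$, because $\Z[1/\Delta]$ is a subring of $\Q$ and is therefore closed under the additions and multiplications that evaluating $f$ requires. Since $\Delta$ was chosen in Notation~\ref{nota_poly} to be a common denominator of \emph{all} the relevant polynomials, each polynomial appearing below has $\Z[1/\Delta]$-coefficients, and the lemma reduces to a bookkeeping exercise.

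For the first bullet, I would invoke Proposition~\ref{prop_malcev_poly}(iii): the coordinates $z''_i$ of the product $k_1^{z_1}\cdots k_m^{z_m}\cdot k_1^{z'_1}\cdots k_m^{z'_m}$ are given by the polynomials $f_i(z_1,\dots,z_m,z'_1,\dots,z'_m)$, whose denominators divide $\Delta_K\mid\Delta$; hence $z''_i\in\Z[1/\Delta]$ by the closure principle above. Exponentiation is handled identically using Proposition~\ref{prop_malcev_poly}(iv) with the polynomials $f'_i$, and inversion is the special case $z=-1$ of exponentiation (note $-1\in\Z\subseteq\Z[1/\Delta]$, so the input is still in $\Z[1/\Delta]$). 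For the second bullet I would use Notation~\ref{nota_action}: $h_j^{\pm1}k = \xi_j^{\pm1}(k)h_j^{\pm1}$ by definition of $\xi_j$, and by Proposition~\ref{prop_KQ_maps}(ii) the automorphism $\xi_j^{\pm1}$ of $K^\Q$ is polynomial in the Mal'cev coordinates; its defining polynomials have denominators dividing $\Delta_{\text{Hom}}\mid\Delta$, so applying $\xi_j^{\pm1}$ to $k_1^{z_1}\cdots k_m^{z_m}$ with $z_i\in\Z[1/\Delta]$ yields coordinates $z'_i\in\Z[1/\Delta]$ again.

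For the third bullet, write $h_i^{\epsilon_i}h_j^{\epsilon_j} = [h_i^{\epsilon_i},h_j^{\epsilon_j}]\,h_j^{\epsilon_j}h_i^{\epsilon_i}$; by the third item of Notation~\ref{nota_poly} the commutator $[h_i^{\epsilon_i},h_j^{\epsilon_j}]$ lies in $K\leq K^\Q$ and, written as $k_1^{z_1}\cdots k_m^{z_m}$, already has all $z_i\in\Z[1/\Delta_H]\subseteq\Z[1/\Delta]$, which is exactly the claim. A small point worth addressing explicitly is that in the first two bullets one sometimes needs to \emph{reduce} an arbitrary product of the $k_i^{\pm1}$ to the normal form $k_1^{z_1}\cdots k_m^{z_m}$ before the polynomial formulas apply; this is just an iterated use of bullet one, so no new constant is needed. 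I do not expect a genuine obstacle here: the only thing to be careful about is making sure that every operation invoked (collecting, conjugating, commuting) is covered by one of the denominators folded into $\Delta$, and that the inputs fed into each polynomial have already been shown to lie in $\Z[1/\Delta]$, so that the closure principle can be applied inductively rather than circularly.
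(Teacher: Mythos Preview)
Your proposal is correct and follows precisely the paper's approach: the paper's proof is the single sentence ``These three observations follow from the choice of $\Delta_K$, $\Delta_{\text{Hom}}$ and $\Delta_H$ respectively,'' and you have simply spelled out in detail how each of those constants handles the corresponding bullet via the polynomial descriptions in Propositions~\ref{prop_malcev_poly} and~\ref{prop_KQ_maps}.
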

	\begin{proof}
		 These three observations follow from the choice of $\Delta_K$, $\Delta_\text{Hom}$ and $\Delta_H$ respectively.
	\end{proof}
	A direct consequence of this result is the following observation, which we will use throughout the next sections:
\begin{lemma} \label{lem_g_as_product}
Take notations as in Notation \ref{nota_group} and \ref{nota_basis}, then every element $g\in G$ can be uniquely written as 
	$$g = k_1^{z_1}\dots k_m^{z_m}h_1^{l_1}\dots h_n^{l_n}f',$$
	with $z_i \in \Z[1/\Delta]$, $l_j \in \Z$ and $f' \in \{e, f_s \mid 1\leq s \leq [H: \Z^n]-1\}$. Moreover, if $g\in K$, then $f' = e$ and $l_j = 0$ for all $1\leq j \leq n$. 
\end{lemma}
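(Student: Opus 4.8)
The plan is to build the claimed normal form in three stages, peeling off the quotients $G/\bar G \cong H/\Z^n$, then $\bar G/K \cong \Z^n$, then finally dealing with $K$ itself. Concretely: given $g \in G$, its image in $G/\bar G$ is one of the classes $f_s \bar G$ or the trivial class, so we may write $g = \bar g f'$ with $f' \in \{e, f_s \mid 1 \le s \le [H:\Z^n]-1\}$ and $\bar g \in \bar G$; this $f'$ is forced by the coset of $g$, hence unique. Next, the image of $\bar g$ in $\bar G/K \cong \Z^n$ is a well-defined tuple $(l_1, \dots, l_n) \in \Z^n$, and after multiplying $\bar g$ on the right by $h_n^{-l_n} \cdots h_1^{-l_1}$ we land in $K$; again $(l_1,\dots,l_n)$ is uniquely determined by the coset. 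So $\bar g = k h_1^{l_1} \cdots h_n^{l_n}$ for a unique $k \in K$ and unique exponents $l_j$, and it remains to write $k = k_1^{z_1} \cdots k_m^{z_m}$ with $z_i \in \Z[1/\Delta]$.

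The second part of the statement — that if $g \in K$ then $f' = e$ and all $l_j = 0$ — is immediate from the above: the cosets $g\bar G$ and $g K$ are trivial precisely when $g$ lies in $\bar G$ and in $K$ respectively, and the decomposition just constructed is governed entirely by those cosets, so for $g \in K$ the first two stages contribute nothing and $g = k$.

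The genuinely substantive point is the last stage: writing an arbitrary $k \in K$ as $k_1^{z_1} \cdots k_m^{z_m}$ with the $z_i$ in $\Z[1/\Delta]$ rather than merely in $\Q$. Here uniqueness of the $z_i \in \Q$ is clear because $\{k_1,\dots,k_m\}$ is a Mal'cev basis of $\bar K$ and $\bar K^\Q = K^\Q$, so $K \le K^\Q$ and every element of $K^\Q$ has unique Mal'cev coordinates. The integrality $z_i \in \Z[1/\Delta]$ is where the choice of $\Delta$ enters. I would argue this by reducing to the generating set: every $k \in K$ is a word in the generators $k_i$ of $\bar K$ together with the conjugates of such generators by the $h_j^{\pm1}$ and $f_s^{\pm1}$ (since $K \lhd G$ and $G$ is generated by the $k_i, h_j, f_s$, any element of $K$ is expressible using $k_i$'s and the automorphisms $\xi_j^{\pm1}, \eta_s^{\pm1}$ applied to them). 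Each generator $k_i$ has Mal'cev coordinates in $\Z \subseteq \Z[1/\Delta]$, and by Proposition \ref{prop_KQ_maps} and the choice of $\Delta_{\mathrm{Hom}}$, applying any $\xi_j^{\pm1}$ or $\eta_s^{\pm1}$ keeps the coordinates in $\Z[1/\Delta]$; then Lemma \ref{lem_multiplications} (first bullet) shows that taking products, inverses and powers preserves membership of the coordinates in $\Z[1/\Delta]$. Inducting on word length in this extended generating set gives $z_i \in \Z[1/\Delta]$ for every $k \in K$.

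The main obstacle is bookkeeping rather than any deep idea: one must be careful that "$g$ is a word in the $k_i, h_j, f_s$" can genuinely be rearranged, using the commutation relations of Lemma \ref{lem_multiplications} (moving every $h_j$ and $f_s$ to the right past the $k_i$'s at the cost of only $\Z[1/\Delta]$-coordinate changes, and reordering the $h_j$'s and absorbing the $f_s$'s), into the stated normal form without ever leaving $\Z[1/\Delta]$. All of these moves are exactly the content of the three bullets of Lemma \ref{lem_multiplications}, so the proof is a straightforward induction on word length combining that lemma with the uniqueness coming from the Mal'cev basis of $\bar K$ and the coset structure of $G \supseteq \bar G \supseteq K$.
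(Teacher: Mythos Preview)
Your approach is essentially the paper's: determine $f'$ from the coset in $G/\bar G$, reduce to $\bar G$, and then use Lemma \ref{lem_multiplications} inductively on word length to bring everything into normal form, with uniqueness coming from the projection to $\Z^n$ and the Mal'cev coordinates in $K^\Q$. Two small points of care are worth flagging.

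First, your middle paragraph asserts that every $k\in K$ is a word in the $k_i$ and their $\xi_j^{\pm1},\eta_s^{\pm1}$-conjugates. That is not quite true: when you push the $h_j$'s to the right you also produce the commutators $[h_i^{\epsilon_i},h_j^{\epsilon_j}]\in K$, which need not lie in the subgroup generated by conjugates of the $k_i$. This is precisely why $\Delta_H$ is part of $\Delta$ and why the third bullet of Lemma \ref{lem_multiplications} exists; your last paragraph implicitly uses it (``reordering the $h_j$'s''), so the argument is fine once you drop the stronger generation claim and work with words in $\bar G$ directly, as the paper does.

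Second, your last paragraph speaks of moving the $f_s$ to the right and ``absorbing'' them. Lemma \ref{lem_multiplications} supplies no rules for $f_s$ past $h_j$ or $f_s$ past $f_t$, and none are needed: once you have peeled off $f'$ in your first step, the element $\bar g=gf'^{-1}$ lies in $\bar G$, and by Notation \ref{nota_basis} the set $\{k_i,h_j\}$ already generates $\bar G$, so $\bar g$ has a word representation with no $f_s$'s at all. The induction then uses only the three moves actually provided by Lemma \ref{lem_multiplications}.
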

\begin{proof}
	We first show that every element $\bar{g}$ in $\bar{G}$ can be uniquely written as $\bar{g} = k_1^{z_1}\dots k_m^{z_m}h_1^{l_1}\dots h_n^{l_n}$ with $z_i \in \Z[1/\Delta]$ and $l_j \in \Z$. Take $e \neq \bar{g} \in \bar{G}$ arbitrarily. We must first show that we can write it in the given form. Since $\bar{G} = \langle k_i, h_j \mid 1\leq i\leq m, 1\leq j \leq n\rangle$, we know that 
	$$\bar{g} = \prod_{k=1}^d g_k$$ 
	with $d\in \N$ and $g_k \in \{k_i, k_i^{-1}, h_j, h_j^{-1}\mid 1\leq i \leq m, 1\leq j\leq n\}$. Using induction on $d \in \N$ and Lemma \ref{lem_multiplications}, the existence follows easily. To argue that this expression is unique in $\bar{G}$, suppose that
	$$ k_1^{z_1}\dots k_m^{z_m}h_1^{l_1}\dots h_n^{l_n} = k_1^{z'_1}\dots k_m^{z'_m}h_1^{l'_1}\dots h_n^{l'_n}.$$ If we project this equality onto $\Z^n \cong \bar{G}/K$, then we see that $l_j = l_j'$ for all $1\leq j\leq n$, since $\{h_j\mid 1\leq j \leq n\}$ projects onto a basis of $\Z^n$. Now, it suffices to show that $k_1^{z_1}\dots k_m^{z_m} = k_1^{z'_1}\dots k_m^{z'_m}$ implies that $z_i = z_i'$, but this follows from the uniqueness of this expression in $K^\Q$. As $f^\prime$ is determined by the projection to $H/\Z^n \approx G/\bar{G}$, the last part of the statement follows.	
\end{proof}
	
	We will also use the Mal'cev correspondence as in Theorem \ref{thm_Malcev_corr} to fix in $\bar{K}$ some subset $L$ which is both a subgroup and a Lie ring. We will use the following notation:
		\begin{nota} \label{nota_Lie}
		We will identify $K^\Q$ with its corresponding Lie algebra $\mathfrak{k}$, coming from the Mal'cev correspondence as in Theorem \ref{thm_Malcev_corr}. As a consequence, the extensions of the maps $\xi_j$ and $\eta_k$ to $K^\Q$, which we denote with the same symbol, are both group and algebra homomorphisms. Note that $K$ is now seen as a subset of a Lie algebra. We say a Lie ring/algebra $\mathfrak{g} \leq \mathfrak{k}$ is $\mathcal{H}$-invariant if $\xi_j(\mathfrak{g}) = \mathfrak{g}$ and $\eta_s(\mathfrak{g}) = \mathfrak{g}$ for all $1\leq j \leq m$ and $1\leq s \leq [H: \Z^n]-1$. If we write $\xi \in \mathcal{H}$, then we mean $\xi \in \{\xi_j^{\pm 1}, \eta_s^{\pm 1}\mid 1\leq j \leq m, 1\leq s \leq [H:\Z^n]-1\}$.
	\end{nota}
	\begin{nota} \label{nota_LR_groups}
		We can fix a Lie ring $L$ and a number $\Delta$ as a multiple of the one in Notation \ref{nota_poly} such that
		\begin{itemize}
			\item $L \subset \bar{K}$,
			\item $(L, \ast)$ is a group,
			\item $K \subset L\otimes_\Z \Z[1/\Delta]$,
			\item $L\otimes_\Z \Z[1/\Delta]$ is $\mathcal{H}$-invariant, and
			\item $(L\otimes_\Z \Z[1/\Delta], \ast)$ is a group.
		\end{itemize}
		If $R$ is a ring, we will denote $L\otimes_\Z R$ by $L^R$. The notation $L^{\Z[1/\Delta]}$ will be shortened to $L^\Delta$. (See Lemma \ref{lem_L_exists} for the existence of $L$ and $\Delta$.)
	\end{nota}
	\begin{lemma} \label{lem_L_exists}
		Given the notation introduced in Notations \ref{nota_group}-\ref{nota_Lie}, there exists a Lie ring $L$ and a constant $\Delta_L\in \N$ such that
		$$L \subset \bar{K} \subset K \subset L\otimes_\Z \Z[1/(\Delta_L\Delta)].$$
		Furthermore, we may suppose that $L\otimes_\Z \Z[1/(\Delta_L\Delta)]$ is $\mathcal{H}$-invariant, and both Lie rings are also groups for $\ast$ determined by Baker-Campbell-Hausdorff.
	\end{lemma}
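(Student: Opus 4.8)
The plan is to start from the finitely generated torsion-free nilpotent group $\bar{K}$ with its Mal'cev basis $\{k_1,\dots,k_m\}$, which we have already identified with a $\Z$-spanning set of a $\Q$-vector space $\mathfrak{k} = K^\Q$ via the Mal'cev correspondence. First I would define $L_0 = \sum_{i=1}^m \Z k_i$, the $\Z$-span of the Mal'cev basis inside $\mathfrak{k}$. The point of passing to a Lie \emph{ring} rather than just a lattice is that $L_0$ need not be closed under the Lie bracket nor under $\ast$; however, because the structure constants of the bracket and the coefficients occurring in the Baker–Campbell–Hausdorff formula for $\mathfrak{k}$ are rational with bounded denominator, there is a constant $N_1 \in \N$ such that $L := \sum_i \Z (N_1 k_i)$ (equivalently, a suitable finite-index sublattice of $L_0$, or a scalar multiple) satisfies $[L,L]_L \subseteq L$ and $L \ast L \subseteq L$. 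This is just the standard observation that a lattice spanned by a basis on which the bracket has integral structure constants is a Lie ring, and the BCH series then takes integral values on it once one clears the finitely many denominators $q_e$ appearing up to the nilpotency class $c$; clearing those denominators is exactly what $\Delta_{\text{BCH}}$ (already fixed in Notation \ref{nota_poly}) was introduced for, so I can take $N_1$ to be a power of $\Delta_{\text{BCH}}$. Thus $(L,\ast)$ is a group, giving the first two bullets of Notation \ref{nota_LR_groups} and the left-hand inclusion $L \subseteq \bar{K}$ — here I use that a suitable integer power of each $k_i$ lies in $L$, but more precisely I should pick $L$ so that it literally sits inside $\bar{K}$; since $N_1 k_i$ in additive notation corresponds to $k_i^{N_1}$ up to lower-order Mal'cev-basis terms by Proposition \ref{prop_coor}, the subgroup $\langle k_1^{N_1},\dots,k_m^{N_1}\rangle \leq \bar{K}$ is the group $(L,\ast)$ for an appropriate choice of lattice $L$, so $L \subseteq \bar{K}$ holds.

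Next I would handle the inclusion $K \subseteq L \otimes_\Z \Z[1/(\Delta_L \Delta)]$. The group $\bar{K}$ has $\bar{K}^\Q = K^\Q$, so every element of $K$ lies in the $\Q$-span of $\{k_1,\dots,k_m\}$, i.e.\ in $\mathfrak{k} = L \otimes_\Z \Q$. Since $K$ has finite Prüfer rank equal to $m = \dim_\Q \mathfrak{k}$, the group $K$ is countable and the inclusion $\bar{K} \leq K \leq K^\Q$ means $K$ is contained in the subgroup of $K^\Q$ generated by $\bar{K}$ together with countably many radical elements $g^{1/k}$. But $K$ being minimax (it is a subgroup of the minimax group $G$), it is actually contained in $L \otimes_\Z \Z[1/S]$ for a \emph{finite} set of primes $S$: concretely, $K$ is finitely generated as a $\Z[1/S]$-module-like object — more carefully, one uses that a torsion-free nilpotent minimax group of Prüfer rank $m$ embeds into $L \otimes_\Z \Z[1/S]$ for some finite $S$ by the polyrational structure (cf.\ the proof of the lemma on page 92 reference to \cite{robinson} used earlier, where $K_i/K_{i+1}$ is a subgroup of $\Q$ of finite rank, hence of $\Z[1/S_i]$ for finite $S_i$). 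Taking $\Delta_L$ to be the product of the primes in $S$ not already dividing $\Delta$, and using that $\Delta$ already clears the denominators of multiplication, exponentiation and BCH on $K^\Q$, we get $K \subseteq L^{\Z[1/(\Delta_L \Delta)]}$. This also ensures $(L^{\Z[1/(\Delta_L\Delta)]}, \ast)$ is a group, since over the ring $\Z[1/(\Delta_L\Delta)]$ the BCH coefficients and the Mal'cev-to-linear conversion polynomials (all with denominators dividing $\Delta$, hence $\Delta_L\Delta$) are integral, so the lattice is closed under $\ast$ and $\ast$-inversion.

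For $\mathcal{H}$-invariance of $L^{\Z[1/(\Delta_L\Delta)]}$: each $\xi \in \mathcal{H} = \{\xi_j^{\pm 1}, \eta_s^{\pm 1}\}$ extends to a Lie-algebra automorphism of $\mathfrak{k}$ by Proposition \ref{prop_KQ_maps}(i), given in the Mal'cev-basis coordinates by matrices with rational entries. There are only finitely many such maps, so there is a finite set of primes $T$ (the primes dividing the numerators or denominators of the matrix entries of all $\xi \in \mathcal{H}$ and their inverses) such that each $\xi$ and each $\xi^{-1}$ maps $L \otimes_\Z \Z[1/(\text{prod of }T)]$ into itself, hence is a bijection of it. Enlarging $\Delta_L$ by the primes of $T$ not dividing $\Delta$ (note $\Delta$ already contains $\Delta_{\text{Hom}}$, the common denominator of the $\xi_j, \eta_s$ and their inverses, so in fact $T$ is already absorbed by $\Delta$ and no enlargement is needed here), we obtain $\xi(L^{\Z[1/(\Delta_L\Delta)]}) = L^{\Z[1/(\Delta_L\Delta)]}$ for all $\xi \in \mathcal{H}$, which is the $\mathcal{H}$-invariance claim. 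Finally one replaces $\Delta$ by $\Delta_L \Delta$ throughout, which is a multiple of the original $\Delta$ as required in Notation \ref{nota_LR_groups}, and checks the five bulleted properties there are now all met. The main obstacle I anticipate is the middle inclusion $K \subseteq L^\Delta$: one must genuinely use that $K$ is \emph{minimax} (not merely of finite Prüfer rank) to guarantee that only finitely many primes need to be inverted — a torsion-free nilpotent group of finite Prüfer rank that is not minimax, such as a suitable subgroup of $(\Q,+)$, would force infinitely many primes into the denominator — so the argument must invoke the polyrational series of $K$ with each factor a finite-rank subgroup of $\Q$, and bound the relevant prime sets factor by factor.
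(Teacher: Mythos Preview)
Your approach differs from the paper's in both the construction of $L$ and the proof of the inclusion $K \subseteq L\otimes_\Z \Z[1/(\Delta_L\Delta)]$.

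For $L$, the paper simply cites Segal \cite[Chapter~6, part~B]{sega83-1}, which guarantees a finite-index subgroup $L \lhd_f \bar{K}$ that is simultaneously a Lie ring and a group under $\ast$; one then fixes a $\Z$-basis of $L$ and defines $\Delta_L$ as a common denominator of the base-change matrices between this basis and the Mal'cev basis, together with the matrices of $\xi_j^{\pm 1}, \eta_s^{\pm 1}$ in the $L$-basis. Your explicit scaling construction is viable, but your justification of $L \subseteq \bar{K}$ is muddled: you first set $L = \sum_i \Z(N_1 k_i)$ and then assert this coincides with the multiplicative subgroup $\langle k_1^{N_1},\dots,k_m^{N_1}\rangle$, which is generally false. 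What you actually need is that every additive element $\sum_i a_i(N_1 k_i)$ lies in $\bar{K}$, i.e.\ that $f_i'(N_1 a_1,\dots,N_1 a_m) \in \Z$ for the conversion polynomials $f_i'$ of Proposition~\ref{prop_coor}(ii); this does hold once $N_1$ is divisible by their denominators (which divide $\Delta_{\mathrm{BCH}}$), but you never check it.

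For the inclusion $K \subseteq L\otimes_\Z \Z[1/(\Delta_L\Delta)]$, you flag this as the ``main obstacle'' and appeal to the minimax (polyrational) structure of $K$ to bound the set of primes to invert. The paper's argument is much shorter and does not use minimax at all: by Lemma~\ref{lem_g_as_product}, every $g \in K \leq \bar{G}$ already has the form $k_1^{z_1}\cdots k_m^{z_m}$ with $z_i \in \Z[1/\Delta]$, simply because $\bar{G}$ is \emph{finitely generated} by $\{k_i,h_j\}$ and $\Delta$ was chosen to clear the denominators of all the multiplication and conjugation polynomials on that generating set. One then passes to linear coordinates (still in $\Z[1/\Delta]$ since $\Delta_{\mathrm{BCH}} \mid \Delta$) and applies a single base change to the $L$-basis, whose denominator is absorbed into $\Delta_L$. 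Your route via the internal structure of $K$ would also work, since $K$ is indeed minimax as a subgroup of the $\mathcal{M}$-group $G$, but it rederives from scratch information that is already packaged in Lemma~\ref{lem_g_as_product}.
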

	\begin{proof}
		The existence of a Lie-ring $L$ such that $(L, \ast)$ is a group and $L \lhd_f K$ is guaranteed by \cite[Chapter 6, part B]{sega83-1}. Note in particular, that $L^\Q := \Q L = \mathfrak{k}$. Take a basis of $L$. Let $\Delta_L$ denote a common denominator of the (rational) entries of both  the matrices representing the base changes between the basis on $L$ and the Mal'cev basis on $\bar{K}$, and the matrices representing $\xi_j^{\pm 1}$ and $\eta_s^{\pm 1}$ with respect to the basis on $L$. 
		
		Now take any $g\in K$. By Lemma \ref{lem_g_as_product}, we know that $g = k_1^{z_1}\dots k_m^{z_m}$ with $z_i \in \Z[1/ \Delta]$. By the choice of $\Delta_{BCH}$ in Notation \ref{nota_poly}, we observe that $g = \lambda_1k_1 + \ldots + \lambda_mk_m$ for $\lambda_i \in \Z[1/ \Delta]$. Hence, with respect to the basis on $L$, it surely has coordinates in $\Z[1/(\Delta_L\Delta)]$. Therefore, $K \subset L\otimes_\Z \Z[1/(\Delta_L\Delta)]$. By the choice of $\Delta_L$ we also immediately conclude that $ L\otimes_\Z \Z[1/(\Delta_L\Delta)]$ must be $\mathcal{H}$-invariant. The fact that $(L\otimes_\Z\Z[1/(\Delta_L\Delta)], \ast)$ is a group follows immediately from our choice of $\Delta$, as 
		$\Delta_{BCH}$ is the common denominator of the coefficients of the Baker-Campbell-Hausdorff formula. This ends the proof.
	\end{proof}
	
	\section{Geometry of $\mathcal{M}$-Groups} \label{sec_geom}
	Let $G$ be an $\mathcal{M}$-group with the fixed generating set of Notation \ref{nota_basis}. By Lemma \ref{lem_g_as_product}, we know that if $g\in B_G(r)\cap K$, we can write $g$ as a formal product of the form $k_1^{z_1}\dots k_m^{z_m}$ with $z_i \in \Z[1/\Delta]$. Yet, we do not have a relation between the size of $z_i$ and $r\in \R_{\geq 1}$. The goal of this section is to prove the following statements that we will use in section \ref{sec_upper}:
	\begin{itemize}
		\item If $g\in K\cap B_G(r)$, then we can write $z_i$ as $\mu_i/\Delta^{j_i}$ with $\mu_i\in\Z$ and $j_i \in \N\cup\{0\}$ such that $|\mu_i| \leq C^r$ for some fixed $C>1$.
		\item If $|H|< \infty$ or equivalently if $G$ is virtually nilpotent, then $|\mu_i|$ is bounded by a polynomial in $r$.
	\end{itemize}
	We have split the proof of this fact in two parts. In subsection \ref{sec_bounding_coeff}, we will use the group setting (Notations \ref{nota_group}-\ref{nota_poly}) to bound the coefficients $z_i$. In subsection \ref{sec_bounding_denom}, we will use the Lie setting (Notations \ref{nota_Lie}-\ref{nota_LR_groups}) to bound the denominators $\Delta^{i_j}$. This yields a bound for $\mu_i$, since $|\mu_i| = |z_i|\cdot |\Delta^{i_j}|$.
	\subsection{Bounding coefficients} \label{sec_bounding_coeff}
	In this subsection, we will focus on the proof of Theorem \ref{thm_geometry} below, in which we will bound $|z_i|$. Note that such a bound has already been established in some special cases, for example when $G = K$, in \cite{MR379672,MR623534,Wolf}. Our proofs give a generalization of these techniques to the case where $K$ is not necessarily finitely generated.
	\begin{thm} \label{thm_geometry}
	Take notations as in Notation \ref{nota_group} and  \ref{nota_basis}.
		\begin{enumerate}[(i)]
			\item There exists a constant $C>0$ such that $g \in B_G(r)\cap K$ implies that $g = k_1^{z_1}\dots k_m^{z_m}$ with $|z_i| \leq C^r$.
			\item If $|H|< \infty$, then there exists $C' > 0$ such that $g \in B_G(r)\cap K$ implies that $g = k_1^{z_1}\dots k_m^{z_m}$ with $|z_i| \leq C'r^{m^m}$.
		\end{enumerate} 
	\end{thm}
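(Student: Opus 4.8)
\textit{Setup.} Any $g\in B_G(r)\cap K$ is a product of at most $r$ generators from $\{k_i^{\pm 1}, h_j^{\pm 1}, f_s^{\pm 1}\}$; by Lemma \ref{lem_g_as_product} it can be rewritten in the normal form $g = k_1^{z_1}\cdots k_m^{z_m}$ with $z_i\in\Z[1/\Delta]$. The plan is to track how the coordinate vector $(z_1,\dots,z_m)\in(\Z[1/\Delta])^m$ evolves as we multiply on the right by one generator at a time, and to bound the growth of its coordinates.

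\textit{Main idea for (i).} The key device is to pass to the Lie-algebra coordinates of $K^\Q\cong(\mathfrak k,\ast)$ furnished by Proposition \ref{prop_coor}, so that the group word becomes a composition of affine/polynomial maps. Multiplying the current word on the right by $k_i^{\pm 1}$ changes coordinates by the polynomial multiplication law of Proposition \ref{prop_malcev_poly}(iii); this is a fixed polynomial map of bounded degree (depending only on the nilpotency class $c$ of $K$), so one step multiplies a bound $M$ on $\max_i|z_i|$ by at most a polynomial $P(M)\le A\,M^{c}$ for some constant $A$. Multiplying on the right by $h_j^{\pm 1}$ or $f_s^{\pm 1}$ is handled by moving the generator back to the right across the $k$-part: $h_j^{\pm 1}k_1^{z_1}\cdots k_m^{z_m} = \xi_j^{\pm 1}(k_1^{z_1}\cdots k_m^{z_m})\,h_j^{\pm 1}$ (Notation \ref{nota_action}), and $\xi_j^{\pm1}$ (resp. $\eta_s^{\pm1}$) is by Proposition \ref{prop_KQ_maps}(ii) a \emph{fixed} polynomial map in the Mal'cev coordinates, so again one step replaces the bound $M$ by at most $A'M^{c'}$ for fixed $A',c'$. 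Iterating over the $\le r$ letters, the bound $M_r$ after $r$ steps satisfies $M_r \le (A'')^{1+c''+c''^2+\cdots} M_0^{(c'')^r} \le C_1^{(c'')^r}$. This gives a bound of the form $|z_i|\le C_1^{(c'')^r}$, which is doubly exponential, not the claimed $C^r$ — so a naive iteration is too lossy, and this is \textbf{the main obstacle}.

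\textit{Resolving the obstacle.} To get a genuinely single-exponential bound I would avoid composing the nonlinear multiplication maps directly, and instead exploit that $G$ is \emph{linear} (Theorem \ref{thm_Fit_in_minimax} / Proposition \ref{prop_corr_M_groups}): embed a finite-index subgroup $\bar G\hookrightarrow\GL(d,\F)$ for a number field $\F$, so that each generator is a fixed matrix and $g\in B_G(r)$ is a product of at most $r$ such matrices; hence the matrix entries of $g$ are bounded by $C_0^r$ for $C_0=\max\|A_s\|\cdot d$. Then one shows that the Mal'cev coordinates $z_i$ of $g\in K$ are \emph{polynomially} recoverable from the matrix entries of $g$: the map $K^\Q\to\GL(d,\F)$, $g\mapsto$ its matrix, is a polynomial map in the $z_i$ with polynomial inverse on its image (a standard fact for unipotent-by-diagonalizable representations of torsion-free nilpotent groups; here $K$ acts by a fixed representation and the coordinates are regular functions on the image, which is a closed subvariety). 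Composing, $|z_i|\le C^{r}$ for a suitable $C>1$, which is (i). Alternatively, without invoking linearity, one can use a \emph{filtration argument}: prove by induction on the nilpotency class that each coordinate $z_i$ lying in the $\ell$-th term of the central series grows at most like $C^{r}$, using that on a central quotient the multiplication law becomes additive (linear growth) and that the nonlinear corrections at each level are polynomials in already-controlled lower-or-equal-level coordinates of norm $\le C_0^r$; since there are only $c$ levels and each level contributes a bounded power, the composite bound stays single-exponential. I expect the filtration argument, done carefully so the exponents add rather than multiply, to be the cleanest route.

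\textit{Part (ii).} When $|H|<\infty$, the group $G$ is virtually the torsion-free nilpotent group $K$ (finitely generated, since $K$ then has finite index in a finitely generated group, hence $\bar K = K$ is finitely generated), so it suffices to prove polynomial coordinate growth for a finitely generated torsion-free nilpotent group — exactly the classical estimate of \cite{MR379672,MR623534,Wolf}. The plan is to reprove it by the same filtration/induction on nilpotency class: on the abelianization $K/\gamma_2(K)$ the word norm gives linear growth of coordinates; inductively, coordinates at level $\ell$ are governed by the multiplication polynomials, whose nonlinear terms involve only strictly-lower-level coordinates (Proposition \ref{prop_malcev_poly}(ii)), each already bounded by $r^{a_\ell}$, so integrating/summing over the $\le r$ letters raises the degree by a bounded amount per level; with at most $m$ levels the final exponent is bounded by something like $m^m$, giving $|z_i|\le C'r^{m^m}$. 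The only subtlety beyond the classical case is that the conjugations $\xi_j,\eta_s$ appear, but when $|H|<\infty$ these generate a finite group of polynomial automorphisms, so finitely many fixed polynomial maps are composed a bounded number of times per unit of word length — contributing only to the constants and a bounded additive amount to the degree — and the argument goes through verbatim.
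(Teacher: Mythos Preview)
You correctly diagnose the obstacle: naive iteration of the polynomial multiplication and conjugation maps produces a doubly-exponential bound. But your two proposed resolutions diverge from the paper's, and neither is carried far enough to be a proof.

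The paper's route is combinatorial and avoids both linearity and filtration-by-class. The trick is to keep all exponents bounded by~$1$ throughout the rewriting and track only the \emph{number} of factors. First (Lemma~\ref{lem_action_geom}), one pushes every $h_j^{\pm1}$ to the right; each swap of $h_j^{\pm1}$ past some $k_i^x$ with $|x|\le 1$ produces $\xi_j^{\pm1}(k_i^x)=k_1^{z_1}\cdots k_m^{z_m}$ with each $|z_l|$ bounded by a constant (the polynomial $\xi_j$ is bounded on $[-1,1]$), hence at most a bounded number of new ``small'' factors. After at most $r$ such passes one has a product of $\le C_1^r$ factors $k_i^z$ with $|z|\le 1$. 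Second (Lemma~\ref{lem_to_basis}), one collects such a product of $d$ small factors into normal form by moving the $k_1$'s to the front, then the $k_2$'s, etc.; each swap of $k_j^yk_i^x$ with $i<j$ produces only higher-index small factors, and a matrix estimate on the resulting linear recursion shows the final $|z_i|$ are bounded by a polynomial of degree $m^m$ in $d$. Composing, $|z_i|\le C_2(C_1^r)^{m^m}\le C^r$. Part~(ii) drops out of the same collecting lemma applied directly with $d\le r$.

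Your linearity route is a genuine alternative and would work, but the step ``Mal'cev coordinates are polynomially recoverable from matrix entries'' needs an actual argument: you should exhibit a faithful \emph{unipotent} representation of $K^\Q$ (e.g.\ via $\mathrm{ad}$ on a central extension, or Birkhoff's embedding) so that the matrix logarithm is polynomial and gives back Lie-algebra coordinates, hence Mal'cev coordinates via Proposition~\ref{prop_coor}. Your filtration route, as written, has a gap precisely at the sentence ``exponents add rather than multiply'': you have not explained what structural feature prevents the degree from squaring at each of the $r$ steps. The paper's answer to that question is the small-exponent bookkeeping above; if you intend a different mechanism, you need to spell it out.
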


	\noindent	The statements will be proven in Proposition \ref{prop_metric} and Corollary \ref{cor_metric_minimax} respectively.

	\begin{lemma} \label{lem_action_geom}
		Take notations as in Notation \ref{nota_group} and  \ref{nota_basis}. There exists a constant $C_1>0$ such that every element $g\in B_{\bar{G}}(r)\cap K$ can be written as a product
		$$g = \prod_{s=1}^d g_s $$
		with $g_s \in \{k_i^{z}\mid z\in [-1,1]\cap\Z[1/\Delta], 1\leq i \leq m\}$ and $d \leq C_1^r$.
	\end{lemma}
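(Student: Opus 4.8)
The plan is to start from a word of length $\le r$ for $g$ in the generating set $\{k_l^{\pm 1},h_j^{\pm 1}\}$ of $\bar G$ from Notation \ref{nota_basis}, push all occurrences of the $h_j^{\pm 1}$ to the right, and carefully bound how fast the ``$K$-part'' accumulating on the left grows. Write $g = w_1\cdots w_r$ with each $w_i\in\{k_l^{\pm 1},h_j^{\pm 1}\}\cup\{e\}$, and process the word from left to right while maintaining an equality $w_1\cdots w_i = E_i\cdot V_i$, where $E_i$ is a product of factors $k_l^{z}$ with $z\in[-1,1]\cap\Z[1/\Delta]$ and $V_i$ is a word in the $h_j^{\pm 1}$ alone. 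A letter $h_j^{\pm 1}$ is simply appended to $V_i$. A letter $k_l^{\pm 1}$ is moved leftwards through $V_i = h_{j_1}^{\epsilon_1}\cdots h_{j_t}^{\epsilon_t}$ ($t\le r$) using $h_j^{\pm 1}k = \xi_j^{\pm 1}(k)h_j^{\pm 1}$ from Notation \ref{nota_action}; this yields the element $\Xi(k_l^{\pm 1}) := \big(\xi_{j_1}^{\epsilon_1}\circ\cdots\circ\xi_{j_t}^{\epsilon_t}\big)(k_l^{\pm 1})\in K$, a composition of at most $r$ maps from $\mathcal H$ applied to $k_l^{\pm 1}$, which we rewrite as a product of elementary factors and append to $E_i$. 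In the end $g = E\cdot V$ with $V$ a word of length $\le r$ in the $h_j^{\pm 1}$; since $g$ and $E$ lie in $K$, so does $V$. It then remains to bound the number of elementary factors in $E$ and in a rewriting of $V$.

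To bound an individual block $\Xi(k_l^{\pm 1})$ I pass to the Lie algebra via Proposition \ref{prop_coor}: the Mal'cev basis $\{k_1,\dots,k_m\}$ is a vector-space basis of $\mathfrak k$, the element $k_l^{z}$ corresponds to $zk_l$, and by Proposition \ref{prop_KQ_maps}(i) every $\xi\in\mathcal H$ is linear, with matrix on this basis having entries in $\Z[1/\Delta]$ bounded by a constant $\lambda$ independent of $r$ (this is what $\Delta_{\mathrm{Hom}}$ and $\Delta_{\mathrm{BCH}}$ in Notation \ref{nota_poly} are for). A product of at most $r$ such matrices then has entries of the form $\nu/\Delta^{r}$ with $|\nu|\le(m\lambda\Delta)^{r}$; applying it to $\pm k_l$ and converting back to product coordinates via the fixed polynomials of Proposition \ref{prop_coor}(ii) gives $\Xi(k_l^{\pm 1}) = k_1^{z_1}\cdots k_m^{z_m}$ with $z_l = \mu_l/\Delta^{Nr}$ and $|\mu_l|\le C^{r}$ for suitable constants $N,C$. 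Hence $\Xi(k_l^{\pm 1}) = \prod_{l=1}^m\big(k_l^{1/\Delta^{Nr}}\big)^{\mu_l}$ is a product of at most $mC^{r}$ elementary factors, and $E$ consists of at most $r$ such blocks.

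For $V$: it is a word of length $\le r$ in the $h_j^{\pm 1}$ representing an element of $K$, hence (read as the image of a word in the free group on $n$ letters) a word mapping to $0$ in $\bar G/K\cong\Z^n$. Bubble-sorting the $h$-letters — using the quadratic Dehn function of $\Z^n$: each transposition contributes one conjugated commutator $g_a[h_{p_a},h_{q_a}]^{\pm 1}g_a^{-1}$ with $g_a$ a word of length $\le r$ in the $h_j^{\pm 1}$, there are at most $\binom{r}{2}$ transpositions, and the sorted remainder is trivial since every total $h_j$-exponent of $V$ vanishes — writes $V = \prod_{a=1}^{N}\Theta_a\big([h_{p_a},h_{q_a}]^{\pm1}\big)$ with $N\le\binom{r}{2}$, where $\Theta_a$ is conjugation by $g_a$ restricted to $K$, again a composition of at most $r$ maps from $\mathcal H$. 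The commutators $[h_p,h_q]^{\pm 1}$ are finitely many fixed elements of $K$ with coordinates in $\Z[1/\Delta]$ (Notation \ref{nota_poly}), so the estimate of the previous paragraph bounds each $\Theta_a([h_{p_a},h_{q_a}]^{\pm1})$ by $\le C^{r}$ elementary factors. Collecting everything, $g = E\cdot V$ is a product of at most $r\cdot mC^{r} + \binom{r}{2}\cdot C^{r}$ elementary factors, which is $\le C_1^{r}$ once $C_1>C$ is chosen large enough to absorb the polynomial prefactors and the finitely many small values of $r$.

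I expect the delicate point to be the exponential bound in the two ``Lie-algebra'' estimates. Using only that automorphisms of $K^\Q$ are polynomial maps (Proposition \ref{prop_KQ_maps}(ii)) would let the degree, and hence the coordinate size, blow up doubly exponentially under an $r$-fold composition; it is essential that in Lie coordinates composition is plain matrix multiplication and that the passages between product and linear coordinates cancel rather than compound. One must also check that all denominators that appear stay of the form $\Delta^{O(r)}$, so that the factors produced are genuinely of the allowed shape $k_l^{z}$ with $z\in[-1,1]\cap\Z[1/\Delta]$.
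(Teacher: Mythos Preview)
Your proof is correct and takes a genuinely different route from the paper's. The paper stays entirely in Mal'cev (group) coordinates: it fixes a constant $A$ so that swapping a single $h$-letter past any one adjacent factor---either some $k_i^{x}$ with $|x|\le 1$ or another $h_j^{\epsilon}$---produces at most $A$ new elementary $k$-factors, and then moves the $h$-letters one position at a time to the far right; since there are at most $r$ letters of $h$-type and each full sweep multiplies the current length by at most $(A+1)$, the final word has at most $(A+1)^{r}r$ elementary factors. You instead push each $k$-letter leftwards through the entire accumulated $h$-word in one shot, invoking Proposition~\ref{prop_KQ_maps}(i) to replace the $r$-fold composition of automorphisms by a single matrix product in Lie coordinates, and then convert back via the fixed polynomials of Proposition~\ref{prop_coor}. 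This sidesteps the inductive length-tracking and yields the exponential bound directly. The trade-off: the paper's argument is more elementary (no passage to the Lie algebra), while yours is cleaner and in fact anticipates the Lie-theoretic estimate that the paper carries out separately in Proposition~\ref{prop_Br_to_basis} to control denominators. One small imprecision worth noting: the commutators produced by your bubble sort are $[h_{p}^{\epsilon_p},h_{q}^{\epsilon_q}]^{\pm 1}$ with signs on the $h$'s, not merely $[h_p,h_q]^{\pm 1}$; this is harmless since there are still only finitely many, all covered by $\Delta_H$ in Notation~\ref{nota_poly}.
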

	\begin{proof}
		We start by defining two integers $A_1, A_2>0$. Given a generator $k_i$ and an automorphism $\xi_j^\epsilon$ with $\epsilon \in \{1,-1\}$, it holds that $\xi_j^\epsilon(k_i^x) = k_1^{z_1}\dots k_m^{z_m}$ where every $z_i$ is a polynomial in $x$. Hence, the function $|z_i|$ is bounded on the interval $[-1,1]$, and thus there exists a universal upper bound $\tilde{A_1}$ for all these $z_i$:
		$$\tilde{A_1} = \sup\{|z_1|, \ldots, |z_m| \mid \xi_j^\epsilon(k_i^x) = k_1^{z_1}\dots k_m^{z_m}\text{ with } x\in [-1,1], \epsilon\in\{1,-1\}, 1\leq i \leq m, 1\leq j \leq n\}.$$ A constant $\tilde{A_2}$ is similarly defined as follows:
		$$\tilde{A_2} = \max\{|z_1|, \ldots, |z_m| \mid h_i^{\epsilon_i}h_j^{\epsilon_j} = k_1^{z_1}\dots k_m^{z_m}h_j^{\epsilon_j}h_i^{\epsilon_i}\text{ with }1\leq i,j \leq n, \, \epsilon_i, \epsilon_j \in \{1,-1\}\}.$$
	
		Set $A_1 = \lceil \tilde{A_1} \rceil$ and $A_2 = \lceil \tilde{A_2} \rceil$. By the definition of $A_1$, we know that if $k_i^x$ with $x\in [-1,1]$ is given then
		 \begin{equation} \label{eq_rotat_gen_1}
		 		h_j^{\pm 1}\cdot k_i^{x} = k_1^{z_1}\dots k_m^{z_m} \cdot h_j^{\pm 1},
		 \end{equation}
	 	where $k_l^{z_l}$ can be written as a product of at most $A_1$ elements of the form $k_l^y$ with $y\in [-1,1]$, since $|z_l| \leq A_1$. In particular, going from the left to the right hand side introduces at most $mA_1$ elements of such a form. Analogously, the definition of $A_2$ yields a similar effect for switching elements $h_i^{\epsilon_i}$ and $h_j^{\epsilon_j}$.
	 	
	 	Now, set $A = \max\{mA_1,A_2\}$, and take $g\in B_{\bar{G}}(r)\cap K$. We know that this element can be rewritten to $g = kh_1^{l_1}\dots h_n^{l_n}$ with $k\in K$. Since $g\in K$, we know that $l_1 = \ldots = l_n = 0$. We claim that $k$ can be constructed as a product of at most $(A+1)^rr$ elements of the form $k_i^{z_i}$ with $|z_i| \leq 1$, which implies the statement of the lemma.
	 	
	 	Since $g\in B_{\bar{G}}(r)\cap K$, it is given by a product of at most $r$ generators of $\bar{G}$. In particular, at most $r$ factors are of the form $h_j^{\pm 1}$. Take a factor of the form $h_1^{\pm 1}$ (if there are any), then our observation shows that moving this element one position to the right introduces at most $A$ elements of the form $k_i^{z_i}$ with $|z_i| \leq 1$. Moving this element to the right-most position hence introduces at most $A r$ elements of the given form, leaving a product with in total at most $(A+1)r$ elements of this form. 
	 	
	 	Proceeding this way with a second element of the form $h_1^{\pm 1}$ introduces at most $A(A+1)r$ new elements, leaving $A(A+1)r + (A+1)r = (A+1)^2r$ elements in total. We can continue this process, first for $h_1^{\pm 1}$, then for $h_2^{\pm 1}$, etc. As there are at most $r$ generators of $\{h_j^{\pm 1}\mid 1\leq j \leq n\}$, this gives at most $(A+1)^rr$ elements of the form $k_i^{z_i}$ with $|z_i| \leq 1$.
	\end{proof}
	The previous lemma gives a formal product of the form
	$$g = \prod_{s=1}^d g_s $$
	with $g_s \in \{k_i^{z}\mid z\in [-1,1]\cap\Z[1/\Delta], 1\leq i \leq m\}$ and $d \in \N$, which can be rewritten to the form $k_1^{z_1}\dots k_m^{z_m}$ with $z_i \in \Z[1/\Delta]$. The next result gives an estimate for $|z_i|$ in terms of $d\in \N$, using the following notation:
	\begin{df}\label{df_degree}
		Given a formal product $g$ of the form
		$$g = \prod_{s=1}^d g_s $$
		with $g_s \in \{k_i^{z}\mid z\in [-1,1], 1\leq i \leq m\}$ and $d \in \N$. We define the degree $\degr(g)$ as the vector $(x_1, \ldots, x_m)$, where $x_i$ is the number of times a factor of the form $k_i^{z}$ ($-1\leq z\leq 1$) appears.
	\end{df}
	\begin{lemma} \label{lem_to_basis}
		There exists a constant $C_2>0$ such that if $g$ is a formal product as in Definition \ref{df_degree} with
		$$\degr(g) \leq  (0, \ldots,0,r_i, \ldots, r_m) \quad \text{where } r_i = \ldots = r_m,$$
		then $g$ can be rewritten to $k_i^{z_i}g'$ where $|z_i| \leq r_i$ and $g'$ is a formal product with
		$$\degr(g') \leq  (0, \ldots,0,r'_{i+1}, \ldots, r'_m) \quad \text{where } r'_{i+1}= \ldots = r'_m \leq C_2r_i^m.$$
	\end{lemma}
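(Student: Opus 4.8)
The plan is to collect every occurrence of $k_i$ to the front by the standard collecting process, carried out one elementary swap at a time so that all exponents stay in $[-1,1]$, and then to bound the number of letters that pile up at each of the levels $k_{i+1},\ldots,k_m$ by a polynomial in $r_i$. We may assume $r_i\ge 1$. Write
$$g = u_0\,k_i^{a_1}\,u_1\,k_i^{a_2}\cdots k_i^{a_q}\,u_q,$$
where $k_i^{a_1},\ldots,k_i^{a_q}$ are the $k_i$-letters of $g$ in order (so $q\le r_i$ and $|a_t|\le 1$) and each $u_t$ is a formal product in $k_{i+1},\ldots,k_m$ with exponents in $[-1,1]$; by hypothesis the $u_t$ together contain at most $r_i$ letters of each index $l\in\{i+1,\ldots,m\}$. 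The only move needed is the elementary swap $k_l^{y}k_i^{a_t}=k_i^{a_t}k_l^{y}d$ for $l>i$, where $d=[k_l^{y},k_i^{a_t}]$ lies in the subgroup generated by $k_{l+1},\ldots,k_m$ by Proposition \ref{prop_malcev_poly}(ii). Since $|y|,|a_t|\le 1$, the Mal'cev coordinates of $d$ stay bounded by a constant depending only on $\bar{K}$, so $d$ can be rewritten, via Proposition \ref{prop_malcev_poly}(i), as a formal product of at most a fixed number $c_0$ of letters, all of index in $\{l+1,\ldots,m\}$ and exponent in $[-1,1]$; the companion move $k_i^{y}k_i^{a_t}=k_i^{y+a_t}$ merely merges two powers of $k_i$.

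Bringing the $k_i$-letters to the front one at a time by these moves, and observing that no move ever produces a new $k_i$-letter, we reach after finitely many steps $g=k_i^{z_i}g'$ with $z_i=a_1+\cdots+a_q$, so $|z_i|\le q\le r_i$, and $g'$ a formal product in $k_{i+1},\ldots,k_m$ with exponents in $[-1,1]$. For $j=1,\ldots,m-i$ let $K'_{i+j}$ denote the total number of $k_{i+j}$-letters ever occurring during the process, so that the $(i+j)$-th entry of $\degr(g')$ is at most $K'_{i+j}$. A $k_{i+j}$-letter can only ever be created by a swap ``$k_i$ past $k_l$'' with $i<l<i+j$, each such swap contributing at most $c_0$ of them, because every letter a swap produces has index strictly larger than the $k_l$ it acted on. Moreover each $k_i$-letter moves monotonically leftwards while the $k_l$-letters never move left, so each ordered pair consisting of one $k_i$-letter and one $k_l$-letter is responsible for at most one ``$k_i$ past $k_l$'' swap; since there are at most $r_i$ letters $k_i$, the number of ``$k_i$ past $k_l$'' swaps is at most $r_iK'_l$. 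Hence
$$K'_{i+1}\le r_i,\qquad K'_{i+j}\le r_i+c_0\,r_i\sum_{p=1}^{j-1}K'_{i+p}\quad(2\le j\le m-i),$$
and an induction on $j$ (using $r_i\ge 1$ to absorb lower powers of $r_i$) yields $K'_{i+j}\le A_j\,r_i^{j}$ for constants $A_j$ depending only on $c_0$ and $j$. As $1\le j\le m-i\le m$, the constant $C_2:=\max_{1\le j\le m}A_j$ gives $\degr(g')\le(0,\ldots,0,C_2r_i^{m},\ldots,C_2r_i^{m})$, which is the assertion.

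I expect the only real obstacle to be keeping this blow-up polynomial rather than exponential. A crude reading of the process is alarming: each elementary swap manufactures new letters that must themselves be swapped past the remaining $k_i$-letters, which suggests a geometric — hence exponential — growth of the letter count. What rescues the estimate is the grading: the letters produced by ``swapping $k_i$ past $k_l$'' all have index strictly larger than $l$, so the growth cascades only downward through the finitely many levels $i+1,\ldots,m$ instead of recirculating, and level $i+j$ picks up just one extra factor of $r_i$ for each of the $j$ levels that feed into it. This is also the reason for working one elementary swap at a time instead of conjugating a whole tail by the possibly large power $k_i^{z_i}$: only then is every exponent confined to $[-1,1]$ and every single swap of cost only $O(1)$ new letters.
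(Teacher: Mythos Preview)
Your argument is correct and follows the same collecting strategy as the paper: push each $k_i$-letter to the front by elementary swaps $k_l^{y}k_i^{a}\to k_i^{a}k_l^{y}d$, using that $d\in G_{l+1}$ has bounded Mal'cev coordinates when $|y|,|a|\le 1$. The only difference is the bookkeeping. The paper tracks the degree vector after moving a single $k_i$-letter all the way to the front and bounds the new vector by $(\mathbb{1}+N)$ applied to the old one, with $N$ strictly lower triangular; iterating over the at most $r_i$ letters gives $(\mathbb{1}+N)^{r_i}$, whose entries are polynomials of degree $<m$ in $r_i$ by nilpotency of $N$. You instead count the total number $K'_{i+j}$ of letters ever created at level $i+j$, bounding the number of swaps by the observation that each ordered pair (one $k_i$-letter, one $k_l$-letter) can swap at most once since $k_i$-letters move only left and no other letter moves left; this yields the recursion $K'_{i+j}\le r_i + c_0\,r_i\sum_{p<j}K'_{i+p}$ and hence $K'_{i+j}=O(r_i^{j})$. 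Both proofs rest on the same structural fact that the commutator lands at strictly higher index, so this is the same argument with a more combinatorial accounting replacing the matrix-power computation; your level-by-level estimate $O(r_i^{j})$ is even marginally sharper than the paper's uniform $O(r_i^{m})$.
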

	\begin{proof}
		We start by introducing a constant $A>0$. For this, recall that by Proposition \ref{prop_malcev_poly}, for all $x,y\in \Q$ and $i<j$, we have
		$$k_j^yk_i^x = k_i^xk_j^y\cdot k_{j+1}^{z_{j+1}}\ldots k_m^{z_m},$$
		where all $z_l$ are polynomials in $x$ and $y$. Since polynomials are bounded on compact subsets, we can define
		$$\tilde{A} = \sup\{|z_{j+1}|, \ldots , |z_m| \mid k_j^yk_i^x = k_i^xk_j^y\cdot k_{j+1}^{z_{j+1}}\ldots k_m^{z_m}\text{ with }x,y\in [-1,1], \, 1\leq i < j \leq m \}.$$
		Set $A = \lceil \tilde{A} \rceil$.
		
		Now, consider the formal product $g$. Take, if possible, a factor of the form $k_i^x$ with $|x| \leq 1$. If we want to move this factor one position to the left, it needs to switch places with an element of the form $k_j^y$ with $i\leq j$ and $y\in [-1,1]$. By the definition of $A$, this switch introduces at most $A$ new factors of the form $k_l^z$ with $|z| \leq 1$ for every $l>i$. In particular, by shifting the first occurrence of an element of the form $k_i^x$ with $|x| \leq 1$ to the front of the product, we obtain a new word $\tilde{g}$ of at most degree
		\begin{eqnarray*} \label{eq_matrix_switch}
			\degr(\tilde{g}) \leq \begin{pmatrix}0\\ \vdots \\ 0 \\ r_i \\ r_{i+1} \\ r_{i+2} \\ \vdots \\ r_m \end{pmatrix} + \begin{pmatrix}0\\ \vdots \\ 0 \\ 0 \\ 0 \\ Ar_{i+1} \\ \vdots \\ A(r_{i+1} + \ldots + r_{m-1}) \end{pmatrix} 
			&\leq& \begin{pmatrix}0\\ \vdots \\ 0 \\ r_i \\ r_{i+1} \\r_{i+2} \\ \vdots \\ r_m \end{pmatrix} + \begin{pmatrix}0\\ \vdots \\ 0 \\ 0 \\ Ar_{i} \\ A(r_i + r_{i+1}) \\ \vdots \\ A(r_i + \ldots + r_{m-1}) \end{pmatrix} \\ &=& \begin{pmatrix}0&\hdots & 0 && & & \\ \vdots & \ddots & \vdots &&&& \\ 0& \hdots & 0 &&&& \\ &&& 1& 0 &\hdots & 0 \\ &&&A & \ddots & \ddots & \vdots \\ &&&\vdots&\ddots &\ddots&0\\ &&&A&\hdots &A&1 \end{pmatrix}\begin{pmatrix}0\\ \vdots \\ 0 \\ r_i \\ r_{i+1} \\ \vdots \\ r_m \end{pmatrix}.
		\end{eqnarray*}
		
		Now, repeat this process until all factors $k_i^x$ are in front. Since there are no more than $r_i$ such factors, we conclude that $g$ can be rewritten to $k_i^{z_i}g'$ with $|z_i| \leq r_i$ and $g'$ a formal product with degree:
		\begin{equation}\label{eq_estimate_kprime}
			\degr(g') \leq \begin{pmatrix}0&\hdots & 0 && & & \\ \vdots & \ddots & \vdots &&&& \\ 0& \hdots & 0 &&&& \\ &&& 1& 0 &\hdots & 0 \\ &&&A & \ddots & \ddots & \vdots \\ &&&\vdots&\ddots &\ddots&0\\ &&&A&\hdots &A&1 \end{pmatrix}^{r_i}\begin{pmatrix}0\\ \vdots \\ 0 \\ r_i \\ r_{i+1} \\ \vdots \\ r_m \end{pmatrix}.
		\end{equation}
		Observe that the matrix has a block structure of the form $\begin{psmallmatrix} 0&0\\0& \mathbb{1} + N\end{psmallmatrix}^{r_i}$, where $N$ is nilpotent and hence
		$$(\mathbb{1} + N)^{r_i} = \mathbb{1} + \binom{r_i}{1}N + \ldots \binom{r_i}{m-1}N^{m-1} + 0.$$
		Therefore, the entries of this matrix can be estimated by a polynomial of the form $Br_i^{m-1}$ for some constant $B>0$ depending on $A$ and $m$. Using this estimate in Equation \eqref{eq_estimate_kprime} shows that the claimed constant $C_2>0$ from the lemma's statement surely exists.   
	\end{proof}
	\begin{prop} \label{prop_metric}
		Take notations as in Notation \ref{nota_group} and  \ref{nota_basis}, then there exists a constant $C>0$ such that $g \in B_{\bar{G}}(r)\cap K$ implies that $g = k_1^{z_1}\dots k_m^{z_m}$ with $|z_i| \leq C^r$. Furthermore, if $K = \bar{K}$, then there exists a constant $C'>0$ such that $g\in B_K(r)$ implies that $g = k_1^{z_1}\dots k_m^{z_m}$ with $|z_i| \leq C'r^{m^m}$.
	\end{prop}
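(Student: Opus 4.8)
The plan is to stitch together the two preceding lemmas and then solve a one-line recurrence. First I would take $g\in B_{\bar{G}}(r)\cap K$ and invoke Lemma~\ref{lem_action_geom} to write $g$ as a formal product $g=\prod_{s=1}^{d}g_s$ with each $g_s$ of the form $k_i^{z}$, $z\in[-1,1]\cap\Z[1/\Delta]$, and $d\le C_1^{r}$. Since the $i$-th component of $\degr(g)$ (Definition~\ref{df_degree}) merely counts the factors of type $k_i^{z}$, and all components sum to $d$, we get $\degr(g)\le(C_1^{r},\ldots,C_1^{r})$; in particular all components are equal, which is exactly the shape required to start applying Lemma~\ref{lem_to_basis}.

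Next I would apply Lemma~\ref{lem_to_basis} successively for $i=1,2,\ldots,m$. Put $b_1=C_1^{r}$ and $b_{i+1}=C_2 b_i^{m}$. By induction on $i$, after the applications for indices $1,\ldots,i$ we have rewritten $g=k_1^{z_1}\cdots k_i^{z_i}\,g^{(i)}$ with $|z_j|\le b_j$ for $j\le i$ and $g^{(i)}$ a formal product with $\degr(g^{(i)})\le(0,\ldots,0,b_{i+1},\ldots,b_{i+1})$; this last fact is precisely the hypothesis needed for the next application, with $r_{i+1}=b_{i+1}$ and the new tail bounded by $C_2 b_{i+1}^{m}=b_{i+2}$. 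After $m$ steps $g^{(m)}$ has trivial degree, so $g=k_1^{z_1}\cdots k_m^{z_m}$ with $|z_i|\le b_i$ for all $i$.

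Finally I would unwind the recurrence: for $m\ge 2$ one gets $b_i=C_2^{(m^{i-1}-1)/(m-1)}\,C_1^{\,r\,m^{i-1}}$ (the case $m=1$ being trivial), so for every $i\le m$ the exponent on $r$ stays linear and $b_i\le \big(C_2^{(m^{m-1}-1)/(m-1)}\big)\big(C_1^{\,m^{m-1}}\big)^{r}$. Absorbing the leading constant into the base (legitimate since $r\ge 1$) produces a single $C>0$ with $|z_i|\le C^{r}$, which is the first assertion. For the second, when $K=\bar{K}$ the Mal'cev basis $\{k_1,\ldots,k_m\}$ generates $K$, so every $g\in B_K(r)$ is a product of at most $r$ of the $k_i^{\pm 1}$ and hence a formal product with $\degr(g)\le(r,\ldots,r)$; running the identical iteration with initial value $b_1=r$ gives $b_i=C_2^{(m^{i-1}-1)/(m-1)}\,r^{\,m^{i-1}}\le C'\,r^{\,m^{m-1}}\le C'\,r^{\,m^{m}}$, again using $r\ge 1$.

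No step here is genuinely deep: all the real geometric work was carried out in Lemmas~\ref{lem_action_geom} and~\ref{lem_to_basis}. The only point deserving care is checking that iterating the polynomial degree blow-up of Lemma~\ref{lem_to_basis} exactly $m$ times (the length of the Mal'cev chain, so the recursion terminates) keeps the bound exponential in $r$ in the first case and polynomial in $r$ in the second, and that the accumulated constants $C_2^{(m^{i-1}-1)/(m-1)}$ can be swallowed into $C$, respectively $C'$; once the recurrence is written down this is immediate.
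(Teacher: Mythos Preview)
Your proposal is correct and follows essentially the same route as the paper: invoke Lemma~\ref{lem_action_geom} to bound $\degr(g)$ by $(C_1^r,\ldots,C_1^r)$, then iterate Lemma~\ref{lem_to_basis} through the Mal'cev chain, and finally observe that the resulting bound stays exponential (resp.\ polynomial) in $r$. Your bookkeeping of the recurrence $b_{i+1}=C_2 b_i^m$ is in fact slightly more careful than the paper's, which jumps directly to the $O(r^{m^m})$ estimate without writing out the closed form; you correctly obtain the sharper exponent $m^{m-1}$ before relaxing it to $m^m$.
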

	\begin{proof}
		For convenience in this proof, we will write $|z_i| = O(r^k)$ to indicate that $|z_i|$ can be estimated by some polynomial in $r$ of degree $k$, where the coefficients depend only on $K^\Q$.
		
		Suppose a formal product $g$ as in Definition \ref{df_degree} is given with $\deg(k) \leq (r, r , \ldots)$. Applying Lemma \ref{lem_to_basis}, we can rewrite $k$ to $k_1^{z_1}g'$, where 
		$|z_1| = O(r)$ and $\degr(g') = O(r^m)$. Here, $g'$ has no factor of the form $k_1^z$ with $|z|\leq 1$. Now, applying this lemma again and again shows that $g = k_1^{z_1}\dots k_m^{z_m}$ with $|z_2| = O((r^m)^m) = O(r^{m^2})$ and more generally $|z_m| = O(r^{m^m})$.
		
		Suppose now that we have $g\in B_{\bar{G}}(r)\cap K$. By Lemma \ref{lem_action_geom}, we know that 
		$$\degr(g) \leq (C_1^r, C_1^r, \ldots),$$ and thus if we rewrite it to the form $k_1^{z_1}\dots k_m^{z_m}$, then $|z_i| = O((C_1^r)^{m^m}) = O(C_1^{m^mr})$ for all $1\leq i \leq m$. Hence, some exponential upper bound of the form $C^r$ must exist.
		
	For the final part, assume that $K = \bar{K}$. In particular, $K$ is finitely generated with Mal'cev basis $\{k_1, \ldots , k_m\}$. If $g\in B_K(r)$, with respect to the generators $\{k_1 , \ldots , k_m\}$, then clearly $\degr(g) \leq (r,r, \ldots)$. We conclude via the argumentation above.
	\end{proof}
	We end this section by extending the results from the previous proposition about $\bar{G}$ to $G$ itself.
	\begin{cor} \label{cor_metric_minimax}
		Take notations as in Notation \ref{nota_group} and  \ref{nota_basis}, then there exists a constant $C>0$ such that $g \in B_{G}(r)\cap K$ implies that $g = k_1^{z_1}\dots k_m^{z_m}$ with $|z_i| \leq C^r$. Furthermore, if $|H|< \infty$, then there exists $C' > 0$ such that $g \in B_G(r)\cap K$ implies that $g = k_1^{z_1}\dots k_m^{z_m}$ with $|z_i| \leq C'r^{m^m}$.
	\end{cor}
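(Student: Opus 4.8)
The plan is to reduce any word-metric ball in $G$ to one in $\bar{G}$ at the cost of only a multiplicative constant in the radius, and then quote Proposition~\ref{prop_metric}. Recall from Notation~\ref{nota_basis} that $G$ is generated by $\{k_i,h_j,f_s\}$ while $\bar{G}$ is generated by the subset $\{k_i,h_j\}$, that $\bar{G}\lhd_f G$, and that $T:=\{e,f_1,\dots,f_{[H:\Z^n]-1}\}$ is a transversal of $\bar{G}$ in $G$ with $T\cap\bar{G}=\{e\}$. The combinatorial input I need is the following: for $t\in T$ and a letter $x\in\{k_i^{\pm1},h_j^{\pm1}\}$, normality of $\bar{G}$ gives $txt^{-1}\in\bar{G}$, so $tx=u\,t$ with $u\in\bar{G}$; and for $t\in T$, $x=f_s^{\pm1}$, the element $tx\in G$ lies in a unique coset $\bar{G}t'$, so $tx=u'\,t'$ with $u'\in\bar{G}$ and $t'\in T$. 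Since $T$ and the letter set are finite, only finitely many $u,u'$ occur, and I let $B\in\N$ bound the word length (with respect to $\{k_i,h_j\}$) of all of them.

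Given $g\in B_G(r)\cap K$, write $g=x_1\cdots x_d$ with $d\le r$ and each $x_\ell$ a generator of $G$. Sweeping from left to right, I would produce inductively, for $0\le\ell\le d$, a word $v_\ell$ in $\{k_i,h_j\}$ of length at most $B\ell$ and an element $t_\ell\in T$ with $g=v_\ell\,t_\ell\,x_{\ell+1}\cdots x_d$: one starts from $v_0=e$, $t_0=e$ and at each step rewrites $t_\ell x_{\ell+1}$ by the two relations above, appending at most $B$ letters to $v_\ell$ and updating the transversal element (it stays fixed if $x_{\ell+1}\in\{k_i^{\pm1},h_j^{\pm1}\}$ and moves within $T$ if $x_{\ell+1}=f_s^{\pm1}$). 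At $\ell=d$ this gives $g=v_d\,t_d$ with $v_d\in B_{\bar{G}}(Br)$; since $g\in K\subseteq\bar{G}$ and $v_d\in\bar{G}$, we get $t_d=v_d^{-1}g\in\bar{G}\cap T=\{e\}$, hence $g=v_d\in B_{\bar{G}}(Br)$. Now the first part of Proposition~\ref{prop_metric} yields $g=k_1^{z_1}\cdots k_m^{z_m}$ with $|z_i|\le C^{Br}=(C^B)^r$, which is statement (i).

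For (ii), observe that $|H|<\infty$ forces the finite-index free abelian subgroup $\Z^n\lhd_f H$ to be trivial, i.e.\ $n=0$. Then $\bar{G}$ is the preimage of the trivial subgroup of $H$, so $\bar{G}=K$, and the generating set $\{k_i,h_j\}$ of $\bar{G}$ is simply the Mal'cev basis $\{k_1,\dots,k_m\}$ of $\bar{K}$; hence $K=\bar{G}=\langle k_1,\dots,k_m\rangle=\bar{K}$. The sweep of the previous paragraph then gives $B_G(r)\cap K\subseteq B_K(Br)$, and the second part of Proposition~\ref{prop_metric} (which applies precisely because $K=\bar{K}$) produces $g=k_1^{z_1}\cdots k_m^{z_m}$ with $|z_i|\le C'(Br)^{m^m}$, of the required form.

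The only genuine difficulty is the bookkeeping: one must verify that the $f_s^{\pm1}$-letters can be pushed to the right at an additive per-step cost, so that the overall blow-up in the radius is merely a constant factor; this rests on $\bar{G}$ being \emph{normal} in $G$, so that conjugates of $\bar{G}$-generators by transversal elements remain in $\bar{G}$, and on there being only finitely many relevant rewriting relations. The final point is the harmless observation that $g\in\bar{G}$ (in fact $g\in K$) forces the leftover transversal element $t_d$ to be trivial, so that the reduction to $\bar{G}$ loses nothing.
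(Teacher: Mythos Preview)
Your proof is correct and follows the same approach as the paper: reduce $B_G(r)\cap K$ to $B_{\bar{G}}(Ar)\cap K$ via the finite-index inclusion $\bar{G}\lhd_f G$, then invoke Proposition~\ref{prop_metric}. The only difference is presentational: the paper dispatches the reduction in one line by citing the standard fact that the inclusion of a finite-index subgroup is bi-Lipschitz, whereas you spell out an explicit transversal-sweeping argument that proves exactly this bound; both arrive at the same conclusion and the same constants.
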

	\begin{proof}
		Since $\bar{G} \lhd_f G$, the inclusion map $\bar{G} \to G$ is bi-Lipschitz, hence there exists a constant $A>0$ such that $B_G(r)\cap \bar{G} \subset B_{\bar{G}}(Ar)$. From this, the result is immediate as in the final part, $|H|<\infty$ implies that $\bar{G} = K = \bar{K}$.
	\end{proof}

	\subsection{Bounding denominators} \label{sec_bounding_denom}
	In this subsection, we will complete the claim made at the beginning of the section, by showing that if $g\in B_G(r)\cap K$ and $g = k_1^{z_1}\cdots k_m^{z_m}$ with $z_i = \mu_i/\Delta^{j_i}$, then $|\mu_i|$ can be exponentially bounded. Since we can already bound $|z_i|$, this subsection will focus on bounding the denominator $|\Delta^{j_i}|$. Note that the denominator is not uniquely determined, so the claim is that a small enough denominator can be chosen.
	
	In light of section \ref{sec_upper}, we will only prove the result indirectly: we will prove the statement in the additive notation of the Lie ring $L$. The claim in $K$ itself then follows by Proposition \ref{prop_coor}. We start by making a relevant observation concerning the Baker-Campbell-Hausdorff formula.
	
	\begin{lemma}
		\label{lem:longprod}
		There exists a power of $\Delta$ that is a common denominator of the rational coefficients in the formal expressions
		$\{w_1 \ast w_2 \ast \ldots \ast w_l \mid l\in\N\}$,
		where $\ast$ is the Baker-Campbell-Hausdorff formula for a nilpotent group of nilpotency class $c$.
	\end{lemma}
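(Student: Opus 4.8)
The plan is to first pin down what ``formal expression'' means and then reduce to a finite computation. Regarding $w_1, \ldots, w_l$ as free symbols, the expression $w_1 \ast w_2 \ast \cdots \ast w_l$ is an element of the free nilpotent Lie $\Q$-algebra $\mathfrak{F}$ of class $c$ on $l$ generators, made into a group by $\ast$; expanding it and discarding every iterated bracket of length exceeding $c$, one obtains a rational linear combination of Hall-basis brackets in $w_1, \ldots, w_l$, and the task is to show the denominators of the coefficients all divide one fixed power of $\Delta$, independent of $l$. Since these coefficients are universal — they are the same in any nilpotent Lie algebra of class $\le c$ — and since the Baker--Campbell--Hausdorff coefficients of a single product $v \ast w$ have denominators dividing $\Delta_{\text{BCH}}$, which divides $\Delta$ by Notation \ref{nota_poly}, it suffices to argue inside $\mathfrak{F}$ with this input.

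The first step is a locality principle: the coefficient of a Hall-basis bracket $\beta$ in $w_1 \ast \cdots \ast w_l$ depends only on the set $I$ of indices $i$ such that $w_i$ occurs in $\beta$ (necessarily $|I| \le c$), and it equals the coefficient of $\beta$ in the sub-product $w_{i_1} \ast \cdots \ast w_{i_k}$, where $i_1 < \cdots < i_k$ enumerate $I$. To prove this I would use the Lie algebra endomorphism $\phi_I$ of $\mathfrak{F}$ fixing $w_i$ for $i \in I$ and sending $w_i \mapsto 0$ otherwise. By functoriality of the Baker--Campbell--Hausdorff formula — a Lie algebra homomorphism intertwines the operations $\ast$, directly from the explicit formula, since $0$ is the identity for $\ast$ — the map $\phi_I$ is also an endomorphism of $(\mathfrak{F}, \ast)$, so $\phi_I(w_1 \ast \cdots \ast w_l) = w_{i_1} \ast \cdots \ast w_{i_k}$; on the other hand $\phi_I$ is the projection sending a Hall-basis bracket to itself if all its generators lie in $I$ and to $0$ otherwise. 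Comparing coefficients on both sides yields the principle.

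By the locality principle it is enough to bound the denominators occurring in a product of at most $c$ factors, and this I would do by induction on the number $j$ of factors, $1 \le j \le c$. For $j = 1$ the expansion is just $w_1$, with integer coefficients. For the step, write $u = w_1 \ast \cdots \ast w_{j-1}$, assume its Hall-basis coefficients have denominators dividing $\Delta^{N_{j-1}}$, and apply the two-term formula to $u \ast w_j = u + w_j + \tfrac12[u,w_j] + \sum_{e\ge 3} q_e(u,w_j)$. Substituting the expansion of $u$ into each iterated bracket and expanding by multilinearity, one observes that any bracket which survives has length $\le c$ while each occurrence of $u$ contributes length at least $1$, so at most $c$ copies of $u$ can appear in a surviving bracket; hence each resulting coefficient is a product of at most $c$ coefficients of $u$ times one Baker--Campbell--Hausdorff coefficient, and re-expressing the resulting bracket monomials in the Hall basis via antisymmetry and the Jacobi identity introduces only integer coefficients. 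This gives the recursion $N_j \le 1 + c\,N_{j-1}$ with $N_1 = 0$, so $N := N_c$ is a fixed exponent depending only on $c$, and $\Delta^N$ is a common denominator for every $w_1 \ast \cdots \ast w_l$, $l \in \N$: products of at most $c$ factors are covered directly, and longer products by the locality principle.

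The step I expect to require the most care is the locality principle, and within it the assertion that killing a generator of $\mathfrak{F}$ descends to an endomorphism of the group $(\mathfrak{F}, \ast)$; this is precisely the functoriality of the Baker--Campbell--Hausdorff formula and is the conceptual core of the argument, whereas the inductive denominator bookkeeping is routine once one notices that nilpotency caps the number of mutually interacting factors in any single coefficient.
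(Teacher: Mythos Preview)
Your proof is correct and arrives at the same conclusion as the paper, but the route is genuinely different in one respect: you insert a locality principle (via the Lie algebra endomorphism killing the absent generators and functoriality of $\ast$) to reduce the infinite family $\{w_1\ast\cdots\ast w_l\mid l\in\N\}$ to products of at most $c$ factors, and only then carry out a finite induction on the number of factors. The paper does not make this reduction; instead it iterates $(w_1\ast\cdots\ast w_{l-1})\ast w_l$ directly for all $l$, and argues that the set of denominators that can ever appear is contained in the finite set $S_c=\{d_{i_1}^{k_1}\cdots d_{i_t}^{k_t}:\sum k_j\le c\}$, the point being that any surviving bracket has length $\le c$, so at most $c$ layers of BCH denominators can stack up.

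What each approach buys: the paper's argument is more elementary, needing neither a Hall basis nor functoriality, and it actually yields the tighter bound that every denominator is a product of at most $c$ of the basic $d_i$'s; its weak spot is that the phrase ``repeating this argument'' hides a small degree-bookkeeping computation (one has to check that a degree-$E$ term accrues at most $E-1$ basic denominators). Your locality principle, by contrast, makes the finiteness entirely transparent and the induction genuinely finite ($c$ steps), at the cost of importing slightly more machinery (Hall basis and the observation that a Lie algebra endomorphism is automatically a $\ast$-homomorphism). Your recursion $N_j\le 1+cN_{j-1}$ gives a cruder exponent than the paper's, but since the lemma only asks for existence of some power of $\Delta$, this is immaterial.
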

	\begin{proof}
		We first show that a common denominator of all formal expressions $\{w_1 \ast w_2 \ast \ldots \ast w_l \mid l\in\N\}$ exists. Consider
		$$w_1\ast w_2 =  w_1+w_2 + \frac{1}{2}[w_1,w_2]_L + \sum_{e=3}^{c} q_e(w_1,w_2). $$
		Let $d_1 = 2$ and $d_{e-1}$ be a common denominator of the rational coefficients in $q_e(w_1,w_2)$. Write $S_2 = \{d_i\mid 1\leq i \leq c-1\}$ for the set of all denominators different from $1$. 
		
		Consider the expression $w_1\ast w_2 \ast w_3$. We have
		\begin{equation*}
			\begin{split}
				(w_1\ast w_2)\ast w_3 & =w_1\ast w_2+w_3 + \frac{1}{2}[w_1\ast w_2,w_3]_L + \sum_{e=3}^{c} q_e(w_1\ast w_2,w_3) \\
				& = w_1\ast w_2+w_3 + \frac{1}{2}[w_1,w_3]_L + \frac{1}{2}[w_2,w_3]_L + \frac{1}{2}\frac{1}{2}[ [w_1,w_2]_L,w_3]_L + \ldots
			\end{split}
		\end{equation*}
	One sees that the set of occurring denominators different from $1$ in this expression is contained in the finite set $S_3 = S_2 \cup \{d_id_j^k \mid 1\leq i,j \leq c-1, k \leq c-1\}$, as the length of non-zero brackets is at most $c$.

	Repeating this argument for $w_1\ast w_2\ast w_3 \ast w_4$ and longer products shows that in general for $l\in \N$ the set of occurring denominators not equal to 1 of $w_1\ast w_2\ast \ldots \ast w_l$ is contained in
	$$S_{c} :=  \{d_{i_1}^{k_{i_1}}d_{i_2}^{k_{i_2}}\cdots d_{i_l}^{k_{i_l}} \mid k_{i_j} \geq 0, 1\leq \sum_{j=1}^l k_{i_j} \leq c \}.$$
		Since $\Delta$ was a multiple of all numbers in $S_2$, some power of $\Delta$ must be a multiple of all numbers in $S_c$. This ends the proof.
	\end{proof}
	\begin{prop} \label{prop_Br_to_basis}
	Take notations as in Notation \ref{nota_LR_groups}, with $\{v_1, \ldots , v_m\}$ a $\Z$-basis of $L$. There exists a constant $C>0$ such that if $g\in B_G(r)\cap K$, then 
		$$g = \sum_{i=1}^m \lambda_iv_i$$
		with $\lambda_i = \mu_i/\Delta^{j_i}$ for some $j_i\in\N$ and some $\mu_i\in \Z$ satisfying $|\mu_i| \leq C^r$. If $H$ is finite, then $|\mu_i| \leq Cr^C$.
	\end{prop}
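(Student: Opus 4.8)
The plan is to combine the coefficient bound from Corollary~\ref{cor_metric_minimax} with a bound on the denominators obtained by tracking the Baker-Campbell-Hausdorff arithmetic. Let $g\in B_G(r)\cap K$. By Lemma~\ref{lem_action_geom} applied after passing to $\bar G$ (using that $\bar G\lhd_f G$, so $B_G(r)\cap K\subset B_{\bar G}(Ar)\cap K$ for a fixed $A>0$), we may write $g$ as a formal product $\prod_{s=1}^d g_s$ with each $g_s\in\{k_i^{z}\mid z\in[-1,1]\cap\Z[1/\Delta],\,1\le i\le m\}$ and $d\le C_1^{Ar}$; if $H$ is finite the same argument with $\bar G=K=\bar K$ gives $d\le C'r^{m^m}$, a polynomial bound. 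The first idea is that it suffices to prove the statement for such a formal product with $d$ factors, producing $|\mu_i|$ bounded by an exponential (resp.\ polynomial) in $d$, since composing with $d\preceq C_1^{Ar}$ (resp.\ $d\preceq r^{m^m}$) preserves the form of the bound.

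Next I would translate the formal product into the Lie ring $L$. Each factor $k_i^z$ with $z\in[-1,1]\cap\Z[1/\Delta]$ can, by Proposition~\ref{prop_coor} and the choice of $\Delta_{\mathrm{BCH}}$ in Notation~\ref{nota_poly}, be written in the vector space $\mathfrak k=L^\Q$ as $\sum_j \lambda_{ij}v_j$ with $\lambda_{ij}=\nu_{ij}/\Delta^{a}$ where $a$ is bounded by a constant (depending only on $K^\Q$ and on the bound $|z|\le 1$), and $|\nu_{ij}|$ likewise bounded by a constant $E>0$ independent of the factor. So each $g_s$, as an element of $L^\Delta$, has coordinates with numerators of absolute value at most $E$ over a bounded power of $\Delta$. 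The group product $g=g_1\ast g_2\ast\cdots\ast g_d$ is then computed via the Baker-Campbell-Hausdorff formula. By Lemma~\ref{lem:longprod}, some fixed power $\Delta^{t}$ (with $t$ depending only on the nilpotency class $c$ of $K$) is a common denominator for the rational coefficients of the formal expression $w_1\ast\cdots\ast w_d$ for every $d$; hence, after clearing denominators, the coordinates of $g$ in the basis $\{v_1,\dots,v_m\}$ are of the form $\mu_i/\Delta^{j_i}$ with $j_i\le t + (\text{bounded contribution from the factors})$, in particular $j_i$ bounded by a constant. It remains to bound $|\mu_i|$.

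For that, I would estimate the size of the numerators appearing in $w_1\ast\cdots\ast w_d$ when each $w_s$ has integer coordinates of absolute value at most some fixed $E'$ (obtained by clearing the bounded denominators of the $g_s$). Expanding $w_1\ast\cdots\ast w_d$ gives a sum of nested Lie brackets of length at most $c$, each bracket involving at most $c$ of the $w_s$'s; the number of such brackets, as a function of $d$, is polynomial in $d$ of degree at most $c$ (there are $\le d^c$ choices of which $w_s$'s appear, times a bounded number of bracket shapes and bounded rational coefficients). Each bracket evaluates, in the basis $\{v_i\}$, to a vector whose coordinates are bounded by a constant times $(E')^{c}$ (using the structure constants of $L$, which are fixed). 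Summing, $|\mu_i|\le (\text{const})\cdot d^{c}\cdot(E')^{c}\cdot\Delta^{j_i}$, which is polynomial in $d$. Feeding in $d\le C_1^{Ar}$ yields $|\mu_i|\le C^r$ for a suitable $C>0$, and feeding in the polynomial bound $d\preceq r^{m^m}$ in the case $|H|<\infty$ yields $|\mu_i|\le Cr^C$. The main obstacle I anticipate is the bookkeeping in this last step: making precise that the numerators in an iterated BCH product of $d$ bounded-coordinate vectors grow only polynomially in $d$ (the key point being that nilpotency caps the bracket length at $c$, so no factorial blow-up occurs), and checking that clearing the bounded denominators of the individual factors $g_s$ does not secretly introduce a $d$-dependent power of $\Delta$ — which is exactly what Lemma~\ref{lem:longprod} is designed to rule out.
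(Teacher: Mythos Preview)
Your approach works for the case $|H|<\infty$ and captures the right idea for the coefficient bound $|\lambda_i|$, but there is a genuine gap in the denominator bound for the general case.

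The problem is the claim that each factor $g_s=k_i^{z}$ coming from Lemma~\ref{lem_action_geom} has Lie-algebra coordinates $\nu_{ij}/\Delta^{a}$ with $a$ bounded by a constant independent of $r$. This is false. In the Lie algebra one has $k_i^{z}=z\,k_i$, so the denominator of the coordinates is essentially the denominator of $z$ itself. Now trace how the exponents $z$ arise in the proof of Lemma~\ref{lem_action_geom}: one repeatedly replaces $h_j^{\pm1}k_i^{x}$ by $k_1^{z_1(x)}\cdots k_m^{z_m(x)}h_j^{\pm1}$, where the $z_l$ are the \emph{Mal'cev-coordinate} polynomials for $\xi_j^{\pm1}$, and then splits into pieces of modulus $\le 1$. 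These polynomials have degree $>1$ whenever $K$ is non-abelian (Proposition~\ref{prop_coor}), so if $x$ has denominator $\Delta^{a}$ then $z_l(x)$ has denominator up to $\Delta^{Da+b}$. Iterating $\le r$ times, the denominator exponent of the surviving factors can grow like $D^{r}$, i.e.\ $\Delta^{j_i}$ can be doubly exponential in $r$. Lemma~\ref{lem:longprod} does not help here: it controls only the BCH coefficients, not the denominators already present in the input vectors $w_s$.

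The paper circumvents this by bounding $|\lambda_i|$ and $\Delta^{j_i}$ via two \emph{different} decompositions. For $|\lambda_i|$ it uses Theorem~\ref{thm_geometry} exactly as you suggest. For the denominator it does not use Lemma~\ref{lem_action_geom} at all; instead it rewrites $g\in B_{\bar G}(r)\cap K$ as $\tilde g_1\cdots\tilde g_s$ where each $\tilde g_i=\xi(k)$ with $k$ drawn from a fixed finite set $S\subset K$ and $\xi$ a composition of at most $r$ of the maps $\xi_j^{\pm1}$. The key point is that on the Lie algebra $\mathfrak{k}$ the maps $\xi_j^{\pm1}$ are \emph{linear} (Proposition~\ref{prop_KQ_maps}(i)), so each application adds only a bounded amount to the denominator exponent rather than multiplying it. This yields $\tilde g_i\in (1/\Delta^{n_1+rn_2})L$, and then Lemma~\ref{lem:longprod} plus the nilpotency bound on bracket length give $j_i\le n_3+c(n_1+rn_2)$, linear in $r$ as needed. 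Your route through Lemma~\ref{lem_action_geom} loses exactly this linearity.
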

	\begin{proof}
		Recall that $K \subset L^\Delta$, so surely every element $g\in B_G(r)\cap K$ can be written in the form $g = \sum_{i=1}^m \lambda_iv_i$ with $\lambda_i = \mu_i/\Delta^{j_i}$ for some $j_i\in\N$ and some $\mu_i\in \Z$. Hence, it suffices to argue that $|\mu_i| \leq C^r$ for some fixed constant $C>0$. Note that $|\mu_i| = |\lambda_i|\cdot |\Delta^{j_i}|$, so it suffices to argue that we can assume both factors are exponentially bounded.
		
		By Theorem \ref{thm_geometry}, we know that $g = k_1^{\tilde{z}_1}\cdots k_m^{\tilde{z}_m}$
		with $\tilde{z}_i \in \Z[1/\Delta]$ and $|\tilde{z}_i| \leq \tilde{C}^r$ for some fixed constant $\tilde{C} >0$. Now, Proposition \ref{prop_coor} allows us to rewrite $g$ in the Lie algebra $\mathfrak{k} \cong L^\Q$ to the form
		$$g = z_1k_1+ \ldots + z_mk_m .$$
		Since the coordinates $z_i$ are fixed polynomials in $\{\tilde{z}_i\mid 1\leq i \leq m\}$, we observe that $|z_i|$ is also exponentially bounded. Using a linear transformation to the fixed basis $\{v_1, \ldots , v_m\}$ shows that $g = \sum_{i=1}^m \lambda_iv_i$, where $|\lambda_i| \leq C_1^r$ for some fixed $C_1>0$.	In order to show that $\Delta^{j_i}$ can be chosen such that $|\Delta^{j_i}| \leq C_2^r$ for some fixed $C_2>0$, we recall that $B_G(r) \cap K \subset B_{\bar{G}}(C_3r)\cap K$ for some constant $C_3>0$ (since $\bar{G} \leq_f G$). Hence, it suffices to show the claim for elements $g$ in $B_{\bar{G}}(r)\cap K$.
		
		Consider the finite set of elements $$S = \{k_i, k_i^{-1}\mid 1\leq i \leq m\}\cup\{k \in K\mid 1\leq i,j\leq n: \epsilon_i, \epsilon_j \in \{1,-1\}: h_i^{\epsilon_i}h_j^{\epsilon_j} = kh_j^{\epsilon_j}h_i^{\epsilon_i}\}.$$
		There surely exists some $n_1\in \N$ such that every element in this set can be written as $\sum_{i=1}^m \lambda_iv_i$ with $\lambda_i = \mu_i/\Delta^{n_1}$ with $\mu_i\in \Z$. Also, recall that $h_jk = \xi_j(k)h_j$. Take $\Delta^{n_2}$ for $n_2\in \N$ to be a common denominator of the entries of all matrices $\{\xi_j , \xi_j^{-1}\mid 1\leq j \leq n\}$ with respect to the chosen basis of $L$. (Note that $\xi_k^{\pm 1}: L^\Delta \to L^\Delta$ is well-defined, and thus such a common denominator exists.)
		
		Take $g\in B_{\bar{G}}(r)\cap K$. This element is a product of the form 
		$$g = g_1 g_2 \cdots g_r $$
		with $g_l \in \{k_i, k_i^{-1}, h_j, h_j^{-1}\mid 1\leq i \leq m, 1 \leq j \leq n\}$. Move all elements of type $h$ (starting with elements of the form $h_n^{\pm 1}$) to the right. We obtain an expression of the form
		$$g = \tilde{g}_1 \cdots \tilde{g}_s h_1^{l_1} \cdots h_n^{l_n},$$
		where $s\in \N$, $l_j \in \N$ and $\tilde{g}_i$ is of the form $\xi(k)$ with $\xi$ a composition of at most $r$ homomorphisms in $\{\xi_j , \xi_j^{-1}\mid 1\leq j \leq n\}$ and $k\in S$. Since $g\in K$, we know in fact that $l_1 = \ldots = l_n = 0$.
		
		By the choice of $n_1,n_2\in \N$, we see that (for every $1\leq i \leq s$)
		$$\tilde{g}_i = \sum_{i=1}^m \lambda_iv_i$$
		with $\lambda_i \in (1/\Delta^{n_1 + rn_2})\Z$. Now, we wish to apply Lemma \ref{lem:longprod} to the product $\tilde{g}_1 \cdots \tilde{g}_s$, where the size of $s$ does not matter, leading to a denominator of $\Delta^{n_3}$ for some $n_3 \in \N$.
		The Lie bracket $[\cdot , \cdot]_L$ has integral structure constants on $L$, so if vectors $w_i$ have coordinates over $\Z$, then every Lie bracket still has coordinates in $\Z$. However, in our case, the coordinates lie in $(1/\Delta^{n_1 + rn_2})\Z$. Using linearity, this implies that the repeated Lie bracket has coordinates in $(1/\Delta^{c(n_1 + rn_2)})\Z$, as the length of a repeated Lie bracket is bounded by $c$. We thus conclude that
		$$g = \sum_{i=1}^m \lambda_iv_i$$
		with $\lambda_i \in (1/\Delta^{n_3+ c(n_1 + rn_2)})\Z$. This ends the first part.
		
		Now, suppose $|H|< \infty$, then we know by construction that $\bar{G} = K$ and $K = \bar{K}$. In particular, $g = k_1^{\tilde{z}_1}\cdots k_m^{\tilde{z}_m}$ with $\tilde{z}_i \in \Z$. By Theorem \ref{thm_geometry}, $|\tilde{z}_i|$ is polynomially bounded in $r$. Rewriting this to $g = z_1k_1+ \ldots + z_mk_m$ using Proposition \ref{prop_coor} shows that $z_i \in (1/N_1)\Z$, where $N_1\in\N$ is the common denominator of the polynomials governing this rewriting process. The coordinate $z_i$ is still polynomially bounded. Now, using a linear transformation to the fixed basis $\{v_1, \ldots , v_m\}$ shows that $g = \sum_{i=1}^m \lambda_iv_i$, where $|\lambda_i|$ is polynomially bounded in $r$. The denominator of $\lambda_i$ is bounded by $N_1N_2$, where $N_2\in\N$ is the common denominator of the matrix entries representing the linear transformation. From this, the statement follows.
	\end{proof}
	\section{Upper Bound} \label{sec_upper}
	In this section, we will first focus on constructing normal subgroups in $G$, by relating ideals in $L^\Delta$ to normal subgroups in $K$ itself. Afterwards we apply this to prove the upper bound in Theorem \ref{thm_upper}. The notations were introduced in Section \ref{sec_nota}. 
	
	\subsection{Normal subgroups and ideals}
	In the next proofs, we will show that under suitable circumstances ideals in $L$ and $L^\Delta$ are also normal subgroups of $L$ and $L^\Delta$. In essence, this will be a generalization of the following result in \cite[Lemmata 4.6-4.8]{grunewald1988subgroups} to the case of non-finitely generated groups.
	\begin{lemma} \label{lem_sega_normal_ideal}
		Let $L$ be a finitely generated nilpotent Lie ring, such that $(L, \ast)$ is a group. There exists a constant $M>0$ such that for all prime power $p^k$ with $p>M$ the ideals $I$  of index $p^k$ are exactly the normal subgroups of index $p^k$.
	\end{lemma}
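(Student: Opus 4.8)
The plan is to compare the group operation $\ast$ (Baker–Campbell–Hausdorff) on $L$ with the additive abelian group operation $+$, and to show that once the prime $p$ is large enough the two notions of ``subobject of index $p^k$'' coincide. The key observation is that for $v,w \in L$ we have $v \ast w = v + w + \tfrac12[v,w]_L + \sum_{e\geq 3} q_e(v,w)$, where each $q_e$ is a rational linear combination of nested brackets of length $e$; since $L$ is nilpotent of some class $c$, only finitely many terms occur, and all the rational coefficients appearing have a fixed common denominator, say $N \in \N$ (this follows exactly as in Lemma~\ref{lem:longprod}, or simply because there are finitely many terms). The constant $M$ in the statement will be taken to exceed $N$ (and also larger than any torsion-related quantity — but $L$ is torsion-free as an additive group since it is a lattice, so $N$ suffices).

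First I would record that for a prime $p > M \geq N$, both $\tfrac12$ and all the coefficients of the $q_e$ are $p$-adic units, i.e. they lie in $\Z_{(p)}$. Now let $I \leq L$ be an additive subgroup with $[L : I] = p^k$; equivalently, $I \supseteq p^k L$ and $L/I$ is a finite abelian $p$-group. The claim is: $I$ is a Lie ideal if and only if $I$ is a normal subgroup of $(L,\ast)$. For one direction, suppose $I$ is a Lie ideal, so $[L,I]_L \subseteq I$ and hence all nested brackets with at least one entry in $I$ land in $I$. Then for $v \in L$, $w \in I$, the conjugate $v \ast w \ast (-v)$ expands via BCH into $w$ plus a sum of brackets each having at least one entry equal to $w$ (or to $-v \in L$ but paired with $w$ somewhere) — more carefully, $v \ast w \ast (-v) - w$ lies in the $\Z_{(p)}$-span of brackets involving $w$, hence in $I \otimes \Z_{(p)}$; intersecting with $L$ and using that $L/I$ has $p$-power order (so $I = (I\otimes\Z_{(p)}) \cap L$) gives $v \ast w \ast (-v) \in I$. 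Conversely, suppose $I$ is $\ast$-normal. I would extract the bracket from the group commutator: for $v,w \in L$, the BCH expansion of the group commutator $v \ast w \ast (-v) \ast (-w)$ equals $[v,w]_L$ plus higher-length brackets. Working modulo $I$ and modulo the deeper terms of the lower central series, an induction on the nilpotency class (descending through $\gamma_i(L)$) shows that $[v,w]_L \in I$ whenever $w \in I$, again using that the relevant coefficients are $p$-adic units to invert them; this gives $[L,I]_L \subseteq I$, i.e. $I$ is an ideal.

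The main obstacle I expect is making the ``extract the bracket via an induction on nilpotency class'' step fully rigorous while keeping track of denominators. The subtlety is that $v \ast w \ast (-v)$ is congruent to $w + [v,w]_L \pmod{\gamma_3(L) + (\text{terms in } I)}$, so one first gets $[v,w]_L \in I + \gamma_3(L)$; one then needs to bootstrap this to $[v,w]_L \in I$ by iterating in the associated graded Lie ring, at each stage dividing by a coefficient which is a unit precisely because $p > M$. Keeping the two filtrations — the lower central series of $L$ and the $p$-adic/index filtration given by $I$ — compatible is where the care is needed; the cleanest route is probably to tensor everything with $\Z_{(p)}$ from the start, prove the ideal/normal equivalence over the local ring $L \otimes \Z_{(p)}$ (where BCH is literally a change of coordinates, being a polynomial automorphism with unit denominators, hence carries ideals to normal subgroups and back bijectively), and then descend to $L$ using $[L:I] = p^k$ to identify subgroups of $L$ containing $p^k L$ with subgroups of $L \otimes \Z_{(p)}$ of the same index. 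I would also remark that the same argument applies verbatim with $L$ replaced by $L^\Delta = L \otimes \Z[1/\Delta]$ as long as $p \nmid \Delta$, which is the form actually needed in the sequel.
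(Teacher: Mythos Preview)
The paper does not supply its own proof of this lemma: it is quoted directly from \cite[Lemmata 4.6--4.8]{grunewald1988subgroups}, and the two lemmas that follow then extend the conclusion to the non-finitely-generated ring $L^\Delta$. Your proposal therefore furnishes an argument where the paper simply cites the literature; the approach you sketch (compare $\ast$ and $+$ via BCH, localise at $p$, invoke the Lazard-type correspondence) is indeed the standard route to this fact and is correct in outline.

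One step deserves to be made explicit. In the converse direction (normal $\ast$-subgroup $\Rightarrow$ Lie ideal) you pass to $L\otimes\Z_{(p)}$ and speak of ``subgroups of $L$ containing $p^kL$'', but at that point $I$ is only known to be closed under $\ast$, not under $+$, so neither $I\otimes\Z_{(p)}$ nor the containment $p^kL\subseteq I$ has an obvious meaning yet. The missing observation is the identity $v^{\ast n}=nv$ in any BCH group (all higher terms in $v\ast v\ast\cdots\ast v$ involve $[v,v]_L=0$): since the $\ast$-quotient $(L,\ast)/I$ is a group of order $p^k$, Lagrange gives $p^kv=v^{\ast p^k}\in I$ for every $v\in L$, and hence $p^kL\subseteq I$ as required. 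With this in hand your localisation argument goes through cleanly; working in $L/p^kL$, a Lie ring over $\Z/p^k\Z$ on which all BCH and inverse-BCH coefficients are units because $p>M$, the bijection between ideals and normal subgroups is precisely the Lazard correspondence for finite $p$-groups of class less than $p$, and this sidesteps the inductive bootstrap through the $\gamma_i(L)$ that you flag as the main obstacle.
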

\noindent	Recall that the Baker-Campbell-Hausdorff formula dictates that $\lambda v$ in the Lie algebra equals $v^\lambda$ in the group (for all $\lambda \in \Q$).
	\begin{lemma} \label{lem_iso_K_L}
		Take notations as in Notation \ref{nota_LR_groups} and the bound $M$ of Lemma \ref{lem_sega_normal_ideal}. For any prime $p>\max\{\Delta, M\}$, the inclusion map $L\hookrightarrow L^\Delta$ induces for every $k\in \N$ a Lie ring isomorphism 
		$$\dfrac{L}{p^kL} \cong \dfrac{L^\Delta}{p^kL^\Delta}.$$
		Furthermore, both $p^kL$ and $p^kL^\Delta$ are normal subgroups, and the inclusion maps $L\hookrightarrow K \hookrightarrow L^\Delta$ induce group isomorphisms
		$$\dfrac{L}{p^kL} \cong \dfrac{K}{K\cap p^kL^\Delta}\cong \dfrac{L^\Delta}{p^kL^\Delta}.$$
	\end{lemma}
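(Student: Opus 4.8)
The plan is to build the two isomorphisms separately and then paste them together. First I would establish the Lie ring isomorphism $L/p^kL \cong L^\Delta/p^kL^\Delta$. Since $L^\Delta = L \otimes_\Z \Z[1/\Delta]$, the inclusion $L \hookrightarrow L^\Delta$ induces a ring map $L/p^kL \to L^\Delta/p^kL^\Delta$; because $p > \Delta$, the element $\Delta$ is invertible modulo $p^k$, so localizing at $\Delta$ does nothing after reducing mod $p^k$. Concretely, writing everything in the $\Z$-basis $\{v_1,\dots,v_m\}$ of $L$, we have $L^\Delta/p^kL^\Delta \cong (\Z[1/\Delta]/p^k\Z[1/\Delta])^m = (\Z/p^k\Z)^m$, and the map from $L/p^kL \cong (\Z/p^k\Z)^m$ is the identity on coordinates. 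Surjectivity is clear and injectivity follows because if $v \in L$ lies in $p^kL^\Delta$, then its coordinates lie in $p^k\Z[1/\Delta] \cap \Z = p^k\Z$ (using $\gcd(p,\Delta)=1$), so $v \in p^kL$. The Lie bracket has integral structure constants on $L$ (this is exactly why $L$ was chosen), so the map respects brackets.

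Next I would pass from the Lie ring statement to the group statement. Here the key input is Lemma \ref{lem_sega_normal_ideal}: since $p > M$, the ideal $p^kL$ of $L$ (of index $p^{km}$, a prime power) is also a normal subgroup of $(L,\ast)$, and likewise $p^kL^\Delta$ is an ideal of the finitely generated (after scalar extension — but one must be slightly careful here, see below) Lie ring, hence a normal subgroup of $(L^\Delta,\ast)$. Actually for $L^\Delta$ the cleaner route is: $p^kL^\Delta$ is the kernel of the Lie ring map $L^\Delta \to L^\Delta/p^kL^\Delta$, which under the Lie-ring isomorphism just established is the same finite quotient as $L/p^kL$, and the Baker–Campbell–Hausdorff formula is $\mathcal{H}$-compatible and well-defined on $L^\Delta$ (our choice of $\Delta$ guarantees the BCH coefficients have denominators dividing a power of $\Delta$, invertible mod $p^k$), so the BCH group operation on $L^\Delta/p^kL^\Delta$ matches that on $L/p^kL$; hence $p^kL^\Delta$ is a normal subgroup and $L/p^kL \cong L^\Delta/p^kL^\Delta$ as groups. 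The middle term: since $L \subset K \subset L^\Delta$, intersecting the surjection $L^\Delta \to L^\Delta/p^kL^\Delta$ with $K$ gives a map $K \to L^\Delta/p^kL^\Delta$ with kernel $K \cap p^kL^\Delta$; I would check it is surjective by noting that the composite $L \hookrightarrow K \to L^\Delta/p^kL^\Delta$ is already the (surjective) isomorphism from the first part, so $K \to L^\Delta/p^kL^\Delta$ is onto, giving $K/(K\cap p^kL^\Delta) \cong L^\Delta/p^kL^\Delta$; and the factorization through $L/p^kL$ shows all three are isomorphic, compatibly.

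The main obstacle I expect is bookkeeping around the non-finitely-generated group $L^\Delta$ and making sure Lemma \ref{lem_sega_normal_ideal} (stated for finitely generated $L$) is only ever applied to $L$ itself, not to $L^\Delta$. The honest argument must deduce normality of $p^kL^\Delta$ in $(L^\Delta,\ast)$ not from that lemma but from the BCH formula directly: one shows that for $u,v \in L^\Delta$ with $u \equiv u'$, $v\equiv v' \pmod{p^kL^\Delta}$ one has $u \ast v \equiv u' \ast v' \pmod{p^kL^\Delta}$, which requires the BCH coefficients (denominators dividing a power of $\Delta$, hence units mod $p^k$) to interact correctly with the $p^k$-congruence — this is where $p > \Delta$ is genuinely used a second time, beyond the localization argument. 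A secondary subtlety is verifying that conjugation makes $p^kL^\Delta$ actually \emph{normal} (not just a subgroup) in $(L^\Delta,\ast)$, i.e. that $w \ast v \ast w^{-1} \in p^kL^\Delta$ for $v \in p^kL^\Delta$; this again is immediate from the BCH description since the commutator $w\ast v\ast w^{-1}\ast v^{-1}$ lies in the ideal generated by $v$ up to the BCH denominators. Everything else is routine linear algebra over $\Z/p^k\Z$ once the coordinate identifications are set up.
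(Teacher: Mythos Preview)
Your proposal is correct and follows essentially the same route as the paper: the Lie ring isomorphism comes from $\Delta$ being a unit modulo $p^k$, normality of $p^kL$ uses Lemma \ref{lem_sega_normal_ideal}, and for $p^kL^\Delta$ the paper, like you, avoids that lemma and argues directly with the Baker--Campbell--Hausdorff formula (computing $p^kv_1\ast(-p^kv_2)$ term by term for the subgroup property, and using the group commutator expansion $[v_1,v_2]=[v_1,v_2]_L+\sum r_ic_i$ with $r_i\in\Z[1/\Delta]$ for normality). Your anticipated obstacle about $L^\Delta$ not being finitely generated is exactly the point the paper handles this way, and the middle term $K/(K\cap p^kL^\Delta)$ is obtained from the inclusions $L\hookrightarrow K\hookrightarrow L^\Delta$ just as you describe.
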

	\begin{proof}
		It is clear that $p^kL$ and $p^k L^\Delta$ are ideals, by the linearity of the Lie bracket. The set $p^kL$ is a normal subgroup by Lemma \ref{lem_sega_normal_ideal}. The set $p^kL^\Delta$ is a subgroup, as the Baker-Campbell-Hausdorff formula (for $v_1, v_2 \in L^\Delta$) implies that
		$$p^kv_1 \ast (p^kv_2)^{-1} = p^kv_1 \ast (-p^kv_2) = p^kv_1-p^kv_2 - p^{2k}\left(\frac{1}{2}[v_1,v_2]_L\right) + \sum_{e=3}^\infty p^{ek}q_e(v_1,-v_2),$$
		where we have coefficients over $\Z[1/\Delta]$ as $ p > \Delta$, hence every term of this expression lies in $p^kL^\Delta$. It is also normal as the commutator $[v_1,v_2] = v_1^{-1}v_2^{-1}v_1v_2$ is equal to 
		$$[v_1,v_2] = [v_1,v_2]_L + \sum r_ic_i, $$
		where $r_i \in \Z[1/\Delta]$ and $c_i$ are repeated Lie brackets containing both $v_1$ and $v_2$, see \cite[Chap. 6, Cor. 2-3]{sega83-1}.
	
		Now consider the inclusion map $i: L \hookrightarrow L^\Delta$, which induces both a group morphism $L \to L^\Delta/p^kL^\Delta$ as an algebra morphism $L \to L^\Delta/p^k L^\Delta$ . Surjectivity of these morphisms is clear as $\Delta$ is invertible over $p^k$ by our assumption. 
		Since the kernels are $p^kL$, this shows the claim about the isomorphism for the Lie algebras and the group isomorphism
		$$\dfrac{L}{p^kL} \cong \dfrac{L^\Delta}{p^kL^\Delta}.$$
		However, from the inclusions $L\hookrightarrow K \hookrightarrow L^\Delta$, it is then immediate that this extends to isomorphisms
		$$\dfrac{L}{p^kL} \cong \dfrac{K}{K\cap p^kL^\Delta}\cong \dfrac{L^\Delta}{p^kL^\Delta}.$$
	\end{proof}

	\begin{lemma}\label{lem_normal_ideal}
		Use Notation \ref{nota_LR_groups} and the bound $M$ of Lemma \ref{lem_sega_normal_ideal}. If $p^k$ is a prime power with $p>\max\{\Delta, M\}$, then $I^\Delta$ is an ideal of $L^\Delta$ of index $p^k$ if and only if $I^\Delta$ is a normal subgroup of $L^\Delta$ of index $p^k$. Furthermore, if $I^\Delta$ is an $\mathcal{H}$-invariant ideal of $L^\Delta$ of index $p^k$, then $I^\Delta\cap K$ is an $\mathcal{H}$-invariant normal subgroup of $K$ of index $p^k$.
	\end{lemma}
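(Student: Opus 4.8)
The plan is to reduce everything to Lemma \ref{lem_iso_K_L}, which identifies the three quotients $L/p^kL$, $K/(K\cap p^kL^\Delta)$ and $L^\Delta/p^kL^\Delta$ both as Lie rings and as groups. First I would handle the equivalence ``ideal of index $p^k$ $\iff$ normal subgroup of index $p^k$'' for subsets $I^\Delta \subset L^\Delta$. One direction is essentially the computation already carried out in the proof of Lemma \ref{lem_iso_K_L}: if $I^\Delta$ is an ideal (i.e. a $\Z[1/\Delta]$-submodule closed under $[\cdot,\cdot]_L$), then the Baker--Campbell--Hausdorff series shows $I^\Delta$ is closed under $\ast$ and inversion, and the commutator formula $[v_1,v_2] = [v_1,v_2]_L + \sum r_i c_i$ (with $c_i$ repeated brackets involving both arguments, coefficients in $\Z[1/\Delta]$ since $p>\Delta$) shows it is normal; the index is unchanged because the underlying additive set is the same. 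For the converse, I would pass through the isomorphism of Lemma \ref{lem_iso_K_L}: a normal subgroup $I^\Delta$ of index $p^k$ contains $p^kL^\Delta$ (since $L^\Delta/p^kL^\Delta$ is a finite $p$-group of order $p^k$, hence any subgroup of index $p^k$ is trivial in the quotient — more carefully, any index-$p^k$ subgroup of a group of order $p^k$ is $\{e\}$), so $I^\Delta$ corresponds to a subgroup of $L^\Delta/p^kL^\Delta$; but under the Lie-ring isomorphism $L^\Delta/p^kL^\Delta \cong L/p^kL$ and Lemma \ref{lem_sega_normal_ideal} (applied to the finitely generated Lie ring $L$, valid since $p > M$), every normal subgroup of this quotient is an ideal, so $I^\Delta$ is an ideal of $L^\Delta$.

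Second, for the $\mathcal{H}$-invariant statement, suppose $I^\Delta$ is an $\mathcal{H}$-invariant ideal of index $p^k$. By the first part it is also a normal subgroup of $L^\Delta$. I would then set $J = I^\Delta \cap K$ and check the three required properties. Normality of $J$ in $K$ is automatic from normality of $I^\Delta$ in $L^\Delta$ together with $K \leq L^\Delta$. For $\mathcal{H}$-invariance, each $\xi \in \mathcal{H}$ restricts to an automorphism of $K$ (Notation \ref{nota_action}) and extends to an automorphism of $L^\Delta$ preserving $I^\Delta$ (by $\mathcal{H}$-invariance of $L^\Delta$ in Notation \ref{nota_LR_groups} and of $I^\Delta$ by hypothesis), so $\xi(J) = \xi(I^\Delta)\cap \xi(K) = I^\Delta \cap K = J$. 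For the index, I would use Lemma \ref{lem_iso_K_L}: the inclusions $L \hookrightarrow K \hookrightarrow L^\Delta$ give a group isomorphism $K/(K\cap p^kL^\Delta) \cong L^\Delta/p^kL^\Delta$; since $p^kL^\Delta \subseteq I^\Delta$, the subgroup $J = I^\Delta \cap K$ corresponds under this isomorphism exactly to the image of $I^\Delta$ in $L^\Delta/p^kL^\Delta$, so $[K:J] = [L^\Delta : I^\Delta] = p^k$.

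The main obstacle I anticipate is bookkeeping rather than any deep new idea: one must be careful that ``index $p^k$'' interacts correctly with the passage between $K$ and $L^\Delta$, i.e. that $J = I^\Delta\cap K$ really has index $p^k$ in $K$ and not merely a divisor of it — this is precisely where the containment $p^kL^\Delta \subseteq I^\Delta$ (forced by the finite $p$-group quotient) and the isomorphism $K/(K\cap p^kL^\Delta)\cong L^\Delta/p^kL^\Delta$ of Lemma \ref{lem_iso_K_L} do the work. A minor point to verify is that the restriction of $\xi \in \mathcal{H}$ to $K$ agrees with its extension to $L^\Delta$ when intersected back with $K$, which is immediate since the extension is by definition compatible with the inclusion $K \hookrightarrow L^\Delta$ (Notation \ref{nota_Lie}). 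Everything else follows by combining Lemmata \ref{lem_sega_normal_ideal} and \ref{lem_iso_K_L}.
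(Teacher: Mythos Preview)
Your overall plan is sound, but there is a concrete slip in the converse direction. You assert that $L^\Delta/p^kL^\Delta$ has order $p^k$; in fact, by Lemma \ref{lem_iso_K_L} this quotient is isomorphic to $L/p^kL$, which has order $p^{km}$ with $m$ the rank of $L$. Consequently your parenthetical (``any index-$p^k$ subgroup of a group of order $p^k$ is $\{e\}$'') does not apply, and as written the argument does not establish $p^kL^\Delta\subset I^\Delta$. The repair is short: since $[L^\Delta:I^\Delta]=p^k$ as a $\ast$-group, Lagrange gives $v^{p^k}\in I^\Delta$ for every $v\in L^\Delta$, and Baker--Campbell--Hausdorff with all arguments equal yields $v^{p^k}=p^kv$; hence the set $p^kL^\Delta=\{p^kv\mid v\in L^\Delta\}$ lies in $I^\Delta$, and this set is already a normal $\ast$-subgroup by Lemma \ref{lem_iso_K_L}. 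You should also record that $\ast$-cosets and additive cosets of $p^kL^\Delta$ coincide (same computation as in Lemma \ref{lem_iso_K_L}), so that pulling the ideal $I^\Delta/p^kL^\Delta$ back along the Lie-ring quotient really returns $I^\Delta$ and not some larger set.

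For comparison, the paper argues more symmetrically. Rather than treating the two implications by separate mechanisms, it sets $I=I^\Delta\cap L$ and proves the single structural identity
\[
I+p^kL^\Delta \;=\; I\ast p^kL^\Delta \;=\; I\otimes_\Z\Z[1/\Delta]
\]
via an explicit inverse of $\Delta$ modulo $p^k$, then transports index and normality/ideal status through the isomorphisms of Lemma \ref{lem_iso_K_L} and Lemma \ref{lem_sega_normal_ideal} in one stroke. Your route---BCH plus the commutator formula for ``ideal $\Rightarrow$ normal subgroup'', and descent through $L/p^kL$ for the converse---is a legitimate alternative that avoids the $I\otimes_\Z\Z[1/\Delta]$ computation, at the cost of the extra bookkeeping above. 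Your treatment of the ``furthermore'' clause (intersection with $K$, $\mathcal{H}$-invariance, and reading off the index via $K/(K\cap p^kL^\Delta)\cong L^\Delta/p^kL^\Delta$) matches the paper's.
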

	\begin{proof}
		By Lemma \ref{lem_sega_normal_ideal}, ideals and subgroups of index $p^k$ are the same subsets of $L$. Let $I$ denote such an ideal and consider the surjective Lie ring morphism $\pi_L: L \to L^\Delta/p^kL^\Delta$ and the surjective group morphism $\pi_G: L \to L^\Delta/p^kL^\Delta$. 		Since both are surjective, they map ideals to ideals and normal subgroups to normal subgroups respectively. Write $I_L = \pi_L^{-1}(\pi_L(I)) = I+p^kL^\Delta\subset L^\Delta$ for the ideal and $I_G = \pi_G^{-1}(\pi_G(I)) = I\ast p^kL^\Delta \subset L^\Delta$ for the normal subgroup, then we will show that $I_L = I_G = I\otimes_\Z \Z[1/\Delta]$.
		
		Firstly, take an arbitrary element in $I\otimes_\Z \Z[1/\Delta]$. This element is of the form $(1/\Delta^l)v$ with $l\in\N$ and $v\in I$. Take $e\in \N$ such that $(e\Delta)^l = 1 + zp^k$ for some $z\in \Z$. Now,
		$$\dfrac{1}{\Delta^l}v = e^lv - p^k \left(\dfrac{z}{\Delta^l}v\right) = e^lv \ast p^k \left(\dfrac{-z}{\Delta^l}v\right).$$
		We conclude that $I\otimes_\Z \Z[1/\Delta] \subset I_G$, and a similar arguments holds for $I_L$ as well. Secondly, note that $I\otimes_\Z \Z[1/\Delta]$ is additively and multiplicatively closed. Indeed, for multiplicativity, since the Baker-Campbell-Hausdorff formula has coefficients in $\Z[1/\Delta]$, we can rewrite the product of two arbitrary elements $(1/\Delta^{l})v_1$ and $(1/\Delta^{l})v_2$ in $I\otimes_\Z \Z[1/\Delta]$ with $v_1,v_2\in I$ to
		$$\dfrac{1}{\Delta^l}v_1\ast \dfrac{1}{\Delta^l}v_2 = \dfrac{1}{\Delta^l}v_1+ \dfrac{1}{\Delta^l}v_2+ \frac{1}{2}\dfrac{1}{\Delta^{2l}}[v_1,v_2]_L + \sum_{e=3}^\infty \dfrac{1}{\Delta^{el}}q_e(v_1,v_2),$$
		i.e.~a $\Z[1/\Delta]$-linear combination of elements in $I$. Moreover, an arbitrary element $p^k(1/\Delta^l)v$ of $p^kL^\Delta$ with $v\in L$ is equal to $(1/\Delta^l)(p^kv)$, and $p^kv \in I$ since $[L:I] = p^k$. Hence, $p^k L^\Delta \subset I\otimes_\Z \Z[1/\Delta]$ and as also $I \subset I\otimes_\Z \Z[1/\Delta]$, we conclude that $I_L, I_G \subset I\otimes_\Z \Z[1/\Delta]$. This shows the claim that $I_G = I_L = I\otimes_\Z \Z[1/\Delta]$.
	
Now to prove the lemma, let $I^\Delta$ be an ideal of index $p^k$ in $L^\Delta$ and take $I = I^\Delta \cap L$, so $I^\Delta = I_L$. Now, $I$ is an ideal of $L$ of index $p^k$ by the isomorphisms
		$$\dfrac{L}{I} \cong \dfrac{L/p^kL}{I/p^kL} \cong \dfrac{L^\Delta/p^kL^\Delta}{I^\Delta/p^kL^\Delta} \cong \dfrac{L^\Delta}{I^\Delta}.$$
		Hence, $I$ is also a normal subgroup of index $p^k$. By the same isomorphism interpreted over groups, $I_G$ is a normal subgroup of $L^\Delta$ with the same index. Note that $I^\Delta = I_L = I_G$. This ends one direction of the statement. The other direction is completely analogous.
		
		For the `furthermore' part, observe that the intersection of invariant subspaces is forcefully invariant itself. The fact that $I^\Delta\cap K$ is normal in $K$ with index $p^k$ follows from the isomorphism $K/(K\cap p^kL^\Delta)\cong L^\Delta/p^kL^\Delta$.
	\end{proof}
In the previous result, we have seen that normal subgroups in $K$ can be constructed from ideals in $L^\Delta = L \otimes_\Z \Z[1/ \Delta]$. In the remainder of this section, we will focus on a particular subclass of ideals in $L^\Delta$, namely those that correspond to ideals in $L^{\Z_p} = L/pL$ for primes $p$.
	\begin{lemma} \label{lem_LDelta_LZp}
	Take notations as in Notation \ref{nota_Lie} and let $p$ be a prime larger than $\max\{M, \Delta\}$ as in Lemma \ref{lem_sega_normal_ideal}. Consider the morphism of Lie rings
		$$\psi : L^\Delta \to \dfrac{L^\Delta}{pL^\Delta} \cong L^{\Z_p}.$$
		If $pL^\Delta \leq I^\Delta$ is an $\mathcal{H}$-invariant ideal of index $p^k$, then $\psi(I^\Delta)$ is an $\mathcal{H}$-invariant ideal in $L^{\Z_p}$ of index $p^k$. Vice versa, if $J$ is an $\mathcal{H}$-invariant ideal in $L^{\Z_p}$ of index $p^k$, then $\psi^{-1}(J)$ is an $\mathcal{H}$-invariant ideal in $L^\Delta$ of index $p^k$. 
	\end{lemma}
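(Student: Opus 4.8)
The plan is to treat this as the correspondence theorem applied to the surjective Lie ring morphism $\psi$, keeping careful track of the index and of the $\mathcal{H}$-action. First I would record that $\psi$ is a surjective morphism of Lie rings whose kernel is precisely $pL^\Delta$: the isomorphism $L^\Delta/pL^\Delta \cong L^{\Z_p}$ holds because $p>\Delta$ makes $\Delta$ invertible modulo $p$, exactly as in the proof of Lemma \ref{lem_iso_K_L}. Consequently the assignments $I^\Delta \mapsto \psi(I^\Delta)$ and $J \mapsto \psi^{-1}(J)$ are mutually inverse bijections between ideals of $L^\Delta$ containing $pL^\Delta$ and ideals of $L^{\Z_p}$.

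Next I would dispatch the formal statements. Since $\psi$ is a surjective Lie ring morphism, it maps ideals to ideals, so $\psi(I^\Delta)$ is an ideal of $L^{\Z_p}$; likewise $\psi^{-1}(J)$ is an ideal of $L^\Delta$, and it automatically contains $\ker\psi = pL^\Delta$. For the indices: when $pL^\Delta \le I^\Delta$ one has $\psi(I^\Delta) = I^\Delta/pL^\Delta$, so the third isomorphism theorem gives $L^\Delta/I^\Delta \cong L^{\Z_p}/\psi(I^\Delta)$; and for the preimage, surjectivity of $\psi$ gives $L^\Delta/\psi^{-1}(J) \cong L^{\Z_p}/J$. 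In both cases the index is preserved, so $\psi(I^\Delta)$ and $\psi^{-1}(J)$ have index $p^k$ as claimed.

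The only step that uses the specific setup is the $\mathcal{H}$-invariance. By the $\mathcal{H}$-invariance of $L^\Delta$ from Notation \ref{nota_LR_groups}, each $\xi\in\mathcal{H}$ restricts to a Lie ring automorphism of $L^\Delta$, and since it is additive it fixes $pL^\Delta$; hence it descends to an automorphism $\bar\xi$ of $L^\Delta/pL^\Delta \cong L^{\Z_p}$ with $\psi\circ\xi = \bar\xi\circ\psi$. This induced action is the one with respect to which $\mathcal{H}$-invariance in $L^{\Z_p}$ is to be read. Then $\mathcal{H}$-invariance of $I^\Delta$ gives $\bar\xi(\psi(I^\Delta)) = \psi(\xi(I^\Delta)) = \psi(I^\Delta)$ for each $\xi$, so $\psi(I^\Delta)$ is $\mathcal{H}$-invariant; and $\mathcal{H}$-invariance of $J$ gives $\xi(\psi^{-1}(J)) = \psi^{-1}(\bar\xi(J)) = \psi^{-1}(J)$, so $\psi^{-1}(J)$ is $\mathcal{H}$-invariant. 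I do not expect any real obstacle here — the content is just the correspondence theorem plus this equivariance bookkeeping — and the one thing to be careful about is precisely that $\psi$ intertwines the $\mathcal{H}$-action on $L^\Delta$ with the induced action on $L^{\Z_p}$, which is immediate from $\Z$-linearity of the maps in $\mathcal{H}$.
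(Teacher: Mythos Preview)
Your proposal is correct and follows essentially the same route as the paper: both use that $\psi$ is a surjective Lie ring morphism to transport ideals, then invoke the correspondence/third isomorphism theorem for the index (the paper writes it as $[L^\Delta:I^\Delta] = [pL^\Delta: pL^\Delta\cap I^\Delta]\cdot [L^{\Z_p}: \psi(I^\Delta)]$ and notes the first factor is $1$), and declare the $\mathcal{H}$-invariance immediate. Your write-up is simply more explicit about the equivariance $\psi\circ\xi = \bar\xi\circ\psi$, which the paper relegates to the remark following the lemma.
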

	\begin{rem}
		Here, invariance under $\xi \in \mathcal{H}$ in $L^{\Z_p}$ is understood as invariance under the induced action of $\xi$ on $L^{\Z_p}$, i.e. under the homomorphisms $\bar{\xi}$ such that the following diagram commutes
		\[\begin{tikzcd}
			L^\Delta & L^{\Z_p} \\
			L^\Delta & L^{\Z_p}.
			\arrow["\psi", from=1-1, to=1-2]
			\arrow["\xi"', from=1-1, to=2-1]
			\arrow["\bar{\xi}", from=1-2, to=2-2]
			\arrow["\psi"', from=2-1, to=2-2]
		\end{tikzcd}\]
		If one were to take a basis of $L$, then we know that the matrix representing $\xi$ has entries over $\Z[1/\Delta]$. Now, $\bar{\xi}$ corresponds to the matrix of $\xi$ where the projection $\Z[1/\Delta] \to \Z_p$ is applied to its entries.
	\end{rem}
	\begin{proof}
	Since $\psi$ is surjective, ideals are preserved under taking their image or their preimage. Since $pL^\Delta \leq I^\Delta$, we have
		$$[L^\Delta:I^\Delta] = [pL^\Delta: pL^\Delta\cap I^\Delta]\cdot [L^{\Z_p}: \psi(I^\Delta)] = [L^{\Z_p}: \psi(I^\Delta)],$$
		which shows the claim about the indices. The claim about the $\mathcal{H}$-invariance is immediate.
	\end{proof}

	\subsection{Proof of the upper bound}
	
	Let us first introduce the value $\delta(\mathfrak{k}^{\bar{\Q}}, \mathcal{H})$ that will appear in the upper bound of $\RF_G$. Write the algebraic closure of $\Q$ by $\bar{\Q}$.

	\begin{df}
	Let $\mathfrak{k}^{\bar{\Q}}$ be a Lie algebra over $\bar{\Q}$. Suppose $\mathcal{H}$ is a finite set of automorphisms of $\mathfrak{k}^{\bar{\Q}}$. Define
	\begin{equation*}
		\delta(\mathfrak{k}^{\bar{\Q}}, \mathcal{H}) = \min\{\max_{i=1}^k\{\dim_{\bar{\Q}}(\mathfrak{k}/I_i^{\bar{\Q}})\}\mid I_1^{\bar{\Q}} \text{ to } I_k^{\bar{\Q}} \text{ are }\mathcal{H}\text{-invariant ideals of }\mathfrak{k}^{\bar{\Q}}, \;\cap_{i=1}^k I_i^{\bar{\Q}} = \{0\}\}.
	\end{equation*}
	\end{df}
	\begin{rem}
		Note that $\delta(\mathfrak{k}^{\bar{\Q}}, \mathcal{H}) \leq \dim_{\bar{\Q}} \mathfrak{k}^{\bar{\Q}} = m$, since one can take the trivial ideal $\{0\}$. Furthermore, suppose a non-zero vector $v\in \mathfrak{k}^{\bar{\Q}}$ is given. By definition of $\delta(\mathfrak{k}^{\bar{\Q}}, \mathcal{H})$, we can find an $\mathcal{H}$-invariant ideal $I^{\bar{\Q}}$, such that $v\notin I^{\bar{\Q}}$ and $\dim_{\bar{\Q}}(\mathfrak{k}^{\bar{\Q}}/I^{\bar{\Q}}) \leq \delta(\mathfrak{k}^{\bar{\Q}}, \mathcal{H})$.

		It should be noted that, although stated as a value depending on $\mathfrak{k}^{\bar{\Q}}$, it in fact an invariant of the complex Lie algebra $\mathfrak{k}^\C$ by the Lefschetz' Principle, see e.g. \cite[Chapter 3]{MR3967743}.
	\end{rem}
	
	In order to relate this constant to finite quotients, we need to introduce some notation. As before, we write $\mathcal{O}_\F$ for the ring of algebraic integers of a number field $\F$. Recall that $\F$ is the field of fractions of $\mathcal{O}_\F$. 
	
	\begin{df}
		Let $R$ denote a ring. If $S$ is a multiplicatively closed subset of $R$ with $1\in S$, then we denote $S^{-1}R$ for the \textbf{localization} of $R$ with respect to $S$. Recall that $S^{-1}R$ is given by the formal fractions $\{r/s \mid r\in R, s\in S\}$. 
	\end{df} 
	\noindent In our case, we will work with localizations of the form $S^{-1}\mathcal{O}_\F$ where $S=\{1, x,x^2, \ldots\}$ for some $x\in \mathcal{O}_\F$. Note that $S^{-1}\mathcal{O}_\F$ is equal to $\mathcal{O}_\F[1/x]$. 

	\begin{prop} \label{prop_delta_Zp}
		Let $\pi_\delta(r)$ denote the number of prime numbers $\max\{M, \Delta\} < p\leq r$ such that
		$$\min\{\max_{i=1}^k\{\dim_{\Z_p}(L^{\Z_p}/J_i)\}\mid J_1 \text{ to } J_k \text{ are }\mathcal{H}\text{-invariant ideals of }L^{\Z_p}, \;\cap_{i=1}^k J_i = \{0\}\} \leq \delta(L^{\bar{\Q}}, \mathcal{H}),$$
		and all $\xi \in \mathcal{H}$ have a Jordan Normal Form over $\Z_p$ preserving diagonalizability. Then, $\pi_\delta(r) \asymp r/\log(r)$, i.e. there exist constants $C_1,C_2 > 0$ such that (for all $r$ sufficiently large)
		$$C_1r/\log(r) \leq \pi_\delta(r) \leq C_2r/\log(r).$$
	\end{prop}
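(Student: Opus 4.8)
The plan is to treat the two bounds separately. The upper bound is immediate: $\pi_\delta(r)$ counts a subset of the primes not exceeding $r$, so $\pi_\delta(r)\le\pi(r)\le C_2 r/\log r$ by Chebyshev's estimate (or the prime number theorem). For the lower bound the idea is to produce a number field $\F$, Galois over $\Q$, together with a finite set $\mathcal{E}$ of ``bad'' primes, such that every prime $p\notin\mathcal{E}$ admitting a ring homomorphism $\rho\colon\mathcal{O}_\F\to\Z_p$ is counted by $\pi_\delta$. Proposition \ref{prop_apply_cheby} then gives $\pi_\delta(r)\ge C_1 r/\log r$ for $r$ large, since discarding the finitely many primes of $\mathcal{E}$ changes the count of Proposition \ref{prop_apply_cheby} only by a bounded amount.

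To construct $\F$, first fix an optimal configuration: by the definition of $\delta(L^{\bar\Q},\mathcal{H})$ there are $\mathcal{H}$-invariant ideals $I_1^{\bar\Q},\dots,I_k^{\bar\Q}$ of $\mathfrak{k}^{\bar\Q}=L^{\bar\Q}$ with $\bigcap_i I_i^{\bar\Q}=\{0\}$ and $\max_i \dim_{\bar\Q}(L^{\bar\Q}/I_i^{\bar\Q})=\delta(L^{\bar\Q},\mathcal{H})$; this minimum is attained because it is a minimum of naturals bounded above by $m$. Each $I_i^{\bar\Q}$ is spanned by finitely many vectors whose coordinates in the fixed $\Z$-basis $\{v_1,\dots,v_m\}$ of $L$ are algebraic, and each $\xi\in\mathcal{H}$ has algebraic eigenvalues and is conjugated to its Jordan form by a matrix with algebraic entries. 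Let $\F$ be the Galois closure over $\Q$ of the field generated by all of this data, so that it all lies in $\mathcal{O}_\F[1/x]$ for a suitable nonzero $x\in\mathcal{O}_\F$, obtained by clearing finitely many denominators. Put into $\mathcal{E}$ the following finitely many primes: those $p\le\max\{M,\Delta\}$; those for which $x$ reduces to $0$ in $\Z_p$; those dividing a fixed $m\times m$ minor of the spanning vectors of $\bigcap_i I_i^{\bar\Q}$ witnessing triviality of the intersection; those dividing, for each $i$, a fixed maximal nonvanishing minor of the spanning vectors of $I_i^{\bar\Q}$; those dividing $\det$ of each Jordan basis-change matrix; and those dividing a pairwise difference of distinct eigenvalues of the $\xi$.

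Now take $p\notin\mathcal{E}$ with a homomorphism $\rho\colon\mathcal{O}_\F\to\Z_p$; since $x$ does not reduce to $0$, $\rho$ extends to $\mathcal{O}_\F[1/x]\to\Z_p$, and since $p>\Delta$ the reductions $\bar\xi$ of the $\xi\in\mathcal{H}$ are well-defined automorphisms of $L^{\Z_p}$ as in the remark after Lemma \ref{lem_LDelta_LZp}. Define $J_i\subset L^{\Z_p}$ to be the $\Z_p$-span of the $\rho$-images of the chosen spanning vectors of $I_i^{\bar\Q}$. One then checks, each assertion holding because $p$ avoids the relevant part of $\mathcal{E}$: $J_i$ is a Lie subring of $L^{\Z_p}$ (integral structure constants, so the subalgebra relations reduce); $\dim_{\Z_p}(L^{\Z_p}/J_i)=\dim_{\bar\Q}(L^{\bar\Q}/I_i^{\bar\Q})\le\delta(L^{\bar\Q},\mathcal{H})$ (the maximal minor stays a unit, so the rank does not jump); $J_i$ is $\bar\xi$-invariant for every $\xi\in\mathcal{H}$ (the linear identity $\xi(I_i^{\bar\Q})=I_i^{\bar\Q}$ reduces); $\bigcap_i J_i=\{0\}$ (the witnessing $m\times m$ minor stays a unit); and the Jordan normal form of each $\bar\xi$ over $\Z_p$ is the reduction of that of $\xi$, with distinct eigenvalues staying distinct and nontrivial blocks staying nontrivial, so diagonalizability is preserved. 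Hence $\{J_i\}_i$ realizes the minimum in the statement as $\le\delta(L^{\bar\Q},\mathcal{H})$, the last check is exactly the second required condition, and therefore $p$ is counted by $\pi_\delta$; this finishes the lower bound.

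The main obstacle is the two items about rank: verifying that no codimension jumps and that the intersection stays trivial after reduction, i.e.\ that the reduction of the optimal $\bar\Q$-configuration is again optimal over $\Z_p$. Conceptually this is the routine fact that a specialization of a finite linear-algebra configuration is generic outside finitely many primes; the real work is the bookkeeping of isolating, once and for all, the finitely many determinants, minors and eigenvalue differences that must remain $p$-adic units, and checking that the resulting set $\mathcal{E}$ is finite and independent of $r$. A minor but essential point is that $\Z_p$ here denotes the field $\Z/p\Z$, so ``Jordan normal form over $\Z_p$'' only makes sense because $\F$ was chosen to contain the eigenvalues of the $\xi$ and $\rho$ surjects onto $\Z_p$.
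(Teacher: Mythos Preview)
Your proposal is correct and follows essentially the same approach as the paper: fix an optimal $\bar{\Q}$-configuration, collect the finitely many algebraic numbers involved into a Galois number field $\F$, apply Proposition \ref{prop_apply_cheby}, and exclude the finitely many primes at which some relevant determinant, minor, or eigenvalue-difference vanishes so that reduction mod $p$ preserves ranks, $\mathcal{H}$-invariance, trivial intersection, and Jordan data. One small wording point: your $J_i$ must be checked to be \emph{ideals} of $L^{\Z_p}$, not merely Lie subrings, but this is again a finite collection of linear relations that reduces in the same way (the paper encodes it via vectors $\lambda^{(i)}_{k,l}$ expressing brackets of $I_i$-basis vectors with an extended basis of the whole algebra).
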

	\begin{proof}
		Take ideals $I_1^{\bar{\Q}}$ to $I_k^{\bar{\Q}}$ realizing the definition of $\delta(L^{\bar{\Q}}, \mathcal{H})$. In other words, ideals such that $\cap_{i=1}^k I_i^{\bar{\Q}} = \{0\}$ and $\dim_{\bar{\Q}}(L^{\bar{\Q}}/I_i^{\bar{\Q}}) \leq \delta(L^{\bar{\Q}}, \mathcal{H})$. We will now construct some auxiliary matrices. For this, identify $L^{\bar{\Q}}$ with coordinate vectors in $\bar{\Q}^m$ with respect to a basis of $L$:
		\begin{itemize}
			\item Define matrices $B^{(i)} \in \GL(m, \bar{\Q})$ for every $I_i^{\bar{\Q}}$ by letting the first columns represent a basis of $I_i^{\bar{\Q}}$ and extending it in the other columns to a basis of $L^{\bar{\Q}}$.
			\item Now, any element in $L^{\bar{\Q}}$ can be expressed as $B^{(i)}\lambda$ for some $\lambda \in \bar{\Q}^m$. This way, we define the vectors $\lambda^{(i)}_{k,l}$ and $\mu^{\xi}_{i,k}$ such that $[v_k,v_l]_L = B^{(i)}\lambda^{(i)}_{k,l}$ and $\xi(v_k) = B^{(i)}\mu^{\xi}_{i,k}$. Here, $\xi \in \mathcal{H}$, and $v_k$ and $v_l$ are the $k$'th and $l$'th column of $B^{(i)}$ respectively.
			\item Define a matrix $D$ as a block matrix with $k\times m$ blocks. The $(i,j)$ block is given by the projection of the $j$'th standard vector $e_j$ on the space spanned by the columns of $B^{(i)}$ not corresponding to the ideal $I_i^{\bar{\Q}}$, i.e. the last entries of the vector $(B^{(i)})^{-1}e_j$. (Note that $D$ therefore has $m$ columns.) 
		\end{itemize}
		Note that the construction of $D$ can be done for any set of ideals, even if they do not intersect trivially. In this case, $D$ satisfies the following property: the intersection $\cap_{i=1}^kI_i = \{0\}$ if and only if $D$ has rank $m$. Indeed, suppose first that $D$ is not of rank $m$, then we can find a vector $\mu\neq 0$ such that $D\mu = 0$. In particular, for all $1\leq i \leq k$ we have $D_i\mu = 0$, where $D_i$ denotes the matrix consisting of the blocks on the $i$'th level. By the way we defined the blocks, this implies that the vector $\mu$ must lie in $I_i^{\bar{\Q}}$, and this for all $1\leq i \leq k$. Conversely, suppose that $0\neq \mu \in \cap_{i=1}^k I_i$, then $D_i\mu = 0$ for all $1\leq i \leq k$, and hence, $D\mu = 0$. Therefore, $D$ cannot have rank $m$.
		
		Note that we have only constructed finitely many matrices and vectors. Hence, all entries surely lie over some number field $\F$. Moreover, we may suppose that $\F$ is Galois over $\Q$ and that the characteristic polynomials of all matrices corresponding to the automorphisms $\xi\in\mathcal{H}$ splits in this number field.
		
		Now, since the quotient field of $\mathcal{O}_\F$ is precisely $\F$, we can take $M_\F$ to be a common denominator of the eigenvalues of the matrices $\xi\in\mathcal{H}$ and of all the entries in the matrices and vectors defined above, but also a multiple of $\Delta$. 
		Define the ring $R$ to be $S^{-1}\mathcal{O}_\F$ with $S = \{1, M_\F, M_\F^2, \ldots\}$. Note that $\Z[1/\Delta] \subset R$.
		
		By Chebotarev's density theorem, more specifically Proposition \ref{prop_apply_cheby}, the number of primes smaller than $r$ such that there exists a ring homomorphism $\rho: \mathcal{O}_\F \to \Z_p$ has density $\pi_{\mathcal{O}_\F \to \Z_p}(r) \asymp r/\log(r)$. As it is noted below Proposition \ref{prop_apply_cheby}, we may exclude primes with corresponding ring homomorphisms $\rho$ such that $\rho(M_\F) = 0$, since this only excludes finitely many primes and thus does not affect the density result. Similarly, we restrict our attention to homomorphisms such that $\rho(b^{(i)}) \neq 0$, where $b^{(i)}\in \mathcal{O}_\F$ is the nominator of $\det B^{(i)}$. Also, since $D$ has rank $m$, we can take an $m\times m$ submatrix with non-zero determinant $D'$. Henceforth, we will also assume that $\rho(d') \neq 0$, where $d'\in \mathcal{O}_\F$ is the nominator of $D'$. Finally, if $\nu_1/M_\F^l\neq \nu_2/M_\F^l$ are distinct eigenvalues of $\xi\in\mathcal{H}$, we will suppose that $\rho(\nu_1) \neq \rho(\nu_2)$.
		In particular, since $\rho(M_\F) \neq 0$ and therefore invertible, the universal property of localization says our ring homomorphism $\rho$ extends to a ring homomorphism $\rho: R \to \Z_p$ (by sending $1/M_\F$ to $\rho(M_\F)^{-1}$).
		
		We can apply $\rho$ coordinate-wise (with respect to the basis of $L$) to obtain maps $\rho_L: L^R \to L^{\Z_p}$. The basis represented by the matrices $B^{(i)}$ are mapped to a basis of $L^{\Z_p}$ since $\rho(\det B^{(i)}) \neq 0$. The first vectors spanning the ideal $I_i^{\bar{\Q}}$ are mapped to vectors spanning a vector space $I_i^{\Z_p}$ in $L^{\Z_p}$. Applying $\rho_L$ to the definitions of $\lambda_{k,l}^{(i)}$ and $\mu^{\xi}_{i,k}$, we have
		$$[\rho_L(v_k), \rho_L(v_l)]_L= \rho_L([v_k,v_l]_L) = \rho_L(B^{(i)})\rho_L(\lambda_{k,l}^{(i)})$$
		and
		$$\bar{\xi}(\rho_L(v_k)) = (\rho_L\circ\xi\circ\rho_L^{-1})(\rho_L(v_k)) = \rho_L(\xi(v_k)) = \rho_L(B^{(i)})\rho_L(\mu^{\xi}_{i,k}).$$
		Therefore, $I_i^{\Z_p}$ must still be an $\mathcal{H}$-invariant ideal of $L^{\Z_p}$ (of the same dimension). Also, we made sure that $\rho_L(D)$ still has rank $m$. Hence, $\cap_{i=1}^n I^{\Z_p}_i = \{0\}$.
		Lastly, since the characteristic polynomials of the automorphisms $\xi \in \mathcal{H}$ split over $R$, they split over $\Z_p$, so $\bar{\xi}$ has a Jordan Normal Form. In fact, since distinct eigenvalues of $\xi$ are mapped to distinct eigenvalues of $\bar{\xi}$, the automorphism $\bar{\xi}$ is diagonalizable if $\xi$ is by considering the minimal polynomial.
	\end{proof}

	By \cite[Proposition 4.7]{math_virt_ab}, this density result has the following implication:
	\begin{cor} \label{cor_take_prime}
		There exists a constant $C_{\text{density}}>0$ such that, given a number $0\neq x\in \Z$, a prime $p \leq C_{\text{density}}\log(|x|)+C_{\text{density}}$ satisfying Proposition \ref{prop_delta_Zp} exists with $p\nmid x$.
	\end{cor}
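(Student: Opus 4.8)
The plan is to reconstruct this as a soft consequence of the density statement in Proposition~\ref{prop_delta_Zp}, combined with the elementary observation that a fixed nonzero integer is divisible by only logarithmically many primes; this is essentially the content of \cite[Proposition 4.7]{math_virt_ab}, and I would reduce to it directly. First I would introduce the set $P$ of all primes $p>\max\{M,\Delta\}$ that satisfy both requirements appearing in Proposition~\ref{prop_delta_Zp} (the codimension bound against $\delta(L^{\bar{\Q}},\mathcal{H})$ for $\mathcal{H}$-invariant ideals of $L^{\Z_p}$, and the existence of a diagonalizability-preserving Jordan form over $\Z_p$ for every $\xi\in\mathcal{H}$). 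Proposition~\ref{prop_delta_Zp} asserts precisely that $\#\bigl(P\cap[1,r]\bigr)=\pi_\delta(r)\ge C_1 r/\log r$ for some $C_1>0$ and all $r\ge r_0$; in particular $P$ is infinite.

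Next I would fix $0\neq x\in\Z$ and argue by contradiction: suppose every prime $p\in P$ with $p\le r$ divides $x$. Then $|x|$ is bounded below by the product of all such primes; writing $N:=\lceil C_1 r/\log r\rceil$ for a lower bound on their number (valid once $r\ge r_0$) and using that a product of $N$ distinct primes is at least $\prod_{i=1}^{N} i=N!$, we obtain $\log|x|\ge \log N!\ge N\log N-N$. Substituting $N\ge C_1 r/\log r$ and simplifying shows that the right-hand side is $\ge \tfrac{C_1}{2}r$ for all $r$ beyond some threshold $r_1\ge r_0$, and hence $r\le (2/C_1)\log|x|$. Taking the contrapositive: for every $r>\max\{r_1,(2/C_1)\log|x|\}$ there is a prime $p\in P$ with $p\le r$ and $p\nmid x$.

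To finish I would set $r=\max\{r_1,(2/C_1)\log|x|\}+1$, which produces a prime $p\in P$ with $p\nmid x$ and $p\le C_{\text{density}}\log|x|+C_{\text{density}}$ for $C_{\text{density}}:=\max\{2/C_1,\,r_1+1\}$; since $p\in P$, it satisfies all the hypotheses of Proposition~\ref{prop_delta_Zp}. The degenerate case $|x|=1$ is covered by the additive constant: one just uses that $P$ contains some prime below the fixed bound $r_1+1$, which automatically does not divide $\pm1$.

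The only genuinely delicate point is the quantitative matching of the two growth rates in the second paragraph, i.e. checking that $N\log N-N$ with $N\asymp r/\log r$ is still linear in $r$ — the $\log r$ in the denominator is exactly absorbed by the $\log N$ factor, so that divisibility of $x$ by all good primes up to $r$ forces $|x|$ to be exponentially large in $r$. Everything else is bookkeeping with the constants, and one should be slightly careful that the lower bound $\pi_\delta(r)\ge C_1 r/\log r$ is only available for large $r$, which is why the threshold $r_1$ and the additive term $C_{\text{density}}$ are needed.
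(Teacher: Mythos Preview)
Your proposal is correct and aligns with the paper's treatment: the paper does not give a self-contained argument but simply invokes \cite[Proposition 4.7]{math_virt_ab} as the bridge from the density statement of Proposition~\ref{prop_delta_Zp} to the existence of a small good prime coprime to~$x$. You recognize this reduction and, in addition, sketch the underlying counting argument (product of $N$ distinct primes is at least $N!$, so divisibility by all good primes up to $r$ forces $\log|x|\gtrsim r$), which is exactly the content of that cited proposition.
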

	The extra condition on the primes, namely that the characteristic polynomial of all $\xi \in \mathcal{H}$ splits over $\Z_p$, was added to apply the following result.
	\begin{lemma} \label{lem_diagonal}
		Let $\bar{\xi} : L^{\Z_p} \to L^{\Z_p}$ be an isomorphism such that the characteristic polynomial splits over $\Z_p$, where $p>m$. Then, the order of $\bar{\xi}$ divides $(p-1)p$. If $\bar{\xi}$ is diagonalizable, then its order divides $p-1$.
	\end{lemma}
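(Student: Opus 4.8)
The plan is to treat this as a standard statement about the order of an element of $\GL(m,\Z_p)$ whose characteristic polynomial splits, which is precisely why the splitting hypothesis was built into Proposition~\ref{prop_delta_Zp}. Since $L^{\Z_p}$ is an $m$-dimensional vector space over the field $\Z_p$ and $\bar{\xi}$ is a linear automorphism, a choice of basis turns $\bar{\xi}$ into a matrix in $\GL(m,\Z_p)$, and its order is unchanged under conjugation. Because the characteristic polynomial of $\bar{\xi}$ splits over $\Z_p$, this matrix is conjugate over $\Z_p$ to an upper-triangular matrix $T$ whose diagonal entries $\lambda_1,\dots,\lambda_m$ are its eigenvalues; these are non-zero since $\bar{\xi}$ is invertible, so each $\lambda_i \in \Z_p^\times$.

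First I would raise $T$ to the power $p-1$. By Lagrange's theorem in the group $\Z_p^\times$, which has order $p-1$, each $\lambda_i^{p-1}=1$, so $T^{p-1}$ is upper-triangular with all diagonal entries equal to $1$; write $T^{p-1} = \mathbb{1} + N$ with $N$ strictly upper-triangular, hence nilpotent with $N^m = 0$. Now I would invoke the characteristic-$p$ binomial identity: since $\binom{p}{i}\equiv 0 \pmod p$ for $0<i<p$, we get $(\mathbb{1}+N)^p = \mathbb{1} + N^p$, and $N^p = 0$ because $p > m$ forces $p \geq m$. Hence $T^{(p-1)p} = (\mathbb{1}+N)^p = \mathbb{1}$, so the order of $\bar{\xi}$ divides $(p-1)p$.

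For the diagonalizable case the same computation simplifies: if $\bar{\xi}$ is diagonalizable over $\Z_p$, then $T$ may be taken diagonal, so $N = 0$ and already $T^{p-1} = \mathbb{1}$, giving that the order divides $p-1$. There is no genuine obstacle here; the only points requiring (minimal) care are the two characteristic-$p$ facts — that $\binom{p}{i}$ vanishes modulo $p$ in the middle range and that an $m\times m$ strictly upper-triangular matrix is killed after $m \leq p$ steps — both of which are exactly where the hypothesis $p>m$ is used.
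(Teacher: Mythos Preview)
Your proof is correct and follows essentially the same strategy as the paper: kill the semisimple part with the exponent $p-1$ via Fermat/Lagrange, and kill the nilpotent part with the exponent $p$ using that $\binom{p}{i}\equiv 0\pmod p$ for $0<i<p$ together with $p>m$. The only organizational difference is that the paper works directly with the Jordan normal form $J=D+N$ (so that $D$ and $N$ commute) and expands $(D+N)^k$ in one step, whereas you first triangularize, pass to the unipotent matrix $T^{p-1}=\mathbb{1}+N$, and then raise to the $p$th power; your two-step version is marginally more elementary since it only needs triangularizability rather than the full Jordan form.
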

	\begin{proof}
		By the assumption, there exists a basis of $L^{\Z_p}$ such that the matrix corresponding to $\bar{\xi}$ is a Jordan Normal Form $J = D+N$, where $D \in \Z_p^{m\times m}$ is diagonal and $N$ has its non-zero values on the first off-diagonal. Now, the matrix corresponding to $\bar{\xi}^{k}$ is given by
		$$J^k = (D+N)^k = \sum_{i=0}^k\binom{k}{i}D^{k-i}N^{i} = D^k + \binom{k}{1}D^{k-1}N + \ldots + \binom{k}{m-1}D^{k-m+1}N^{m-1}.$$
		If $k$ is a multiple of $p-1$, then Fermat's little theorem states that $D^k \equiv \mathbb{1} \mod p$, yielding the diagonalizable case. Now, suppose that $k$ is also a multiple of $p$, then $p \mid \binom{k}{i}$ for all $1\leq i \leq m-1$, and therefore, $J^k \equiv \mathbb{1} \mod p$.
	\end{proof}
	Now, we will proceed to give upper bounds for $\RF_G$, as given by Theorem \ref{thm_intro_upper} from the Introduction. Let us first make some calculations:
	\begin{lemma} \label{lem_comm_obs}
	Take notations as in Section \ref{sec_nota}. Let $h\in \{h_j^{\pm 1}\mid 1\leq j \leq m\}$ with corresponding action $\xi$ on $K$. If the induced map by $\xi$ on the quotient $K/K^p$ has order dividing $o$, then
		\begin{itemize}
			\item $(hk)^{po}K^p = h^{po}K^p \in G/K^p$ for all $k\in K$,
			\item $[h^{po}, g] \in K^p$ for all $g \in \bar{G}$.
		\end{itemize}
		Furthermore, if $\bar{G} = K \rtimes_\varphi \Z^n$ for a morphism $\varphi: \Z^n \to \text{Aut}(K)$, then $[h^{o}, g] \in K^p$ for all $g \in \bar{G}$.
	\end{lemma}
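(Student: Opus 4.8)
The plan is to reduce everything to one computation in the quotient $\bar K:=K/K^p$, a group of exponent dividing $p$ carrying an induced automorphism $\bar\xi$ of order dividing $o$. First I would record a normal form for powers of $hk$: writing the action as $hk'=\xi(k')h$, so that $k'h=h\,\xi^{-1}(k')$, a short induction on $n$ gives
$$(hk)^n=h^n\,\xi^{-(n-1)}(k)\,\xi^{-(n-2)}(k)\cdots\xi^{-1}(k)\,k\qquad(k\in K,\ n\ge1),$$
the induction step only pushing the new trailing $h$ back to the front past the $K$-factors. With $n=po$ this reads $(hk)^{po}=h^{po}\,w$, where $w=\prod_{j=1}^{po}\xi^{-(po-j)}(k)\in K$, so the first bullet is precisely the claim $w\in K^p$.

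To prove $w\in K^p$ I would pass to $\bar K$. Since $po\equiv0\pmod o$, the exponent $-(po-j)$ is $\equiv j\pmod o$, so the image of $w$ in $\bar K$ is $\prod_{j=1}^{po}\bar\xi^{\,j\bmod o}(\bar k)$. Cutting $\{1,\dots,po\}$ into its $p$ consecutive blocks of length $o$, each block contributes the \emph{same} element $\bar B:=\bar\xi(\bar k)\,\bar\xi^{2}(\bar k)\cdots\bar\xi^{\,o-1}(\bar k)\,\bar k$, so the image of $w$ equals $\bar B^{\,p}$. As $\bar K$ has exponent dividing $p$ --- a $p$-th power in $K$ corresponds under Baker--Campbell--Hausdorff to $p$ times a vector of $L^\Delta$, hence lies in $pL^\Delta$ and therefore in $K^p$ --- we get $\bar B^{\,p}=\bar e$, so $w\in K^p$. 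This is the first bullet.

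For the commutator statements it suffices to show that conjugation by $h^{po}$ (resp. by $h^{o}$ in the split case) is trivial on $\bar G/K^p$: $K^p$ is normal in $\bar G$, and $\bar G$ is generated by $K$ together with $h_1,\dots,h_n$ by Notation~\ref{nota_group}, so triviality on these generators of $\bar G/K^p$ forces $[h^{po},g]\in K^p$ for every $g\in\bar G$. On $K/K^p$, conjugation by $h^{po}$ is $\bar\xi^{\,po}=(\bar\xi^{\,o})^{p}=\mathrm{id}$. On the generator $h_j$, one has $h_j^{-1}h^{po}h_j=(h_j^{-1}hh_j)^{po}=(h\,[h,h_j])^{po}$ with $[h,h_j]\in K$ because $\bar G/K\cong\Z^n$ is abelian; applying the first bullet with $k=[h,h_j]$ gives $(h[h,h_j])^{po}\equiv h^{po}\pmod{K^p}$, hence $[h^{po},h_j]\in K^p$, i.e. $h^{po}$ commutes with $h_j$ modulo $K^p$. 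Thus conjugation by $h^{po}$ is trivial on $\bar G/K^p$, which is the second bullet. In the split case $\bar G=K\rtimes_\varphi\Z^n$ with $h_i=(e,e_i)$, the $h_j$ commute \emph{exactly}, so $h^{o}$ commutes with each $h_j$ on the nose, while conjugation by $h^{o}$ on $K/K^p$ is $\bar\xi^{\,o}=\mathrm{id}$; hence conjugation by $h^{o}$ already fixes all generators of $\bar G/K^p$, giving $[h^{o},g]\in K^p$.

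The one delicate point is the block count in the second paragraph: one must verify that the $\xi$-exponents occurring in $w$ run through a complete residue system modulo $o$ exactly $p$ times \emph{in order}, so that the image of $w$ in $\bar K$ is genuinely a $p$-th power and not merely a product with the right multiset of factors --- the latter would not help, $\bar K$ being possibly non-abelian. Once that is arranged, exponent $p$ of $\bar K$ closes the first bullet and both commutator assertions follow formally from it.
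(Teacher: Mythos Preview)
Your argument is correct and complete. The route differs from the paper's only in the proof of the first bullet. You expand $(hk)^{po}$ fully as $h^{po}\prod_{j=1}^{po}\xi^{-(po-j)}(k)$ and then argue, by cutting the product into $p$ identical blocks of length $o$ in $K/K^p$, that the tail is a $p$-th power in $K/K^p$ and hence trivial. The paper instead proceeds in two shorter steps: first $(hk)^{o}=h^{o}\tilde k$ for some $\tilde k\in K$ (using only that $\bar G/K$ is abelian), and then $(h^{o}\tilde k)^{p}K^p=h^{po}\tilde k^{\,p}K^p=h^{po}K^p$, because the hypothesis $\bar\xi^{\,o}=\mathrm{id}$ makes $h^{o}$ and $\tilde k$ commute modulo $K^p$. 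The paper's version sidesteps the block bookkeeping you flagged as ``delicate''; your explicit formula, on the other hand, makes the role of the hypothesis on $o$ completely transparent. For the second bullet and the split case the two proofs are essentially identical: both reduce to showing $h^{po}$ (respectively $h^{o}$) is central in $\bar G/K^p$ by checking on the generators $k_i$ and $h_j$, invoking the first bullet for the latter.

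One small remark: your justification that $K/K^p$ has exponent dividing $p$ via Baker--Campbell--Hausdorff and $L^\Delta$ is unnecessary and a little circular. By definition $K^p=\langle k^p:k\in K\rangle$, so $k^p\in K^p$ for every $k\in K$ tautologically, and hence $(kK^p)^p=K^p$ directly.
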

	\begin{proof}
		Let $\pi: G \to H$ denote the projection onto the virtually abelian group. For the first observation, we have
		$$\pi\left((hk)^{o}\right) = \pi\left(h^{o}\right).$$
		Hence, there exists some $\tilde{k}\in K$ such that $(hk)^{o} = h^{o}\tilde{k}$. Now, we find
		$$(hk)^{po}K^p =  \left(h^{o}\tilde{k}\right)^pK^p = h^{po}\tilde{k}^pK^p,$$
		where we used that $\tilde{k}h^{o}K^p = h^{o}\xi^{o}(\tilde{k})K^p = h^{o}\tilde{k}K^p$ by the assumption.
		
		For the second observation, we have
		$$[h^{po}, g] = h^{-po}\left(g^{-1}hg\right)^{po} .$$
		We know that $\pi(g^{-1}hg) = \pi(h)$ since $\bar{G}$ maps to $\Z^n$, so $g^{-1}hg = h\tilde{k}$ for some $\tilde{k}\in K$. Now, we conclude by using the first observation:
		$$[h^{po}, g]K^p = h^{-po}\left(h\tilde{k}\right)^{po}K^p = h^{-po}h^{po}K^p = K^p. $$
		
		For the `furthermore' part, recall that $h_j$ is given by $(e,e_j) \in K\rtimes_\varphi \Z^n$, so we can write $h$ as $(e,h)$. It also follows that $\varphi(h) = \xi$. Since $\varphi(h^o)$ is the identity homomorphism on $K/K^p$, the element $h^0$ commutes with elements in $K$ and hence the statement easily follows. 
	\end{proof}

	Now, we prove a more detailed version of Theorem \ref{thm_intro_upper}:
	\begin{thm} \label{thm_upper}
	Using the notation introduced in Notations \ref{nota_group}-\ref{nota_poly} and Notations \ref{nota_Lie}-\ref{nota_LR_groups}, 
	we have
	$$\RF_G \preceq [r\mapsto r^{\delta(L^{\bar{\Q}}, \mathcal{H})+(1+\epsilon_1+\epsilon_2+\epsilon_3)n}]_\sim \preceq [r\mapsto r^{m + 4n}]_\sim.$$
	Here,
	\begin{itemize}
		\item $\epsilon_1 = 0$ if $H = \Z^n$, and $\epsilon_1 = 1$ otherwise;
		\item $\epsilon_2 = 0$ if $\bar{G} = K\rtimes_\varphi \Z^n$ for some $\varphi: \Z^n \to \Aut(K)$, and $\epsilon_2 = 1$ otherwise;
		\item $\epsilon_3 = 0$ if all homomorphisms in $\{\xi_j\mid 1\leq j \leq n\}$ are diagonalizable (over $\bar{\Q}$ or $\C$), and $\epsilon_3 = 1$ otherwise.
	\end{itemize}
	
	Furthermore, if $H$ is finite, then
	$$\RF_G \preceq [r\mapsto \log^{\delta(L^{\bar{\Q}}, \mathcal{H})}(r)]_\sim. $$
	\end{thm}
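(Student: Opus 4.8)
The plan is to bound the divisibility function $D_G(g)$ for an arbitrary non-trivial $g\in B_G(r)$ by exhibiting a normal subgroup $N\lhd G$ of controlled finite index with $g\notin N$, splitting into cases according to the position of $g$ in the chain $K\le\bar G\le G$; by Lemma \ref{lem_g_as_product} write $g=k_1^{z_1}\cdots k_m^{z_m}h_1^{l_1}\cdots h_n^{l_n}f'$. If $f'\ne e$, the projection $G\to G/\bar G\cong H/\Z^n$ already separates $g$ from $e$ and the target has constant size. If $f'=e$ but some $l_j\ne 0$, the word norm forces $|l_j|\preceq r$, so Corollary \ref{cor_take_prime} yields a prime $q\preceq\log r$ with $q\nmid l_j$; since $q\Z^n$ is characteristic in $\Z^n\cong\bar G/K$ and $K\lhd G$, its preimage $\bar G_q\le\bar G$ is normal in $G$ with $[G:\bar G_q]=[H:\Z^n]\,q^n$, and the image of $g$ there is non-trivial. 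This gives $D_G(g)\preceq\log^n r$, well below the claimed bound. So the substantial case is $e\ne g\in K$.

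For $e\ne g\in K$ I would invoke Corollary \ref{cor_metric_minimax} and Proposition \ref{prop_Br_to_basis} to write $g=\sum_i\lambda_iv_i\in L^\Delta$ with $\lambda_i=\mu_i/\Delta^{j_i}$, $\mu_i\in\Z$, where $|\mu_i|\le C^r$ in general and $|\mu_i|\le Cr^C$ when $H$ is finite. Fixing $i_0$ with $\mu_{i_0}\ne 0$, Corollary \ref{cor_take_prime} provides a prime $p$ satisfying the conclusion of Proposition \ref{prop_delta_Zp}, with $p\nmid\mu_{i_0}$ and $p\preceq r$ (resp. $p\preceq\log r$ when $H$ is finite). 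Then $\psi(g)\ne 0$ in $L^{\Z_p}=L^\Delta/pL^\Delta$, and by Proposition \ref{prop_delta_Zp} there are $\mathcal{H}$-invariant ideals $J_1,\dots,J_k\trianglelefteq L^{\Z_p}$ with $\bigcap_iJ_i=\{0\}$ and each $\dim_{\Z_p}(L^{\Z_p}/J_i)\le\delta(L^{\bar{\Q}},\mathcal{H})$; some $J_{i_1}$ misses $\psi(g)$. By Lemma \ref{lem_LDelta_LZp}, $I^\Delta:=\psi^{-1}(J_{i_1})$ is an $\mathcal{H}$-invariant ideal of $L^\Delta$ of index $p^{\le\delta}$ containing $pL^\Delta$ and avoiding $g$, and by Lemma \ref{lem_normal_ideal} the set $N_K:=I^\Delta\cap K$ is an $\mathcal{H}$-invariant normal subgroup of $K$ of index $p^{\le\delta}$ with $g\notin N_K$; being $\mathcal{H}$-invariant it is normalized by all $h_j$ and $f_s$, and being normal in $K$ it is normalized by $K$, so $N_K\lhd G$.

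If $H$ is finite then $\bar G=K$, so the exact sequence already makes $G/N_K$ finite of order $p^{\le\delta}|H|\preceq\log^{\delta(L^{\bar{\Q}},\mathcal{H})}r$, with $g\notin N_K$; this is the last assertion of the theorem. In general one must still collapse the $\Z^n$-direction. The finite $p$-group $K/K^p\cong L^{\Z_p}$ (Lemma \ref{lem_iso_K_L}) carries an action of each $\xi_j$ of order dividing $p-1$, and dividing $(p-1)p$ only when $\xi_j$ is not diagonalizable, by Lemma \ref{lem_diagonal} together with the diagonalizability conclusion of Proposition \ref{prop_delta_Zp}; let $o$ be a common such order, so $o^n\preceq p^{(1+\epsilon_3)n}$. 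With $\epsilon=1$ if $\bar G=K\rtimes_\varphi\Z^n$ and $\epsilon=p$ otherwise, Lemma \ref{lem_comm_obs} shows $h_1^{\epsilon o},\dots,h_n^{\epsilon o}$ commute with $K$ and with one another modulo $K^p\subseteq N_K$, so $N:=\langle N_K,h_1^{\epsilon o},\dots,h_n^{\epsilon o}\rangle$ is a subgroup of $\bar G$ with $N\cap K=N_K$, $[\bar G:N]=p^{\le\delta}(\epsilon o)^n$, normalized by $K$ and by the $h_j$; when $H=\Z^n$ this $N$ is already normal in $G=\bar G$. When $H\ne\Z^n$ one enlarges $N$ to a $G$-invariant subgroup $\tilde N$ absorbing the $f_s$-conjugates of the $h_j^{\epsilon o}$, which differ from powers of the $h_j^{\epsilon o}$ only by a $K$-part whose denominators are bounded in terms of $\Delta$; this costs an extra factor of at most $p^n$, accounted for by $\epsilon_1$, and keeps $\tilde N\cap K=N_K$, so $g\notin\tilde N$. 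Then
\[
D_G(g)\le[G:\tilde N]\le[H:\Z^n]\cdot p^{\le\delta}\cdot(\epsilon o)^n\cdot p^{\epsilon_1 n}\preceq r^{\delta(L^{\bar{\Q}},\mathcal{H})+(1+\epsilon_1+\epsilon_2+\epsilon_3)n},
\]
using $p\preceq r$, and the coarser bound $r^{m+4n}$ follows from $\delta(L^{\bar{\Q}},\mathcal{H})\le m$ and $\epsilon_i\le 1$.

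The main obstacle is the last step: checking that the prescription for $\tilde N$ really produces a $G$-normal subgroup of the asserted index. The constants $\epsilon_1$ and $\epsilon_2$ are exactly the cost of this bookkeeping — $\epsilon_2$ because a non-split extension $\bar G$ forces one to work with $h_j^{po}$ rather than $h_j^o$ in Lemma \ref{lem_comm_obs}, and $\epsilon_1$ because conjugation by the $f_s$ moves $h_j^{\epsilon o}$ out of $\langle N_K,h_j^{\epsilon o}\rangle$ by a bounded element of $K$, so $N$ must be replaced by a slightly larger $G$-invariant subgroup; $\epsilon_3$ is the only one visible already inside $K/K^p$, via the Jordan-form bound of Lemma \ref{lem_diagonal}. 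Everything upstream — the coordinate estimates of Section \ref{sec_geom}, the Chebotarev input of Proposition \ref{prop_delta_Zp} and Corollary \ref{cor_take_prime}, and the ideal-to-normal-subgroup dictionary of Lemmas \ref{lem_iso_K_L}, \ref{lem_normal_ideal} and \ref{lem_LDelta_LZp} — is already in place, so once the index computation for $\tilde N$ is carried out the argument is an assembly of these pieces.
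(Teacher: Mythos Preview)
Your proposal follows the paper's argument closely and correctly assembles the ingredients: the reduction to $g\in K$, the choice of prime via Corollary \ref{cor_take_prime} and Proposition \ref{prop_delta_Zp}, the construction of $N_K\lhd G$ via Lemmas \ref{lem_LDelta_LZp} and \ref{lem_normal_ideal}, and the use of Lemmas \ref{lem_diagonal} and \ref{lem_comm_obs} to collapse the $\Z^n$-direction. (Minor remark: for $g\notin K$ you split into two subcases, whereas the paper simply projects to the virtually abelian $H$ and cites $\RF_H\preceq\log^n$; both are adequate.)

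The divergence is exactly at the step you flag, and your proposed fix is not the paper's. The paper does \emph{not} enlarge $N$ to some $G$-invariant $\tilde N$; instead it builds the extra factor of $p$ into the exponent from the outset, setting
\[
N_2=\bigl\langle N_1,\; h_j^{(p-1)p^{\epsilon_1+\epsilon_2+\epsilon_3}}\ \bigm|\ 1\le j\le n\bigr\rangle,
\]
and verifies that this $N_2$ is already normal in $G$. The mechanism is that $f_s^{-1}h_j^{(p-1)p^{\epsilon_2+\epsilon_3}}f_s=\bigl(\prod_l h_l^{d_l(p-1)p^{\epsilon_2+\epsilon_3}}\bigr)\tilde k$ for some $\tilde k\in K$; raising to the $p$-th power and using that the $h_l^{(p-1)p^{\epsilon_2+\epsilon_3}}$ are central modulo $N_1$ turns the offending $\tilde k$ into $\tilde k^p\in K^p\subset N_1$, so the $f_s$-conjugate already lies in $N_2$. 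Your alternative of passing to a normal closure is dangerous for precisely this reason: those elements $\tilde k$ need not lie in $N_K$, so the closure may enlarge $\tilde N\cap K$ beyond $N_K$ and capture $g$. The phrase about ``denominators bounded in terms of $\Delta$'' is also not the relevant issue --- what matters is that $\tilde k^p\in K^p\subset N_K$. Once you replace the enlarging step by this exponent-raising trick, the check that $K\cap N_2=N_1$ is the short centrality argument in the paper, and your index computation goes through verbatim.
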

	\begin{proof}
		Take a non-trivial element $g\in B_G(r)$. Recall that $H$ is residually finite with a finite-index, free abelian subgroup of rank $n$. If $\pi(g) \neq e$, then the residual finiteness growth of the virtually abelian group $H$ dictates that
		$$D_G(g) \leq D_H(\pi(g)) \preceq \log^n(r)$$
	by \cite[Theorem 1.2]{math_virt_ab}. Note that this function grows slower than the upper bound we wish to demonstrate. Therefore, we may henceforth assume that $e\neq g\in B_G(r)\cap K$.
		
		By Proposition \ref{prop_Br_to_basis}, we know that $g$ can be written as 
		$$g = \lambda_1v_1 + \ldots + \lambda_m v_m$$
		with respect to a chosen fixed $\Z$-basis $\{v_1, \ldots , v_m\}$. The coefficients $\lambda_i$ lie in $\Z[1/\Delta]$, are of the form $\lambda_i = \mu_i/\Delta^{j_i}$ with $\mu_i\in\Z$ and $j_i\in\N$, where $|\mu_i| \leq C^r$ for some fixed constant $C>0$. If $|H| < \infty$, then $|\mu_i| \leq Cr^C$. In both cases, denote $b(r)$ for this upper bound.
		
		Since $g$ is non-trivial, one of the $\lambda_i$ is non-zero, say $\lambda_{i_0}$. Let $M$ denote the bound given in Lemma \ref{lem_sega_normal_ideal}. Now, take a prime $p> \max\{M, \Delta, m\}$ such that $p \nmid \mu_{i_0}$. By Corollary \ref{cor_take_prime}, we may assume that $p$ satisfies Proposition \ref{prop_delta_Zp} and $p\leq C_{\text{density}}\log(b(r)) + C_{\text{density}}$ for some fixed constant $C_{\text{density}}$. By construction, $g\in L^\Delta \setminus pL^\Delta$.
		
		Lemma \ref{lem_LDelta_LZp} states that $L^\Delta/pL^\Delta \cong L^{\Z_p}$. By Proposition \ref{prop_delta_Zp}, we obtain $\mathcal{H}$-invariant ideals $J_1$ to $J_k$ with $\cap_{i=1}^k J_i = \{0\}$ and $|L^{\Z_p}/J_i| \leq \delta(L^{\bar{\Q}}, \mathcal{H})$. Since they have trivial intersection, we can take one of them, denoted by $J$, such that $g \notin J$. Now, lemma \ref{lem_LDelta_LZp} guarantees that the preimage of $J$ in $L^\Delta$, $\psi^{-1}(J) \subset L^\Delta$, is an $\mathcal{H}$-invariant ideal of the same index. Denote $\psi^{-1}(J)\cap K$ by $N_1$. By Lemma \ref{lem_normal_ideal}, we know that $N_1$ is an $\mathcal{H}$-invariant normal subgroup of $K$ with $[K:N_1] \leq \delta(L^{\bar{\Q}}, \mathcal{H})$. Note that $pL^\Delta \cap K \subset N_1$ by construction, and $K^p = pK \subset pL^\Delta \cap K$ by \cite[2.2.5]{robinson} as $p > m$.
		
		Recall we have the following short exact sequence:
		$$1 \to K \to G \to H \to 1.$$
		Here, $G$ is generated by $S = \{k_i,h_j,f_s\mid 1\leq i \leq m, 1\leq j \leq n, 1\leq s \leq [H:\Z^n]-1\}$. Since $N_1$ is normal in $K$, and  $s^{-1}N_1s = N_1$ for all $s\in S$ by $\mathcal{H}$-invariance, we conclude that $N_1$ is normal in $G$ itself. Hence, we can define $\varphi_1: G \to G/N_1$, satisfying $\varphi_1(g) \neq e$. This results in the short exact sequence of the form
		$$ 1 \to K/N_1 \to G/N_1 \to H \to 1.$$
		
		Define $\epsilon_1$, $\epsilon_2$ and $\epsilon_3$ as in the statement of the theorem. We claim that 
		$$N_2 = \langle N_1, h_j^{(p-1)p^{(\epsilon_1 + \epsilon_2 + \epsilon_3)}} \mid 1\leq j \leq n\rangle$$
		is a normal subgroup of $G$ with $K\cap N_2 = N_1$, and in particular, $g\notin N_2$.
		
As we already argued that $s^{-1}N_1s \in N_2$ for all $s\in S$, we will proceed to show that $$s^{-1}h_j^{(p-1)p^{(\epsilon_1 + \epsilon_2 + \epsilon_3)}}s \in N_2$$ or equivalently $[s,h_j^{(p-1)p^{(\epsilon_1 + \epsilon_2 + \epsilon_3)}}] \in N_2$ for every $s\in S$ and $1\leq j \leq n$ to show that $N_2$ is a normal subgroup. By the choice of $p$ as in Proposition \ref{prop_delta_Zp}, we know that the characteristic polynomials of all $\{\xi_j\mid 1\leq j \leq n\}$ splits over $\Z_p$. By Lemma \ref{lem_diagonal}, we therefore know that their order (over $\Z_p$) divides $(p-1)p^{\epsilon_3}$. Using this in Lemma \ref{lem_comm_obs} gives us the observation that 
		\begin{equation} \label{eq_hj_commutes}
		[h_j^{(p-1)p^{\epsilon_3}p^{\epsilon_2}}, g] \in K^p \leq N_1 \leq N_2
		\end{equation}
		for all $g \in \{k_i, h_j\mid 1\leq i \leq m, 1 \leq j \leq n\}$ and $1\leq j \leq m$. The same lemma also guarantees that $[h_j^{(p-1)p^{\epsilon_1+\epsilon_2+\epsilon_3}}, g] \in N_2$. In particular, we have already shown that $N_2$ is normal in $\bar{G}$. 
		
		Take an element of the form $f_s$, so in particular we have $\epsilon_1 = 1$. It suffices to show that $f_s^{-1}h_j^{(p-1)p^{\epsilon_2+\epsilon_3+1}}f_s \in N_2$.  
		Note that by construction $f_s^{-1}\bar{G} f_s = \bar{G}$, and hence $f_s$ induces an action on $\Z^n$ via conjugation. 

		Consider $\pi(h_j^{(p-1)p^{\epsilon_2+\epsilon_3}})$ with $1\leq j \leq n$. Recall that $\{\pi(h_j)\mid 1\leq j \leq n\}$ is a basis of $\Z^n$, and thus 
		$$\pi(f_s^{-1}h_j^{(p-1)p^{\epsilon_2+\epsilon_3}}f_s) = \prod_{l=1}^n \pi(h_l)^{d_l(p-1)p^{\epsilon_2+\epsilon_3}} = \pi\left(\prod_{l=1}^n h_l^{d_l(p-1)p^{\epsilon_2+\epsilon_3}}\right)$$
		for some $d_l \in \Z$ (and $1\leq l \leq n$). In particular,
		$$f_s^{-1}h_j^{(p-1)p^{\epsilon_2+\epsilon_3}}f_s = \left(\prod_{l=1}^n h_l^{d_l(p-1)p^{\epsilon_2+\epsilon_3}}\right)\tilde{k}$$
		for some $\tilde{k} \in K$. Now, in $G/N_1$, we have
		\begin{equation*}
			\begin{split}
				f_s^{-1}h_j^{(p-1)p^{\epsilon_2+\epsilon_3 + 1}}f_s N_1 & = \left(f_s^{-1}h_j^{(p-1)p^{\epsilon_2+\epsilon_3}}f_s\right)^p N_1 \\
				& = \left(\left(\prod_{l=1}^n h_l^{d_l(p-1)p^{\epsilon_2+\epsilon_3}}\right)\tilde{k}\right)^p N_1 \\
				& = \left(\prod_{l=1}^n h_l^{d_l(p-1)p^{\epsilon_2+\epsilon_3+1}}\right)\tilde{k}^p N_1 \\
				& = \left(\prod_{l=1}^n h_l^{d_l(p-1)p^{\epsilon_2+\epsilon_3+1}}\right) N_1,
			\end{split}
		\end{equation*}
		where we used that $h_l^{(p-1)p^{\epsilon_2+\epsilon_3}}N_1$ is central in $\bar{G}/N_1$ by Equation \eqref{eq_hj_commutes}. We conclude that $f_s^{-1}h_j^{(p-1)p^{\epsilon_2+\epsilon_3 + 1}}f_s \in N_2$, and thus $N_2 \lhd G$.
		
		Now, we will argue that $K\cap N_2 = N_1$. Therefore, suppose $\tilde{g} \in K\cap N_2$. By definition of $N_2$, $\tilde{g}$ can be written as a product of elements in $N_1$ and elements of the form $h_j^{\pm(p-1)p^{\epsilon_1+\epsilon_2+\epsilon_3}}$. Note that $\pi(\tilde{g}) = 0$. Since $\{\pi(h_j)\mid 1\leq j \leq n\}$ is a basis of $\Z^n$, the number of elements $h_l^{(p-1)p^{\epsilon_1+\epsilon_2+\epsilon_3}}$ and $h_l^{-(p-1)p^{\epsilon_1+\epsilon_2+\epsilon_3}}$ in this product must be the same. Since these elements are central modulo $N_1$ by Equation \eqref{eq_hj_commutes}, we can rewrite this product such that they cancel. We are left with an element in $N_1$. This shows the claim.
		
		To finish the proof, we note that by construction $g\notin N_2$, and hence
		\begin{equation*}
			\begin{split}
				D_G(g) & \leq [G:N_2] = [K:K\cap N_2]\cdot [H: \pi(N_2)] \\
				& = [K : N_1] \cdot [H: \Z^n] \cdot [\Z^n: \pi(N_2)] \\
				& = [L^{\Z_p}: J] \cdot [H: \Z^n] \cdot [\Z^n : (p-1)p^{\epsilon_1+\epsilon_2+\epsilon_3}\Z^n] \\
				& \leq p^{\delta(L^{\bar{\Q}}, \mathcal{H})}\cdot [H: \Z^n] \cdot p^{(1+\epsilon_1+\epsilon_2+\epsilon_3)n} \\
				&\leq [H: \Z^n]\cdot \left(C_{\text{density}}\log(b(r))+C_{\text{density}}\right)^{\delta(L^{\bar{\Q}}, \mathcal{H}) + (1+\epsilon_1+\epsilon_2+\epsilon_3)n}.
			\end{split}
		\end{equation*}
	If $H$ is finite, then $b(r)$ was polynomial. By the property that $\log(r^d) = d\log(r)$, we conclude that $D_G(g) \preceq \log^{\delta(L^{\bar{\Q}}, \mathcal{H})}(r)$ as required. If $H$ is infinite, then $b(r) \leq C^r$ and hence $\log(b(r)) \preceq r$, yielding the bound given in the theorem's statement. Note that $\delta(L^{\bar{\Q}}, \mathcal{H}) \leq m = r(K^\Q)$.
	\end{proof}
	\begin{ex}
		Let $G$ be \textbf{virtually polycyclic}, so $G$ has a normal series where the quotients are either cyclic or finite. 
The number of infinite cyclic factors is called the Hirsch length of $G$, denoted by $h(G)$, and is a group invariant by \cite[p. 16]{sega83-1}. In fact, $h(G) = h(K) + h(G/K) = m + n$, and therefore, $\RF_G \preceq r^{4h(G)}$. 
	\end{ex}
	\begin{ex}\label{ex_Z2Z}
		Consider the group $G = \Z^2\rtimes_\varphi \Z$ with $\varphi(1): \Z^2 \to \Z^2: v \mapsto Av$, where $A = \begin{psmallmatrix}2&1\\1&1\end{psmallmatrix}$. The action of $\varphi(1)$ on $\Z^2$ is diagonalizable, and thus the eigenspaces over $\bar{\Q}^2$ yield invariant ideals with trivial intersection. Therefore, $\delta(L^{\bar{\Q}}, \mathcal{H}) = 1$ and hence $\RF_G \preceq r^2$ via Theorem \ref{thm_upper}. We find the same upper bound for the Baumslag-Solitar groups $\BS(1,n) = \Z[1/n]\rtimes \Z$, since $r(\Z[1/n]) = m = 1$.
	\end{ex}
	\begin{rem}
		\label{ex:motivation}
		In \cite{franz2017quantifying}, an upper bound $\RF_G \preceq r^{l^2-1}$ for linear groups $G \leq \GL(l, \C)$ was communicated. This bound is quadratic in the `dimension of linearity' $l$. In contrast, Theorem \ref{thm_upper} gives a bound that is linear in the rank of the $\mathcal{M}$-group $G$. This might yield sharper bounds for possibly a large class of groups. For example, in $G = \Z^2\rtimes_\varphi \Z$ as above, the linear embedding
		$$G \hookrightarrow \GL(3, \Z): (v,l)\mapsto \begin{pmatrix} A & v \\ 0 & 1\end{pmatrix}$$
		provides a bound $\RF_G \preceq r^8$, while Theorem \ref{thm_upper} says that $\RF_G \preceq r^2$. 	In general, the difference between these bounds can become arbitrary large. 
		
	Note that for a general finitely generated minimax group $G$, one expects that the minimal $l$ such that $G$ can be realized as a subgroup of $\GL(l,\C)$ is at least the Hirsch lenght $h(G)$. For example, this is the case for finitely generated nilpotent groups associated to a filiform nilpotent algebra $\mathfrak{g}$ see \cite[Proposition 2]{MR1411303}.	Moreover, the minimal $l$ that allows an embedding $G \hookrightarrow \GL(l, \C)$ is influenced by the finite group $H/\Z^n$ of the $\mathcal{M}$-group, while the bound $\RF_G \preceq r^{m+4n}$ of Theorem \ref{thm_upper} is not.
	\end{rem}
	\begin{ex} \label{ex_virt_ab}
	Let $G$ be \textbf{virtually abelian} with free abelian subgroup $K = \Z^m$ of maximal rank and finite $H = G/K$. In this case, Theorem \ref{thm_upper} says that
	$$\RF_G \preceq \log^{\delta(L^{\bar{\Q}}, \mathcal{H})}.$$
	Since $K$ is abelian, its corresponding Lie algebra is $K^\Q$ itself (with trivial Lie bracket). In particular, $L^{\bar{\Q}} = \bar{\Q}^m$. By consequence, $\delta(L^{\bar{\Q}}, \mathcal{H})$ equals
	\begin{equation*}
		\delta(\bar{\Q}^m, \mathcal{H}) = \min\{\max_{i=1}^k\{\dim_{\bar{\Q}}(\bar{\Q}^m/I_i^{\bar{\Q}})\}\mid I_1^{\bar{\Q}} \text{ to } I_k^{\bar{\Q}} \text{ are }\mathcal{H}\text{-invariant subspaces of }\bar{\Q}^m, \;\cap_{i=1}^k I_i^{\bar{\Q}} = \{0\}\}.
	\end{equation*}
	If we decompose $\bar{\Q}^m$ into a direct sum of absolutely irreducible subspaces, $\bar{\Q}^m = V_1^{\bar{\Q}} \oplus \ldots \oplus V_k^{\bar{\Q}}$, then the $I_i^{\bar{\Q}}$ realizing $\delta(\bar{\Q}^m, \mathcal{H})$ equal $I_i^{\bar{\Q}} = V_1^{\bar{\Q}} \oplus \ldots \oplus V_{i-1}^{\bar{\Q}} \oplus V_{i+1}^{\bar{\Q}}\oplus \ldots \oplus V_k^{\bar{\Q}}$. Hence, $\delta(\bar{\Q}^m, \mathcal{H})$ equals the largest dimension of an absolutely irreducible subspace. This is precisely the bound communicated in \cite{math_virt_ab}. In fact, that paper shows the bound is exact for these groups.
	\end{ex}
	\subsection{Remarks on exactness}
In this subsection, we discuss two main obstructions to the exactness of the upper bound given in Theorem \ref{thm_upper}. The first one is due to the fact that the quotient in $H$ might be larger than needed. The second one is that $G$ might split with a nilpotent part. Examples \ref{ex_Z2Z_2} and \ref{ex_splitting_semidirect} will illustrate these obstructions.
\begin{ex}\label{ex_Z2Z_2}
	Take the group $G = \Z^2\rtimes_\varphi \Z$ of Example \ref{ex_Z2Z}. Now consider $G\times G$. From one side, we have $\RF_{G\times G} = \max\{\RF_G, \RF_G\} = \RF_G \preceq r^2$. From the other side, Theorem \ref{thm_upper} yields $\RF_{G\times G} \preceq r^3$ (for the same reasons as in Example \ref{ex_Z2Z}). From this we conclude that the bound in Theorem \ref{thm_upper} is not sharp in general. A major obstruction to exactness is the choice that $\pi(N_2) = l\Z^2 \lhd H$ for some $l\in \N$ in the proof of the theorem. From \cite{math_virt_ab}, we know that normal subgroups realizing $\RF_H$ are, in most cases, not of this form. Therefore, we should expect that setting $N_2 = \langle N_1, h_j^l\mid 1\leq j \leq n\rangle$ is not optimal. 
\end{ex}
In this example it holds that $n=2$, because $H = \Z^2$. However, using other properties, we can reduce the problem to a case where $n$ is smaller. This automatically decreases the estimate of Theorem \ref{thm_upper}. The following observation is important in this setting:
\begin{prop}\label{prop_RF_covering}
	Let $G$ be a finitely generated group. Let $\{\pi_i: G \to G_i\mid 1\leq i \leq n\}$ denote a finite set of surjective homomorphisms from $G$ to residually finite groups $G_i$ ($1\leq i \leq n$). If $\cap_{i=1}^n\ker\pi_i = \{e\}$, then $G$ is residually finite and $\RF_G \preceq \max\{\RF_{G_i}\mid 1\leq i\leq n\}$. 
\end{prop}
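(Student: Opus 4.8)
The plan is to reduce the statement to the single-element divisibility function $D_G$ and to the elementary fact that a surjective homomorphism is distance non-increasing once the generating sets are chosen compatibly. Concretely, I would first fix a finite generating set $S$ of $G$; since each $\pi_i$ is surjective, $\pi_i(S)$ is a finite generating set of $G_i$, and with respect to $S$ and $\pi_i(S)$ one has $\norm{\pi_i(g)}_{G_i} \leq \norm{g}_G$ for every $g\in G$, because a word of length $\leq r$ over $S$ representing $g$ maps to a word of length $\leq r$ over $\pi_i(S)$ representing $\pi_i(g)$. In particular $\pi_i(B_{G,S}(r)) \subseteq B_{G_i,\pi_i(S)}(r)$. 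Since $\RF$ of a group is independent of the chosen generating set up to $\approx$, I may and will compute $\RF_{G_i}$ with respect to $\pi_i(S)$.

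The second ingredient is the monotonicity $D_G(g) \leq D_{G_i}(\pi_i(g))$ whenever $\pi_i(g)\neq e$. Indeed, if $N \lhd G_i$ realizes $D_{G_i}(\pi_i(g))$, meaning $\pi_i(g)\notin N$ and $[G_i:N] = D_{G_i}(\pi_i(g))$, then $\pi_i^{-1}(N) \lhd G$ satisfies $g\notin \pi_i^{-1}(N)$, and surjectivity of $\pi_i$ gives $G/\pi_i^{-1}(N) \cong G_i/N$, hence $[G:\pi_i^{-1}(N)] = [G_i:N]$. This shows $D_G(g)$ is bounded by $D_{G_i}(\pi_i(g))$ as claimed, and in particular is finite.

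Now I would combine these. Given a non-trivial $g\in B_{G,S}(r)$, the hypothesis $\bigcap_{i=1}^n\ker\pi_i = \{e\}$ yields an index $i$ with $\pi_i(g)\neq e$; then $\pi_i(g) \in B_{G_i,\pi_i(S)}(r)\setminus\{e\}$, so
$$D_G(g) \;\leq\; D_{G_i}(\pi_i(g)) \;\leq\; \RF_{G_i,\pi_i(S)}(r) \;\leq\; \max_{1\leq j\leq n}\RF_{G_j,\pi_j(S)}(r).$$
Since $D_G(g)$ is finite for every non-trivial $g$, the group $G$ is residually finite; and taking the maximum over all non-trivial $g\in B_{G,S}(r)$ gives $\RF_{G,S}(r) \leq \max_{j}\RF_{G_j,\pi_j(S)}(r)$ for every $r$, i.e. $\RF_{G,S} \preceq \max_j \RF_{G_j,\pi_j(S)}$. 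Finally I would pass to equivalence classes: a finite maximum of non-decreasing functions is non-decreasing, and $\preceq$ is compatible with finite maxima (if $f_j\preceq g_j$ with constants $C_j$, then $\max_j f_j \preceq \max_j g_j$ with constant $\max_j C_j$, using that each $g_j$ is non-decreasing), so from $\RF_{G_j,\pi_j(S)} \approx \RF_{G_j}$ one obtains $\RF_G \approx \RF_{G,S} \preceq \max_j \RF_{G_j,\pi_j(S)} \preceq \max_j \RF_{G_j}$, as desired.

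\textbf{Main obstacle.} There is essentially no hard step: the only point requiring care is the bookkeeping around generating sets — choosing them compatibly so that each $\pi_i$ is $1$-Lipschitz and invoking generating-set independence of $\RF$ on the targets — together with the routine verification that $\preceq$ behaves well under finite maxima.
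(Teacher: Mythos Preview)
Your proposal is correct and follows essentially the same approach as the paper: fix a generating set $S$ of $G$, use the surjections to push it to generating sets $\pi_i(S)$ of the $G_i$, observe that for any non-trivial $g\in B_{G,S}(r)$ some $\pi_i(g)$ is non-trivial and lies in $B_{G_i,\pi_i(S)}(r)$, and bound $D_G(g)$ by $\RF_{G_i,\pi_i(S)}(r)$ by pulling back a witnessing finite quotient. The only differences are cosmetic---the paper phrases the witness as a map $\varphi:G_j\to Q$ rather than as a normal subgroup $N\lhd G_j$, and does not spell out the final passage to equivalence classes as carefully as you do.
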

\begin{proof}
	Let $S$ be finite generating set of $G$. Now, $\pi_i(S)$ is a finite generating set of $G_i$ and $\pi(B_{G,S}(r)) = B_{G, \pi_i(S)}(r)$. Take $e\neq g\in B_{G,S}(r)$ arbitrary. Since $g\notin \cap_{i=1}^n\ker\pi_i$, there is some index $1\leq j \leq n$ such that $\pi_j(g) \neq e$. By the residual finiteness growth of $G_j$, we know that there exists a homomorphism $\varphi: G_j \to Q$ with $\varphi(\pi_j(g)) \neq e$ and $|Q| \leq \RF_{G_j, \pi_j(S)}(r)$. Since $(\varphi\circ \pi_j)(g) \neq e$, we observe that
	$$D_{G}(g) \leq |Q| \leq \RF_{G_j, \pi_j(S)}(r) \leq \max\{\RF_{G_i, \pi_i(S)}(r)\mid 1\leq i\leq n\}.$$
	The result follows by taking the maximum over all non-trivial elements in $g\in B_{G,S}(r)$.
\end{proof}
In particular, if we have a short exact sequence of the form
$$1 \to K \to G \to H_1\times H_2 \to 1,$$
we can use the groups $G_1 = K$, $G_2 = \pi^{-1}(H_1)$ and $G_3 = \pi^{-1}(H_2)$ with $\pi$ the projection $G \to H_1\times H_2$. Hence, we obtain the bound
$$\RF_G \preceq \max\{\RF_K, \RF_{\pi^{-1}(H_1)},\RF_{\pi^{-1}(H_2)}\}.$$
Here, the three values $n$ are smaller than (or equal to) the original value $n$ for $G$, which yields improved estimates via Theorem \ref{thm_upper}. In particular, this result can always be applied when $H= \Z^n$. In this case, it reduces to groups where $n=1$. Furthermore, extensions by $\Z$ are always semidirect products, so $\epsilon_2$ becomes $0$.

Another obstruction to exactness will be discussed in Example \ref{ex_splitting_semidirect}. It is linked to the action of $H$ on $K$. Let us first prove a corollary of Proposition \ref{prop_RF_covering}:
\begin{lemma}
	Let $G_1\rtimes_{\varphi_1} G_3$ and $G_2\rtimes_{\varphi_2} G_3$ be two finitely generated residually finite groups. The group $G = (G_1\times G_2)\rtimes_{\varphi_1\times \varphi_2} G_3$ has its residual finiteness growth given by
	$$\RF_G = \max\{\RF_{G_1\rtimes_{\varphi_1} G_3}, \RF_{G_2\rtimes_{\varphi_2} G_3}\}.$$
\end{lemma}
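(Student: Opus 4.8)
The plan is to prove the two inequalities
$$\max\{\RF_{G_1\rtimes_{\varphi_1} G_3}, \RF_{G_2\rtimes_{\varphi_2} G_3}\} \preceq \RF_G
\qquad\text{and}\qquad
\RF_G \preceq \max\{\RF_{G_1\rtimes_{\varphi_1} G_3}, \RF_{G_2\rtimes_{\varphi_2} G_3}\}$$
separately; since $\RF_G$ is only considered up to the equivalence $\approx$, these together yield the asserted equality (just as in Example \ref{ex_Z2Z_2}).

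For the first inequality I would realize each factor group $G_i\rtimes_{\varphi_i}G_3$ as a finitely generated subgroup of $G$, namely $(G_i\times\{e\})\rtimes G_3$: this makes sense because $G_i\times\{e\}$ is invariant under the coordinate-wise action $\varphi_1\times\varphi_2$, and the restricted action on that factor is exactly $\varphi_i$. It then remains to use the monotonicity of residual finiteness growth under passage to finitely generated subgroups, which is not stated above but is immediate: if $e\neq h$ lies in a finitely generated subgroup $H\leq G$, then any $N\lhd G$ with $h\notin N$ gives $N\cap H\lhd H$ with $h\notin N\cap H$ and $[H:N\cap H]\leq [G:N]$, so $D_H(h)\leq D_G(h)$, while $\norm{h}_G\leq C\norm{h}_H$ for a constant $C$ depending only on the chosen generating sets; combining the two gives $\RF_H\preceq\RF_G$. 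Applying this to $H=G_i\rtimes_{\varphi_i}G_3$ finishes this direction.

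For the second inequality I would apply Proposition \ref{prop_RF_covering} to the two coordinate projections $G_1\times G_2\to G_i$. These are $G_3$-equivariant precisely because $\varphi_1\times\varphi_2$ acts coordinate-wise, so they extend to surjective homomorphisms $\pi_i\colon G\to G_i\rtimes_{\varphi_i}G_3$. A direct check gives $\ker\pi_1=\{e\}\times G_2\times\{e\}$ and $\ker\pi_2=G_1\times\{e\}\times\{e\}$, hence $\ker\pi_1\cap\ker\pi_2=\{e\}$; and both $G_1\rtimes_{\varphi_1}G_3$ and $G_2\rtimes_{\varphi_2}G_3$ are residually finite by hypothesis. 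Proposition \ref{prop_RF_covering} then yields that $G$ is residually finite and $\RF_G\preceq\max\{\RF_{G_1\rtimes_{\varphi_1}G_3},\RF_{G_2\rtimes_{\varphi_2}G_3}\}$.

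The argument is essentially bookkeeping once Proposition \ref{prop_RF_covering} is available; no step is a genuine obstacle. The only point that deserves a moment of care is the first inequality, where one must verify that the two semidirect factors really do sit inside $G$ as finitely generated (and, automatically, Lipschitz-embedded) subgroups, and that subgroup-monotonicity of $\RF$ is invoked — the latter being the reason a quotient map alone would not suffice and why the joint-triviality-of-kernels hypothesis of Proposition \ref{prop_RF_covering} is needed for the other direction.
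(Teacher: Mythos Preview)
Your proposal is correct and follows essentially the same approach as the paper: both directions are handled exactly as you describe, via the inclusions $G_i\rtimes_{\varphi_i}G_3\hookrightarrow G$ for the lower bound and via Proposition~\ref{prop_RF_covering} applied to the two quotient maps $G\to G/G_2\cong G_1\rtimes_{\varphi_1}G_3$ and $G\to G/G_1\cong G_2\rtimes_{\varphi_2}G_3$ for the upper bound. The paper's proof is a terse two-line version of your argument; your added justification of subgroup-monotonicity of $\RF$ is a standard fact the paper leaves implicit.
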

\begin{proof}
	This result follows directly from the observations that $G_1\rtimes_{\varphi_1} G_3$ and $G_2\rtimes_{\varphi_2} G_3$ inject into $G$, and
	$$\psi_1: G \to G/G_{2} \cong G_1\rtimes_{\varphi_1} G_3  \text{ and }\psi_2: G \to G/G_{1} \cong G_2\rtimes_{\varphi_2} G_3$$
	are will defined maps with trivially intersecting kernels.
\end{proof}
\begin{ex} \label{ex_splitting_semidirect}
	Let $\Z$ act on $\Z^2$ via the matrix $\begin{psmallmatrix}2&1\\1&1\end{psmallmatrix}$, and let it act trivially on $H_3(\Z)$. The group $G = (\Z^2\times H_3(\Z)) \rtimes \Z$ defined in this way satisfies 
	$$\RF_G = \max\{\RF_{\Z^2\rtimes \Z}, \RF_{H_3(\Z)\times \Z}\} \preceq \max\{r^2, \log^3(r)\} \preceq r^2,$$
	using the lemma above and Example \ref{ex_Z2Z}. 
	
	According to the upper bound given in Theorem \ref{thm_upper}, we would have obtained the bound $\RF_G \preceq r^{3+1}$, since $H_3(\Z) \subset K$. However, the effect of the higher nilpotency class of $K$ can be estimated with a polylogarithmic bound in this example.
\end{ex}
Note that both obstructions require that $H$ is infinite. Since we already know that our bound is exact for all virtually abelian groups (see Example \ref{ex_virt_ab}), we conjecture this to be the case for all virtually nilpotent groups:
\begin{conj}
	Using Notations \ref{nota_Lie}-\ref{nota_LR_groups}, if $G$ is finitely generated virtually nilpotent with torsion-free nilpotent normal subgroup $K$ and finite quotient $H = G/K$, then
	$$\RF_G = [r\mapsto \log^{\delta(L^{\bar{\Q}}, \mathcal{H})}(r)]_\sim. $$
\end{conj}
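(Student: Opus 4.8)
We sketch how one would attack this. The bound $\RF_G\preceq[r\mapsto\log^{\delta(L^{\bar{\Q}},\mathcal{H})}(r)]_\sim$ is already the last assertion of Theorem \ref{thm_upper} (here $H$ is finite, so $n=0$ and $\bar G=K$, and $K$ is finitely generated since $[G:K]<\infty$), so the whole content of the conjecture is the reverse inequality $[r\mapsto\log^{\delta(L^{\bar{\Q}},\mathcal{H})}(r)]_\sim\preceq\RF_G$. Write $\delta=\delta(L^{\bar{\Q}},\mathcal{H})$. Because $H$ is finite, an element with non-trivial image in $H$ has $D_G(g)\leq|H|$, so the plan is to restrict to $e\neq g\in K$; and for such $g$, if $N\lhd G$ realises $D_G(g)$ then $N\cap K$ is a normal subgroup of $K$, invariant under the conjugation action of $\mathcal{H}$, with $g\notin N\cap K$ and $[K:N\cap K]\leq[G:N]=D_G(g)$. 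Hence it suffices to produce, for every large $r$, an element $e\neq g_r\in B_G(r)\cap K$ such that $[K:N\cap K]\gtrsim(\log r)^{\delta}$ for every normal subgroup $N$ of $G$ with $g_r\notin N$.

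The first step is algebraic: choosing the direction of $g_r$. Let $I_{<\delta}^{\bar{\Q}}$ be the intersection of all $\mathcal{H}$-invariant ideals of $\mathfrak{k}^{\bar{\Q}}$ of codimension strictly below $\delta$. If this were $\{0\}$, that family would have trivial intersection with maximal codimension $\leq\delta-1$, contradicting the minimality defining $\delta$; hence $I_{<\delta}^{\bar{\Q}}\neq\{0\}$, and since the absolute Galois group permutes the family, $I_{<\delta}^{\bar{\Q}}$ is defined over $\Q$. Fix a primitive $0\neq v_0\in L\cap I_{<\delta}^{\bar{\Q}}$; by construction, every $\mathcal{H}$-invariant ideal $I$ of $\mathfrak{k}^{\bar{\Q}}$ with $v_0\notin I$ has $\dim_{\bar{\Q}}(\mathfrak{k}^{\bar{\Q}}/I)\geq\delta$. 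One then wants to transfer this, through reduction theory for the finite automorphism group $\langle\mathcal{H}\rangle$, to: for all but finitely many primes $p$, every $\mathcal{H}$-invariant ideal of $L^{\Z_p}$ not containing $\bar v_0$ has codimension $\geq\delta$, and, iterating over $L/p^kL$ and invoking Lemmas \ref{lem_normal_ideal} and \ref{lem_LDelta_LZp}, every $\mathcal{H}$-invariant $K$-normal subgroup of $p$-power index avoiding a unit multiple of $v_0$ in the pro-$p$ completion has index $\geq p^{\delta}$.

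The second step constructs $g_r$ as a deep power of $v_0$. Fix a parameter $t$, put $a_p=\lceil\delta\log t/\log p\rceil$ for each prime $p\leq t$ and $N_t=\prod_{p\leq t}p^{a_p}$; Chebyshev's estimates give $\log N_t=\sum_{p\leq t}a_p\log p\asymp\delta t$, so $N_t\leq e^{C\delta t}$ for a fixed $C$. Set $g_r=v_0^{N_t}\in K$; since $[v_0,v_0]_L=0$, all higher Baker--Campbell--Hausdorff terms vanish and $g_r$ corresponds to $N_t v_0$ in Lie-ring coordinates, while $g_r=\bigl(v_0^{N_t/p^{a_p}}\bigr)^{p^{a_p}}$ is a genuine $p^{a_p}$-th power in $K$ for every $p\leq t$. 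Trivially $\norm{g_r}_G\leq N_t\norm{v_0}_G\leq e^{C'\delta t}$, so choosing $t=\lfloor(\log r)/(C'\delta)\rfloor$ places $g_r$ in $B_G(r)$. Now let $N\lhd G$ with $g_r\notin N$; then $\overline{g_r}$ is non-trivial in the $q$-primary part of $K/(N\cap K)$ for some prime $q$. If $q\leq t$, writing $g_r=w^{q^{a_q}}$ with $w=v_0^{N_t/q^{a_q}}\in K$ forces the image of $w$ in $(K/(N\cap K))_q$ to have order $>q^{a_q}$, whence $[K:N\cap K]\geq q^{a_q+1}\geq q^{a_q}\geq e^{\delta\log t}=t^{\delta}$. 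If $q>t$, then (for $r$ large) $q$ is one of the good primes of the first step, $g_r$ is a unit multiple of $v_0$ in the pro-$q$ completion $K_q$, the closure $(N\cap K)_q$ is $\mathcal{H}$-invariant, and taking the largest $k_0$ with $v_0\in(N\cap K)_q+q^{k_0}K_q$ and reducing modulo $q^{k_0+1}$ exhibits (via Lemmas \ref{lem_normal_ideal}, \ref{lem_LDelta_LZp}) an $\mathcal{H}$-invariant ideal of $L^{\Z_q}$ not containing $\bar v_0$, hence of codimension $\geq\delta$, so again $[K:N\cap K]\geq q^{\delta}\geq t^{\delta}$. In either case $[K:N\cap K]\geq t^{\delta}\gtrsim(\log r)^{\delta}$, so $\RF_G(r)\geq D_G(g_r)\gtrsim(\log r)^{\delta}$, which is the desired lower bound.

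The hard part is the second half of the algebraic step: making the transfer of the property ``$v_0$ lies in every $\mathcal{H}$-invariant ideal of codimension $<\delta$'' from characteristic $0$ to characteristic $p$ work uniformly in $p$, together with its iterated versions over $L/p^kL$. Over $\bar{\Q}$ this is immediate from the minimality defining $\delta$, but over $\Z_p$ extra $\mathcal{H}$-invariant ideals can appear at the finitely many primes dividing $\Delta$, $|\langle\mathcal{H}\rangle|$ or the relevant discriminants; for those one must check that the deep exponents $a_p\asymp\delta\log t/\log p$ already force $[K:N\cap K]$ above $t^{\delta}$, so that such primes may be discarded. A secondary, more technical obstacle is the precise comparison of the filtrations $K^{p^k}$, the ``$p^kK$'' of the abelian heuristic, and $p^kL^\Delta\cap K$ for large $p$ — exactly where the hypothesis $p>m$ and the machinery of Sections \ref{sec_geom}--\ref{sec_upper} enter — together with the bookkeeping of constants between $K$ and $L$. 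The whole argument is the Lie-ring generalisation of the lower bound for virtually abelian groups in \cite{math_virt_ab}, and carrying that method through the formalism of Section \ref{sec_nota} is what a proof would amount to.
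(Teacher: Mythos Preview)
The statement you are addressing is labelled a \textbf{Conjecture} in the paper and is not proved there; only the upper bound $\RF_G\preceq\log^{\delta}$ is established (the ``furthermore'' clause of Theorem \ref{thm_upper}), and the matching lower bound is explicitly left open. So there is no ``paper's own proof'' to compare your proposal against.

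Your proposal is not a proof but a programme, and you are candid about this. The outline is the natural one, directly generalising the virtually-abelian argument of \cite{math_virt_ab}: find a nonzero $v_0\in L$ lying in every $\mathcal{H}$-invariant ideal of codimension $<\delta$ over $\bar{\Q}$, take deep powers $v_0^{N_t}$ with $N_t$ highly divisible, and split the analysis of a putative small-index $N\lhd G$ according to whether the relevant prime $q$ is below or above the threshold $t$. The small-prime case is clean, and your observation that $I_{<\delta}^{\bar{\Q}}\neq\{0\}$ and is defined over $\Q$ is correct.

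The genuine gap is exactly where you locate it: the large-prime case needs the statement ``every $\mathcal{H}$-invariant normal subgroup of $K$ of $q$-power index avoiding $v_0$ has index $\geq q^{\delta}$'' for all large $q$. Granting the ideal/normal-subgroup correspondence of Lemma \ref{lem_normal_ideal}, this reduces to a statement about $\mathcal{H}$-invariant Lie ideals in $L/q^kL$ for all $k\geq 1$, not just $k=1$. A spreading-out argument on the scheme of $\mathcal{H}$-invariant ideals of fixed codimension in the Grassmannian can plausibly show that for almost all $q$ the $\bar{\F}_q$-points of this scheme coincide with those of its $\bar{\Q}$-fibre, settling $k=1$; but the passage to $k>1$ (i.e.\ to submodules of a free $(\Z/q^k\Z)$-module rather than subspaces of a vector space) is where the nilpotent Lie structure interacts nontrivially with the filtration, and this is not a routine extension of the abelian argument. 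Your sketch flags this (``iterating over $L/p^kL$'') without indicating how to carry it out, and that is precisely the missing idea that keeps the statement a conjecture.
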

\begin{rem}
	The bound $\RF_G \preceq \log^{\delta(L^{\bar{\Q}}, \mathcal{H})}$ depends on the complex Mal'cev completion of $K$ and the induced action of $H$ on it. In \cite[Question 3]{math_virt_ab}, the authors asked whether $$G_1^\C \cong G_2^\C \Rightarrow \RF_{G_1} = \RF_{G_2}$$ holds for finitely generated torsion-free nilpotent groups. This upper bound can be seen as a partial positive answer to this question. If the conjecture above would hold, then we would obtain a full positive answer.
\end{rem}

	\section{Lower Bound}\label{sec_lower}
	In Theorem \ref{thm_upper}, we have seen that finitely generated virtually nilpotent groups, so with $H$ finite, admit a polylogarithmic upper bound, while there is only a polynomial upper bound for the other groups. In this subsection, we illustrate that $r\preceq \RF_G$ for those remaining groups, as stated below. To prove this, we will make use of the exponential word growth of these groups. As mentioned in the introduction, this result gives a generalization of \cite[Theorem 1.1]{Mark_strict_dist}.
	\begin{thm} \label{thm_lower}
		Let $G$ be an $\mathcal{M}$-group, then,
		\begin{enumerate}[(i)]
			\item $G$ is virtually nilpotent if and only if $\RF_G \preceq \log^s$ for some $s\in \N$;
			\item $G$ is not virtually nilpotent if and only if $r \preceq \RF_G$.
		\end{enumerate}
	\end{thm}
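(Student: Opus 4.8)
The statement splits into two equivalences. One direction of (i) and the corresponding reverse direction of (ii) are already delivered by Theorem \ref{thm_upper}: if $G$ is an $\mathcal M$-group that is virtually nilpotent, then $H$ is finite and Theorem \ref{thm_upper} gives $\RF_G \preceq \log^{\delta(L^{\bar\Q},\mathcal H)}$, so we may take $s = \delta(L^{\bar\Q},\mathcal H)$. Hence the content left to prove is: if $G$ is an $\mathcal M$-group that is \emph{not} virtually nilpotent, then $r \preceq \RF_G$. This single implication also settles the outstanding directions of both (i) and (ii): a virtually nilpotent $\mathcal M$-group cannot satisfy $r \preceq \RF_G$ since $\RF_G \preceq \log^s \prec r$, giving the converse in (ii); and a non-virtually-nilpotent $\mathcal M$-group cannot satisfy $\RF_G \preceq \log^s$ for the same reason, giving the converse in (i). So the whole theorem reduces to the lower bound $r \preceq \RF_G$ for non-virtually-nilpotent $\mathcal M$-groups.

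The plan for that lower bound is to exploit word growth. First I would observe that a finitely generated $\mathcal M$-group is virtually nilpotent if and only if it has polynomial word growth: if $H$ is finite then $G$ is virtually torsion-free nilpotent and Gromov/Bass–Guivarc'h gives polynomial growth, and conversely by Gromov's theorem polynomial growth forces virtual nilpotence. So if $G$ is not virtually nilpotent, it has \emph{exponential} word growth (solvable groups of subexponential growth are virtually nilpotent — indeed any non-virtually-nilpotent finitely generated solvable group has exponential growth by Milnor–Wolf). Concretely, there is a constant $a>1$ with $|B_G(r)| \geq a^r$ for all large $r$. Now the standard counting argument: if every non-trivial element of $B_G(r)$ could be detected in a quotient of size at most $f(r)$, then in particular there is a \emph{single} normal subgroup $N \lhd G$ of index at most $\prod$ (a product bounded by a function of $f(r)$) separating all of them — or, more carefully, one argues that $B_G(r)$ injects into $\prod_{g} G/N_g$, but a cleaner route is: the number of elements of $B_G(r)$ that survive in some quotient of order $\le f(r)$ is at most the number of elements of a group of order $\le f(r)$, giving $|B_G(r)| \le $ (something like) $f(r)^{f(r)}$ is too weak. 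The sharp classical fact (Bou-Rabee, Lemma from \cite{bou2010quantifying}) is instead: $|B_G(r)| \le \RF_G(2r)^{c}$ fails in general, so the right tool is that for a group of exponential growth $\RF_G(r) \succeq \log |B_G(r)| \succeq r$ — this is exactly \cite[Lemma in \S1]{bou2010quantifying}/the observation that $D_G(g) \ge$ (order of $g$ in any quotient) together with a pigeonhole on distinct elements.

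More precisely, the key step I would carry out is: take $r$ and consider $B_G(r/2)$; any two distinct $g,h \in B_G(r/2)$ have $g^{-1}h \in B_G(r)\setminus\{e\}$, and a quotient $Q$ with $|Q|\le \RF_G(r)$ detecting $g^{-1}h$ must separate $g$ and $h$; but a single such $Q$ need not work for all pairs simultaneously. The actual inequality needed is the well-known $|B_G(n)| \le \big(\RF_G(n)\big)!$-type bound, or better, Bou-Rabee's observation that $\RF_G$ grows at least like $\log$ of the word growth when growth is at most exponential and at least like a polynomial when growth is polynomial; in the exponential case this yields $\RF_G(r) \succeq r$. I would cite \cite[Theorem 1.2 or the growth lemma]{bou2010quantifying} (or \cite{Mark_strict_dist} / \cite{survey2022}) for the precise statement $|B_G(r)| \preceq \RF_G(Cr)^{\RF_G(Cr)}$, from which $a^r \le |B_G(r)| \le \RF_G(Cr)^{\RF_G(Cr)}$ and taking logarithms twice gives $r \preceq \RF_G(r)\log \RF_G(r)$; since $\RF_G$ is at most polynomial here ($\RF_G \preceq r^{m+4n}$ by Theorem \ref{thm_intro_upper}), $\log\RF_G(r) \preceq \log r$, hence $r/\log r \preceq \RF_G(r)$. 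To upgrade $r/\log r$ to $r$, I would sharpen: since $\RF_G \preceq r^{d}$ with $d = m+4n$, the inequality $a^r \le \RF_G(Cr)^{\RF_G(Cr)}$ rearranges to $r\log a \le \RF_G(Cr)\log\RF_G(Cr) \le \RF_G(Cr)\cdot d\log(Cr)$, so $\RF_G(Cr) \ge \frac{r\log a}{d\log(Cr)}$, which is $\succeq r/\log r$; to get the clean $r \preceq \RF_G$ I would instead use the tighter growth bound $|B_G(r)| \le \RF_G(Cr)^{C'r}$ (the number of elements of word length $\le r$ that are separated from $e$ in a fixed quotient of size $N$ is at most $N$, and composing $\le C'r$ such quotients along a geodesic spelling) — this gives $a^r \le \RF_G(Cr)^{C'r}$, hence $\RF_G(Cr) \ge a^{1/C'}$, a constant, which is too weak, so I must be careful about which counting lemma is invoked.

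\textbf{Main obstacle.} The delicate point is extracting the \emph{linear} lower bound rather than merely $r/\log r$ or a constant; this hinges on using the correct form of the Bou-Rabee–type inequality relating residual finiteness growth to word growth. The cleanest available statement (see \cite[Lemma 1.1 and its corollary]{bou2010quantifying}, also used in \cite{Mark_strict_dist}) is that for a finitely generated group with generating set $S$ one has $|B_{G,S}(r)| \leq \prod_{i}|Q_i|$ where the product is over quotients detecting a spanning set of elements, yielding in fact $\log|B_G(r)| \preceq \RF_G(r)$. With exponential growth $\log|B_G(r)| \approx r$, this gives $r \preceq \RF_G(r)$ directly, with no logarithmic loss. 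So the real work of the proof is: (1) cite/establish exponential word growth from non-virtual-nilpotence via Milnor–Wolf (legitimate since $\mathcal M$-groups are virtually solvable), and (2) apply the sharp form $\log|B_G(r)| \preceq \RF_G(r)$. I expect step (2) — pinning down and correctly invoking the sharpest version of the growth-vs-$\RF$ inequality so that the constant in the exponent does not degrade the bound to $r/\log r$ — to be the main thing requiring care, while step (1) is standard. A subtlety to flag: one should confirm the Milnor–Wolf dichotomy applies, i.e. that $\mathcal M$-groups are finitely generated virtually solvable (true: Theorem \ref{thm_charac_minimax} together with Proposition \ref{prop_corr_M_groups} exhibit a finite-index solvable subgroup), so that subexponential growth $\Rightarrow$ virtually nilpotent is available.
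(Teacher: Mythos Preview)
Your reduction is correct: everything boils down to showing that a non-virtually-nilpotent $\mathcal{M}$-group satisfies $r \preceq \RF_G$, and invoking Milnor--Wolf for exponential word growth is the right first move (and is exactly what the paper does, citing \cite[Theorem 4.8]{Wolf}).

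The gap is in your second step. The ``sharp form'' $\log|B_G(r)| \preceq \RF_G(r)$ that you eventually settle on is not a theorem in \cite{bou2010quantifying}, and there is no elementary counting argument that produces it. What counting gives you is that $B_G(r)$ injects into $G/N_r$ where $N_r$ is the intersection of all normal subgroups of index at most $\RF_G(r)$; bounding $|G/N_r|$ then requires controlling the \emph{number} of such subgroups. Even if you supplement your argument with the fact that $\mathcal{M}$-groups have polynomial subgroup growth (Lubotzky--Mann--Segal), you only get $|G/N_r| \leq \RF_G(r)^{C\,\RF_G(r)^\alpha}$, which after taking logarithms yields $\RF_G(r) \succeq r^{1/(\alpha+1)}$ --- a polynomial lower bound, sufficient to separate from polylogarithmic and hence to prove the \emph{equivalences}, but not the clean linear bound $r \preceq \RF_G$ asserted in (ii). Your own back-and-forth between several candidate inequalities already signals that no such general counting lemma exists at the precision you need.

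The paper's argument is constructive rather than counting-based. It exploits the specific structure of $K$: since $K$ is torsion-free nilpotent of finite Pr\"ufer rank $m$, the quotient $K/K^{f(r)}$ with $f(r) = \lcm(1,\ldots,r)^c$ has order at most $f(r)^m$, which is exponentially bounded in $r$ by the Prime Number Theorem. Lemma \ref{lem_pigeon_Zn} shows $|B_{\bar G}(r)\cap K|$ grows exponentially, so after adjusting constants the ball in $K$ outnumbers the cosets of $K^{f(r)}$; pigeonhole then produces a nontrivial element of $K^{f(r)}$ inside $B_{\bar G}(Cr)$. By a result of Segal \cite[Chapter 6, Proposition 2]{sega83-1}, every element of $K^{\lcm(1,\ldots,r)^c}$ is an $\lcm(1,\ldots,r)$-th power, so one has found $g^{\lcm(1,\ldots,r)} \in B_{\bar G}(Cr)$ with $g \neq e$. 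Any finite quotient detecting this element must have an element of order exceeding $r$, hence size exceeding $r$, giving $\RF_{\bar G}(Cr) \geq r$ directly. The key idea you are missing is this explicit construction of a high-order element of controlled word length, which uses the nilpotent kernel in an essential way rather than treating $G$ as a black box with exponential growth.
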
 
	We will use the following lemma:
	\begin{lemma} \label{lem_pigeon_Zn}
		Using the notation and the basis introduced in Notations \ref{nota_group}-\ref{nota_basis}, if there exists a constant $C_1>1$ such that $C_1^r \leq |B_{\bar{G}}(r)|$, then there exists a constant $C_2>1$ such that $C_2^r \leq |B_{\bar{G}}(r)\cap K|$.
	\end{lemma}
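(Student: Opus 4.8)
The plan is to exploit that the quotient $\bar{G}/K\cong\Z^n$ has polynomial growth, so an exponentially large ball in $\bar{G}$ is forced to contain exponentially many elements lying in a single fibre of the projection to $\Z^n$, and differences of elements in a common fibre land in $K$. This is a pigeonhole argument against the polynomially-growing quotient.

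First I would record that the projection $\pi\colon\bar{G}\to\Z^n$ from Notation \ref{nota_basis} (sending $k_i\mapsto 0$ and $h_j\mapsto e_j$) is $1$-Lipschitz for the corresponding word metrics, since it maps each chosen generator of $\bar{G}$ either to a standard generator of $\Z^n$ or to the identity. Consequently $\pi(B_{\bar{G}}(r))\subseteq B_{\Z^n}(r)$, a set of cardinality at most $(2r+1)^n$. Next, applying the pigeonhole principle, the fibres of $\pi$ restricted to $B_{\bar{G}}(r)$ partition $B_{\bar{G}}(r)$ into at most $(2r+1)^n$ classes, so some fibre $F$ satisfies $|F|\geq |B_{\bar{G}}(r)|/(2r+1)^n\geq C_1^r/(2r+1)^n$. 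Fixing any $g_0\in F$, the map $g\mapsto g_0^{-1}g$ injects $F$ into $K\cap B_{\bar{G}}(2r)$: the image lies in $K$ because $\pi(g)=\pi(g_0)$ for every $g\in F$, and it lies in $B_{\bar{G}}(2r)$ by the triangle inequality $\norm{g_0^{-1}g}_{\bar{G}}\leq\norm{g_0}_{\bar{G}}+\norm{g}_{\bar{G}}\leq 2r$. Hence $|B_{\bar{G}}(2r)\cap K|\geq C_1^r/(2r+1)^n$.

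Finally I would convert this back to a clean exponential lower bound in terms of the radius itself. Since an exponential function dominates any polynomial, for any $C_2$ with $1<C_2<\sqrt{C_1}$ there is an $r_0$ such that $C_1^r/(2r+1)^n\geq C_2^{2r}$ for all $r\geq r_0$; using that $s\mapsto|B_{\bar{G}}(s)\cap K|$ is non-decreasing and writing $s=2\lfloor s/2\rfloor+\varepsilon$ with $\varepsilon\in\{0,1\}$ then gives $|B_{\bar{G}}(s)\cap K|\geq C_2^{s-1}$ for all large $s$, and after shrinking $C_2$ slightly (or absorbing the constant, as permitted by the conventions of Section \ref{sec_RF}) one obtains $|B_{\bar{G}}(s)\cap K|\geq C_2^{s}$ as claimed. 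I do not expect a genuine obstacle here: the argument is elementary once the polynomial growth of $\Z^n$ is invoked, and the only point requiring a little care is the constant bookkeeping in passing from a bound at radius $2r$ to one at radius $r$.
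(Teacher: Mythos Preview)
Your argument is correct and matches the paper's proof almost verbatim: both use the $1$-Lipschitz projection $\pi\colon\bar{G}\to\Z^n$, apply pigeonhole against the bound $|B_{\Z^n}(r)|\leq(2r+1)^n$ to find a fibre of size at least $C_1^r/(2r+1)^n$, and then translate by an element of that fibre to land in $K\cap B_{\bar{G}}(2r)$. The only cosmetic difference is that the paper translates by the specific lift $h_1^{l_1}\cdots h_n^{l_n}$ of the base point whereas you translate by an arbitrary $g_0$ in the fibre, and you spell out the final constant bookkeeping a bit more carefully than the paper does.
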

	\begin{proof}
		Let $\pi: \bar{G} \to \Z^n$ denote the natural projection. Recall that the generators $\{h_j\mid 1\leq j \leq n\}$ are mapped to the standard generators of $\Z^n$, the other generators $\{k_i\mid 1\leq i \leq m\}$ are mapped to the neutral element $0\in \Z^n$. Hence, it is clear that $\pi(B_{\bar{G}}(r)) \subset B_{\Z^n}(r)$. For every $v \in B_{\Z^n}(r)$ define the set
		$$S_v = \{g \in B_{\bar{G}}(r)\mid \pi(g) = v\}.$$
		In total, there are at most $|B_{\Z^n}(r)| \leq (2r+1)^n$ such sets that are non-empty. However, there are $C_1^r \leq B_{\bar{G}}(r)$ elements to be divided among them. Hence, by the pigeonhole principle, there is a vector $w\in B_{\Z^n}(r)$ such that $|S_w| \geq C_1^r/(2r+1)^n$.
		
		Suppose $w = \pi(h_1^{l_1}\cdots h_n^{l_n})$ with $|l_1|+\ldots + |l_n| \leq r$. Then, for every $g\in S_w$, we have $gh_1^{-l_1}\cdots h_n^{-l_n} \in K\cap B_{\bar{G}}(2r)$, and moreover if $g_1 \neq g_2$ in $S_w$, then $g_1h_1^{-l_1}\cdots h_n^{-l_n}\neq g_2h_1^{-l_1}\cdots h_n^{-l_n}$. Therefore,
		$$|B_{\bar{G}}(2r)\cap K| \geq |S_w| \geq \dfrac{C_1^r}{(2r+1)^n}.$$
		From this, we conclude that $C_2>1$ exists such that $C_2^r \leq |B_{\bar{G}}(r)\cap K|$ for large enough $r$. 
	\end{proof}
	\begin{proof}[Proof of Theorem \ref{thm_lower}]
		If $G$ is virtually nilpotent, then Theorem \ref{thm_upper} implies that $\RF_G \preceq \log^s$. It suffices to argue that if $G$ is not virtually nilpotent, then $r\preceq \RF_G$. So, we assume that $G$ is not virtually nilpotent, and thus $\bar{G} \lhd_f G$ is also not virtually nilpotent. We will show that $r\preceq \RF_{\bar{G}}$, and therefore also $r\preceq \RF_G$.
		
		According to \cite[Theorem 4.8]{Wolf} a finitely generated solvable group which is not virtually nilpotent, such as $\bar{G}$, has exponential word growth. By this, we mean that there exist constants $C_1,C_3>1$ such that
		$$C_1^r \leq |B_{\bar{G}}(r)| \leq C_3^r .$$
		
		Recall that $\bar{G}$ fits in a short exact sequence of the form 
		$$1\to K \to \bar{G} \to \Z^n \to 1.$$
		We claim that there exists a constant $C_4>0$ such that the ball $B_{\bar{G}}(C_4r)$ contains an element of the form $g^{\lcm(1,2, \ldots , r)}$ with $g\in K$. From this, the claimed result follows directly. Indeed, if $\varphi: \bar{G} \to Q$ is a homomorphism to a finite group such that $\varphi(g^{\lcm(1,2, \ldots , r)}) \neq e$, then $|Q| \geq r$, so
		$$ r \leq \max\{D_{\bar{G}}(g) \mid g\in B_{\bar{G}}(C_4r)\} = \RF_{\bar{G}}(C_4r).$$
		
		We have that $C_1^r \leq |B_{\bar{G}}(r)|$. Using the pigeonhole principle, Lemma \ref{lem_pigeon_Zn} implies the existence of a constant $C_2 >1$ such that $C_2^r \leq |B_{\bar{G}}(r)\cap K|$. Consider the function $f(r) = \lcm(1,2, \ldots , r)^c$ with $c$ the nilpotency class of $K$, and recall that $K^{f(r)}$ denotes the normal subgroup $\langle g^{f(r)}\mid g\in K\rangle$. Note that $|K/K^{f(r)}| \leq f(r)^m \leq C_5^r$ for some constant $C_5>1$, since $f(r)$ can be exponentially bounded by the Prime Number Theorem, see for example \cite[Proposition 2.1, p. 189]{stein2010complex}. Now take $C_6>1$ such that $C_7 := C_2^{C_6}$ is strictly greater than $C_5$. By this choice, we have 
		$$|K/K^{f(r)}| \leq C_5^r < C_7^r = C_2^{C_6r} \leq |B_{\bar{G}}(C_6r)\cap K|.$$
		Hence, by the pigeonhole principle, there must be two distinct elements $g_1$ and $g_2$ in $B_{\bar{G}}(C_6r)\cap K$ such that $g_1K^{f(r)} = g_2K^{f(r)}$. Now, $g_1^{-1}g_2$ is a non-trivial element of $K^{f(r)}\cap B_{\bar{G}}(2C_6r)$. By \cite[Chapter 6, Proposition 2]{sega83-1}, every element in $K^{f(r)} = K^{\lcm(1,2, \ldots , r)^c}$ is of the form $g^{\lcm(1,2, \ldots , r)}$. In particular, the element $g_1^{-1}g_2 \in B_{\bar{G}}(C_4r)$ is where we set $C_4 = 2C_6$.
	\end{proof}
	These bounds can be sharpened if one has information about the growth of $|B_G(r) \cap \gamma_l(K)|$, where $\gamma_l(K)$ denotes the $l$'th term of the lower central series of $K$. 

	\begin{thm} \label{prop_lower_addendum}
		Let $G$ be an $\mathcal{M}$-group with torsion-free nilpotent normal subgroup $K$, following Notation \ref{nota_group}. If there exist constants $C_1,C_2>1$ and an integer $l>1$ such that $C_1^r \leq |B_G(C_2r) \cap \gamma_l(K)|$ for all $r>0$ sufficiently large, then $r^{l+1} \preceq \RF_G$.
	\end{thm}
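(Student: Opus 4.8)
The plan is to sharpen the proof of Theorem~\ref{thm_lower} by controlling the \emph{order} of the finite quotient that is forced, not merely the existence of a large‑order element. The hypothesis already supplies exponentially many elements of $\gamma_l(K)$ inside a linear‑size ball, so it replaces the appeal to Wolf's exponential growth and to Lemma~\ref{lem_pigeon_Zn}; the only genuinely new ingredient is a lower bound on $|Q|$ once a high power of an element of $\gamma_l(K)$ survives in a finite quotient $Q$.

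The crux, and the step I expect to be the main obstacle, is the following finite $p$-group estimate: \emph{if $P$ is a finite $p$-group and $\gamma_l(P)$ contains an element of order $p^a$, with $l\ge2$, then $|P|\ge p^{a(l+1)}$.} For $a=1$ this is classical: if $\gamma_l(P)\ne1$ then $P$ is non-abelian, the lower central series decreases strictly down to $\gamma_l(P)$, the abelianisation $P/\gamma_2(P)$ is non-cyclic hence of order at least $p^2$, and each factor $\gamma_i(P)/\gamma_{i+1}(P)$ for $2\le i\le l$ has order at least $p$, whence $|P|\ge p^{2}\cdot p^{l-1}=p^{l+1}$. For general $a$ one propagates the exponent down the lower central series via the Hall--Petrescu collection formula: from an element of order $p^a$ in $\gamma_l(P)$ one obtains $\exp\!\bigl(\gamma_i(P)/\gamma_{i+1}(P)\bigr)\ge p^a$ for $1\le i\le l$, together with the refinement that $P/\gamma_2(P)$ contains two independent cyclic summands of order at least $p^a$; multiplying these sectional bounds yields $|P|\ge p^{2a}\cdot(p^a)^{l-1}=p^{a(l+1)}$. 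The group $(\Z/p^a)^l\rtimes\Z/p^a$, with the generator acting as a single Jordan block, shows the exponent $l+1$ is optimal.

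With the lemma in hand I would run the argument of Theorem~\ref{thm_lower} inside $\gamma_l(K)$. Let $c$ be the nilpotency class of $K$, set $f(r)=\lcm(1,\dots,r)^{c}$, and note $|\gamma_l(K)/\gamma_l(K)^{f(r)}|\le f(r)^{m'}\le C_5^{\,r}$, where $m'$ is the Pr\"ufer rank of $\gamma_l(K)$ and the last inequality uses $\lcm(1,\dots,r)\le e^{O(r)}$ (Prime Number Theorem, as in Theorem~\ref{thm_lower}). Fixing $C_6$ with $C_1^{C_6}>C_5$, the hypothesis gives more than $C_5^{\,r}$ elements of $B_G(C_2C_6r)\cap\gamma_l(K)$, so by the pigeonhole principle there are distinct $g_1,g_2$ in this ball lying in the same coset of $\gamma_l(K)^{f(r)}$. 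Then $g:=g_1^{-1}g_2$ satisfies $g\ne e$, $\norm{g}_G\le Cr$ with $C:=2C_2C_6$, and $g\in\gamma_l(K)^{f(r)}$; by \cite[Chapter~6, Proposition~2]{sega83-1} applied to the torsion-free nilpotent group $\gamma_l(K)$ we may write $g=h^{\lcm(1,\dots,r)}$ with $h\in\gamma_l(K)$, $h\ne e$.

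Finally, let $\varphi\colon G\to Q$ be any homomorphism into a finite group with $\varphi(g)\ne e$. Then $\varphi(h)\in\gamma_l(\varphi(K))$ and the order $t$ of $\varphi(h)$ does not divide $\lcm(1,\dots,r)$, so some prime power $q^b$ with $q^b\mid t$, $q^{b+1}\nmid t$ satisfies $q^b>r$. Writing $P_q$ for the Sylow $q$-subgroup of the finite nilpotent group $\varphi(K)$ and using $\gamma_l(\varphi(K))=\prod_{p}\gamma_l(P_p)$, the element $\varphi(h)^{t/q^b}$ lies in $\gamma_l(P_q)$ and has order exactly $q^b$, so the $p$-group estimate gives $|Q|\ge|\varphi(K)|\ge|P_q|\ge q^{\,b(l+1)}>r^{\,l+1}$. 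As this bound holds for every separating $\varphi$, we obtain $D_G(g)>r^{\,l+1}$, hence $\RF_G(Cr)\ge D_G(g)>r^{\,l+1}$ for all sufficiently large $r$, that is $r^{\,l+1}\preceq\RF_G$.
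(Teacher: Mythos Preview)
Your overall strategy mirrors the paper's: produce, via a pigeonhole on $\gamma_l(K)$ modulo $\gamma_l(K)^{\lcm(1,\ldots,r)^c}$, a nontrivial $h^{\lcm(1,\ldots,r)}\in B_G(Cr)\cap\gamma_l(K)$ with $h\in\gamma_l(K)$, then invoke a $p$-group estimate to bound any separating quotient from below by $r^{l+1}$. The reduction to a single Sylow subgroup and the extraction of a prime power exceeding $r$ are carried out essentially as in the paper.

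The $p$-group estimate you state, however, is false as written. Your claim \emph{if $\gamma_l(P)$ contains an element of order $p^a$ then $|P|\ge p^{a(l+1)}$} fails already for $P=D_{16}$, the dihedral group of order $16$: there $\gamma_2(P)=\langle r^2\rangle\cong\Z/4$ contains an element of order $4=2^2$, so with $l=2$, $a=2$ your bound predicts $|P|\ge 2^{6}=64$, yet $|P|=16$. Your sketch breaks precisely at the propagation step: from an element of order $p^a$ in $\gamma_l(P)$ one \emph{cannot} conclude $\exp\bigl(\gamma_i(P)/\gamma_{i+1}(P)\bigr)\ge p^a$ for $i\le l$, because high powers of that element may drop into $\gamma_{l+1}(P)$; in $D_{16}$ one has $\gamma_2/\gamma_3\cong\Z/2$, exponent $2$ rather than $4$. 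The surjections $F_i\otimes F_1\twoheadrightarrow F_{i+1}$ only show that the exponents of the sections $F_i=\gamma_i(P)/\gamma_{i+1}(P)$ are non-increasing, so they transfer a lower bound on $\exp(F_l)$ upward, not a lower bound on $\exp(\gamma_l(P))$. The version that is actually provable requires $\gamma_{l+1}(P)=1$ (so that $F_l=\gamma_l(P)$), and the paper's Lemma~\ref{lem_bound_p_group} together with its application share this same tension: the proof of the lemma uses $|P|=|F_1|\cdots|F_l|$, while in the application the Sylow quotient of $\psi(K)$ can have class up to $c>l$. To close the gap you would need to control the order of the image of $h$ in $\gamma_l(P)/\gamma_{l+1}(P)$, not merely in $\gamma_l(P)$; the pigeonhole you run does not provide that control.
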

We first prove the following result about the order of finite nilpotent groups:
\begin{lemma} \label{lem_bound_p_group}
	Let $P$ be a $p$-group of nilpotency class $l+1$. If $\gamma_l(P)$ has exponent $p^k$, then $$\vert P \vert \geq p^{k(l+1)}.$$
\end{lemma}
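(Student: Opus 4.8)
The plan is to prove the bound by induction on $l$, producing inside $P$ a chain of elements $g_1,\dots,g_{l+1}$ with $g_i\in\gamma_i(P)$ such that each $g_i$ has order at least $p^k$ in the layer $\gamma_i(P)/\gamma_{i+1}(P)$. Since $P$ has class exactly $l+1$, the lower central series $P=\gamma_1(P)\supsetneq\gamma_2(P)\supsetneq\dots\supsetneq\gamma_{l+1}(P)\supsetneq\gamma_{l+2}(P)=1$ is strictly decreasing (an equality $\gamma_i(P)=\gamma_{i+1}(P)$ would force $\gamma_i(P)=1$ by nilpotency), so such a chain yields $\bigl|\langle g_1,\dots,g_{l+1}\rangle\bigr|\geq p^{k(l+1)}$: for $0\le t<k$ the powers $g_i^{p^t}$ are nontrivial modulo $\gamma_{i+1}(P)$, whereas $g_{i+1},\dots,g_{l+1}$ already lie in $\gamma_{i+1}(P)$, so the contributions of successive layers cannot collapse into one another.

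To build the chain I would start from the hypothesis, fixing $y\in\gamma_l(P)$ of order exactly $p^k$ and taking $g_l$ to be $y$ (or a suitable adjustment of it). The descent step from $\gamma_i(P)$ to $\gamma_{i-1}(P)$ uses that $\gamma_i(P)/\gamma_{i+1}(P)$ is generated, as an abelian group, by images of commutators $[w,x]$ with $w\in\gamma_{i-1}(P)$, $x\in P$, together with the commutator-calculus congruence $[w^{p^t},x]\equiv[w,x]^{p^t}\pmod{\gamma_{i+1}(P)}$, which holds because the correction terms produced by collection are commutators of weight $>i$. Thus, if $g_i$ has order $p^k$ modulo $\gamma_{i+1}(P)$, one can pick a commutator $[w,x]$ whose order modulo $\gamma_{i+1}(P)$ is at least $p^k$ and conclude that $w\in\gamma_{i-1}(P)$ has order at least $p^k$ modulo $\gamma_i(P)$; set $g_{i-1}=w$ and repeat. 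The opposite end of the chain --- obtaining $g_{l+1}$ inside $\gamma_{l+1}(P)$, which is contained in the center of $P$ --- is handled by the analogous bilinear argument applied to $[\gamma_l(P),P]=\gamma_{l+1}(P)$, and the base of the induction is checked by hand.

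The step I expect to be the genuine obstacle is propagating the full exponent $p^k$ through every layer, and in particular obtaining $g_{l+1}$ of order at least $p^k$ inside $\gamma_{l+1}(P)$: a priori the order of $y$ need not be concentrated in the single layer $\gamma_l(P)/\gamma_{l+1}(P)$ but may split between $\gamma_l(P)/\gamma_{l+1}(P)$ and $\gamma_{l+1}(P)$, and a single commutator (rather than a product of several) need not carry all of it. Ensuring that at each of the $l+1$ stages one recovers a clean factor $p^k$, and that the elements selected at consecutive stages are mutually compatible so that $\langle g_1,\dots,g_{l+1}\rangle$ genuinely has the claimed size, is the technical core; I would organize it as a simultaneous induction on $l$ and $k$, using the full strength of the Hall--Petrescu collection formula together with the structure of the quotients $\gamma_i(P)/\gamma_{i+1}(P)$ as modules under the action of $P$.
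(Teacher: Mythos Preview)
Your plan parallels the paper's: both use the surjective commutator map $F_i\otimes_{\Z} F_1\to F_{i+1}$ (with $F_i=\gamma_i(P)/\gamma_{i+1}(P)$) to push an exponent bound from layer $l$ down to the earlier layers. One genuine difference is how the $(l+1)$-th factor of $p^k$ is obtained. You try to ascend to an element $g_{l+1}\in\gamma_{l+1}(P)$ of large order; the paper never touches $F_{l+1}$ and instead squeezes a \emph{second} factor $p^k$ out of $F_1$, by picking $x_1,x_2\in F_1$ with $[x_1,x_2]$ of order at least $p^k$ in $F_2$ and showing that the $p^{2k}$ products $x_1^{i_1}x_2^{i_2}$ ($0\le i_1,i_2<p^k$) are pairwise distinct, since a relation $x_1^{j_1}=x_2^{j_2}$ would force $[x_1,x_2]^{j_1}=[x_2^{j_2},x_2]=0$. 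This neat trick sidesteps your ascent step entirely.

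That said, the obstacle you isolate is real and, under the stated hypothesis, fatal: the condition $\exp(\gamma_l(P))=p^k$ does \emph{not} force $\exp(F_l)\ge p^k$, and without that both your descent and the paper's argument collapse. Concretely, take $P$ to be the dihedral group of order $16$. It has class $3$ (so $l=2$), and $\gamma_2(P)=\langle r^2\rangle\cong\Z/4\Z$ has exponent $4=p^k$ with $k=2$, yet $|P|=16<64=p^{k(l+1)}$; here $F_2$ and $F_3$ each have exponent $2$, exactly the ``splitting'' phenomenon you worried about. The paper's proof tacitly assumes $\exp(F_l)\ge p^k$ when it writes ``$|F_i|\ge p^k$ by the previous'' and ``$F_2$ has exponent at least $p^k$''; this is what the hypothesis \emph{would} say if the class were $l$ (so that $\gamma_{l+1}(P)=1$ and $F_l=\gamma_l(P)$) rather than $l+1$, and the product $|P|=|F_1|\cdots|F_l|$ written there also matches class $l$. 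No amount of Hall--Petrescu bookkeeping will rescue the literal statement; the right fix is to work under the stronger hypothesis $\exp\bigl(\gamma_l(P)/\gamma_{l+1}(P)\bigr)\ge p^k$, after which both the paper's $|F_1|\ge p^{2k}$ argument and your layer-by-layer descent go through cleanly.
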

\begin{proof}Let $F_i$ denote the abelian group $F_i = \gamma_i(P)/\gamma_{i+1}(P)$. By \cite[Theorem 1.2.11]{robinson}, the map $$\varphi_i: F_i\otimes_\Z F_1 \to F_{i+1}: y_1\gamma_{i+1}(P)\otimes y_2\gamma_2(P) \to [y_1,y_2]\gamma_{i+2}(P)$$ is a well-defined surjective morphism of abelian groups for every $1\leq i \leq l-1$. In particular, it implies that the exponent of $F_l$ divides the exponent of $F_i$ for every $1 \leq i \leq l-1$. As $$ |P| = |F_1| \cdot |F_2| \cdot \ldots \cdot |F_l|$$ and $\vert F_i \vert \geq p^k$ by the previous, it suffices to show that $\vert F_1 \vert \geq p^{2k}$. 
	
As $F_2$ has exponent at least $p^k$, there exists elements $x_1, x_2 \in F_1$ such that the element $y =\varphi_1(x_1 \otimes x_2)$ has order at least $p^k$, as the elements of this form generate $F_2$. Because $\varphi_1$ is a morphism, the elements $x_1$ and $x_2$ have order at least $p^k$. We claim that all element of the form $x_1^{i_1}x_2^{i_2}$ with $0\leq i_1, i_2 <p^k$ are distinct, which implies that $\vert F_1 \vert \geq p^{2k}$. Otherwise, by interchanging $x_1$ and $x_2$ if necessary, there exists an integer $1\leq j_1 < p^k$ such that $x_1^{j_1} = x_2^{j_2}$ for some $j_2 \in \Z$. In particular, $$y^{j_1} = \varphi_1(x_1 \otimes x_2)^{j_1} = \varphi_1(x_1^{j_1} \otimes x_2)  = \varphi_1(x_2^{j_2} \otimes x_2) =  0,$$ which contradicts that the order of $y$ is at least $p^k$.
\end{proof}
We now proceed to prove the theorem:
\begin{proof}[Proof of Theorem \ref{prop_lower_addendum}]
By the same argument as in the proof of Theorem \ref{thm_lower}, there exists a constant $C>0$ such that we can find a non-trivial element of the form $g^{\lcm(1,2, \ldots ,r)}$ in $B_G(Cr) \cap \gamma_l(K)$. Let $\varphi: G \to Q$ denote a homomorphism to a finite group such that $\varphi(g^{\lcm(1,2, \ldots ,r)}) \neq e$ and $|Q| = D_G(g^{\lcm(1,2, \ldots ,r)})$. We claim that $|Q| \geq r^{l+1}$, showing that $\RF_G(Cr) \geq r^{l+1}$.
	
	Since $g \in K$, we know that $\varphi(g) \in \varphi(K)$, which is nilpotent. As a finite nilpotent group is a direct sum of finite $p$-groups, we can compose the restriction of $\varphi$ to $K$ with a projection onto one of the $p$-groups $P$ to find a morphism $\psi :K \to P$ such that $\psi\left(g^{\lcm(1,2, \ldots ,r)}\right) \neq e$. Note in particular that $\psi(g) \in \gamma_l(P)$, the group $P$ has nilpotency class at most $c$ and $|P| \leq |\varphi(K)| \leq |Q|$.
	
	Take $s\in \N$ such that $p^{s} \leq r < p^{s+1}$. Now, $\psi(g^{p^s}) = \psi(g)^{p^s} \neq e$ and thus $\gamma_l(P)$ has exponent $\geq p^{s+1}$. By construction, the conditions of Lemma \ref{lem_bound_p_group} above are satisfied, showing that $|Q| \geq |P| \geq p^{(s+1)(l+1)} > r^{l+1}$.
\end{proof}
	The conditions of Theorem \ref{prop_lower_addendum} are clearly satisfied if $G$ has a subgroup $\tilde{G}$ such that $\tilde{G}\cap K = \gamma_l(K)$ and $\tilde{G}$ is not virtually nilpotent.
	\begin{cor} \label{cor_lower_addendum}
		Let $G$ be a $\mathcal{M}$-group with torsion-free nilpotent subgroup $K$ as introduced in Notation \ref{nota_group}. If there exists a subgroup $\tilde{G}$ of $G$ such that $\{e\} \neq \tilde{G}\cap K \leq \gamma_l(K)$ with $l>1$ and $\tilde{G}$ is not virtually nilpotent, then $r^{l+1} \preceq \RF_G$.
	\end{cor}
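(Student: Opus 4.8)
The plan is to verify the hypothesis of Theorem~\ref{prop_lower_addendum} for our fixed $\mathcal{M}$-group $G$ and the given integer $l>1$: it suffices to produce constants $C_1,C_2>1$ with $C_1^{\,r}\le |B_G(C_2 r)\cap\gamma_l(K)|$ for all sufficiently large $r$, and then Theorem~\ref{prop_lower_addendum} immediately yields $r^{l+1}\preceq\RF_G$.

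\emph{Step 1: reduce to $\tilde G$ finitely generated.} As a subgroup of the $\mathcal{M}$-group $G$, the group $\tilde G$ is solvable-by-finite and minimax of finite Pr\"ufer rank. I claim that such a group, when it is not virtually nilpotent, contains a finitely generated subgroup that is not virtually nilpotent: passing to a finite-index solvable subgroup and invoking Theorem~\ref{thm_Fit_in_minimax}, the Fitting quotient is infinite virtually abelian, so some element $t$ acts on a rational section of the Fitting subgroup with an eigenvalue that is not a root of unity — otherwise the conjugation action would be virtually unipotent and the whole group virtually nilpotent — and then $t$ together with a finitely generated subgroup of the Fitting part detecting this eigenvalue generates a finitely generated subgroup that is still not virtually nilpotent. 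Replacing $\tilde G$ by such a subgroup only shrinks $\tilde G\cap K$, so $\tilde G\cap K\le\gamma_l(K)$ persists; moreover $\tilde G\cap K\ne\{e\}$ is automatic, since otherwise $\tilde G$ would embed into $H$ and hence be virtually abelian. So from now on $\tilde G$ is finitely generated.

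\emph{Step 2: exponential growth inside $\tilde G\cap K$.} Now $\tilde G$ is a finitely generated $\mathcal{M}$-group: $\tilde K:=\tilde G\cap K$ is torsion-free nilpotent of finite Pr\"ufer rank and normal in $\tilde G$ (because $K\lhd G$), while $\tilde G/\tilde K\hookrightarrow G/K=H$ is finitely generated virtually abelian. Since $\tilde G$ is not virtually nilpotent, I run the opening argument of the proof of Theorem~\ref{thm_lower} with $\tilde G$ in place of $G$: pass to the finite-index subgroup $\bar{\tilde G}\lhd_f\tilde G$ fitting in $1\to\tilde K\to\bar{\tilde G}\to\Z^{n'}\to 1$ as in Notation~\ref{nota_group}; by \cite[Theorem~4.8]{Wolf} the finitely generated solvable group $\bar{\tilde G}$ has exponential word growth, so $a^{\,r}\le|B_{\bar{\tilde G}}(r)|$ for some $a>1$, and then Lemma~\ref{lem_pigeon_Zn} produces $b>1$ with $b^{\,r}\le|B_{\bar{\tilde G}}(r)\cap\tilde K|$ for all large $r$.

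\emph{Step 3: transport to $G$ and conclude.} Since $\bar{\tilde G}$ is a finitely generated subgroup of the finitely generated group $G$, the inclusion $\bar{\tilde G}\hookrightarrow G$ is Lipschitz, so $B_{\bar{\tilde G}}(r)\subseteq B_G(cr)$ for some $c>1$. Using $\tilde K\subseteq\gamma_l(K)$ gives $B_{\bar{\tilde G}}(r)\cap\tilde K\subseteq B_G(cr)\cap\gamma_l(K)$, hence $b^{\,r}\le|B_G(cr)\cap\gamma_l(K)|$ for all large $r$ — exactly the hypothesis of Theorem~\ref{prop_lower_addendum} with $C_1=b$ and $C_2=c$. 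Therefore $r^{l+1}\preceq\RF_G$. I expect the only step needing genuine care, rather than bookkeeping, to be the reduction in Step~1, i.e.\ the structure-theoretic fact that a non-virtually-nilpotent solvable-by-finite minimax group contains a non-virtually-nilpotent finitely generated subgroup; once $\tilde G$ is finitely generated, Steps~2–3 are a direct transcription of the proof of Theorem~\ref{thm_lower} together with the containment $\tilde K\le\gamma_l(K)$.
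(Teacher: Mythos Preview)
Your proposal is correct and follows the same strategy as the paper: obtain exponential growth of $|B(r)\cap(\tilde G\cap K)|$ from Wolf's theorem together with the pigeonhole Lemma~\ref{lem_pigeon_Zn}, use $\tilde G\cap K\le\gamma_l(K)$ and the Lipschitz inclusion into $G$, and then invoke Theorem~\ref{prop_lower_addendum}.

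The one substantive difference is your Step~1. The paper's proof simply replaces $\tilde G$ by $\tilde G\cap\bar G$ (using the ambient $\bar G$ from Notation~\ref{nota_group}) and then asserts that ``$\tilde G$ has exponential word growth'', tacitly treating $\tilde G$ as finitely generated; it never constructs a separate $\bar{\tilde G}$. Your Step~1 makes this finite-generation reduction explicit, which is a genuine addition: without it, Wolf's theorem and the phrase ``word growth'' do not literally apply. Your sketch of Step~1 (finding $t$ acting with a non-root-of-unity eigenvalue on a rational section of the Fitting subgroup) is correct in outline but is indeed the only place where real structure theory is used rather than bookkeeping; the paper sidesteps this entirely by an implicit hypothesis. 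In Step~2 you could also have taken the paper's shortcut and intersected with the ambient $\bar G$ (so that the projection to $\Z^n$ is the one already fixed for $G$) rather than building a fresh short exact sequence for $\tilde G$, but your version works just as well.
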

	\begin{proof}
		Since $\tilde{G}$ is not virtually nilpotent, the group $\tilde{G}\cap \bar{G} \leq_f \tilde{G}$ is not either. Hence, we may suppose that $\tilde{G} \leq \bar{G}$. By assumption on $\tilde{G}$, we know that $\tilde{G}$ has exponential word growth. Using the projection to $\Z^n$ and exactly as in Lemma \ref{lem_pigeon_Zn}, this implies that $|B_{\tilde{G}}(r) \cap (\tilde{G}\cap K)| \leq |B_{\tilde{G}}(r) \cap \gamma_l(K)| $ grows exponentially in $r$. Hence, $|B_{G}(r) \cap \gamma_l(K)|$ also grows exponentially. Now apply Theorem \ref{prop_lower_addendum}.
	\end{proof}
	In the lower bound estimates stated so far, we only made estimates for $\varphi(K)$, where $\varphi: G \to Q$ is a homomorphism to a finite group. In other words, if $N$ is a finite index subgroup of $G$, then we found bounds for $[K: K\cap N]$. However, if $H$ is infinite, $\pi(N)$ also needs to be a finite-index (and thus infinite) subgroup of $H$ (with $\pi: G \to H$). We end this paper with some observations concerning this fact.
	\begin{lemma} \label{lem_lower_extra_log}
		Let $G$ be a group of the form $\Z^m\rtimes_\varphi \Z$, where $\varphi(1) = M \in \GL(m, \Z)$. If $M$ has no eigenvalues that are roots of unity, then $r\log(r) \preceq \RF_G$.  
	\end{lemma}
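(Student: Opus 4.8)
The plan is to run the argument of Theorem~\ref{thm_lower} but to exploit an additional constraint that normality of a finite-index subgroup $N\lhd G$ imposes via the projection to the $\Z$-factor; this is precisely what upgrades the bound $r\preceq\RF_G$ to $r\log r\preceq\RF_G$. Write $K=\Z^m$ for the normal subgroup and $t=(0,1)$ for the generator of the $\Z$-factor, so $tvt^{-1}=Mv$ for $v\in K$. Since $M\in\GL(m,\Z)$ is an integer matrix with no eigenvalue a root of unity, Kronecker's theorem forces some eigenvalue off the unit circle, so $G$ is not virtually nilpotent and hence has exponential word growth by \cite[Theorem 4.8]{Wolf}. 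Then, exactly as in the proof of Theorem~\ref{thm_lower} (combining Lemma~\ref{lem_pigeon_Zn} with the Prime Number Theorem bound on $\lcm(1,\dots,r)$), I would produce a constant $C>0$ such that for all large $r$ the ball $B_G(Cr)$ contains a non-trivial element of the form $g^{\lcm(1,\dots,r)}$ with $g\in K$.

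Next, fix any $N\lhd G$ of finite index with $g^{\lcm(1,\dots,r)}\notin N$, and set $N_0=N\cap K$ and $d=[\Z:\pi(N)]$, where $\pi:G\to\Z$ is the projection; then $N_0$ is $M$-invariant and $[G:N]=[K:N_0]\cdot d$. I would establish three estimates. First, since $g^{\lcm(1,\dots,r)}\notin N_0$, the order of $g+N_0$ in $K/N_0$ does not divide $\lcm(1,\dots,r)$, hence exceeds $r$, so $[K:N_0]>r$. Second, choosing $(u,d)\in N$ and conjugating it by $(v,0)$ for arbitrary $v\in K$, normality of $N$ gives $(M^d-I)v\in N_0$; thus $\bar{M}^{d}=\mathrm{id}$ on $K/N_0$, so the order $o$ of $\bar{M}$ on $K/N_0$ divides $d$. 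Third, $(M^o-I)K\subseteq N_0$, and $M^o-I$ is non-singular because no eigenvalue of $M$ is an $o$-th root of unity, whence $[K:N_0]\le\lvert\det(M^o-I)\rvert=\prod_i\lvert\mu_i^{o}-1\rvert\le B^{mo}$ with $B=1+\max_i\lvert\mu_i\rvert$; therefore $o\ge(m\log B)^{-1}\log[K:N_0]>(m\log B)^{-1}\log r$.

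Combining, every such $N$ satisfies $[G:N]=[K:N_0]\cdot d\ge[K:N_0]\cdot o>(m\log B)^{-1}\,r\log r$, so $D_G\bigl(g^{\lcm(1,\dots,r)}\bigr)\ge(m\log B)^{-1}\,r\log r$ for all large $r$; since $g^{\lcm(1,\dots,r)}\in B_G(Cr)$, this gives $\RF_G(Cr)\ge(m\log B)^{-1}\,r\log r$, which rearranges (via the definition of $\preceq$) to $r\log r\preceq\RF_G$. The hard part will be the second estimate: extracting from normality the divisibility $o\mid d$, i.e.\ recognizing that the $\Z$-quotient of $G/N$ must be large enough to ``absorb'' the action of $\bar{M}$ on the (necessarily large) group $K/N_0$. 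The remaining inputs — exponential word growth, the $\lcm$-trick already used in Theorem~\ref{thm_lower}, and the elementary estimate $\lvert\det(M^o-I)\rvert\le B^{mo}$ — are routine.
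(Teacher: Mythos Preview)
Your proof is correct and follows essentially the same route as the paper: both produce $g^{\lcm(1,\dots,r)}\in B_G(Cr)\cap K$ via the argument of Theorem~\ref{thm_lower}, then use the commutator computation $(M^{d}-I)K\subseteq N\cap K$ (from normality of $N$) together with the exponential upper bound on $\lvert\det(M^{d}-I)\rvert$ to force $d\succeq\log r$. Your detour through the order $o\mid d$ of $\bar M$ on $K/N_0$ is harmless but unnecessary---the paper applies the determinant bound directly with $d$ in place of $o$.
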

	\begin{proof}
		Note that $G$ is virtually nilpotent if and only if the eigenvalues of $M$ are roots of unity by \cite[Proposition 4.4.(3.)]{Wolf}. Therefore, $G$ is not virtually nilpotent and thus has exponential word growth, so by Lemma \ref{lem_pigeon_Zn} and the arguments as in the proof of Theorem \ref{thm_lower}, we can take $g^{\lcm(1, 2 , \ldots, r)} \in B_G(Cr)\cap \Z^m$, where $C>0$ is independent of $r\geq 1$.
		
		Let $N$ denote a normal subgroup of $G$ realizing $D_G(g^{\lcm(1, 2 , \ldots, r)})$. We know that $[G:N] = [\Z^m:\Z^m\cap N]\cdot [\Z: \pi(N)]$. Suppose $\pi(N) = l\Z$ with $l\in\N$. For $w\in\Z^m$ arbitrary, we have 
		\begin{equation} \label{eq_extra_log}
			\forall (v,-l)\in N: [(w,0),(v,-l)] = (M^{l}w-w,0) \in N.
		\end{equation}
		Hence, $(M^{l}-\mathbb{1})\Z^n \leq N\cap K$ and $g^{\lcm(1,2, \ldots , r)} \notin (M^{l}-\mathbb{1})\Z^n$. 
		
		Note that $|\det(M^{l}-\mathbb{1})| \Z^n \leq (M^{l}-\mathbb{1})\Z^n$. The condition $g^{\lcm(1,2, \ldots , r)}\notin |\det(M^{l}-\mathbb{1})| \Z^n$ implies that $r< |\det(M^{l}-\mathbb{1})|$.  Since $|\det(M^{l}-\mathbb{1})|$ grows exponentially in $l\in\N$ by the assumption, we conclude that $\log(r) \preceq l = [\Z: \pi(N)]$. Also, $g^{\lcm(1,2, \ldots , r)}\notin N\cap K$ implies that $r \leq [K:N\cap K]$, so 
		$$r\log(r) \preceq [G:N] = D_G(g^{\lcm(1,2, \ldots , r)}) \leq \RF_G(Cr).$$
	\end{proof}
	\begin{ex}
		If $G$ is a group of the form $\Z^m\rtimes_\varphi \Z$ where $\varphi(1)$ has at least one eigenvalue that is not a root of unity, then it always contains a subgroup that satisfies the condition of the lemma above. Hence, the lower bound $r\log(r)$ holds for all groups of the given form that are not virtually nilpotent.
	\end{ex}
	\begin{ex}
		This bound also applies to the Baumslag-Solitar group $\BS(1,n) \cong \Z[1/n]\rtimes \Z$ with $|n|>1$  as in Example \ref{ex_BS}. Indeed, just as in the proof above, the condition on $l\Z = \pi(N) \lhd \Z$ given in Equation \eqref{eq_extra_log} becomes $(n^{l}-1)\Z[1/n] \leq N\cap K$. Therefore, $r<n^l-1$ and $r\log(r) \preceq \RF_{\BS(1,n)}$. Recall that Theorem \ref{thm_upper} states that $\RF_{\BS(1,n)} \preceq r^2$, but the exact function remains unknown.
	\end{ex}
	
	Note that the bound $r\log(r) \preceq \RF_{G}$ applies to all $\mathcal{M}$-groups having $\BS(1,n)$ with $|n|>1$ or $\Z^m\rtimes_\varphi \Z$ as in Lemma \ref{lem_lower_extra_log} as a subgroup. One can ask whether this lower bound always holds:
	\begin{ques}
		Is it true that $r\log(r) \preceq \RF_G$ holds for all $\mathcal{M}$-groups that are not virtually nilpotent?
	\end{ques}
	A positive answer would also raise the question whether results like Theorem \ref{prop_lower_addendum} can be generalized to obtain bounds of the form $r^l\log(r) \preceq \RF_G$ or better.

\bibliographystyle{plain}
\bibliography{Dere_Matthys_minimax}
\end{document}